\newcommand{\abs}[1]{\vert#1\vert}
\newcommand{\babs}[1]{\big\vert#1\big\vert}
\newcommand{\Babs}[1]{\Big\vert#1\Big\vert}
\newcommand{\Abs}[1]{\Vert#1\Vert}
\newcommand{\bAbs}[1]{\big\Vert#1\big\Vert}
\newcommand{\dsp}{\displaystyle}
\newcommand{\bU}{{\mathbf U}}
\newcommand{\tbU}{\widetilde{\mathbf U}}
\newcommand{\tU}{\widetilde{U}}
\newcommand{\bC}{{\mathbf C}}
\newcommand{\bA}{{\mathbf A}}
\newcommand{\ovV}{\overline{V}}
\newcommand{\uU}{\underline{U}}
\newcommand{\bom}{{\boldsymbol \omega}}
\newcommand{\uom}{\underline{ \omega}}
\newcommand{\bn}{{\mathbf n}}
\newcommand{\uV}{{\underline V}}
\newcommand{\uA}{{\underline A}}
\newcommand{\uw}{{\underline w}}
\newcommand{\Vp}{U_\parallel}
\newcommand{\Ap}{A_\parallel}
\newcommand{\Vpb}{U_{(\beta)\parallel}}
\newcommand{\R}{{\mathbb R}}
\newcommand{\N}{{\mathbb N}}
\newcommand{\dt}{\partial_t}
\newcommand{\dz}{\partial_z}
\newcommand{\cS}{{\mathcal S}}
\newcommand{\mfa}{{\mathfrak a}}
\newcommand{\mfN}{{\mathfrak m}^N}
\newcommand{\proj}{{\mathfrak P}}
\newcommand{\cE}{{\mathcal E}}
\newcommand{\curl}{\mbox{\textnormal{curl} }}
\newcommand{\curlm}{\mbox{\textnormal{curl}}^\mu}
\newcommand{\dive}{\mbox{\textnormal{div} }}
\newcommand{\curls}{\mbox{\textnormal{curl}}^\sigma}
\newcommand{\dives}{\mbox{\textnormal{div}}^\sigma}
\newcommand{\divem}{\mbox{\textnormal{div}}^\mu}
\newcommand{\surf}{{\vert_{\textnormal{surf}}}}
\newcommand{\bott}{{\vert_{\textnormal{bott}}}}
\newcommand{\surff}{{\vert_{z=0}}}
\newcommand{\bottf}{{\vert_{z=-H_0}}}
\newcommand{\bottff}{{\vert_{z=-1}}}
\newcommand{\eps}{\varepsilon}
\newcommand{\tpsi}{\widetilde{\psi}}
\newcommand{\cG}{{\mathcal G}}
\newcommand{\cGg}{{\mathcal G}_{\textnormal{gen}}}
\newcommand{\dx}{\partial_x}
\newcommand{\ep}{\varepsilon}
\newcommand{\pa}{\partial}
\newcommand{\Om}{\Omega}
\theoremstyle{plain}
\newtheorem{thm}{Theorem}[section]
\newtheorem{proposition}[thm]{Proposition}
\newtheorem{lemma}[thm]{Lemma}
\newtheorem{cor}[thm]{Corollary}
\theoremstyle{definition}
\newtheorem{defi}[thm]{Definition}
\theoremstyle{remark}
\newtheorem{remark}[thm]{Remark}
\newtheorem{nota}[thm]{Notation}
\newtheorem{rem}{Remark}
\title [Water waves with vorticity]{Well-posedness and shallow-water stability for a new
  Hamiltonian formulation of the water waves equations with
  vorticity}
\author{Angel Castro}
\address{Angel Castro\\Departamento de Matem\'aticas de la UAM, Instituto de Ciencias Matem\'aticas CSIC\\
Campus de Cantoblanco, 28049 Madrid, Spain }
\email{angel\_castro@icmat.es}
\author{David Lannes}
 \address{David Lannes \\ DMA, Ecole Normale Sup\'erieure et CNRS UMR 8553\\ 45 rue
  d'Ulm \\ 75005 Paris, France}
\email{David.Lannes@ens.fr}
\begin{document}

\begin{abstract}
In this paper we derive a new formulation of the water waves equations
with vorticity that generalizes the well-known Zalkarov-Craig-Sulem
formulation used in the irrotational case. We prove the local
well-posedness of this formulation, and show that it is formally
Hamiltonian. This new formulation is cast in Eulerian variable, and in
finite depth; we show that it can be used to provide uniform bounds on the lifespan
and on the norms of the solutions in the singular shallow water regime. As an
application to these results, we derive and provide the first rigorous
justification of a shallow water model for water waves in presence of
vorticity; we show in particular that a third equation must be added to the standard
model to recover the velocity at the surface from the averaged velocity.
\end{abstract}

\maketitle

\section{Introduction}

\subsection{General setting}

The equations governing the motion of the surface of a homogeneous,
inviscid fluid of density $\rho$ under the
influence of gravity (assumed to be constant and vertical, ${\bf
  g}=-g{\bf e}_z$, $g>0$) are known as the water waves equations, or free
surface Euler equations. In the case where the surface of the fluid is
delimited above by the graph of a function $\zeta(t,X)$ over its rest
state $z=0$ (with $t$ the time variable, $X\in \R^d$ the horizontal space
variables, and $z$ the vertical variable), and below by a flat bottom
$z=-H_0$, and denoting by $\bU$ and $P$ the velocity and pressure
fields, these equations can be written
\begin{eqnarray}\label{Eulereq}
\dt \bU+\bU\cdot \nabla_{X,z}\bU&=&-\frac{1}{\rho}\nabla_{X,z}P -g {\bf e}_z,\\
\label{incomp}
\nabla_{X,z}\cdot \bU&=&0,
\end{eqnarray}
in the fluid domain $\Omega_t=\{(X,z)\in \R^{d+1},
-H_0<z<\zeta(t,X)\}$; they are complemented with the boundary conditions
\begin{eqnarray}
\label{kineticeq}
\dt \zeta -\bU_{\surf}\cdot N&=&0 \quad(\mbox{with }N=(-\nabla\zeta^T,1)^T),\\
P_\surf&=&\mbox{constant}
\end{eqnarray}
at the surface, and
\begin{equation}
\label{condbott}
\bU_\bott\cdot N_b=0\quad (\mbox{with }N_b=\bf e_z)
\end{equation}
at the bottom. In many physical situations, the motion of the fluid is
in addition irrotational, and another equation can be added to
\eqref{Eulereq}-\eqref{condbott}, namely
$$
\curl \bU=0 \quad\mbox{ in }\quad \Omega.
$$
This additional assumption yields considerable simplifications since
all the relevant information to describe the fluid motion is then
concentrated on the interface. This can be exploited in many ways. See
for instance \cite{Lindblad,CoutandShkoller,AmbroseMasmoudi,ShatahZeng,CCF} for local well-posedness results and \cite{CCF,CCFGGpenas,CCFGG,CSS} for the existence of turning waves and splash singularities. Let us describe
briefly here the approach initiated by Zakharov \cite{Zakharov} and
Craig-Sulem \cite{CS} which is one of the most seminal and the starting
point for the present paper.\\
From the irrotationality assumption, one can infer the existence of a
velocity potential $\Phi$ such that $\bU=\nabla_{X,z}\Phi$; the
incompressibility conditions implies that $\Phi$ be harmonic, and the
bottom boundary condition that its normal derivative vanishes at the
bottom. It follows that $\Phi$ is fully determined by its trace at the
surface $\Phi_\surf=\psi$. Zakharov noticed that the irrotational
equation could be reduced to the Hamiltonian equation
$$
\dt \left(\begin{array}{c}\zeta \\ \psi\end{array}\right)=J \mbox{\rm
  grad}_{\zeta,\psi}H,
\quad\mbox{ with }\quad J=\left( \begin{array}{cc} 0 & 1 \\ -1 & 0\end{array}\right),
$$
and where $H$ is the total energy $H=\frac{1}{2}\int_{\R^d}
g\zeta^2+\frac{1}{2}\int_\Omega \abs{\bU}^2$. Introducing the
Dirichlet-Neumann operator
$\cG[\zeta]\psi=\sqrt{1+\abs{\nabla\zeta}}\partial_n \Phi_\surf$,
Craig and Sulem rewrote these equations as a closed set of two
evolution equations on $\zeta$ and $\psi$,
$$
\left\lbrace
\begin{array}{l}
\dsp \dt \zeta-\cG[\zeta]\psi=0,\\
\dsp \dt \psi +g \zeta+\frac{1}{2}
 \babs{\nabla\psi}^2-\frac{\big(\cG[\zeta]\psi+\nabla\zeta\cdot\nabla\psi\big)^2}{2
(1+\abs{\nabla\zeta}^2)}=0.
\end{array}\right.
$$
This is a convenient formulation for well-posedness issues
(see for instance \cite{LannesJAMS} for local well-posedness,
\cite{ABZ_Duke,ABZ_ENS,ABZ_Inv,MingZhang} for low regularity solutions,
\cite{GMS,IonPus,AD} for global existence, etc.). It has also been
used for numerical computations \cite{CS,GN}, weak turbulence modeling
\cite{ZDP}, analysis of periodic wave patterns or solitary waves
\cite{IoossPlotnikov,AlazardMetivier,RoussetTzvetkov}, etc. More relevant to our
present motivations, it is probably the most commonly used approach to
derive and justify asymptotic models describing the solutions
to the water waves equations in various physical regimes (e.g. shallow
water or deep water). The derivation of such models follows directly
from an asymptotic expansion of the Dirichlet-Neumann operators
(e.g. \cite{CSS1,CG,BCL,Chazel,Iguchi2,LS,MingZZ}), while the key point for
their justification is a local well posedness theorem for a
dimensionless version of the equations, and over a time scale whose
dependance on the various dimensionless parameters is controlled; in the shallow
water regime --  of great importance for applications in oceanography --
this induces an extra difficulty because the shallow water limit is
singular; the relevant existence results have been shown in
\cite{AL,Iguchi1} (see also \cite{Craig,SW,Iguchi0,Li,TW} for the
justification of various asymptotic models using other approaches). We
refer to \cite{Lannes_book} for a more comprehensive description of these aspects.

However, a limitation of the Zakharov-Craig-Sulem approach is that it
is restricted to irrotational flows. This is a relevant framework for
most applications in oceanography, but several important phenomena
such as rip currents for instance can only be understood by taking
into account vorticity effects. Rip currents are only one particular
example of wave-currents interactions; the understanding of the energy
exchanges at stake in such interactions is an important challenge
in oceanography, and vorticity is one of the key mechanisms
involved. Several asymptotic models have been derived in the physics
literature to take into account vorticity effects in shallow water
models; however, these derivations rely on assumptions on the
structure of the flow (e.g. columnar motion)
that are in general not satisfied, or only at a very low order of
precision. In particular, there does not exist to our knowledge any good
description of the nonlinear dynamics of the vorticity in the shallow water
regime; it is for instance not known whether horizontal vorticity may
be created from vertical vorticity.\\
From a mathematical viewpoint, various authors considered the local
well-posedness theory for the water waves problem in presence of
vorticity
\cite{Lindblad,CoutandShkoller,ShatahZeng,ZhZh,MasmoudiRousset}; these
results are however not adapted to answer the above preoccupations
because they consider different physical configurations (drop of
fluid, infinite depth, etc.) and use mathematical techniques that make
the singular shallow water limit very delicate to handle (see for
instance the comments of \cite{Lannes_criterion} on the incompatibility of standard symbolic
analysis and of the shallow water limit); moreover, the influence of
vorticity on the flow is generally treated implicitly. The rigorous
qualitative analysis of water waves seems to have essentially
been restricted, when vorticity is present, to one dimensional surfaces,
and to periodic or standing waves; we refer to the recent book
\cite{Constantin} for an extensive review and more references on these aspects.
The motivation
for the present work is therefore twofolds:
\begin{enumerate}
\item Find a formulation of the water waves equations allowing for
  the presence of vorticity, adapted to the physical configurations we
  have in mind for applications to oceanography, and making the
  influence of the vorticity on the flow as explicit as possible.
\item Show the well-posedness of this formulation, and control the
  life span and the size of the solution in the so called shallow
  water limit, in order  to pave the way for the derivation of shallow water
  models in presence of vorticity.
\end{enumerate}
These goals are achieved by deriving first a generalization of the above classical
Zakharov-Craig-Sulem formulation, as a set of three evolution
equations on $(\zeta,\psi,\bom)$, where $\bom=\curl \bU$ is the
vorticity. Of course, $\psi$ cannot be defined as in the irrotational
case as the trace at the surface of the velocity potential $\Phi$;
instead, we define $\nabla\psi$ as the projection onto gradient vector
fields of the horizontal component of the tangential velocity $\Vp$ at
the surface. The equations then read
$$
\left\lbrace
\begin{array}{l}
\dsp \dt \zeta+\uV\cdot \nabla\zeta-\uw=0,\\
\dsp \dt \psi +g \zeta+\frac{1}{2}
 \babs{\Vp}^2-\frac{1}{2}(1+\abs{\nabla\zeta}^2)\uw^2-\frac{\nabla^\perp}{\Delta}\cdot\big(
\uom\cdot N \uV\big)=0,\\
\dsp \dt \bom+\bU\cdot\nabla_{X,z}
\bom=\bom\cdot\nabla_{X,z} \bU
\end{array}\right.
$$
where $\uV$ and $\uw$ denote the horizontal and vertical components of
the velocity at the interface, and $\uom=\bom_\surf$. In the
irrotational case ($\bom=0$), these equations coincide exactly with
the ones derived by Zakharov-Craig-Sulem.
We show that it is a closed set
of equations and establish local well-posedness. We also show that
this new formulation is formally Hamiltonian (with a non-canonical
Poisson bracket).\\
As the classical
irrotational Zakharov-Craig-Sulem formulation, but contrary to the aforementioned works in
the rotational case, our equations are cast in Eulerian variables, in
a configuration which is relevant to applications in oceanography
(finite depth), and can easily be used to derive asymptotic models.
We consider in particular the shallow water regime, which is of great
importance in oceanography. We first rewrite the equation in
dimensionless form, and prove the existence time is of order
$\frac{T}{\eps}$, with $T$ independent of $\eps,\mu\in (0,1)$, where
$$
\eps=\frac{\mbox{typical amplitude of the wave}}{\mbox{depth}}
\quad\mbox{ and }\quad
\mu=\Big(\frac{\mbox{depth}}{\mbox{typical horizontal length}}\Big)^2;
$$
this estimate on the lifespan of solutions goes with uniform bounds on
the solutions, which make possible the derivation of asymptotic
models. As an illustration, we provide the first rigorous derivation
and justification of the nonlinear shallow water
equations with vorticity, which is an approximation of order $O(\mu)$
of the water waves equations when $\eps=O(1)$. The more technical
derivation of a $O(\mu^2)$ model of Green-Naghdi type is
left for the companion paper \cite{CastroLannes}; let us just mention
that the shallow water nonlinear dynamics of the vorticity studied
here allow us to exhibit a so far unknown mechanism of generation of
horizontal vorticity from purely vertical rotational effects. More
generally, we expect that the asymptotic bounds derived in this paper
will be of great use to assess the validity of the physical
assumptions made in the physics literature to formally derive
asymptotic shallow water models in presence of vorticity.

\medbreak

The paper is organized as follows. Section \ref{sect2} is devoted to
the derivation of our new formulation of the water waves with
vorticity (see above). The fact that this formulation is a closed set
of equations follows from the resolution of a div-curl problem
allowing to reconstruct the velocity $\bU$ in the fluid domain from
$\zeta$, $\psi$ and $\bom$. This div-curl problem is studied in full
details in
Section \ref{sect3}. The local well-posedness is then addressed in
Section \ref{sect4}: the equations are ``quasilinearized'' and {\it a
  priori} estimates are derived. Using these estimates, a solution is
then constructed by an iterative scheme (which is non trivial due to
the fact the the vorticity is defined on a domain that depends on the
surface elevation). The main result is then given in Theorem
\ref{theo1}.\\
The proof of Theorem \ref{theo1} has been
tailored to allow its implementation in the shallow water
setting. However, handling the shallow water limit induces some
difficulties that are not relevant for a standard local
well-posedness result such as Theorem \ref{theo1} (the control of the
bottom vorticity for instance). For the readers who are not interested
in the shallow water analysis, we have therefore opted to treat this
aspect separately. This is done in Section \ref{sect5}. The
first step is to write a dimensionless version of the equations; the
associated well-posedness result is then given in Theorem
\ref{theomainND}.  Note that the
non dimensionalization of the vorticity is not obvious, but that this theorem
justifies {\it a posteriori} the choice we have made. As an
application of this result, we derive and justify in \S \ref{sectSWvort} a first
order nonlinear shallow water model in presence of vorticity.\\
Finally, we investigate in Section \ref{sect6} the Hamiltonian
structure of our new formulation of the water waves equations with vorticity.

\subsection{Notations}

- $X=(x,y)\in \R^2$ denotes the horizontal variables. We also denote  by $z$ the vertical variable.\\
- $\nabla$ is the gradient with respect to the horizontal variables;
$\nabla_{X,z}$ is the full three dimensional gradient operator. The
curl and divergence operators are defined as
$$
\curl {\bf A}=\nabla_{X,z}\times {\bf A}\quad\mbox{ and }\quad \dive
{\bf A}=\nabla_{X,z}\cdot {\bf A}.
$$
- We denote by $d=1,2$ the horizontal dimension. When $d=1$, we often
identify functions on $\R$ as functions on $\R^2$ independent of the
$y$ variable. In particular, when $d=1$, the gradient, divergence and
curl operators take the form
$$
\nabla_{X,z}f=\left(\begin{array}{c}\partial_x f \\ 0 \\ \partial_z
    f \end{array}\right),
\quad
\curl {\bf A}=\left(\begin{array}{c}-\partial_z {\bf A}_2 \\ \partial_z
    {\bf A}_1-\dx {\bf A}_3\\ \partial_x {\bf A}_2
     \end{array}\right),\quad
\dive {\bf A}=\partial_x {\bf A}_1+\dz {\bf A}_3.
$$
- $\cS$ is the flat strip $\R^d\times (-H_0,0)$.\\
- We denote by $(X,\zeta(t,X))$ a parametrization of the free surface at
time $t$ and by $\Omega_t$  the fluid domain delimited at time $t$ by
this free surface and a flat bottom at depth $z=-H_0$,
$$
\Omega_t=\{(X,z)\in \R^{3}, -H_0< z <\zeta(t,X)\};
$$
when the dependence on time is not important, we just write $\Omega$
instead of $\Omega_t$.\\
- When $d=1$, $\Omega$ is invariant along the $y$ axis, and we
identify it with a two-dimensional domain; in particular,
\begin{align*}
\Abs{f}_{L^2(\Omega)}&=\int_{\R}\int_{-H_0}^{\zeta(x)} \abs{f(x,z)}^2dzdx
&\mbox{ if }d=1, \\
\Abs{f}_{L^2(\Omega)}&=\int_{\R^2}\int_{-H_0}^{\zeta(x,y) }\abs{f(x,,yz)}^2dzdxdy &\mbox{ if }d=2.
\end{align*}
- We write $\bU$ the velocity field; its horizontal component
 is written ${\mathbf V}$, and its vertical component
${\mathbf w}$.\\
- For a vector $\bA\in \R^{3}$ we often denote by $\bA_h$ its
horizontal component and by $\bA_v$ its vertical component.\\
- If ${\bf A}$ is a vector field defined on $\Omega_t$, we write
$\underline{A}$ the function
$$
\underline{A}(t,X)={\bf A}_\surf(t,X)={\bf A}(t,X,\zeta(t,X));
$$
consistently, if ${\bf A}$ is defined on the flat strip $\cS$ then $\uA(t,X)={\bf
  A}(t,X,0)$. We also denote by $A_b$ its trace at the bottom
$$
{A_b}(t,X)={\bf A}_\bott(t,X)={\bf A}(t,X,-H_0).
$$
- $\bn$ is the unit upward normal vector at the surface,
$\bn=N/\abs{N}$, with $N=(-\nabla \zeta,1)^T$.\\
- $\bn_b$ is the {\it upward} normal vector at the (flat)
bottom, $\bn_b=N_b={\bf e}_z$.\\
- If ${\bf V}\in \R^d$, we write ${\bf V}^\perp=(-{\bf V}_2,{\bf V}_1)^\perp$.\\
- For all vector field ${\bf A}$ defined on $\Omega$ and with values
in $\R^3$,  let us define $\Ap\in \R^2$ as the horizontal component of
the tangential part of ${\bf A}$ at the surface,
\begin{equation}\label{notapar}
A_\parallel=\uA_h+\uA_v\nabla\zeta,
\end{equation}
so that $\underline{A}\times
N=\left(\begin{array}{l}-\Ap^\perp\\-\Ap^\perp\cdot
    \nabla\zeta\end{array}\right)$.\\
- We always use simple bars to denote functional norms on $\R^d$ and
double bars to denote functional norms on the $d+1$ dimensional
domains $\Omega$ and $\cS$; for instance
$$
\abs{f}_p=\abs{f}_{L^p(\R^d)},\quad
\abs{f}_{H^s}=\abs{f}_{H^s(\R^d)},\quad
\Abs{f}_p=\Abs{f}_{L^p(\Omega)} \quad(\mbox{or }\Abs{f}_{L^p(\cS)}), \mbox{ etc.}
$$
- We use the Fourier multiplier notation
$$
f(D)u={\mathcal F}^{-1}(\xi\mapsto f(\xi)\widehat{u}(\xi))
$$
and denote by $\Lambda=(1-\Delta)^{1/2}=(1+\abs{D})^{1/2}$ the
fractional derivative operator.\\
-We define, for
all $s\in \R$, $k\in \N$ the space $H^{s,k}=H^{s,k}(\cS)$ by
\begin{equation}\label{defHsk}
H^{s,k}=\bigcap_{j=0}^k H^j((-H_0,0);H^{s-j}(\R^d)),\quad \mbox{ with }\quad
\Abs{u}_{H^{s,k}}=\sum_{j=0}^k \Abs{\Lambda^{s-j}\dz^j u}_2.
\end{equation}
- We shall have to handle functions whose gradient are in some Sobolev
space, but which are not in $L^2(\R^d)$. We therefore introduce the
Beppo-Levi spaces \cite{DenyLions}
$$
\forall s\geq 0,\qquad \dot{H}^s(\R^d)=\{f\in L^2_{{\rm
    loc}}(\R^d),\nabla f\in H^{s-1}(\R^d)^2\};
$$
similarly, for functions defined on the fluid domain $\Omega$, we write
$$
\forall k\in \N^*,\qquad \dot H^k(\Omega)=\{f\in L^2_{\rm
  loc}(\Omega),\nabla_{X,z} f\in H^{k-1}(\Omega)^{3}\}.
$$
- The ``dual spaces'' are $H_0^s(\R^d)$ defined as
\begin{equation}\label{defH0m}
 H^{s}_0(\R^d)=\{u\in H^{s}(\R^d)\,:\, \exists
v\in H^{s+1}(\R^d),\, u=|D|v\},
\end{equation}
and we write $\abs{u}_{H_0^{s}}=\abs{\frac{1}{\abs{D}}u}_{H^{s+1}}.$\\
- We write $s\vee t=\max\{s,t\}$.\\
- We generically denote by $C(\cdot)$ some positive function that has
a nondecreasing dependance on its arguments.\\
- We write $[\partial^\alpha,f,g]$ for the symmetric commutator
 $[\partial^\alpha,f,g]=\partial^\alpha(fg)-\partial^\alpha fg
 -f\partial^\alpha g$.\\

\section{A new formulation for the equations}\label{sect2}

\subsection{A first reduction}
Taking the trace of \eqref{Eulereq} at the free surface and then
taking the vectorial product of the resulting equation with $N$, one
obtains, with the notation \eqref{notapar},
$$
-\dt \Vp^\perp -g\nabla^\perp\zeta-\frac{1}{2}\nabla^\perp \abs{\Vp}^2+\frac{1}{2}\nabla^\perp\big(
(1+\abs{\nabla\zeta}^2)\uw^2\big)+\uom\cdot N\uV=0,
$$
where we also  used the notations
$$
\bom=\curl \bU,\qquad \uom=\bom_\surf;
$$
one gets therefore
\begin{equation}\label{eqVp}
\dsp \dt \Vp +g\nabla\zeta+\frac{1}{2}\nabla \abs{\Vp}^2-\frac{1}{2}\nabla\big(
(1+\abs{\nabla\zeta}^2)\uw^2\big)+\uom\cdot N\uV^\perp=0.
\end{equation}
Denoting by $\Pi$ the projector onto gradient vector fields, and
$\Pi_\perp$ the projector onto orthogonal gradient vector fields,
$$
\Pi=\frac{\nabla\nabla^T}{\Delta},\qquad \Pi_\perp=\frac{\nabla^\perp (\nabla^\perp)^T}{\Delta},
$$
we can decompose $\Vp$ under the form
\begin{eqnarray*}
\Vp&=&\Pi \Vp+\Pi_\perp \Vp\\
&=& \nabla\psi+\nabla^\perp\tpsi,
\end{eqnarray*}
for some scalar functions $\psi$ and $\tpsi$,
and similarly
\begin{eqnarray*}
\uom\cdot N\uV^\perp&=&\Pi \big( \uom\cdot N\uV^\perp\big)+\Pi_\perp \big(\uom\cdot N\uV^\perp\big)\\
&=& \nabla \big[\frac{\nabla}{\Delta}\cdot\big( \uom\cdot N\uV^\perp\big)\big]+\Pi_\perp \big(\uom\cdot N\uV^\perp\big).
\end{eqnarray*}
Applying $\Pi$ to the equation on $\Vp$ one therefore finds the following
equation on $\psi$,
\begin{equation}\label{4eq1}
 \dt \psi +g \zeta+\frac{1}{2} \abs{\Vp}^2-\frac{1}{2}\big(
(1+\abs{\nabla\zeta}^2)\uw^2\big)+\frac{\nabla}{\Delta}\cdot\big( \uom\cdot N\uV^\perp\big)=0.
\end{equation}
There is no need to derive an equation on $\Pi_\perp \Vp=\nabla^\perp\tpsi$ since this
component of $\Vp$ is fully determined
by the knowledge of $\bom$
and $\zeta$; indeed, using the differential identity
\begin{align}\label{iden}
\left(\nabla\times {\bf A}\right)_\surf\cdot N=\nabla^\perp \cdot A_{\parallel},
\end{align} one computes easily that
$$
\uom\cdot N=\nabla^\perp\cdot \Vp,
$$
and therefore
$$
\Pi_\perp \Vp=\nabla^\perp \tpsi,
$$
where $\tpsi$ is the unique solution\footnote{We assume here that
  $\bom\in L^2(\Omega)$ and divergence free; see Lemma \ref{existtildepsi}
  for the existence and uniqueness of $\tpsi$.}   in the Beppo-Levi space $\dot{H}^{3/2}(\R^d)$ of
$$
\Delta \tpsi=\uom\cdot N.
$$

Taking now the curl of \eqref{Eulereq}  we classically obtain the
vorticity equation
\begin{equation}\label{4eq3}
\dt \bom+\bU\cdot\nabla_{X,z} \bom=\bom\cdot\nabla_{X,z} \bU
\quad\mbox{in }\quad \Omega_t,
\end{equation}
with $\bom=\curl \bU$.

\medbreak

Our claim is that the kinematic equation \eqref{kineticeq}, together
with \eqref{4eq1} and \eqref{4eq3}, forms a closed system of equations
on $(\zeta,\psi,\bom)$. We have therefore to prove that these
quantities fully determine the velocity field in the whole fluid
domain. This is done in the next subsection.

\begin{rem}\label{remmm}
Applying $\Pi_\perp$ to the equation on $\Vp$ does not bring any
further information; this
  leads to $$
 \dt \Pi_\perp \Vp +\Pi_\perp\big(\uom\cdot N\uV^\perp)=0,
$$
and therefore
$$
 \dt (\nabla^\perp\cdot \Vp) +\nabla\cdot\big(\uom\cdot N\uV)=0.
$$
Evaluating the vorticity equation at the surface, we also get
$$
\dt \uom+\uV\cdot \nabla\uom=\uom\cdot \nabla \uU+\uom\cdot N \dz \bU_\surf,
$$
from which one readily deduces that
$$
\dt (\uom\cdot N)+\nabla\cdot (\uom\cdot N \uV)=0.
$$
We therefore get
$$
\dt (\uom\cdot N-\nabla^\perp\cdot \Vp)=0,
$$
which is always true since, as seen above, $\uom\cdot
N=\nabla^\perp\cdot U_\parallel$.
\end{rem}

\begin{rem}\label{vorbot}We have an analogous equation to \eqref{iden}
  at the bottom, which yields the following relation for the bottom vorticity,
$$\bom_\bott\cdot N_b=\nabla^\perp \cdot V_\bott;$$
in particular, if ${\bf V}\in H^1(\Omega)^2$ then $\omega_b\cdot
N_b\in H^{-\frac{1}{2}}_0(\R^d)$ with $H^{-\frac{1}{2}}_0(\R^d)$ as
defined in \eqref{defH0m}.
\end{rem}

\subsection{A div-curl problem}\label{sectdivcurl1}

Let $\zeta\in W^{2,\infty}(\R^d)$ and denote by $\Omega$ the associated
fluid domain,
\begin{equation}\label{defdomain}
\Omega=\{(X,z)\in \R^{d+1}, -H_0< z <\zeta(X)\};
\end{equation}
we assume that the fluid domain is strictly connected in the sense that
\begin{equation}\label{hmin}
\exists h_{\min},\quad \forall X\in \R^d, \qquad H_0+\zeta(X)\geq
h_{\min}.
\end{equation}
From the discussion of the previous section, one has
$$\Pi_\perp\Vp=\nabla^\perp \tpsi=\nabla^\perp \Delta^{-1}(\uom\cdot N)
$$
and is therefore fully
determined by the knowledge of (the normal component at the surface
of) the vorticity $\bom$. The following theorem shows that it is possible to reconstruct
the whole velocity field $\bU$ in the fluid domain in terms $\bom$,
$\Pi \Vp=\nabla\psi$ and $\zeta$; more precisely, there is a unique solution $\bU\in
H^1(\Omega)^{3}$ to the boundary value problem
\begin{equation}\label{divrot}
\left\lbrace
\begin{array}{lll}
\curl \bU =& \bom&\quad \mbox{ in }\quad \Omega\\
\dive \bU = &0&\quad \mbox{ in }\quad \Omega\\
\Vp=& \nabla\psi+\nabla^\perp \Delta^{-1}(\uom\cdot N)&\quad\mbox{ at the surface}\\
U_b\cdot N_b=&0 &\quad\mbox{ at the bottom},
\end{array}\right.
\end{equation}
where we recall that $\Vp$ is
defined in \eqref{notapar}, that $\uom$ stands for the trace of
$\bom$ at the surface, and that $U_b$ is the trace of $\bU$ at the bottom.
In the statement of the
theorem, we use the following
definition to denote divergence free vector fields defined on the
fluid domain $\Omega$. The second point of the definition is motivated
by Remark \ref{vorbot} (where the space $H_0^{-1/2}$ is also introduced).
\begin{defi}\label{divfree}
Let $\zeta\in W^{2,\infty}(\R^d)$ be such that \eqref{hmin} is
satisfied and $\Omega$ be as in \eqref{defdomain}. \\
{\bf i.} We define the subspace of $L^2(\Omega)^{3}$ of divergence
  free vector fields as
$$
H(\mbox{\textnormal{div}}_0,\Omega)=\{{\bf B}\in
L^2(\Omega)^3,\dive{\bf B}=0\}.
$$
{\bf ii.} The set of such functions satisfying
$B_b\cdot N_b \in H^{-1/2}_0(\R^d) $ is
denoted
$$
H_b(\mbox{\textnormal{div}}_0,\Omega)=\{{\bf B}\in
H(\mbox{\textnormal{div}}_0,\Omega), \, B_b\cdot N_b \in H^{-1/2}_0(\R^d)\},
$$
which we equip with the norm
$$
\Abs{{\bf B}}_{2,b}=\Abs{{\bf B}}_2+\abs{B_b\cdot N_b}_{H_0^{-1/2}}.
$$
\end{defi}
For the sake of clarity, the proof of the following theorem
is postponed to \S \ref{sectproofth1}.
\begin{thm}\label{prop1}
Let $\zeta\in W^{2,\infty}(\R^d)$ be such that \eqref{hmin} is
satisfied and $\Omega$ be as in \eqref{defdomain}. Let also
$\bom\in H_b(\mbox{\textnormal{div}}_0,\Omega)^3$ and  $\psi\in
\dot{H}^{3/2}(\R^d)$.\\
There exists a unique solution $\bU\in H^1(\Omega)^{3}$ to the
boundary value problem \eqref{divrot}, and one has
$\bU=\curl \bA+\nabla_{X,z}\Phi$ with $\bA\in H^2(\Omega)^{3}$ solving
\begin{equation}\label{eqA}
\left\lbrace
\begin{array}{rll}
\dsp \curl \curl \bA&=\bom &\mbox{ \textnormal{in} }\Omega,\\
\dsp \dive \bA&=0 &\mbox{ \textnormal{in} }\Omega,\\
\dsp N_b\times A_b&=0&\\
\dsp N\cdot \uA&=0&\\
\dsp (\curl
  \bA)_\parallel&=\nabla^\perp \Delta^{-1}(\uom\cdot N),&\\
\dsp N_b\cdot(\curl \bA)_\bott & = 0,&
\end{array}\right.
\end{equation}
while $\Phi\in \dot{H}^2(\Omega)$ solves
\begin{equation}\label{eqPhi0}
\left\lbrace
\begin{array}{l}
\Delta_{X,z}\Phi=0 \quad\mbox{ \textnormal{in} }\quad \Omega,\\
\Phi_\surf=\psi,\qquad\partial_n\Phi_\bott=0.
\end{array}\right.
\end{equation}
Moreover, one has
$$
\Abs{\bU}_2+\Abs{\nabla_{X,z}\bU}_2^2\leq C(\frac{1}{h_{\min}},H_0,\abs{\zeta}_{W^{2,\infty}})\big(\Abs{\bom}_{2,b}+\abs{\nabla\psi}_{H^{1/2}}\big).
$$
\end{thm}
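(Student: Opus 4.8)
The plan is to decompose the velocity into a curl part and a gradient part, $\bU=\curl\bA+\nabla_{X,z}\Phi$, and solve for each piece separately. The gradient part $\Phi$ handles the irrotational component and is governed by the classical mixed Dirichlet–Neumann problem \eqref{eqPhi0}, whose solvability in $\dot H^2(\Omega)$ with the estimate $\Abs{\nabla_{X,z}\Phi}_{H^{1/2}}\lesssim\abs{\nabla\psi}_{H^{1/2}}$ is standard (variational formulation on $\dot H^1(\Omega)$, then elliptic regularity after flattening the domain; this is essentially the Dirichlet–Neumann theory recalled in the introduction). The real content is the vector potential problem \eqref{eqA}. First I would construct $\bA$: since $\bom$ is divergence free and in $H_b(\mathrm{div}_0,\Omega)$, one solves $\curl\curl\bA=\bom$, $\dive\bA=0$ with the stated boundary conditions $N_b\times A_b=0$, $N\cdot\uA=0$, $N_b\cdot(\curl\bA)_\bott=0$ and the tangential surface condition $(\curl\bA)_\parallel=\nabla^\perp\Delta^{-1}(\uom\cdot N)$. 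Because $\dive\bom=0$ one has $\curl\curl\bA=-\Delta\bA$ (componentwise) under the Coulomb gauge $\dive\bA=0$, so this reduces to a vector Laplace problem with mixed boundary data; I would set it up variationally in (a Beppo–Levi version of) $H^1(\Omega)^3$ and then bootstrap to $H^2$ by elliptic regularity, checking that the natural boundary conditions produced by the weak formulation are exactly the ones listed.

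The key steps, in order: (1) solve \eqref{eqPhi0} for $\Phi\in\dot H^2(\Omega)$ with the $H^{1/2}$ trace estimate; (2) solve \eqref{eqA} for $\bA\in H^2(\Omega)^3$, obtaining $\bA$ from $\bom$ (using $\bom\in H_b(\mathrm{div}_0,\Omega)$ to control the bottom trace of $\curl\bA$, via Remark \ref{vorbot}) together with the surface datum $\nabla^\perp\Delta^{-1}(\uom\cdot N)$, which lies in $H^{1/2}(\R^d)$ precisely because $\uom\cdot N\in H^{-1/2}_0(\R^d)$; (3) set $\bU=\curl\bA+\nabla_{X,z}\Phi$ and verify it solves \eqref{divrot}: the divergence vanishes since $\dive\curl=0$ and $\Delta\Phi=0$; the curl equals $\curl\curl\bA=\bom$ since $\curl\nabla=0$; the surface condition $\Vp=\nabla\psi+\nabla^\perp\Delta^{-1}(\uom\cdot N)$ follows by splitting the tangential part of $\bU$ at the surface into the $\curl\bA$ contribution (which gives $(\curl\bA)_\parallel$, the orthogonal-gradient piece) and the $\nabla_{X,z}\Phi$ contribution (which gives $\nabla\Phi_\surf=\nabla\psi$, up to the correction from the vertical component, handled exactly as in the irrotational Craig–Sulem reduction); and the bottom condition $U_b\cdot N_b=0$ from $N_b\cdot(\curl\bA)_\bott=0$ and $\partial_n\Phi_\bott=0$; (4) prove uniqueness: if $\bU$ solves \eqref{divrot} with zero data, then $\bU$ is curl-free and divergence-free hence locally a gradient $\nabla_{X,z}\Phi$, with $\Phi$ harmonic, $\partial_n\Phi_\bott=0$, and $\nabla\Phi_\surf$ (plus the vertical correction) $=0$, i.e. $\Phi_\surf$ constant, forcing $\Phi$ constant by the uniqueness for \eqref{eqPhi0}; (5) assemble the energy estimate from the estimates for $\Phi$ and $\bA$.

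I expect the main obstacle to be step (2): the vector potential problem \eqref{eqA} is overdetermined-looking (six boundary conditions on a second-order vector system) and one must check that it is in fact well-posed — i.e. that the gauge condition $\dive\bA=0$, the normal conditions $N\cdot\uA=0$ and $N_b\times A_b=0$, and the two conditions on $\curl\bA$ are mutually compatible and that exactly the right number survive as essential versus natural boundary conditions in the weak formulation. A clean way to handle this is to work component-by-component after flattening $\Omega$ to the strip $\cS$ via the diffeomorphism $(X,z)\mapsto(X,\sigma(X,z))$ used throughout the paper, turning $-\Delta\bA=\bom$ into a uniformly elliptic variable-coefficient system whose coefficients are controlled by $\abs{\zeta}_{W^{2,\infty}}$ and $1/h_{\min}$; the compatibility of the gauge with the curl boundary data then follows from the identity \eqref{iden} and from $\dive\bom=0$, much as in the standard Friedrichs/Bogovskii-type construction of vector potentials on domains with boundary. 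The delicate low-regularity bookkeeping is ensuring that the surface vorticity $\uom\cdot N$ only ever enters through the $H^{-1/2}_0$ norm (so that $\nabla^\perp\Delta^{-1}(\uom\cdot N)\in H^{1/2}$ and contributes $\Abs{\bom}_{2,b}$, not a stronger norm, to the final bound), and that the bottom vorticity is likewise controlled solely by $\Abs{\bom}_{2,b}$ through Remark \ref{vorbot}; this is exactly why the norm $\Abs{\cdot}_{2,b}$ of Definition \ref{divfree} was introduced.
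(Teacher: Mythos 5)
Your overall decomposition $\bU=\curl\bA+\nabla_{X,z}\Phi$ and the reduction to \eqref{eqPhi0} for $\Phi$ and \eqref{eqA} for $\bA$ match the paper exactly, and your step (3) verification and step (4) uniqueness argument are sound. The gap is in step (2), the construction of $\bA$, which is the real content. You propose to pass to the componentwise vector Laplacian $-\Delta\bA=\bom$ and set it up variationally in $H^1(\Omega)^3$; but the Dirichlet form $\int_\Omega\nabla_{X,z}\bA:\nabla_{X,z}{\bf C}$ produces Neumann conditions as its natural boundary conditions, not the tangential-curl conditions $(\curl\bA)_\parallel=\nabla^\perp\Delta^{-1}(\uom\cdot N)$ and $N_b\cdot(\curl\bA)_\bott=0$ that \eqref{eqA} requires, and the Coulomb gauge $\dive\bA=0$ does not come for free from such a formulation. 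The paper's key move, which your plan does not supply, is to work directly with the curl--curl bilinear form $a(\bA,{\bf C})=\int_\Omega\curl\bA\cdot\curl{\bf C}$ on the \emph{constrained} space $\mathfrak X=\{\bA\in H^1(\Omega)^3:\dive\bA=0,\ \uA\cdot N=0,\ N_b\times A_b=0\}$, for which a Friedrichs-type coercivity $\Abs{\bA}_{H^1}\lesssim\Abs{\curl\bA}_2+\Abs{\dive\bA}_2$ (Lemmas \ref{lempoincare}, \ref{leminterm}, \ref{equivnorm}) has to be established; the tangential-curl conditions then emerge as the natural boundary conditions. Equally important, and absent from your plan, is the step where the divergence-free constraint on the \emph{test} functions is relaxed (from $\mathfrak X$ to $H^1_{\rm b.c.}(\Omega)$) so as to show the variational solution satisfies $\curl\curl\bA=\bom$ in the strong sense: this is exactly where $\dive\bom=0$ and the relation $\Delta\tpsi=\uom\cdot N$ enter, and without it you only have a weak identity against divergence-free test functions, which does not yield the PDE.

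A second gap is in your step (5). The $H^1$ bound on $\tbU=\curl\bA$ is not the output of a routine elliptic-regularity bootstrap; it is a separate energy computation (the paper's Lemma \ref{lemlem}). One multiplies $-\Delta_{X,z}\tbU=\curl\bom$ by $\tbU$, integrates by parts, and then has to show that the resulting boundary integral, after exploiting $w_b=0$, reorganizes at the surface into the specific structure $F=(\nabla\widetilde{\underline w}-(\nabla^\perp\zeta\cdot\nabla)\widetilde{\underline V}^\perp,\ -\nabla\cdot\widetilde{\underline V})$ of \eqref{defF} and can be controlled by $\abs{\nabla\tpsi}_{H^{1/2}}$ and $\Abs{\bom}_{2,b}$ without loss. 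Treating this as an ``assembly'' step significantly understates the work, and in fact this is also where the careful tracking you rightly flag (that $\uom\cdot N$ must only enter through $H^{-1/2}_0$) is actually carried out.
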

The theorem furnishes a Hodge-Weyl decomposition\footnote{Note that
  this decomposition is
  not orthogonal for the $L^2(\Omega)$ scalar product. The standard
  (orthogonal) Hodge-Weyl decomposition would be
$$
\bU=\bU^\sharp+\nabla_{X,z} \Phi^\sharp,
$$
with $\bU^\sharp$ divergence free and tangential to the boundaries of
$\Omega$. This decomposition does not isolate the vorticity effects in
the sense that they are present in both terms of the decomposition,
while they are absent in the potential part of the decomposition we
use here.
} of the velocity
field, $\bU=\nabla_{X,z}\Phi+\curl \bA$, the first component of which
is the irrotational part of the velocity field, and the second one its
rotational part. More precisely, the theorem allows us to give the
following definition.
\begin{defi}\label{defimappings}
Let $\zeta\in W^{2,\infty}(\R^d)$ satisfy \eqref{hmin} and $\Omega$ be
given by \eqref{defdomain}.
\item[(1)] We define the linear mapping ${\mathbb
  A}[\zeta]$ as follows
$$
{\mathbb A}[\zeta]:
\begin{array}{lcl}
H_b(\mbox{\textnormal{div}}_0,\Omega)&\to& H^2(\Omega)^{3},\\
\bom&\mapsto& {\bf A},
\end{array}
$$
where $\bA$ is the solution to \eqref{eqA} provided by Theorem
\ref{prop1}.
\item[(2)] We define the linear mappings ${\mathbb
  U}_I[\zeta]$ and ${\mathbb
  U}_{II}[\zeta]$ by
$$
{\mathbb U}_I[\zeta]:
\begin{array}{lcl}
\dot{H}^{3/2}(\R^d)&\to& H^1(\Omega)^{3}\\
\psi &\mapsto& \nabla_{X,z} \Phi
\end{array}
\quad \mbox{\textnormal{and}}\quad
 {\mathbb U}_{II}[\zeta]:
\begin{array}{lcl}
H_b(\mbox{\textnormal{div}}_0,\Omega)&\to& H^1(\Omega)^{3},\\
\bom &\mapsto& \curl ({\mathbb A}[\zeta]\bom)
\end{array},
$$
where $\Phi$ solves \eqref{eqPhi0}.
\item[(3)] The linear mapping ${\mathbb
  U}[\zeta]$ is 
$$
 {\mathbb U}[\zeta]:
\begin{array}{lcl}
\dot{H}^{3/2}(\R^d)\times H_b(\mbox{\textnormal{div}}_0,\Omega)&\to& H^1(\Omega)^{3},\\
(\psi,\bom) &\mapsto& {\mathbb U}_I[\zeta]\psi+{\mathbb U}_{II}[\zeta]\bom
\end{array}.
$$
\end{defi}

\subsection{The generalized Zakharov-Craig-Sulem formulation}\label{sectTheeq}

We use here the results of \S \ref{sectdivcurl1}
to derive a closed system of equations on $\zeta$, $\psi$ and $\bom$,
from which all the other physical quantities can be deduced. \\
According to Definition \ref{defimappings}, the kinematic
equation \eqref{kineticeq} can be written
$$
\dt \zeta-{\mathbb U}[\zeta](\psi,\bom)_\surf\cdot N=0.
$$
Proceeding similarly with the equation \eqref{4eq1} for $\psi$, and
the vorticity equation \eqref{4eq3}, and introducing the mappings\footnote{Note
  that according to \eqref{divrot}, one has
$$
{\mathbb U}_\parallel[\zeta](\psi,\bom)=\nabla\psi+\frac{\nabla^\perp}{\Delta}(\uom\cdot
N).
$$}
\begin{equation}\label{defVVp}
\begin{array}{lcl}
\dsp \underline{\mathbb
  V}[\zeta](\psi,\bom)&=&{\mathbb
  V}[\zeta](\psi,\bom)_\surf,\\
\dsp \underline{\mathbb
  w}[\zeta](\psi,\bom)&=&{\mathbb
  w}[\zeta](\psi,\bom)_\surf,\\
\dsp {\mathbb
  U}_\parallel[\zeta](\psi,\bom)&=&\underline{\mathbb
  V}[\zeta](\psi,\bom) +\underline{\mathbb
  w}[\zeta](\psi,\bom)\nabla\zeta
\end{array}
\end{equation}
we  derive the following
generalization of the Zakharov-Craig-Sulem formulation of the
water-waves equations in presence of a nonzero vorticity field,
\begin{equation}\label{ZCSgen}
\left\lbrace
\begin{array}{l}
\dsp \dt \zeta-\underline{\mathbb U}[\zeta](\psi,\bom)\cdot N=0,\\
\dsp \dt \psi +g \zeta+\frac{1}{2}
 \babs{{\mathbb
     U}_\parallel[\zeta](\psi,\bom)}^2-\frac{1}{2}(1+\abs{\nabla\zeta}^2)\underline{{\mathbb
   w}}[\zeta](\psi,\bom)^2 \\
\hspace{4.2cm}\dsp -\frac{\nabla^\perp}{\Delta}\cdot\big(
\uom\cdot N \underline{\mathbb
  V}[\zeta](\psi,\bom)\big)=0,\\
\dsp \dt \bom+{\mathbb U}[\zeta](\psi,\bom)\cdot\nabla_{X,z}
\bom=\bom\cdot\nabla_{X,z} {\mathbb U}[\zeta](\psi,\bom)
\end{array}\right.
\end{equation}
(with $\underline{\omega}=\bom_\surf$). Note that the divergence free constraint on the vorticity
\begin{equation}\label{DFvort}
\dive\bom=0 \quad\mbox{ in }\quad \Omega_t
\end{equation}
should be added to these equation; we omit it however since it is
propagated by the vorticity equation if it is initially satisfied.
\begin{remark}
For $\zeta\in W^{2,\infty}(\R^d)$ satisfying \eqref{hmin} one can
define a {\it generalized} Dirichlet-Neumann operator $\cGg[\zeta]$ as
$$
\cGg[\zeta]:\begin{array}{lcl}
\dot{H}^{3/2}(\R^d)\times H_b(\mbox{\textnormal{div}}_0,\Omega)&\to & H^{1/2}(\R^d),\\
(\psi,\bom)&\mapsto& {\mathbb U}[\zeta](\psi,\bom)_\surf\cdot N;
\end{array}
$$
the standard Dirichlet-Neumann operator used in the irrotational case
for the Zakharov-Craig-Sulem formulation of the water waves
corresponds to $\cG[\zeta]\psi=\cGg[\zeta](\psi,0)$. Remarking that
$$
\underline{\mathbb
  w}[\zeta](\psi,\bom)=\frac{\cGg[\zeta](\psi,\omega)+\nabla\zeta\cdot
{\mathbb U}_\parallel[\zeta](\psi,\omega)}{1+\abs{\nabla\zeta}^2},
$$
the equations \eqref{ZCSgen} can be written under the form
$$
\left\lbrace
\begin{array}{l}
\dsp \dt \zeta-\cGg[\zeta](\psi,\bom)=0,\\
\dsp \dt \psi +g \zeta+\frac{1}{2}
 \babs{{\mathbb U}_\parallel[\zeta](\psi,\bom)}^2-\frac{\big(\cGg[\zeta](\psi,\bom)+\nabla\zeta\cdot {\mathbb U}_\parallel[\zeta](\psi,\bom)\big)^2}{2
(1+\abs{\nabla\zeta}^2)}\\
\dsp \hspace{4.2cm}-\frac{\nabla^\perp}{\Delta}\cdot\big(
\uom\cdot N \underline{\mathbb
  V}[\zeta](\psi,\bom)\big)=0,\\
\dsp \dt \bom+{\mathbb U}[\zeta](\psi,\bom)\cdot\nabla_{X,z}
\bom=\bom\cdot\nabla_{X,z} {\mathbb U}[\zeta](\psi,\bom),
\end{array}\right.
$$
In the irrotational case, one has $\bom=0$ and ${\mathbb
  U}_\parallel[\zeta](\psi,\bom)=\nabla\psi$; denoting further
$\cG[\zeta]\psi=\cGg[\zeta](\psi,0)$,  these equations then
simplify into
\begin{equation}\label{ZCS}
\left\lbrace
\begin{array}{l}
\dsp \dt \zeta-\cG[\zeta]\psi=0,\\
\dsp \dt \psi +g \zeta+\frac{1}{2}
 \babs{\nabla\psi}^2-\frac{\big(\cG[\zeta]\psi+\nabla\zeta\cdot\nabla\psi\big)^2}{2
(1+\abs{\nabla\zeta}^2)}=0,
\end{array}\right.
\end{equation}
which is the standard Zakharov-Craig-Sulem formulation.
\end{remark}

\begin{remark}\label{straightvort}
Contrary to the irrotational case where the water waves equations
\eqref{ZCS} are cast on the fixed domain $\R^d$, our formulation
\eqref{ZCSgen} of
the water waves equations with vorticity are partly cast on the moving
-- and unknown -- fluid domain $\Omega_t$ parametrized at time $t$ by
the free surface $\zeta(t,\cdot)$ through \eqref{defdomain}. The
functional setting for the study of the vorticity $\bom$ is therefore
less straightforward than for $\zeta$ and $\psi$. A convenient way to
deal with this difficulty\footnote{Even in the
  irrotational case, this problem arises if one wants to give a
  rigorous meaning to the original water waves equations
  \eqref{Eulereq}, \eqref{incomp}, \eqref{kineticeq} (plus an
  irrotationality condition). One of the advantages of the
  Zakharov-Craig-Sulem formulation \eqref{ZCS}  is that such
  difficulties have disappeared. They must however be handled to prove
rigorously that the free surface Euler equations are indeed equivalent to
\eqref{ZCS}. This very careful analysis has been performed only
recently in \cite{ABZ_Berti}.} is to fix the domain by using a
diffeomorphism $\Sigma(t,\cdot)$ mapping at each time the flat strip
$\cS=\R^d\times (-H_0,0)$ onto the fluid domain $\Omega_t$.
The equation on the
vorticity $\bom$ in \eqref{ZCSgen} is then replaced by an equation on
the {\it straightened vorticity} $\omega=\bom\circ \Sigma$, which is
defined on the (fixed) strip $\cS$.
\end{remark}

\section{The div-curl problem} \label{sect3}

The resolution of the div-curl problem \eqref{divrot} is necessary to
prove that the formulation \eqref{ZCSgen} of the water waves equations
with vorticity forms a closed set of equations. The well-posedness of
this boundary value problem was stated in Theorem \ref{prop1}; its
proof is given below in \S \ref{sectproofth1}. A consequence of
Theorem \ref{prop1} is that the $\curl$ operator can be ``inverted'',
as explained in \S \ref{sectinvertcurl}.\\
The analysis of the evolution equations \eqref{ZCSgen} shall require
additional properties on the velocity field provided by Theorem
\ref{prop1}. After transforming the fluid domain into a flat strip in
\S \ref{sectstraight}, we provide in \S \ref{secthigher} higher order
estimates on the solution. The control of time derivatives requires a
specific treatment, and is performed in \S
\ref{sectproofpropshape}. Finally, crucial properties of the so called
``good unknowns'' are provided in \S \ref{sectAIG}.

\subsection{Proof of Theorem \ref{prop1}}\label{sectproofth1}

We prove in this section Theorem \ref{prop1}, that is, we solve the
following div-curl problem in the fluid domain $\Omega$,
\begin{equation}\label{divrotbis}
\left\lbrace
\begin{array}{lll}
\curl \bU =& \bom&\quad \mbox{ in }\quad \Omega\\
\dive \bU = &0&\quad \mbox{ in }\quad \Omega\\
U_\parallel=& \nabla\psi+\nabla^\perp \tpsi ,&\quad\mbox{ at the surface}\\
U_b\cdot N_b=&0 &\quad\mbox{ at the bottom}.
\end{array}\right.
\end{equation}
with $\tpsi\in \dot{H}^{3/2}(\R^d)$ such that $\Delta
\tpsi=\uom\cdot N$ (see Lemma \ref{existtildepsi} below for the
existence of $\tpsi$).\\
In order for the boundary conditions to make sense, some minimal
regularity is needed on $\bU$; let us recall the definitions
\begin{eqnarray}
\label{defHdiv} H(\dive,\Omega)&=&\{\bU\in
L^2(\Omega)^3,\dive\bU\in L^2(\Omega)\}\\
\label{defHrot} H(\curl,\Omega)&=&\{\bU\in
L^2(\Omega)^3,\curl\bU\in L^2(\Omega)^3\};
\end{eqnarray}
it is classical (see for instance Chapter 9 of \cite{DL}) that normal traces
(resp. tangential traces) at the boundary are well defined in
$H(\dive,\Omega) $ (resp. $H(\curl,\Omega)$). It follows that if
$\bom\in L^2(\Omega)^{3}$ and $\bU$ solves the first two equations
of \eqref{divrotbis}, one can take normal and tangential traces at the
boundaries, so that it makes sense to impose the two boundary
conditions of \eqref{divrotbis}.

The first remark to do, is that it is easy to reduce \eqref{divrotbis}
to the case $\psi=0$. Indeed, let $\Phi\in \dot{H}^2(\Omega)$ solve
the boundary value problem
\begin{equation}\label{eqPhi}
\left\lbrace
\begin{array}{l}
\Delta_{X,z}\Phi=0\quad\mbox{ in }\quad \Omega,\\
\Phi_\surf=\psi,\qquad \partial_n\Phi_\bott=0
\end{array}\right.
\end{equation}
(see for instance Chapter 2 of \cite{Lannes_book} for the existence
and uniqueness of such a $\Phi$); defining
$\tbU=\bU-\nabla_{X,z}\Phi$, one readily computes that
\begin{equation}\label{divrotter}
\left\lbrace
\begin{array}{lll}
\curl \tbU =& \bom&\quad \mbox{ in }\quad \Omega\\
\dive \tbU = &0&\quad \mbox{ in }\quad \Omega\\
\tU_\parallel=& \nabla^\perp\tpsi,&\quad\mbox{ at the surface}\\
\tU_b\cdot N_b=&0 &\quad\mbox{ at the bottom},
\end{array}\right.
\end{equation}
which is the same problem as \eqref{divrotbis} with $\psi=0$.

We look for a solution to \eqref{divrotter}  under the form $\tbU=\curl
\bA$, where the  potential vector $\bA$ satisfies the system
\begin{equation}\label{potvector}
\left\lbrace
\begin{array}{rl}
\dsp \curl \curl \bA&=\bom\\
\dsp N_b\times A_b&=0\\
\dsp N\cdot \uA&=0\\
\dsp \left(\curl \bA\right)_\parallel&=\nabla^\perp \widetilde{\psi}.\\
\end{array}\right.
\end{equation}
It is important to notice that
\begin{align*}
N_b\times A _b=0 \Longrightarrow N_b \cdot (\nabla\times A) _\bott=0,
\end{align*}
which corresponds to the last boundary condition of  \eqref{divrotter}.

Before proving the existence of such a potential
vector $\bA$ in $H^1(\Omega)$, we need a series of preliminary lemmas.
The first one is a Poincar\'e inequality for vector fields whose
normal component vanishes at the surface, and whose tangential
components vanish at the bottom.
\begin{lemma}\label{lempoincare}
Assume that $\zeta\in W^{1,\infty}(\R^d)$ satisfies the non vanishing
depth condition \eqref{hmin}.  Let also $\bA\in
H(\dive,\Omega)\cap H(\curl,\Omega)$ be such that
$$
A_b\times N_b=0,\quad\mbox{\textnormal{ and }}\quad
\uA\cdot N=0.
$$
Then one has
$$\Abs{\bA}_{2}\leq C(H_0,\abs{\zeta}_{W^{1,\infty}}) \Abs{\dz\bA}_{2}.$$
\end{lemma}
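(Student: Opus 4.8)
The plan is to write any admissible $\bA$ as the vertical integral of its $z$-derivative, using the two boundary conditions to fix the constant of integration on the correct boundary. More precisely, the tangential trace condition $A_b\times N_b=0$ at the flat bottom means the horizontal components $\bA_h$ vanish at $z=-H_0$, so for a.e.\ $X$,
$$
\bA_h(X,z)=\int_{-H_0}^{z}\dz\bA_h(X,z')\,dz',
$$
and Cauchy--Schwarz in the $z$-variable together with the bound $|z+H_0|\le H_0$ (which uses only that the domain has depth at most $H_0$, hence really $H_0+\abs\zeta_\infty$, absorbed into the constant) gives
$\Abs{\bA_h}_2\le C(H_0,\abs\zeta_{W^{1,\infty}})\Abs{\dz\bA_h}_2$.
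The vertical component $\bA_v$ is not controlled at the bottom, but the surface condition $\uA\cdot N=0$ reads $-\nabla\zeta\cdot\uA_h+\uA_v=0$, i.e.\ $\bA_v\surf=\nabla\zeta\cdot\bA_h\surf$; so I would instead integrate from the top:
$$
\bA_v(X,z)=\bA_v(X,\zeta(X))-\int_{z}^{\zeta(X)}\dz\bA_v(X,z')\,dz'
=\nabla\zeta\cdot\uA_h-\int_{z}^{\zeta(X)}\dz\bA_v\,dz'.
$$
Taking $L^2(\Omega)$ norms, the integral term is controlled by $\Abs{\dz\bA_v}_2$ as before, and the boundary term $\Abs{\nabla\zeta\cdot\uA_h}_{L^2_X}$ is a trace which I bound by $\abs\zeta_{W^{1,\infty}}$ times $\abs{\uA_h}_{L^2(\R^d)}$, then estimate the horizontal trace $\uA_h$ by $\Abs{\bA_h}_2+\Abs{\dz\bA_h}_2$ (again via the fundamental theorem of calculus from the bottom, or a standard trace inequality on the strip), and finally reuse the bound on $\bA_h$ already obtained. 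Collecting the two estimates gives the claim.

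The only subtlety — and the step I expect to require the most care — is making the boundary traces rigorous under the stated low regularity: $\bA\in H(\dive,\Omega)\cap H(\curl,\Omega)$ is not $H^1$, so $\dz\bA$ need not be in $L^2$ a priori, and the pointwise identities above must be read in the appropriate trace sense. The clean way is to first prove the inequality for $\bA$ smooth up to the boundary (where all the above manipulations are literally valid), and then pass to the limit: functions smooth on $\overline\Omega$ are dense in $H(\dive,\Omega)\cap H(\curl,\Omega)$ with the boundary conditions $A_b\times N_b=0$ and $\uA\cdot N=0$ preserved in the limit (these are continuous on the relevant trace spaces, cf.\ Chapter~9 of \cite{DL}), and the right-hand side $\Abs{\dz\bA}_2$ is lower semicontinuous — indeed, if $\Abs{\dz\bA}_2=\infty$ there is nothing to prove, so one may assume it finite and argue by approximation keeping that quantity bounded. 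Flattening the domain via the trivial vertical map $(X,z)\mapsto(X,\tfrac{H_0+\zeta(X)}{H_0}(z+H_0)-H_0)$ is an optional convenience that turns $\Omega$ into the strip $\cS$ at the cost of $W^{1,\infty}$ factors in $\zeta$; I would likely avoid it and argue directly on $\Omega$, since the vertical-slice integration does not need a flat domain. No Poincaré-type spectral argument is needed — this is entirely a one-dimensional fundamental-theorem-of-calculus estimate fibered over $X\in\R^d$, with the two boundary conditions playing complementary roles for the horizontal and vertical components.
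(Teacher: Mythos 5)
Your proof is correct, and the core mechanism is the same as the paper's: integrate once in $z$ over each vertical fiber, using the two boundary conditions to kill the boundary terms. The decompositions differ slightly. You keep the coordinate split $\bA=(\bA_h,\bA_v)$: the horizontal part vanishes at the flat bottom (since $A_b\times N_b=0$), so the fundamental theorem of calculus gives $\Abs{\bA_h}_2\lesssim\Abs{\dz\bA_h}_2$; the vertical part $\bA_v$ vanishes on neither boundary, so you shift the integration constant to the surface, use $\uA_v=\nabla\zeta\cdot\uA_h$ to trade it for a trace of $\bA_h$, and then control that trace from the bottom again. The paper instead forms the $z$-independent, non-orthogonal decomposition $\bA=\bA_I+\bA_{II}$ where $\bA_I=(\bA_v-\nabla\zeta\cdot\bA_h)\,{\bf e}_z$ is the vertical component obtained by projecting along the tangent plane of the surface, and $\bA_{II}$ is the complementary surface-tangent piece (horizontal part $\bA_h$, vertical part $\nabla\zeta\cdot\bA_h$). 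Then $\bA_I$ vanishes at the surface precisely because $\uA\cdot N=0$, and $\bA_{II}$ vanishes at the bottom because $\bA_h$ does; a one-sided Poincar\'e inequality then applies directly to each piece, with the $W^{1,\infty}$ dependence on $\zeta$ absorbed by the operator norms of the (pointwise-in-$X$, $z$-independent) projectors. The paper's decomposition avoids the explicit trace step, while yours trades that for the convenience of the raw coordinate split; both are sound. Your regularity remark — that $\bA\in H(\dive,\Omega)\cap H(\curl,\Omega)$ is not a priori $H^1$, so the pointwise manipulations should be done on smooth approximations or under the standing assumption $\dz\bA\in L^2$ — is well placed and applies equally to the paper's version.
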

\begin{proof}
Let $\Pi_I(X): \R^{3}\to \R^{3}$ be the projection onto
$\R N(X)$ parallel to $N_b^\perp$, and $\Pi_{II}(X): \R^{3}\to \R^{3}$ be the projection onto
the horizontal plane $N_b^\perp$ parallel to $N(X)$. One can decompose $\bA$ under
the form
$$
\bA=\bA_I+\bA_{II} \quad \mbox{ with }\quad \bA_{I}=\Pi_I \bA, \quad
\bA_{II}=\Pi_{II} \bA.
$$
Since $\bA_I$ vanishes at the surface and $\bA_{II}$ vanishes at the
bottom, we can use the standard Poincar\'e inequality to get
\begin{eqnarray*}
\Abs{\bA}_2&\leq& \Abs{\bA_I}_2+\Abs{\bA_{II}}_2\\
&\leq& \abs{H_0+\zeta}_\infty (\Abs{\dz \bA_I}_2+\Abs{\dz \bA_{II}}_2).
\end{eqnarray*}
since the projectors $P_{j}$ ($j=I,II$) do not depend on $z$ and have
operator norm bounded by
$C(\abs{\zeta}_{W^{1,\infty}})$, we easily deduce
that
$$
\Abs{\bA}_2 \leq C(\abs{\zeta}_{W^{1,\infty}},H_0)
\Abs{\dz \bA}_2.
$$
\end{proof}
The second lemma shows that all the derivatives of $\bA$ can be
controlled in terms of $\curl \bA$, $\dive \bA$ and the trace of $\bA$
at the surface and bottom.
\begin{lemma}\label{leminterm}
Let $\zeta\in W^{2,\infty}(\R^d)$ satisfy \eqref{hmin} and let  $\bA\in H(\dive, \Omega)\cap H(\curl,\Omega)$ be such that
$$
A_b\times N_b=0,\quad\mbox{\textnormal{ and }}\quad
\uA\cdot N=0.
$$
Then one has
\begin{align*}
\Abs{\nabla_{X,z} \bA}_{2}^2\leq \Abs{\curl \bA}_{2} +\Abs{\dive
  \bA}_2^2 +C \abs{\zeta}_{W^{2,\infty}}\abs{\uA}_2^2,
\end{align*}
for some numerical constant $C$.
\end{lemma}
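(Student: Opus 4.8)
The plan is to derive the classical div--curl integration by parts identity and to use the two boundary conditions to control the resulting boundary terms. For a sufficiently smooth vector field on $\Omega$ one has the pointwise identity
$$
\abs{\nabla_{X,z}\bA}^2=\abs{\curl\bA}^2+(\dive\bA)^2+\dive\mathbf{F},\qquad \mathbf{F}:=(\bA\cdot\nabla_{X,z})\bA-\bA\,\dive\bA,
$$
obtained by writing $\abs{\curl\bA}^2=\abs{\nabla_{X,z}\bA}^2-\partial_iA_j\,\partial_jA_i$ and then, via two applications of the Leibniz rule, $\partial_iA_j\,\partial_jA_i=(\dive\bA)^2+\dive\mathbf{F}$. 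Integrating over $\Omega$ and applying the divergence theorem gives
$$
\Abs{\nabla_{X,z}\bA}_2^2=\Abs{\curl\bA}_2^2+\Abs{\dive\bA}_2^2+\int_{\partial\Omega}\mathbf{F}\cdot\nu,
$$
with $\nu$ the outward unit normal, so that everything reduces to estimating the boundary integral over the bottom and over the surface.

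On the bottom $\{z=-H_0\}$ one has $\nu=-\mathbf{e}_z$, and $A_b\times N_b=0$ with $N_b=\mathbf{e}_z$ forces the two horizontal components of $\bA$ to vanish identically along the (flat) bottom; hence there $\bA=(0,0,A_v)$ and, the horizontal derivatives of $A_1,A_2$ vanishing along the bottom, $\dive\bA=\dz A_v$. A one-line computation then gives $\mathbf{F}\cdot\nu=-(A_v\,\dz A_v-A_v\,\dz A_v)=0$, so the bottom contributes nothing.

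The surface $\{z=\zeta(X)\}$ carries all of the content. Since $\nu\,dS=N\,dX$, the surface part equals $\int_{\R^d}\mathbf{F}_\surf\cdot N\,dX$; using the chain rule $(\partial_{x_i}\bA)_\surf=\partial_{x_i}\uA-(\partial_{x_i}\zeta)(\dz\bA)_\surf$ for $\uA(\cdot)=\bA(\cdot,\zeta(\cdot))$ one computes
$$
\big((\bA\cdot\nabla_{X,z})\bA\big)_\surf=(\uA_h\cdot\nabla)\uA+\big(\uA_v-\uA_h\cdot\nabla\zeta\big)(\dz\bA)_\surf .
$$
The boundary condition $\uA\cdot N=\uA_v-\uA_h\cdot\nabla\zeta=0$ kills the second term here, as well as the contribution of $\bA\,\dive\bA$ to $\mathbf{F}$, leaving $\mathbf{F}_\surf\cdot N=\big((\uA_h\cdot\nabla)\uA\big)\cdot N$. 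Differentiating the identity $\uA\cdot N\equiv0$ on $\R^d$ in the direction $\uA_h$ and using $N=(-\nabla\zeta,1)^T$ gives
$$
\big((\uA_h\cdot\nabla)\uA\big)\cdot N=-\uA\cdot\big((\uA_h\cdot\nabla)N\big)=\uA_h\cdot\big((\uA_h\cdot\nabla)\nabla\zeta\big),
$$
which is pointwise bounded by $\abs{\nabla^2\zeta}_\infty\abs{\uA}^2$. Integrating over $\R^d$ yields $\babs{\int_{\partial\Omega}\mathbf{F}\cdot\nu}\le C\abs{\zeta}_{W^{2,\infty}}\abs{\uA}_2^2$ for a numerical constant $C$, which together with the identity above and the vanishing bottom term gives the announced estimate.

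It remains to address regularity: the manipulations above presuppose enough smoothness for $(\bA\cdot\nabla_{X,z})\bA$, its surface trace, and the divergence theorem to be legitimate, whereas the statement only assumes $\bA\in H(\dive,\Omega)\cap H(\curl,\Omega)$. I would reach that generality by running the argument on smooth approximations: after flattening $\Omega$ onto the strip $\cS$ one mollifies in the horizontal variable --- which preserves both boundary conditions --- applies the identity to each regularization, and passes to the limit, using the uniform bound just obtained together with weak compactness and lower semicontinuity of the $L^2$ norm to conclude that $\nabla_{X,z}\bA\in L^2(\Omega)$ and satisfies the inequality. I expect this last, non-algebraic step to be the only genuinely delicate point; once the identity is organized as above, the boundary bookkeeping is routine.
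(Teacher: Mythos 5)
Your proof is correct and follows essentially the same route as the paper's: the identity $\abs{\nabla_{X,z}\bA}^2=\abs{\curl\bA}^2+(\dive\bA)^2+\dive\mathbf{F}$ (written pointwise by you, in integrated form in the paper) produces the same boundary integrand $\nu_i\bA_j\partial_j\bA_i-(\nu\cdot\bA)\dive\bA$, the bottom term vanishes because the bottom is flat, and the surface term is controlled by differentiating the tangency constraint $\uA\cdot N=0$ along $\uA_h$, which is the coordinate form of the paper's argument with a local extension of $\overrightarrow{n}$. Your closing remark on regularity (mollifying after flattening, passing to the limit by weak compactness) addresses a point the paper leaves implicit and is a sensible way to justify the divergence theorem under the stated hypotheses.
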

\begin{proof}
The proof is a small variant of classical estimates (see for instance
Chapter 9 in \cite{DL}). With the convention of summation of repeated indices, and with the notation
$(\partial_1,\partial_2,\partial_3)^T=\nabla_{X,z}$, one has
\begin{eqnarray*}
\int_\Omega
\abs{\partial_j\bA_i}^2&=&\int_\Omega \partial_j\bA_i(\partial_j\bA_i-\partial_i\bA_j)+\int_\Omega \partial_j\bA_i\partial_i\bA_j\\
&=&\Abs{\curl \bA}_2^2 +\int_\Omega \partial_j\bA_i\partial_i\bA_j,
\end{eqnarray*}
and one can rewrite the second term of the right-hand-side as
\begin{eqnarray*}
\int_\Omega \partial_j\bA_i\partial_i\bA_j&=&\int_{\Gamma} \overrightarrow{n}_j
\bA_i \partial_i \bA_j-\int_\Omega \bA_i\partial_i\partial_j \bA_j\\
&=&\int_\Gamma \big(\overrightarrow{n}_i
\bA_j\partial_j\bA_i-\overrightarrow{n}_j\bA_j\partial_i\bA_i\big)+\int_\Omega \partial_i\bA_i\partial_j\bA_j
\end{eqnarray*}
where $\overrightarrow{n}$ is the outward unit normal vector to the boundary
$\Gamma$ of $\Omega$ (i.e. the bottom and the surface); we have therefore obtained
\begin{equation}
\label{nablacurldiv}
\Abs{\nabla_{X,z}\bA}_2^2=\Abs{\curl \bA}_2^2+\Abs{\dive \bA}_2^2+\int_\Gamma \big(\overrightarrow{n}_i
\bA_j\partial_j\bA_i-(\overrightarrow{n}\cdot \bA)\dive\bA\big).
\end{equation}
Let us now evaluate the surface and bottom contributions of the
boundary integral in the right-hand-side:\\
- {\it Bottom contribution.} Since $\bA$ is a normal vector field at
the bottom, one has
$$
(\overrightarrow{n}\cdot \bA)(\dive \bA)_\bott=2\abs{\bA}^2
H_b+\bA\cdot \partial_{\overrightarrow{n}} \bA,
$$
where $H_b$ is the mean curvature of the bottom. Remarking that
\begin{eqnarray*}
\int_{\rm bott} \overrightarrow{n}_i
\bA_j\partial_j\bA_i
&=& \int_{\rm bott} 2(\overrightarrow{n}\times \bA)\cdot \curl
\bA+\bA\cdot\partial_{\overrightarrow{n}}\bA\\
&=&\int_{\rm bott} \bA\cdot\partial_{\overrightarrow{n}}\bA
\end{eqnarray*}
(since $\bA$ is normal to the bottom), we get the following expression
for the contribution of the bottom to the boundary integral in \eqref{nablacurldiv},
\begin{equation}\label{contribbott}
\int_{\rm bott} \big(\overrightarrow{n}_i
\bA_j\partial_j\bA_i-(\overrightarrow{n}\cdot
\bA)\dive\bA\big)=-2\int_{\rm bott} H_b \abs{A_b}^2,
\end{equation}
which vanishes since the bottom is flat.\\
- {\it Surface contribution.} Since $\uA\cdot\overrightarrow{n}=0$ at
the surface, the contribution of the surface to
the boundary integral in \eqref{nablacurldiv} is of the form
$$
\int_{\rm surf} \overrightarrow{n}_i
\bA_j\partial_j\bA_i
=
\int_{\rm surf} (\bA\cdot \nabla_{X,z})
(\bA \cdot \overrightarrow{n})-\bA\cdot (\bA\cdot \nabla_{X,z} )\overrightarrow{n},
$$
where we still denote by $\overrightarrow{n}$ a local extension of
$\overrightarrow{n}$ inside $\Omega$. Since $\bA$ is tangent to the
surface, the operator $\bA\cdot \nabla_{X,z}$ is
tangential, and the first component of the right-hand-side
vanishes. Since moreover, the extension of $\overrightarrow{n}$ can be
chosen such that $\Vert \overrightarrow{n}\Vert_{W^{1,\infty}}\leq C
\abs{\zeta}_{W^{2,\infty}}$, we deduce that
\begin{equation}\label{contribsurf}
\int_{\rm surf} \big(\overrightarrow{n}_i
\bA_j\partial_j\bA_i-(\overrightarrow{n}\cdot
\bA)\dive\bA\big)\leq C \abs{\zeta}_{W^{2,\infty}}\abs{\uA}_2^2.
\end{equation}
\end{proof}
The Lemmas \ref{lempoincare} and \ref{leminterm} imply a useful
equivalence property for the $H^1$ norm of $\bA$ with the $L^2$ norms
of $\dive\bA$ and $\curl \bA$.
\begin{lemma}\label{equivnorm}
Let $\zeta\in W^{2,\infty}(\R^d)$ satisfy \eqref{hmin} and let  $\bA\in H(\dive, \Omega)\cap H(\curl,\Omega)$ be such that
$$
A_b\times N_b=0,\quad\mbox{\textnormal{ and }}\quad
\uA\cdot N=0.
$$
Then one has
$$
\Abs{\bA}_2^2+\Abs{\nabla_{X,z}\bA}_2^2\leq
C(\abs{\zeta}_{W^{2,\infty}},H_0) \big(\Abs{\dive \bA}^2_2+\Abs{\curl \bA}_2^2\big).
$$
\end{lemma}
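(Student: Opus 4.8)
The statement follows by combining the two preceding lemmas; the plan is to feed the surface term of Lemma~\ref{leminterm} into the Poincar\'e inequality of Lemma~\ref{lempoincare} and absorb. Concretely, Lemma~\ref{leminterm} gives $\Abs{\nabla_{X,z}\bA}_2^2\le\Abs{\curl\bA}_2^2+\Abs{\dive\bA}_2^2+C\abs{\zeta}_{W^{2,\infty}}\abs{\uA}_2^2$, so the only genuine issue is to control the trace term $\abs{\uA}_2^2$ in a way that can be absorbed.

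First I would bound $\abs{\uA}_2$ by a trace inequality adapted to the strip-like geometry of $\Omega$. For every $z\in(-H_0,\zeta(X)]$ one has $\abs{\uA(X)}^2\le\abs{\bA(X,z)}^2+2\int_{-H_0}^{\zeta(X)}\abs{\bA(X,z')}\,\abs{\dz\bA(X,z')}\,dz'$; averaging over $z\in(\zeta(X)-a,\zeta(X))$ with $a\le h_{\min}$ (allowed by \eqref{hmin}), integrating in $X$, and using Young's inequality, one obtains, for every $\delta>0$,
$$\abs{\uA}_2^2\le \delta\,\Abs{\dz\bA}_2^2+C\big(\delta^{-1},h_{\min}^{-1},\abs{\zeta}_{W^{1,\infty}}\big)\Abs{\bA}_2^2$$
(equivalently, one may apply the divergence theorem to $\sqrt{1+\abs{\nabla\zeta}^2}\,\tfrac{z+H_0}{H_0+\zeta}\,\abs{\bA}^2\,{\bf e}_z$, which is tangent to the flat bottom and has unit normal trace at the surface). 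Plugging this into Lemma~\ref{leminterm} and choosing $\delta$ small enough, in terms of $\abs{\zeta}_{W^{2,\infty}}$, so that the term $C\abs{\zeta}_{W^{2,\infty}}\delta\,\Abs{\dz\bA}_2^2\le\tfrac12\Abs{\nabla_{X,z}\bA}_2^2$ can be moved to the left-hand side, I would get
$$\Abs{\nabla_{X,z}\bA}_2^2\le C\big(\Abs{\curl\bA}_2^2+\Abs{\dive\bA}_2^2\big)+C\,\Abs{\bA}_2^2$$
with $C=C(\abs{\zeta}_{W^{2,\infty}},H_0,h_{\min})$. It then remains to dispose of the lower-order term $\Abs{\bA}_2^2$, and this is exactly where Lemma~\ref{lempoincare} is used: it gives $\Abs{\bA}_2\le C(H_0,\abs{\zeta}_{W^{1,\infty}})\Abs{\dz\bA}_2\le C\,\Abs{\nabla_{X,z}\bA}_2$, so a last absorption yields the claimed equivalence (and, a posteriori, the $H^1$ regularity of $\bA$; to make sense of $\abs{\uA}_2$ for a general field in $H(\dive,\Omega)\cap H(\curl,\Omega)$ one would first run the argument on a smooth approximation of $\curl\bA$ and $\dive\bA$, then pass to the limit).

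The hard part will be this chain of absorptions, since the trace constant, the Poincar\'e constant of Lemma~\ref{lempoincare}, and the factor $\abs{\zeta}_{W^{2,\infty}}$ in the surface term all enter and have to be balanced. It is worth noting that in the flat case $\zeta\equiv0$ the difficulty disappears: the surface term vanishes identically, Lemma~\ref{leminterm} becomes the exact identity $\Abs{\nabla_{X,z}\bA}_2^2=\Abs{\curl\bA}_2^2+\Abs{\dive\bA}_2^2$ (a Fourier transform in $X$ decouples the problem into one-dimensional problems in $z$), and one concludes directly with the one-dimensional Poincar\'e inequality; so all the work is in controlling the curvature of the free surface. If a direct absorption were not available for large $\abs{\zeta}_{W^{2,\infty}}$, I would fall back on a contradiction argument: a sequence saturating the inequality converges, along a subsequence, to a field $\bA$ with $\curl\bA=\dive\bA=0$ satisfying the homogeneous boundary conditions, hence $\bA=\nabla_{X,z}p$ with $p$ harmonic, constant on the flat bottom and with $\partial_n p=0$ at the surface, so $\bA\equiv0$ by uniqueness for this mixed boundary value problem — a contradiction.
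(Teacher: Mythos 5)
There is a genuine gap at the final absorption, and it cannot be repaired within the scheme you propose. Your trace inequality, whether obtained by averaging in $z$ or by the divergence theorem, is a Young-decomposed form of $\abs{\uA}_2^2\lesssim \Abs{\bA}_2\Abs{\dz\bA}_2$, so after using Lemma~\ref{lempoincare} the right-hand side is $\lesssim \Abs{\nabla_{X,z}\bA}_2^2$. Plugging into Lemma~\ref{leminterm} you thus reach
$$
\Abs{\nabla_{X,z}\bA}_2^2\le \Abs{\curl\bA}_2^2+\Abs{\dive\bA}_2^2+C\abs{\zeta}_{W^{2,\infty}}\cdot C'\,\Abs{\nabla_{X,z}\bA}_2^2,
$$
with $C'$ the (not-small) product of the trace and Poincar\'e constants. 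The coefficient $C\abs{\zeta}_{W^{2,\infty}}C'$ depends on $\abs{\zeta}_{W^{2,\infty}}$, $H_0$ and $h_{\min}$ but is in general $\ge 1$, and there is no free parameter left to make it small. Your own intermediate version with the explicit $\delta$ has the same problem: making $\delta$ small drives $C_\delta$ up, and after Poincar\'e the large $C_\delta$ sits in front of $\Abs{\nabla_{X,z}\bA}_2^2$ again. So ``one last absorption'' does not close.

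The point the paper exploits, and which a generic trace inequality cannot see, is that the boundary conditions plus the algebraic identity $\dz\bA_h=-(\curl\bA)_h^\perp+\nabla\bA_v$ let one write $\int_{\R^d}\abs{\uA_h}^2=2\int_\Omega\bA_h\cdot\dz\bA_h$ (since $\bA_h$ vanishes at the flat bottom) and, after an integration by parts in $X$ and using $\uA_v=\nabla\zeta\cdot\uA_h$, obtain
$$
\abs{\uA}_2^2\le C(\abs{\nabla\zeta}_\infty)\,\Abs{\bA}_2\big(\Abs{\dive\bA}_2+\Abs{\curl\bA}_2\big),
$$
i.e.\ the \emph{good} quantities, not $\Abs{\dz\bA}_2$, appear as the second factor. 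After Lemma~\ref{lempoincare} this gives $\abs{\uA}_2^2\le C\Abs{\nabla_{X,z}\bA}_2(\Abs{\dive\bA}_2+\Abs{\curl\bA}_2)$; now Young with small parameter puts the smallness on $\Abs{\nabla_{X,z}\bA}_2^2$ (cost is a large constant on the div/curl terms, which is harmless), and the absorption closes. This is the step you are missing.

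Your fallback contradiction/compactness argument is also unavailable here: $\Omega$ is unbounded in the horizontal directions, so bounded sequences in $H^1(\Omega)$ are not precompact in $L^2(\Omega)$, and the rigidity step (the harmonic function with mixed boundary conditions being constant) would fail without decay. The flat-surface remark is correct but exactly illustrates the issue: as soon as $\nabla\zeta\neq 0$, you must estimate $\abs{\uA}_2$ through the curl/div identity, not through a generic trace bound.
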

\begin{proof}
Recalling that $\bA_h$ and $\bA_v$ stand for the horizontal and
vertical components of $\bA$, we deduce from the assumption
that $\bA$ is tangential at the surface that $\uA_v=\nabla\zeta \cdot
\uA_h$ and therefore,
$$
\abs{\uA}_2 \leq C(\abs{\nabla\zeta}_\infty)\abs{\uA_h}_2,
$$
and we therefore turn to estimate the $L^2$-norm of $\uA_h$. Using the
fact that ${\bA_h}_\bott=0$ by assumption, we can write
\begin{align*}
&\int_{\R^d}\abs{{\uA_h}}^2=\int_{\R^d}\int_{-H_0}^{\zeta}\pa_z \abs{{\bA_h}}^2=2\int_{\Omega}\bA_h\cdot \pa_z \bA_h.
\end{align*}
Remarking that $\dz\bA_h=-(\curl \bA)_h^\perp+\nabla \bA_v$,
we have that
\begin{align*}
\int_{\R^d}\abs{{\uA_h}}^2&=-2\int_{\Omega}\bA_h\cdot
\left(\curl \bA\right)_h^\perp+2\int_\Omega \bA_h\cdot \nabla \bA_v\\
&=-2\int_{\Omega}\bA_h\cdot \left(\curl
  \bA\right)_h^\perp-2\int_\Omega(\nabla\cdot \bA_h) \bA_v-2\int_{\R^d}\nabla\zeta\cdot {\uA_h} {\uA_v}\\
&=-2\int_{\Omega}\bA_h\cdot \left(\curl
  \bA\right)_h^\perp-2\int_\Omega (\dive \bA)\bA_v+  2\int_\Omega\pa_z
\bA_v \bA_v\\
& \indent+2\int_{\R^d} -\nabla\zeta\cdot \uA_h \uA_v
\end{align*}
Remarking that the third term is equal to
$\abs{\uA_v}_2-\vert{(\bA_v)_\bott}\vert_2$,  and recalling that
$\uA_v=\nabla\zeta\cdot\uA_h$, we deduce
\begin{align*}
\int_{\R^d}\abs{{\uA_h}}^2
&=-2\int_{\Omega}\bA_h\cdot \left(\curl
  \bA\right)_h^\perp-2\int_\Omega (\dive \bA)\bA_v-\int_{\R^2}{\bA_v}_{|_\text{bott}}^2-\int_{\R^2}\uA_v^2.
\end{align*}
We can now deduce from the above that
\begin{eqnarray*}
\abs{\uA}_2^2&\leq& C(\abs{\nabla\zeta}_\infty)\Abs{\bA}_2
\big(\Abs{\dive \bA}_2+\Abs{\curl \bA}_2\big)\\
&\leq& C(\abs{\zeta}_{W^{1,\infty}},H_0)\Abs{\nabla_{X,z}\bA}_2
\big(\Abs{\dive \bA}_2+\Abs{\curl \bA}_2\big),
\end{eqnarray*}
where we used the Poincar\'e inequality provided by Lemma \ref{lempoincare} to
obtain the second inequality. Using Young's inequality $2ab\leq
\epsilon^2 a^2 +\epsilon^{-2} b^2$ with $\epsilon$ small enough and
the estimate furnished by Lemma \ref{leminterm}, we get the result.
\end{proof}

We can now proceed to construct a solution to
\eqref{potvector}; we also impose that $\bA$ be divergence free. We are
therefore concerned with the problem
\begin{equation}\label{potencial}
\left\lbrace\begin{array}{rc}
\dsp \curl\curl \bA=&\bom\\
\dsp \dive \bA=& 0,
\end{array}\right.
\mbox{ in }\Omega,
\end{equation}
with the boundary conditions
\begin{equation}\label{Bcond}
\left\lbrace
\begin{array}{rc}
\dsp \uA\cdot N=&0\\
\dsp N_b\times A_b=&0\\
\dsp  (\curl \bA)_\parallel=&\nabla^\perp\tpsi,\\
\end{array}\right.
\end{equation}
with $\tpsi\in \dot{H}^{1/2}(\R^d)$.
The first step is to construct a variational solution in the following sense.
\begin{defi}\label{weakpotencial}
\item[(1)] We denote by ${\mathfrak X}$ the closed subspace of
  $H^1(\Omega)^3$ defined as
$$
{\mathfrak X}=\{\bA\in H^1(\Omega)^{3},\quad \dive \bA=0,\quad \uA\cdot N=0,\quad
N_b\times A_b=0\}.
$$
\item[(2)] Let $\bom\in L^2(\Omega)^{3}$ and $\tpsi\in \dot{H}^{1/2}(\R^d)$. Then the vector field $\bA\in\mathfrak{X}$ is a variational solution of the system \eqref{potencial}-\eqref{Bcond} if and only if
\begin{align}\label{varform}
\forall {\bf C}\in {\mathfrak X},\qquad \int_\Omega \curl
\bA\cdot\curl {\bf C}=\int_{\Omega} \bom\cdot {\bf
  C}+\int_{\R^d} \nabla\tpsi\cdot {C}_\parallel,
\end{align}
where we recall the notation $C_\parallel=\underline{C}_h+\nabla\zeta \underline{C}_v$.
\end{defi}

The following lemma shows the existence and uniqueness of such a variational
solution.
\begin{lemma}\label{1exthm}
Let $\zeta\in W^{2,\infty}(\R^d)$ satisfy \eqref{hmin}, and let $\bom\in L^2(\Omega)^{3}$ and
$\tpsi\in \dot{H}^{1/2}(\R^d)$. Then there exists a unique variational
solution  $\bA\in \mathfrak{X}$ to \eqref{potencial}-\eqref{Bcond} in
the sense of Definition \ref{weakpotencial}. One has moreover
\begin{equation}\label{estrot}
\Abs{\curl \bA}_2 \leq C(H_0,\abs{\zeta}_{W^{2,\infty}})\big(\Abs{\bom}_2+\abs{\nabla\tpsi}_{H^{-1/2}}\big).
\end{equation}
\end{lemma}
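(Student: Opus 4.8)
The plan is to solve the system \eqref{potencial}--\eqref{Bcond} in the variational sense by applying the Lax--Milgram theorem on the space $\mathfrak{X}$ of Definition \ref{weakpotencial}. First I would check that $\mathfrak{X}$, equipped with the $H^1(\Omega)^3$ scalar product, is a Hilbert space: the constraint $\dive\bA=0$ defines a closed subspace of $L^2(\Omega)$, while $\bA\mapsto\uA\cdot N$ and $\bA\mapsto N_b\times A_b$ are continuous from $H^1(\Omega)^3$ into the relevant trace spaces on $\R^d$, so their kernels are closed; hence $\mathfrak{X}$ is closed in $H^1(\Omega)^3$.

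Next I would introduce the symmetric bilinear form $a(\bA,\bC)=\int_\Omega\curl\bA\cdot\curl\bC$ and the linear functional $\ell(\bC)=\int_\Omega\bom\cdot\bC+\int_{\R^d}\nabla\tpsi\cdot C_\parallel$ on $\mathfrak{X}$. Continuity of $a$ is immediate. The crucial point is coercivity, which is exactly where Lemma \ref{equivnorm} is used: every $\bC\in\mathfrak{X}$ satisfies $\dive\bC=0$, $\underline{C}\cdot N=0$ and $N_b\times C_b=0$, so
\[
\Abs{\bC}_2^2+\Abs{\nabla_{X,z}\bC}_2^2\leq C(H_0,\abs{\zeta}_{W^{2,\infty}})\,\Abs{\curl\bC}_2^2=C(H_0,\abs{\zeta}_{W^{2,\infty}})\,a(\bC,\bC),
\]
which shows that $a$ is coercive on $\mathfrak{X}$ (and in particular that $\Abs{\curl\,\cdot\,}_2$ is a norm on $\mathfrak{X}$, equivalent to the $H^1$ norm). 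For $\ell$, the term $\int_\Omega\bom\cdot\bC$ is bounded by $\Abs{\bom}_2\Abs{\bC}_2$. For the second term I would use the trace theorem, $\abs{\underline{C}}_{H^{1/2}(\R^d)}\leq C\Abs{\bC}_{H^1(\Omega)}$, together with the fact that multiplication by $\nabla\zeta$ (which lies in $W^{1,\infty}$ since $\zeta\in W^{2,\infty}$) is bounded on $H^{1/2}(\R^d)$, to get $\abs{C_\parallel}_{H^{1/2}}\leq C(\abs{\zeta}_{W^{2,\infty}})\Abs{\bC}_{H^1(\Omega)}$; then the $H^{-1/2}$--$H^{1/2}$ duality gives $\big\vert\int_{\R^d}\nabla\tpsi\cdot C_\parallel\big\vert\leq\abs{\nabla\tpsi}_{H^{-1/2}}\abs{C_\parallel}_{H^{1/2}}$. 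Altogether $\abs{\ell(\bC)}\leq C(H_0,\abs{\zeta}_{W^{2,\infty}})\big(\Abs{\bom}_2+\abs{\nabla\tpsi}_{H^{-1/2}}\big)\Abs{\bC}_{H^1(\Omega)}$, so $\ell$ is continuous on $\mathfrak{X}$.

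Lax--Milgram then provides a unique $\bA\in\mathfrak{X}$ with $a(\bA,\bC)=\ell(\bC)$ for all $\bC\in\mathfrak{X}$, which is precisely \eqref{varform}. Taking $\bC=\bA$ and using first the bound on $\ell$ and then coercivity,
\[
\Abs{\curl\bA}_2^2=\ell(\bA)\leq C(H_0,\abs{\zeta}_{W^{2,\infty}})\big(\Abs{\bom}_2+\abs{\nabla\tpsi}_{H^{-1/2}}\big)\Abs{\bA}_{H^1(\Omega)}\leq C(H_0,\abs{\zeta}_{W^{2,\infty}})\big(\Abs{\bom}_2+\abs{\nabla\tpsi}_{H^{-1/2}}\big)\Abs{\curl\bA}_2,
\]
and dividing by $\Abs{\curl\bA}_2$ yields \eqref{estrot}.

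The main obstacle is the coercivity estimate: it is the only place where the particular structure of the boundary conditions (normal trace at the surface, tangential trace at the flat bottom) and the regularity $\zeta\in W^{2,\infty}$ are genuinely needed, but all of this work has already been packaged into Lemmas \ref{lempoincare}--\ref{equivnorm}. The remaining technical point, minor by comparison, is the continuity of $\bC\mapsto C_\parallel$ from $\mathfrak{X}$ into $H^{1/2}(\R^d)$, which is what allows one to pair it against $\nabla\tpsi\in H^{-1/2}(\R^d)^d$ (recall that $\tpsi$ only lies in the homogeneous space $\dot H^{1/2}(\R^d)$, so $\tpsi$ itself need not belong to $L^2$).
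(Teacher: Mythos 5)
Your proposal is correct and follows essentially the same approach as the paper: Lax--Milgram on $\mathfrak{X}$ with the bilinear form $a(\bA,\bC)=\int_\Omega\curl\bA\cdot\curl\bC$, coercivity via Lemma \ref{equivnorm}, continuity of the linear form via the trace lemma and $H^{\pm 1/2}$ duality, and finally $\bC=\bA$ to deduce \eqref{estrot}. The extra remarks you add (closedness of $\mathfrak{X}$, boundedness of multiplication by $\nabla\zeta$ on $H^{1/2}$) are implicit in the paper's terser argument and do not change the route.
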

\begin{proof}
The bilinear form
$$
a(\bA,{\bf C})=\int_\Omega \curl \bA\cdot \curl {\bf C}
$$
is obviously continuous on ${\mathfrak X}\times {\mathfrak X}$; Lemma
\ref{equivnorm} also shows that it is coercive. The linear form
\begin{align*}
L(\bA)=&\int_\Omega \bom \cdot \bA +\int_{\R^2}\nabla\tpsi\cdot A_\parallel
\end{align*}
also satisfies
\begin{align*}
L(\bA)&\leq \Abs{\bom}_2\Abs{\bA}_2+\abs{\nabla\tpsi}_{H^{-\frac{1}{2}}} \abs{A_\parallel}_{H^{\frac{1}{2}}}\\
&\leq \Abs{\bom}_2\Abs{\bA}_2+C(\abs{\zeta}_{W^{2,\infty}})\abs{\nabla\tpsi}_{H^{-\frac{1}{2}}} \Abs{\bA}_{H^1},
\end{align*}
where we used the trace lemma\footnote{By invoking the ``trace
  lemma'', we refer throughout this article to the continuity of the
  trace operators at the surface and at the bottom, as operators
  defined on $H^1(\Omega)$ with values in $H^{1/2}(\R^d)$,
$$
\forall \bA\in H^1(\Omega),\qquad \abs{\uA}\leq
C(\abs{\zeta}_{W^{1,\infty}})\Abs{\bA}_{H^1}
\quad\mbox{ and }\quad
\abs{A_b}\leq
C\Abs{\bA}_{H^1}.
$$
 } to derive the second inequality; is is
therefore continuous on ${\mathfrak X}$ and  we can
apply Lax-Milgram's theorem to obtain
the existence and uniqueness of the variational solution.\\
Taking ${\bf C}={\bf A}$ in \eqref{varform} and using the above
estimate on $L(\cdot)$, we get
$$
\Abs{\curl \bA}_2^2\leq C(\abs{\zeta}_{W^{2,\infty}})\big(\Abs{\bom}_2+\abs{\nabla\tpsi}_{H^{-1/2}}\big) \Abs{\bA}_{H^1},
$$
and the estimate of the lemma therefore follows directly from Lemma
\ref{equivnorm} since $\dive \bA=0$.
\end{proof}
Because of the divergence free condition, the space ${\mathfrak X}$
used as the space of test functions in Definition \ref{weakpotencial}
is too small to ensure that the variational solution provided by Lemma
\ref{1exthm} satisfies the first equation of \eqref{potencial} in the
sense of distributions. If we therefore want to take a larger space of test functions by
removing the divergence free condition in the definition of
${\mathfrak X}$, namely, by working with test functions in the space
$$
H^1_{\rm b.c.}(\Omega)=\{\bA\in H^1(\Omega)^{3},\quad \uA\cdot N=0,\quad
N_b\times A_b=0\},
$$
For all  ${\bf C}\in H^1_{\rm b.c.}(\Omega)$, we  define $\varphi$ by
\begin{align*}
\Delta \varphi =& \dive {\bf C}\\
\partial_n\varphi_\surf=&0\\
\varphi_\bott=&0,
\end{align*}
so that ${\bf C}-\nabla_{X,z}\varphi\in \mathfrak{X}$ and therefore
\begin{align*}
\int_{\Om} \curl {\bf A}\cdot \curl {\bf C}=\int_{\Om} \bom\cdot
\left({\bf C}-\nabla_{X,z}\varphi\right)
+\int_{\R^d}\nabla\tilde{\psi}\cdot ({C}_\parallel-(\nabla_{X,z}\varphi)_\parallel).
\end{align*} 
But, if $\dive \bom=0$,  
\begin{align*}
\int_{\Omega} \bom\cdot \nabla_{X,z} \varphi = &\int_{\R^d} \underline{\omega}\cdot N \underline{\varphi}\\
\int_{\R^d}\nabla\tilde{\psi}\cdot (\nabla_{X,z}\varphi)_\parallel=&-\int_{\R^d}\Delta \tilde{\psi}\underline{\varphi}.
\end{align*}
Thus, if $\Delta\tilde{\psi}=\underline{\omega}\cdot N$ we learn that
\begin{align}\label{varform3}
\int_{\Omega}\curl \bA\cdot \curl {\bf C} =\int_{\Omega}\bom\cdot
{\bf C}+\int_{\R^d}\nabla \tilde{\psi}\cdot C_\parallel
\end{align}
for all ${\bf C}\in H^1_{\rm b.c.}(\Omega)$.

It is now easy to deduce that the variational solution furnished by
Lemma \ref{1exthm} is a strong solution of
\eqref{potencial}-\eqref{Bcond}.
\begin{lemma}\label{3exthm}
Let $\zeta\in W^{2,\infty}(\R^d)$ satisfy \eqref{hmin}, and let $\bom\in L^2(\Omega)^{3}$ and
$\tpsi\in \dot{H}^{1/2}(\R^d)$. If moreover $\dive \bom=0$ and $\Delta
\tpsi=\uom\cdot N$, then the variational solution $\bA\in H^1(\Omega)^3$ furnished by Lemma
\ref{1exthm} solves \eqref{potencial} with boundary conditions \eqref{Bcond}.
\end{lemma}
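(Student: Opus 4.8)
The plan is to upgrade the variational identity \eqref{varform3}, now valid for all test functions in the larger space $H^1_{\rm b.c.}(\Omega)$, into the pointwise (distributional) statements of \eqref{potencial} and \eqref{Bcond}. First I would note that taking ${\bf C}\in C_c^\infty(\Omega)^3$ in \eqref{varform3} kills the surface integral (since such ${\bf C}$ vanishes near $\partial\Omega$, so $C_\parallel=0$) and yields $\int_\Omega \curl\bA\cdot\curl{\bf C}=\int_\Omega\bom\cdot{\bf C}$, i.e. $\curl\curl\bA=\bom$ in $\mathcal D'(\Omega)$. Since $\bom\in L^2(\Omega)^3$, this gives $\curl\curl\bA\in L^2(\Omega)^3$; combined with $\dive\bA=0$ (which holds because $\bA\in\mathfrak X$) and $\dive\curl\bA=0$, elliptic regularity for the Hodge Laplacian together with the estimates of Lemmas~\ref{leminterm} and \ref{equivnorm} promotes $\curl\bA$ to $H^1(\Omega)^3$ and hence $\bA$ to $H^2(\Omega)^3$, so that all boundary traces below are classical $H^{1/2}$ traces. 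The divergence-free condition and the two boundary conditions $\uA\cdot N=0$, $N_b\times A_b=0$ are already built into $\mathfrak X$, and $N_b\cdot(\curl\bA)_\bott=0$ follows from $N_b\times A_b=0$ as already observed after \eqref{potvector}.

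It remains to recover the third boundary condition $(\curl\bA)_\parallel=\nabla^\perp\tpsi$. For this I would integrate by parts in the left-hand side of \eqref{varform3} using the now-available $H^2$ regularity: for ${\bf C}\in H^1_{\rm b.c.}(\Omega)$,
$$
\int_\Omega \curl\bA\cdot\curl{\bf C}=\int_\Omega (\curl\curl\bA)\cdot{\bf C}+\int_\Gamma (\bn\times\curl\bA)\cdot{\bf C},
$$
where $\bn$ is the outward normal on $\Gamma=\partial\Omega$. The bottom part of the boundary term vanishes because ${\bf C}$ is tangential there and $N_b\cdot(\curl\bA)_\bott=0$ forces $\bn\times\curl\bA$ to be parallel to ${\bf C}$'s normal complement — more precisely one checks $(\bn_b\times\curl\bA)\bott\cdot{\bf C}_\bott=0$ using $N_b\times A_b=0$. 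Subtracting \eqref{varform3} and using $\curl\curl\bA=\bom$ in $\Omega$, one is left with
$$
\int_{\rm surf}(\bn\times\curl\bA)\cdot{\bf C}=\int_{\R^d}\nabla\tpsi\cdot C_\parallel
$$
for all admissible ${\bf C}$. Rewriting $\bn\times\curl\bA$ at the surface in terms of the horizontal tangential part $(\curl\bA)_\parallel$ via the identity $\underline{\bf B}\times N=(-{\bf B}_\parallel^\perp,-{\bf B}_\parallel^\perp\cdot\nabla\zeta)^T$ recorded in the Notations, and noting that $C_\parallel$ ranges over a dense subset of $H^{1/2}(\R^d)^2$ as ${\bf C}$ ranges over $H^1_{\rm b.c.}(\Omega)$ (one may prescribe $\underline{C}_h$ freely and then set $\underline{C}_v=\nabla\zeta\cdot\underline{C}_h$ to meet $\uC\cdot N=0$, extending into $\Omega$ so as to vanish near the bottom), this identity forces $(\curl\bA)_\parallel^\perp=-\nabla\tpsi$ in $H^{-1/2}$, i.e. $(\curl\bA)_\parallel=(\nabla\tpsi)^\perp=\nabla^\perp\tpsi$ after reindexing, which is exactly \eqref{Bcond}.

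I expect the main obstacle to be the careful bookkeeping in the surface boundary term: matching $(\bn\times\curl\bA)$ against the test function's surface trace while keeping track of the $\abs{N}$ and $\nabla\zeta$ factors, and verifying that the map ${\bf C}\mapsto C_\parallel$ hits enough of $H^{1/2}(\R^d)^2$ to conclude the identity of distributions rather than merely a weak relation. The bottom contribution is also a minor subtlety — one must confirm it truly vanishes rather than contributing a spurious term — but this follows from $N_b\times A_b=0$ exactly as in the remark after \eqref{potvector}. Everything else (the interior equation, the $H^2$ regularity, and the two boundary conditions encoded in $\mathfrak X$) is immediate from \eqref{varform3}, Lemma~\ref{1exthm}, and the preliminary lemmas.
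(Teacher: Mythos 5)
Your overall plan coincides with the paper's: test \eqref{varform3} with ${\bf C}\in {\mathcal D}(\Omega)$ to obtain $\curl\curl\bA=\bom$ in distributions, note that $\dive\bA=0$, $\uA\cdot N=0$ and $N_b\times A_b=0$ are built into $\mathfrak X$, and recover the remaining surface boundary condition by testing \eqref{varform3} against all of $H^1_{\rm b.c.}(\Omega)$, integrating by parts, and invoking a density argument on $C_\parallel$. There is, however, a genuine gap in your regularity step. Lemmas~\ref{leminterm} and \ref{equivnorm} bound $\Abs{\nabla_{X,z}\bA}_2$ in terms of $\Abs{\curl\bA}_2+\Abs{\dive\bA}_2$ --- that is, they give $H^1$ regularity of $\bA$ itself, not of $\curl\bA$ --- and ``elliptic regularity for the Hodge Laplacian'' in a domain that is merely $W^{2,\infty}$ is not an off-the-shelf fact: the $H^1$ bound on $\curl\bA=\widetilde{\bf U}$ is precisely the content of Lemma~\ref{lemlem}, which comes \emph{after} the present statement and is proved by an independent, fairly delicate computation. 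You therefore cannot assert $\bA\in H^2(\Omega)^3$ at this point of the argument.

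Fortunately the $H^2$ claim is not needed, and the paper deliberately avoids it. Since $\curl\curl\bA=\bom\in L^2(\Omega)^3$ and $\dive\curl\bA=0$, the field $\curl\bA$ lies in $H(\dive,\Omega)\cap H(\curl,\Omega)$ (see \eqref{defHdiv}--\eqref{defHrot}), which is already enough (Chapter 9 of \cite{DL}) for its normal and tangential traces to make sense in $H^{-1/2}$ and for the Green's identity you write to hold as a duality pairing against $H^1$ test functions. With this weaker regularity, your surface computation and density argument (as $\underline{C}_h$ ranges over $H^{1/2}(\R^d)^2$, so does $C_\parallel$, since the linear map relating them is bounded with bounded inverse for $\zeta\in W^{1,\infty}$) go through and give the last boundary condition of \eqref{Bcond}, as in the paper. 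One further small correction: the bottom boundary term vanishes because the \emph{test function} satisfies $N_b\times C_b=0$, so $C_b$ is vertical while $\bn_b\times(\curl\bA)_\bott$ is horizontal; the condition $N_b\times A_b=0$ on $\bA$ itself does not enter there.
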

\begin{proof}
By construction, $\dive \bA=0$. The fact that $\curl\curl \bA=\bom$
stems from \eqref{varform3} with all ${\bf C}\in {\mathcal
  D}(\Omega)\subset H^1_{\rm b.c.}(\Omega)$ as test functions. It follows
that $\curl \bA$ belongs to  $H(\dive,\Omega)\cap H(\curl,\Omega)$ and
therefore that the traces of $\curl \bA$ at the surface and bottom
make sense. The fact that these traces satisfy the last  condition
in \eqref{Bcond} is also a consequence of \eqref{varform3}. The first
two conditions of \eqref{Bcond} are automatically satisfied since
$\bA\in H^1_{\rm b.c.}(\Omega)$.
\end{proof}
Now we have all the ingredients to finish the proof of the
theorem. With $\dive\bom=0$ and $\Delta\tpsi=\uom\cdot N$, we
denote by $\bA\in {\mathfrak X}$ the variational solution furnished by
Lemma \ref{1exthm} and set $\tbU=\curl \bA$. We get directly from
Lemma \ref{3exthm} that $\tbU\in H(\dive,\Omega)\cap H(\curl,\Omega)$
solves \eqref{divrotter}. We also get from \eqref{estrot} that
\begin{eqnarray}
\nonumber
\Abs{\tbU}_2 &\leq&
\nonumber
 C(H_0,\abs{\zeta}_{W^{2,\infty}})\big(\Abs{\bom}_2+\abs{\nabla\tpsi}_{H^{-1/2}}\big)\\
&\leq&
\label{L2est}
 C(H_0,\frac{1}{h_{\rm min}},\abs{\zeta}_{W^{2,\infty}}) \Abs{\bom}_{2,b}.
\end{eqnarray}
where we used the following lemma to derive the second inequality (we
recall that $H_0^{-1/2}$ is defined in Remark \ref{vorbot} and that
$ H_b(\dive\!\!_0, \Omega)$ and $\Vert \bom\Vert_{2,b}$ are defined in Definition \ref{divfree}).
\begin{lemma}\label{existtildepsi}
Let $\zeta\in W^{1,\infty}(\R^d)$ satisfy \eqref{hmin} and $\bom\in H_b(\dive\!\!_0, \Omega)$. Then there exists a unique solution $\tpsi\in
\dot{H}^{3/2}(\R^d)$ to the equation $\Delta\tpsi=\uom\cdot N$, and
one has
$$
\abs{\nabla\tpsi}_{H^{1/2}}\leq
C(\frac{1}{h_{\min}},\abs{\zeta}_{W^{1,\infty}})\Abs{\bom}_{2,b}.
$$
\end{lemma}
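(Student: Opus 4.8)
The plan is to make sense of the right-hand side $\uom\cdot N$ — which, since $\bom$ is only in $L^2(\Omega)$, should be read as the normal trace at the surface of the $H(\dive,\Omega)$ field $\bom$, transported to $\R^d$ through the graph parametrization (the Jacobian $\abs N$ of that map cancelling the surface-measure factor) — then to show that this distribution lies in the space $H^{-1/2}_0(\R^d)$ of \eqref{defH0m} with norm $\lesssim\Abs{\bom}_{2,b}$, and finally to invert $\Delta$ on that space.

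First I would record a flux identity: testing $\dive\bom=0$ against functions $\phi=\phi(X)\in C^\infty_c(\R^d)$ — equivalently, the divergence theorem on the vertical slab over $\supp\phi$, the bottom being flat — and integrating $\bom_h$ in the vertical variable gives, in $\mathcal D'(\R^d)$,
$$
\uom\cdot N=\omega_b\cdot N_b-\nabla\cdot\overline{\bom_h},\qquad
\overline{\bom_h}(X):=\int_{-H_0}^{\zeta(X)}\bom_h(X,z)\,dz,
$$
with $\overline{\bom_h}\in L^2(\R^d)^d$ and $\abs{\overline{\bom_h}}_2\le C(\frac{1}{h_{\min}},\abs{\zeta}_{W^{1,\infty}})\Abs{\bom}_2$ by Cauchy–Schwarz in $z$ and \eqref{hmin}. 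Next, the classical trace theory for $H(\dive,\Omega)$ on the Lipschitz domain $\Omega$ (Chapter 9 of \cite{DL}) yields $\uom\cdot N\in H^{-1/2}(\R^d)$ with a bound of the same type; combined with the hypothesis $\omega_b\cdot N_b\in H^{-1/2}_0(\R^d)\subset H^{-1/2}(\R^d)$, the identity forces $\nabla\cdot\overline{\bom_h}\in H^{-1/2}(\R^d)$. Writing $\nabla\cdot\overline{\bom_h}=\abs{D}\,v$ with $v:=\frac{\nabla}{\abs{D}}\cdot\overline{\bom_h}\in L^2$ (so $\abs{v}_2\le\abs{\overline{\bom_h}}_2$, the Fourier symbol of $\frac{\nabla}{\abs{D}}\cdot$ being of modulus $1$) and using the pointwise bound $(1+\abs{\xi}^2)^{1/2}\le 1+\abs{\xi}^2(1+\abs{\xi}^2)^{-1/2}$, one gets $\abs{v}_{H^{1/2}}^2\le\abs{v}_2^2+\abs{\nabla\cdot\overline{\bom_h}}_{H^{-1/2}}^2$, hence $v\in H^{1/2}$, i.e. $\nabla\cdot\overline{\bom_h}\in H^{-1/2}_0$. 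Therefore $\uom\cdot N=\omega_b\cdot N_b-\nabla\cdot\overline{\bom_h}\in H^{-1/2}_0(\R^d)$, and collecting the estimates (recalling $\Abs{\bom}_{2,b}=\Abs{\bom}_2+\abs{\omega_b\cdot N_b}_{H_0^{-1/2}}$) gives $\abs{\uom\cdot N}_{H^{-1/2}_0}\le C(\frac{1}{h_{\min}},\abs{\zeta}_{W^{1,\infty}})\Abs{\bom}_{2,b}$.

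Finally, I would write $\uom\cdot N=\abs{D}\,w$ with $w\in H^{1/2}(\R^d)$, $\abs{w}_{H^{1/2}}=\abs{\uom\cdot N}_{H^{-1/2}_0}$; the vector field $-\frac{\nabla}{\abs{D}}w\in H^{1/2}(\R^d)^d$ is curl free with divergence $\abs{D}\,w=\uom\cdot N$, so by the Beppo–Levi/Deny–Lions theory \cite{DenyLions} it is the gradient of a function $\tpsi\in L^2_{\rm loc}(\R^d)$, unique up to an additive constant. Then $\tpsi\in\dot{H}^{3/2}(\R^d)$, $\Delta\tpsi=\nabla\cdot(-\frac{\nabla}{\abs{D}}w)=\uom\cdot N$, and $\abs{\nabla\tpsi}_{H^{1/2}}=\babs{\frac{\nabla}{\abs{D}}w}_{H^{1/2}}\le\abs{w}_{H^{1/2}}\le C(\frac{1}{h_{\min}},\abs{\zeta}_{W^{1,\infty}})\Abs{\bom}_{2,b}$. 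Uniqueness modulo constants is clear: the difference of two solutions is harmonic with $L^2$ gradient, hence constant by Liouville's theorem.

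The main difficulty is the step placing $\uom\cdot N$ in $H^{-1/2}_0$ and not merely in $H^{-1/2}$: trace theory only gives $H^{-1/2}$ regularity for the surface flux, and estimating $\nabla\cdot\overline{\bom_h}$ from $\overline{\bom_h}\in L^2$ alone yields no better than $H^{-1}_0$; it is precisely the combination of the divergence-free constraint (which ties the surface flux to the bottom flux) with the $H^{-1/2}_0$ information carried by the bottom flux that closes the argument — and this $H^{-1/2}_0$ regularity is exactly what is needed to solve $\Delta\tpsi=\uom\cdot N$ with $\nabla\tpsi\in H^{1/2}$ rather than only $H^{-1/2}$.
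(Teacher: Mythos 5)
Your argument is correct but follows a genuinely different route from the paper's. The paper works variationally: it shows that the linear form $v\mapsto\int_{\R^d}(\uom\cdot N)\,v$ is continuous on $\dot H^1(\R^d)$ by expressing it, via the straightening $\Sigma$, the extension $v^{\rm ext}$, and an integration by parts, as $\int_{\R^d}(\omega_b\cdot N_b)\,v^{\rm ext}_b+\int_\cS(1+\dz\sigma)\nabla_{X,z}^\sigma v^{\rm ext}\cdot\omega$; it then applies Lax--Milgram to get $\tpsi\in\dot H^1$, and repeats the same computation with $\Lambda\tpsi$ as test function to upgrade to $\nabla\tpsi\in H^{1/2}$. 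You instead derive the explicit flux identity $\uom\cdot N=\omega_b\cdot N_b-\nabla\cdot\overline{\bom_h}$, show directly that the right-hand side lies in $H^{-1/2}_0(\R^d)$, and invert $\Delta$ by a Fourier-multiplier construction. The crux --- which you identify correctly --- is that $\overline{\bom_h}\in L^2$ alone only places $\nabla\cdot\overline{\bom_h}$ in $H^{-1}_0$; you recover the missing half-derivative by combining the $H(\dive,\Omega)$ normal-trace estimate with the hypothesis on $\omega_b\cdot N_b$, and close with the pointwise Fourier bound $\abs{v}_{H^{1/2}}^2\le\abs{v}_2^2+\abs{\nabla\cdot\overline{\bom_h}}_{H^{-1/2}}^2$. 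Your route has the advantage of exhibiting $\uom\cdot N$ explicitly as an element of $H^{-1/2}_0(\R^d)$, with a formula, whereas the paper never materializes this regularity and stays entirely on the dual side; the cost is that you invoke essentially as a black box the estimate $\abs{\uom\cdot N}_{H^{-1/2}}\leq C(\frac{1}{h_{\min}},\abs{\zeta}_{W^{1,\infty}})\Abs{\bom}_2$ on the unbounded Lipschitz graph with uniform constants --- this is precisely the estimate the paper proves by hand through the extension $v^{\rm ext}$, so if one unpacks the trace theorem the two arguments meet again.
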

\begin{proof}
The bilinear form $a(u,v)=\nabla u\cdot \nabla v$ is continuous and
coercive on the Hilbert space $\dot{H}^1(\R^d)$. Let us now define the
linear $l(\cdot)$ form on $\dot{H}^1(\R^d)$ by
$$
\forall v\in \dot{H}^1(\R^d),\qquad  l(v):=\int_{\R^d} \uom\cdot N v.
$$
If we can prove that $l$ is continuous, then the existence and
uniqueness of a variational solution $\tpsi\in \dot{H}^1(\R^d)$
to $\Delta\tpsi=\uom\cdot N$
will be a direct consequence of Lax-Milgram's theorem. We therefore
show this continuity property.\\
Note now that $\Sigma(X,z+\sigma(X,z))$, with
$\sigma=\frac{1}{H_0}(H_0+z)\zeta$, is a diffeomorphism mapping the
flat strip $\cS$ to the fluid domain $\Omega$,
and denote $\omega=\bom\circ\Sigma$. Writing
$\nabla^\sigma_{X,z}=(J_\Sigma^{-1})^T\nabla_{X,z}$ (with
$J_\Sigma=d_{X,z}\Sigma$ the Jacobian matrix), we can integrate by parts in
the above formula to find
\begin{eqnarray*}
l(v)&=&\int_{\R^d} \omega_b\cdot N_bv^{\rm ext}_b+\int_\cS (1+\dz\sigma) v^{\rm ext}
\nabla_{X,z}^\sigma\cdot
\omega+\int_{\cS}(1+\dz\sigma)\nabla_{X,z}^\sigma v^{\rm ext}\cdot \omega
\\
&=&\int_{\R^d} (\omega_b\cdot
N_b)v^{\rm ext}_b+\int_{\cS}(1+\dz\sigma)\nabla_{X,z}^\sigma v^{\rm ext}\cdot \omega
\end{eqnarray*}
where for all $v\in \dot{H}^1(\R^d)$, $v^{\rm ext}$ is the solution of the boundary value problem
$$
\left\lbrace
\begin{array}{l}
\Delta_{X,z} v^{\rm ext}=0 \qquad \mbox{ in }\quad \cS,\\
v^{\rm ext}_\surff=v,\qquad (\pa_zv^{\rm ext})_\bottf=0,
\end{array}\right.
$$
i.e. $\dsp v^{\rm ext}=\frac{\cosh((z+H_0)\abs{D})}{\cosh(H_0\abs{D})}
v$ (note that we use the Fourier multiplier notation even though $v$
belongs to $\dot{H}^1(\R^d)$ which is not a space of tempered
distributions; we refer to Notation 2.28 of \cite{Lannes_book} to see
that this makes sense).  One has therefore
$$
l(v)\lesssim \abs{\omega_b\cdot N_b}_{H_0^{-1/2}}\abs{v_b^{\rm ext}}_{\dot{H}^{1/2}}+
C(\abs{\zeta}_{W^{1,\infty}})\Abs{\omega}_2\Abs{\nabla_{X,z}^\sigma
  v^{\rm ext}}_2
$$
where for the first term, we used the fact that $H_{0}^{-1/2}(\R^d)$ is the
dual space of $\dot{H}^{1/2}(\R^d)$ (see \cite{BGSW} for a proof). Using
the explicit expression of $v^{\rm ext}$, one readily deduces that
$$
l(v)\lesssim C(\abs{\zeta}_{W^{1,\infty}},\frac{1}{h_{\min}})\Abs{\bom}_{2,b}\abs{\nabla v}_2,
$$
which implies the desired continuity property, and therefore the
existence and uniqueness of a variational solution $\tpsi\in
\dot{H}^1(\R^d)$ to $\Delta\tpsi=\uom\cdot N$. We also directly get
from the above that
$$
\abs{\nabla\tpsi}_2\leq C(\abs{\zeta}_{W^{1,\infty}},\frac{1}{h_{\min}})\Abs{\bom}_{2,b}.
$$
In order to obtain an $H^{1/2}$ estimate of $\nabla\tpsi$, let us
take the $L^2$-scalar product of the equation $\Delta\tilde\psi=\underline{\omega}\cdot N$ with
$\Lambda\tilde\psi$ (with $\Lambda=(1-\Delta)^{1/2}$). Integrating
by parts, we then proceed as above to get
\begin{eqnarray*}
\abs{\nabla\tilde\psi}_{H^{1/2}}^2&=&-\int_{\R^d}\Lambda\tpsi
\underline{\omega}\cdot N\\
&=&-\int_{\R^d}\Lambda\tpsi^{\rm ext}_b
\omega_b\cdot
N_b-\int_{\cS}(1+\dz\sigma)\nabla_{X,z}^\sigma\Lambda\tpsi^{\rm ext}\cdot \omega.
\end{eqnarray*}
One readily deduces from the Cauchy-Schwarz inequality that
\begin{eqnarray*}
\abs{\nabla\tilde\psi}_{H^{1/2}}^2&\leq &\abs{\omega_b\cdot
  N_b}_{H_0^{-1/2}}\abs{\Lambda \tpsi^{\rm ext}_\bott}_{\dot{H}^{1/2}}+
C(\abs{\zeta}_{W^{1,\infty}})\Abs{\omega}_2\Abs{\nabla_{X,z}^\sigma
  \Lambda\tpsi^{\rm ext}}_2\\
&\leq&
C(\abs{\zeta}_{W^{1,\infty}},\frac{1}{h_{\min}})\Abs{\bom}_{2,b}\Abs{\Lambda\nabla_{X,z}
  \tpsi^{\rm ext}}_2,
\end{eqnarray*}
Since $\Abs{\Lambda\nabla_{X,z} \tpsi^{\rm ext}}_2\lesssim
\abs{\nabla\tpsi}_{H^{1/2}}$ (this is a standard smoothing property, see for instance Lemma 2.20 of
\cite{Lannes_book}), one deduces that
$$
\abs{\nabla\tpsi}_{H^{1/2}}\leq
C(\abs{\zeta}_{W^{1,\infty}},\frac{1}{h_{\min}})\Abs{\bom}_{2,b},
$$
which concludes the proof of the lemma.
\end{proof}
The following lemma complements this $L^2$-estimate on $\tbU$ provided
by \eqref{L2est} by an
$H^1$-estimate.
\begin{lemma}\label{lemlem}
Let $\zeta\in W^{2,\infty}(\R^d)$ satisfy \eqref{hmin} and  $\bom\in H_b(\dive\!\!_0, \Omega)$, and $\tpsi\in \dot{H}^{3/2}(\R^d)$ be the
solution to the equation $\Delta\tpsi=\uom\cdot N$ furnished by Lemma \ref{existtildepsi}.
If $\tbU\in L^2(\Omega)^{3}$ solves \eqref{divrotter}
then $\tbU\in H^1(\Omega)^{3}$ and one has
$$
\Abs{\tbU}_2+\Abs{\nabla_{X,z}\tbU}_2\leq  C\big(\abs{\zeta}_{W^{2,\infty}},H_0,\frac{1}{h_{\min}}\big)\Abs{\bom}_{2,b}.
$$
\end{lemma}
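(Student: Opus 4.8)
The plan is to subtract off the surface data so as to reduce to a problem with \emph{homogeneous} boundary conditions, and then to invoke ``conjugate'' versions of Lemmas~\ref{lempoincare}, \ref{leminterm} and \ref{equivnorm} in which the roles of the (flat) bottom and of the free surface are exchanged. Recall that $\tbU$ solves \eqref{divrotter}, so it is tangent to the bottom ($\tbU_b\cdot N_b=0$) and its tangential component at the surface is $\tU_\parallel=\nabla^\perp\tpsi$, and that the $L^2$ bound $\Abs{\tbU}_2\leq C(\frac{1}{h_{\min}},H_0,\abs{\zeta}_{W^{2,\infty}})\Abs{\bom}_{2,b}$ is already available from \eqref{L2est}.

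First I would lift the surface data. By Lemma~\ref{existtildepsi}, $\nabla\tpsi\in H^{1/2}(\R^d)$ with $\abs{\nabla\tpsi}_{H^{1/2}}\leq C(\frac{1}{h_{\min}},\abs{\zeta}_{W^{1,\infty}})\Abs{\bom}_{2,b}$, so a standard bounded extension operator (for instance, working on the flat strip $\cS$ through the diffeomorphism $\Sigma$ of Lemma~\ref{existtildepsi} and using the Poisson-type lift $\frac{\cosh((z+H_0)\abs{D})}{\cosh(H_0\abs{D})}\nabla^\perp\tpsi$) produces $\mathbf r\in H^1(\Omega)^2$ with $\underline{\mathbf r}=\nabla^\perp\tpsi$ and $\Abs{\mathbf r}_{H^1}\leq C(\frac{1}{h_{\min}},\abs{\zeta}_{W^{1,\infty}})\Abs{\bom}_{2,b}$; setting $\mathbf R=(\mathbf r,0)$, one then has $\mathbf R_\parallel=\underline{\mathbf r}=\nabla^\perp\tpsi$ at the surface and $\mathbf R_b\cdot N_b=0$ at the bottom. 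Consequently $\mathbf B:=\tbU-\mathbf R$ belongs to $H(\dive,\Omega)\cap H(\curl,\Omega)$, with $\curl\mathbf B=\bom-\curl\mathbf R$ and $\dive\mathbf B=-\dive\mathbf R$, so that $\Abs{\curl\mathbf B}_2+\Abs{\dive\mathbf B}_2\leq C(\frac{1}{h_{\min}},H_0,\abs{\zeta}_{W^{2,\infty}})\Abs{\bom}_{2,b}$; moreover $\mathbf B_b\cdot N_b=0$ and $\mathbf B_\parallel=\tU_\parallel-\mathbf R_\parallel=0$ at the surface, i.e. $\underline{\mathbf B}\times N=0$. Thus $\mathbf B$ is tangent to the bottom and \emph{normal} to the surface -- exactly the configuration opposite to that of Lemmas~\ref{lempoincare}--\ref{equivnorm}.

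It then remains to prove, for a field $\mathbf B$ of this type, the analogue of Lemma~\ref{equivnorm}, namely $\Abs{\mathbf B}_2^2+\Abs{\nabla_{X,z}\mathbf B}_2^2\leq C(\abs{\zeta}_{W^{2,\infty}},H_0)\big(\Abs{\dive\mathbf B}_2^2+\Abs{\curl\mathbf B}_2^2\big)$, after which $\tbU=\mathbf B+\mathbf R$ yields the statement. This is obtained by repeating the proofs of Lemmas~\ref{lempoincare}--\ref{equivnorm} with the surface and bottom interchanged. The Poincar\'e inequality $\Abs{\mathbf B}_2\leq C\Abs{\dz\mathbf B}_2$ follows exactly as in Lemma~\ref{lempoincare}, now using that the $N_b^\perp$-component of $\underline{\mathbf B}$ vanishes at the surface and the $N$-component of $\mathbf B_b$ vanishes at the bottom. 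In the gradient identity \eqref{nablacurldiv}, the bottom contribution of the boundary integral vanishes because $\mathbf B_v$ and its tangential derivatives vanish there and the bottom is flat, while the surface contribution reduces to $-\int_{\rm surf}(\underline{\mathbf B}\cdot\overrightarrow{n})^2\dive\overrightarrow{n}$ -- the two normal-derivative terms, coming respectively from $\overrightarrow{n}_i\mathbf B_j\partial_j\mathbf B_i$ and from $(\overrightarrow{n}\cdot\mathbf B)\dive\mathbf B$, cancel because $\underline{\mathbf B}$ is normal to the surface -- and is therefore bounded by $C\abs{\zeta}_{W^{2,\infty}}\abs{\underline{\mathbf B}}_2^2$. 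Finally the trace term is controlled by writing $\abs{\underline{\mathbf B}}_2\leq C(\abs{\nabla\zeta}_\infty)\abs{\underline{\mathbf B}_v}_2$ (from $\underline{\mathbf B}_h=-\underline{\mathbf B}_v\nabla\zeta$), then expanding $\int_{\R^d}\abs{\underline{\mathbf B}_v}^2=2\int_\Omega\mathbf B_v\dz\mathbf B_v$ (using $(\mathbf B_b)_v=0$) and integrating by parts as in Lemma~\ref{equivnorm}, which here gives $\int_{\R^d}\abs{\underline{\mathbf B}_v}^2(1+\abs{\nabla\zeta}^2)+\int_{\R^d}\abs{(\mathbf B_b)_h}^2=2\int_\Omega\mathbf B_v\dive\mathbf B+2\int_\Omega(\curl\mathbf B)_h^\perp\cdot\mathbf B_h$, so that $\abs{\underline{\mathbf B}}_2^2\leq C(\abs{\nabla\zeta}_\infty)\Abs{\mathbf B}_2\big(\Abs{\dive\mathbf B}_2+\Abs{\curl\mathbf B}_2\big)$; Young's inequality then closes the estimate. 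I expect the only real work here to be bookkeeping: justifying the boundary integrations by parts at $H(\dive,\Omega)\cap H(\curl,\Omega)$ regularity (just as in Lemmas~\ref{leminterm}--\ref{equivnorm}) and checking that, in the interchanged configuration, all the sign-sensitive boundary terms line up favourably -- which, as indicated, they do precisely because the bottom is flat.
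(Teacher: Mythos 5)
Your proof is correct, but it takes a genuinely different route from the paper's. The paper works directly with $\tbU$: from $\dive\tbU=0$ it writes $-\Delta_{X,z}\tbU=\curl\bom$, integrates by parts to obtain $\int_\Omega\abs{\nabla_{X,z}\tbU}^2=\int_{\partial\Omega}\big(\partial_{\overrightarrow{n}}\tbU+\overrightarrow{n}\times\curl\tbU\big)\cdot\tbU+\int_\Omega\abs{\bom}^2$, shows that the bottom contribution vanishes (flat bottom, $w_b=0$), and then carries out a fairly lengthy computation to reduce the surface integrand to the tangential-derivative expression $F$ in \eqref{defF}; substituting $\widetilde{\underline{V}}=\nabla^\perp\tpsi-\widetilde{\underline{w}}\nabla\zeta$ and integrating by parts on $\R^d$, it closes the estimate with the trace lemma, Lemma~\ref{lempoincare}, Young's inequality, and Lemma~\ref{existtildepsi}. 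You instead lift the surface data to a controlled $\mathbf{R}\in H^1(\Omega)$, subtract, and observe that $\mathbf{B}=\tbU-\mathbf{R}$ lives in the conjugate configuration (normal at the surface, tangential at the flat bottom), so that the roles of the two boundaries in Lemmas~\ref{lempoincare}--\ref{equivnorm} are exchanged. The bookkeeping goes through as you claim: the oblique decomposition of Lemma~\ref{lempoincare} still splits $\mathbf{B}$ into a piece vanishing at the surface and one vanishing at the bottom; in \eqref{nablacurldiv} the curvature term that vanished at the flat bottom in \eqref{contribbott} now appears at the curved surface but is harmlessly bounded by $C\abs{\zeta}_{W^{2,\infty}}\abs{\underline{\mathbf B}}_2^2$, the tangential-derivative term sits at the flat bottom where it cancels, and your trace identity $\int_{\R^d}\abs{\underline{\mathbf B}_v}^2(1+\abs{\nabla\zeta}^2)+\int_{\R^d}\abs{(\mathbf B_b)_h}^2=2\int_\Omega\mathbf B_v\dive\mathbf B+2\int_\Omega(\curl\mathbf B)_h^\perp\cdot\mathbf B_h$ is correct. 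What each approach buys: yours is more modular and reuses the structure of Lemmas~\ref{lempoincare}--\ref{equivnorm}, at the price of re-proving conjugate versions of them and of constructing the lift $\mathbf R$; the paper's is self-contained in a single integration-by-parts pass, at the price of the algebraic reduction to \eqref{defF}, which is precisely the part your approach sidesteps.
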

\begin{proof}
Let us first remark that the fact that $\dive \tbU=0$ implies
\begin{equation*}
\curl\curl \tbU=-\Delta_{X,z} \tbU  =\curl \bom
\end{equation*}
and therefore
\begin{equation*}
-\int_{\Omega}\Delta_{X,z} \tbU\cdot \tbU = \int_{\Omega}\curl \bom \cdot \tbU
\end{equation*}
so that, after integrating by parts we have that
\begin{eqnarray*}
\int_{\Omega}\abs{\nabla_{X,z} \tbU}^2&=&\int_{\pa \Omega}
\left(\pa_{\overrightarrow{n}} \tbU+ \overrightarrow{n}\times \bom\right)\cdot \tbU  +\int_{\Omega} \bom\cdot
\curl \tbU\\
&=&\int_{\pa \Omega} \left(\pa_{\overrightarrow{n}} \tbU+ \overrightarrow{n}\times \curl \tbU \right)\cdot \tbU  +\int_{\Omega} \abs{\bom}^2,
\end{eqnarray*}
where $\overrightarrow{n}$ is the {\it outward} unit normal vector to the boundary
$\Gamma$ of $\Omega$.
At the bottom $\overrightarrow{n}=-N_b$ and $w_b=0$;  since moreover
$$\pa_{\overrightarrow{n}} \tbU_\bott =-\pa_z \tbU_\bott
\quad\mbox{ and }\quad
{(\curl \tbU)_h}_\bott=\dz \widetilde{\bf V}^\perp_\bott,$$
one easily gets that $\left(\pa_{\overrightarrow{n}} \tbU+
  \overrightarrow{n}\times \curl \tbU \right)_\bott=0$ and therefore
\begin{eqnarray}
\nonumber
\int_{\Omega} \abs{\nabla_{X,z} \tbU}^2 &=& \int_{\rm surf} \left(\pa_n
  \tbU+ n\times \curl\tbU \right)\cdot \tbU  +\int_{\Omega}
\abs{\bom}^2\\
\label{gaz}
&=& \int_{\R^d} \left(N\cdot \nabla_{X,z}
  \tbU+ N\times \curl\tbU \right)_\surf\cdot \widetilde\uU  +\int_{\Omega}
\abs{\bom}^2.
\end{eqnarray}
In order to evaluate the boundary integral, let us remark that
$$
(N\cdot \nabla_{X,z}
  \tbU)_\surf=\left(\begin{array}{l}
-\nabla\zeta\cdot
\nabla\widetilde{\underline{V}}+(1+\abs{\nabla\zeta}^2)\dz\widetilde{\bf
  V}_\surf\\
-\nabla\zeta\cdot (\nabla \widetilde{\bf w})_\surf+\dz\widetilde{\bf w}_\surf
\end{array}\right)
$$
and
$$
\left(N\times \curl\tbU \right)_\surf=\left(\begin{array}{l}
-\dz\widetilde{\bf V}_\surf+\nabla\widetilde{\underline{w}}-\nabla\zeta \dz \widetilde{\bf
  w}_\surf\\
\hspace{1.55cm}-\nabla\cdot \widetilde{\underline{{V}}}^\perp\nabla^\perp
\zeta+\nabla\zeta\cdot \dz \widetilde{\bf V}^\perp \nabla^\perp \zeta\\
-\nabla\zeta\cdot \dz \widetilde{\bf V}_\surf+\nabla\zeta\cdot (\nabla \widetilde{\bf w})_\surf
\end{array}\right),
$$
and use the identity
$$
\abs{\nabla\zeta}^2\dz \widetilde{\bf V}_\surf=(\nabla\zeta\cdot\dz \widetilde{\bf
  V}_\surf )\nabla\zeta+(\nabla\zeta^\perp\cdot\dz \widetilde{\bf V}_\surf )\nabla\zeta^\perp
$$
to obtain
\begin{eqnarray*}
\left(N\cdot \nabla_{X,z}
  \tbU+ N\times \curl\tbU \right)_\surf&=&\big(N\cdot  \dz \widetilde{\bf
  U}_\surf)N\\
&+&
\left(\begin{array}{l}
\nabla\widetilde{\underline{w}}-(\nabla\zeta\cdot \nabla) \widetilde{\underline{
  V}}-\nabla\cdot \widetilde{\underline{V}}^\perp \nabla^\perp \zeta\\
0
\end{array}\right).
\end{eqnarray*}
Using the fact that $\dive \tbU=0$ we now remark that
\begin{eqnarray*}
\big(N\cdot  \dz \widetilde{\bf U}_\surf)&=&-\dz \widetilde{\bf V}_\surf\cdot
\nabla\zeta+\dz \widetilde{\bf w}_\surf\\
&=&-\dz \widetilde{\bf V}_\surf\cdot
\nabla\zeta-(\nabla\cdot \widetilde{\bf V})_\surf\\
&=&-\nabla\cdot \widetilde{\underline{V}},
\end{eqnarray*}
and we can then deduce the following expression
\begin{eqnarray}
\nonumber
\left(N\cdot \nabla_{X,z}
  \tbU+ N\times \curl\tbU \right)_\surf&=&\left(\begin{array}{l}
\nabla\widetilde{\underline{w}}-(\nabla^\perp\zeta\cdot \nabla)\widetilde{\underline{
  V}}^\perp\\
-\nabla\cdot \widetilde{\underline{V}}
\end{array}\right)\\
\label{defF}
&:=&F
\end{eqnarray}
Going back to \eqref{gaz}, this gives
\begin{eqnarray}
\label{noel}
\int_{\Omega} \abs{\nabla_{X,z} \tbU}^2&=& \int_{\R^d} F\cdot \widetilde{\uU}  +\int_{\Omega}
\abs{\bom}^2\\
\label{noel1}
&=&\int_{\R^d} 2 \widetilde{\underline{V}}\cdot \nabla\widetilde{\underline{{w}}}-(\nabla^\perp\zeta\cdot \nabla)\widetilde{\underline{
  V}}^\perp\cdot \widetilde{\underline{V}}+\int_{\Omega}
\abs{\bom}^2.
\end{eqnarray}
Noting that $\widetilde{\underline{
    V}}=\nabla^\perp\tpsi-\widetilde{\underline{w}}\nabla\zeta$ and integrating by parts if necessary, all the components of the integrand
of the first term can be put under the form
$$
P(\zeta)\partial_i \nabla^\perp \tpsi \tU_j ,\quad
Q(\zeta) (\nabla\tpsi)_i\tU_j
\quad\mbox{ or }\quad
Q(\zeta) \widetilde{\underline{w}}^2
\qquad (1\leq i,j\leq d),
$$
with $P(\zeta)$ (resp. $Q(\zeta)$) a generic notation for a polynomial
in the first (resp. and second) order derivatives of $\zeta$.
We deduce therefore by standard product estimates that
\begin{eqnarray}
\nonumber
\Abs{\nabla_{X,z}\tbU}_2^2 &\leq& C(\abs{\zeta}_{W^{2,\infty}})
\big(\abs{\nabla\tpsi}_{H^{1/2}}\abs{\widetilde\uU}_{H^{1/2}}+
\abs{\nabla\tpsi}_{2}\abs{\widetilde\uU}_{2}+\abs{\widetilde{\underline{
    w}}}_2^2\big)+\Abs{\bom}_2^2\\
\label{H1estint}
&\leq&C(\abs{\zeta}_{W^{2,\infty}})
\big(\abs{\nabla\tpsi}_{H^{1/2}}\abs{\widetilde\uU}_{H^{1/2}}+\abs{\widetilde{\underline{w}}}_2^2\big)+\Abs{\bom}_2^2.
\end{eqnarray}
Remarking now that
\begin{align*}
\abs{\widetilde{\underline{w}}}_2^2
&=2\int_\Omega \widetilde{\bf w}\partial_z \widetilde{\bf w}\\
&\leq C(H_0,\abs{\zeta}_\infty)\Abs{\widetilde{\bf w}}_2\Abs{\dz \widetilde{\bf w}}_2
\end{align*}
and recalling that the trace lemma and Lemma \ref{lempoincare} yield
$$
\abs{\widetilde\uU}_{H^{1/2}}\leq C(\abs{\zeta}_{W^{1,\infty}})\Abs{\tbU}_{H^1},
$$
we easily deduce from \eqref{H1estint} and Young's inequality that
$$
\Abs{\tbU}_{H^1}^2 \leq C(H_0,\abs{\zeta}_{W^{2,\infty}})\big(\abs{\nabla\tpsi}_{H^{1/2}}^2+\Abs{\tbU}_2^2+\Abs{\bom}_2^2\big).
$$
Using the upper bound on $\Abs{\tbU}_2$ provided by \eqref{L2est},
we get
$$
\Abs{\tbU}_2+\Abs{\nabla_{X,z}\tbU}_2\leq  C\big(\abs{\zeta}_{W^{2,\infty}},H_0,\frac{1}{h_{\min}}\big)\big(\Abs{\bom}_{2,b}+\abs{\nabla\tpsi}_{H^{1/2}}),
$$
and the $H^1$-estimate of the lemma follows from Lemma \ref{existtildepsi}.
\end{proof}
Letting
$$
\bU=\tbU+\nabla_{X,z}\Phi,
$$
with $\Phi$ solving \eqref{eqPhi}, we have therefore constructed a
solution to \eqref{divrot}. Uniqueness of this solution follows from
Lemma \ref{lemlem}, while the estimate given in the statement of the
theorem follows from Lemma \ref{lemlem} and the estimate
$$
\Abs{\nabla_{X,z}\Phi}_{H^1}\leq C(\frac{1}{h_{\min}},\abs{\zeta}_{W^{2,\infty}})\abs{\nabla\psi}_{H^{1/2}},
$$
which is a well known estimate from the study of irrotational water
waves (see for instance Corollary 2.40 of \cite{Lannes_book}).

\subsection{Inverting the $\curl$ operator}\label{sectinvertcurl}

Theorem \ref{prop1} can be used to construct an inverse of the
curl operator on the space of divergence free vector fields.
\begin{cor}\label{invertcurl}
Let $\zeta\in W^{2,\infty}(\R^d)$ be such that \eqref{hmin} is
satisfied and $\Omega$ be as in \eqref{defdomain}. Let also ${\bf
  C}\in H(\mbox{\textnormal{div}}_0,\Omega)$ be such that $C_b\cdot
N_b=0$. Then there exists a unique solution ${\bf B}\in H^1(\Omega)^3$
to the boundary value problem
$$
\left\lbrace
\begin{array}{lll}
\curl {\bf B} =& {\bf C}&\quad \mbox{ in }\quad \Omega\\
\dive {\bf B} = &0&\quad \mbox{ in }\quad \Omega\\
B_\parallel=& \nabla^\perp \Delta^{-1}(\underline{C}\cdot N)&\quad\mbox{ at the surface}\\
B_b=&0 &\quad\mbox{ at the bottom}
\end{array}\right. ;
$$
we denote this solution ${\bf B}=\mbox{\rm curl}^{-1}{\bf C}$.
\end{cor}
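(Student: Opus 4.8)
The plan is to read this off from Theorem~\ref{prop1} applied with $\psi=0$; the only genuinely new ingredient is the strengthening of the impermeability condition at the bottom into the full Dirichlet condition $B_b=0$.

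First I would record that the hypotheses on $\mathbf{C}$ say precisely that $\mathbf{C}\in H_b(\mbox{\textnormal{div}}_0,\Omega)$ in the sense of Definition~\ref{divfree}: indeed $\mathbf{C}\in H(\mbox{\textnormal{div}}_0,\Omega)$, and $C_b\cdot N_b=0$ trivially belongs to $H_0^{-1/2}(\R^d)$, so that $\Abs{\mathbf{C}}_{2,b}=\Abs{\mathbf{C}}_2$. One may then apply Theorem~\ref{prop1} with the vorticity $\bom$ replaced by $\mathbf{C}$ and with $\psi=0$. Since the potential $\Phi$ solving \eqref{eqPhi0} vanishes when $\psi=0$, the theorem furnishes a unique $\mathbf{B}:={\mathbb U}_{II}[\zeta]\mathbf{C}=\curl\big({\mathbb A}[\zeta]\mathbf{C}\big)\in H^1(\Omega)^3$ which solves the first two equations of the system, which satisfies $B_\parallel=\nabla^\perp\Delta^{-1}(\underline{C}\cdot N)$ at the surface (the $\tpsi$ of Lemma~\ref{existtildepsi} attached to $\bom=\mathbf{C}$ being exactly $\Delta^{-1}(\underline{C}\cdot N)$), and which satisfies $B_b\cdot N_b=0$ at the bottom. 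It then remains only to promote this last relation to $B_b=0$.

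This is the step I expect to be the main obstacle, and the only one where the compatibility condition $C_b\cdot N_b=0$ is genuinely used: Theorem~\ref{prop1} delivers just the vanishing of the \emph{normal} trace of $\mathbf{B}$ at the bottom. Writing $B_b=(B_{b,h},0)+(B_b\cdot N_b)N_b$, the normal part already vanishes, so one must show that $B_{b,h}\in H^{1/2}(\R^d)^2$ is zero. The bottom analogue of the differential identity \eqref{iden} (Remark~\ref{vorbot}) gives $\nabla^\perp\cdot B_{b,h}=(\curl\mathbf{B})_\bott\cdot N_b=C_b\cdot N_b=0$, so $B_{b,h}$ is curl free; the plan is then to combine this with $\dive\mathbf{B}=0$ evaluated at the bottom, using once more $C_b\cdot N_b=0$ to control the vertical derivative there, to conclude that $B_{b,h}$ is also divergence free, hence harmonic on $\R^d$, hence identically zero. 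Granting this, $B_b=0$ and $\mathbf{B}$ solves the announced boundary value problem.

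Uniqueness is soft: if $\mathbf{B}_1,\mathbf{B}_2$ both solve the problem, then $\mathbf{D}:=\mathbf{B}_1-\mathbf{B}_2$ satisfies $\curl\mathbf{D}=0$ and $\dive\mathbf{D}=0$ in the simply connected domain $\Omega$, so $\mathbf{D}=\nabla_{X,z}\Theta$ for some $\Theta\in\dot{H}^2(\Omega)$ with $\Delta_{X,z}\Theta=0$. By the chain rule the surface condition reads $D_\parallel=\nabla\underline{\Theta}=0$, so $\underline{\Theta}$ is constant, while $\mathbf{D}_\bott=0$ gives $\nabla_{X,z}\Theta=0$ at the bottom, in particular $\partial_n\Theta=0$ there; classical uniqueness for the Laplace equation with a Dirichlet datum at the surface and a homogeneous Neumann datum at the bottom (Chapter~2 of \cite{Lannes_book}) then forces $\Theta$ to be constant, whence $\mathbf{D}=0$.
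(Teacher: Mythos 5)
Your overall structure matches the paper's: apply Theorem~\ref{prop1} with $\psi=0$ (so that $\Phi=0$ and $\mathbf{B}={\mathbb U}_{II}[\zeta]\mathbf{C}$), observe that this already gives $\curl\mathbf{B}=\mathbf{C}$, $\dive\mathbf{B}=0$, the surface condition, and $B_b\cdot N_b=0$, and then try to upgrade the last relation to $B_b=0$. Your uniqueness argument is correct, though over-engineered: a solution of the corollary's overdetermined problem is in particular a solution of the Theorem~\ref{prop1} problem with $\psi=0$, so uniqueness is inherited directly.

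The existence step is where the proposal stops short. You correctly read off from the bottom analogue of \eqref{iden} (Remark~\ref{vorbot}) that $\nabla^\perp\cdot B_{b,h}=C_b\cdot N_b=0$, so the $\Pi_\perp$-piece of $B_{b,h}$ vanishes. But you then merely \emph{describe} the remaining step ("combine this with $\dive\mathbf{B}=0$ evaluated at the bottom\,\dots\,to conclude that $B_{b,h}$ is also divergence free") rather than carry it out; this is the entire content of the corollary. From $\dive\mathbf{B}=0$ one only gets $\nabla\cdot B_{b,h}=-(\partial_z B_v)_\bott$, and the hypothesis $C_b\cdot N_b=0$ gives no control of this \emph{normal} derivative of the vertical component; it is neither prescribed by the boundary conditions in \eqref{divrot} nor by the trace of $\mathbf{C}$. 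The paper's own proof is a one-liner asserting $(B_b\times N_b)^\perp=\nabla^\perp\Delta^{-1}(C_b\cdot N_b)$, which is exactly the statement $\Pi\,B_{b,h}=0$; your route is an attempted derivation of this same observation via the Hodge decomposition of $B_{b,h}$, and it breaks down at precisely the point the paper leaves implicit. In fact, a direct Fourier computation on the flat strip with $\zeta=0$, $d=1$, and $\mathbf{C}=(0,c(x),0)$, $c\in L^2(\R)$ (which is divergence-free with $C_b\cdot N_b=0$), gives for the Theorem~\ref{prop1} solution $\widehat{B_{b,1}}(\xi)=-\hat c(\xi)\tanh(\xi)/\xi$, so the tangential trace at the bottom does not vanish unless $c\equiv 0$. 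This indicates that the step you left as a plan does not close in the stated generality, and a proof of the corollary as written would require either additional hypotheses on $\mathbf{C}$ or a reformulation. As submitted, your proposal does not establish the result, and the missing step is not a routine detail.
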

\begin{proof}[Proof of the corollary]
A direct application of Theorem \ref{prop1} (with $\psi=0$) provides
existence and uniqueness of a solution to the same boundary problem as
in the statement of the corollary, but with the bottom boundary condition
replaced by $B_b\cdot N_b=0$. The fact that $B_b\times N_b=0$ comes
from the observation that
$$
(B_b\times N_b)^\perp=\nabla^\perp\Delta^{-1}(C_b\cdot N_b),
$$
which is equal to zero since $C_b\cdot N_b=0$ by assumption.
\end{proof}

\subsection{The straightened div-curl problem}\label{sectstraight}

We know from Theorem \ref{prop1} that there exists a unique solution $\bU\in H^1(\Omega)^{3}$ of
the problem
$$
\left\lbrace
\begin{array}{lll}
\curl \bU =& \bom&\quad \mbox{ in }\quad \Omega\\
\dive \bU = &0&\quad \mbox{ in }\quad \Omega\\
\Vp=&
\nabla\psi+\nabla^\perp\Delta^{-1}(\uom\cdot N)&\quad\mbox{ at the surface}\\
U_b\cdot N_b=&0 &\quad\mbox{ at the bottom}.
\end{array}\right.
$$
The study of the regularity properties of this solution is made easier by
working with a transformed equivalent div-curl problem on the flat
strip $\cS=\R^d\times (-H_0,0)$.
We therefore introduce the diffeomorphism $\Sigma: \cS\to \Omega$ mapping the flat
strip $\cS$
onto the fluid domain and defined as
$$
	\Sigma(t,\cdot):
	\begin{array}{lcl}
	\cS &\to&\Omega\\
	(X,z) &\mapsto & \Sigma(t,X,z)=(X, z+\sigma(t,X,z)),
	\end{array}
$$
where $\sigma$ is the scalar function
$$
        \sigma(X,z)=\frac{1}{H_0}(z+H_0)\zeta.
$$
Defining
$$
U=\left(\begin{array}{c} V\\ w\end{array}\right)=\bU\circ \Sigma, \qquad \omega=\bom\circ \Sigma,
$$
one readily gets that $\bU\in H^1(\Omega)^3$ is the unique solution to
the above div-curl problem if and only if $U\in H^1(\cS)^3$ is the unique
solution to the transformed div-curl problem
\begin{equation}\label{div-rotS}\left\lbrace
\begin{array}{lll}
\curls U =& \omega&\quad \mbox{ in }\quad \cS\\
\dives U = &0&\quad \mbox{ in }\quad \cS\\
\Vp=&
\nabla\psi+\nabla^\perp\Delta^{-1}(\uom\cdot N)&\quad\mbox{ at the surface}\\
U_b\cdot N_b=&0 &\quad\mbox{ at the bottom},
\end{array}\right.
\end{equation}
where we use the notations
\begin{equation}\label{notstar}
\curls U=(\curl \bU)\circ\Sigma=\nabla^\sigma_{X,z}\times U,\qquad \dives
U=(\dive \bU)\circ\Sigma=\nabla_{X,z}^\sigma\cdot U,
\end{equation}
with $\nabla_{X,z}^\sigma$ given by
\begin{equation}\label{defA}
\nabla_{X,z}^\sigma=(J_\Sigma^{-1})^T\nabla_{X,z},\quad\mbox{ with }\quad
(J_\Sigma^{-1})^T=\left(\begin{array}{cc} \mbox{Id}_{d\times d} &
    \frac{-\nabla \sigma}{1+\dz \sigma}\\0 &  \frac{1}{1+\dz \sigma}
\end{array}\right)
\end{equation}
($J_\Sigma=d_{X,z}\Sigma$ is the Jacobian matrix of the diffeomorphism
$\Sigma$). More generally, if $F={\bf F}\circ \Sigma$, we define using
the convenient notations of \cite{MasmoudiRousset}
\begin{equation}\label{notaMR1}
\partial_i^\sigma F=\partial_i^\sigma {\bf F}\circ \Sigma\qquad (i=t,x,y,z)
\end{equation}
and therefore
\begin{equation}\label{notaMR2}
\partial_i^\sigma=\partial_i-\frac{\partial_i\sigma}{1+\dz \sigma}\dz
\qquad (i=t,x,y)
\quad\mbox{ and }\quad
\dz^\sigma=\frac{1}{1+\dz \sigma}\dz.
\end{equation}
\begin{nota}\label{notastr}
We denote by ${\mathbb U}^\sigma[\zeta](\psi,\omega)$ the solution to the
straightened div-curl problem \eqref{div-rotS}. According to
Definition \ref{defimappings}, one has
$$
{\mathbb U}^\sigma[\zeta](\psi,\omega)={\mathbb
  U}[\zeta](\psi,\bom)\circ \Sigma,
$$
and we have the decomposition
\begin{equation}\label{decompS}
{\mathbb U}^\sigma[\zeta](\psi,\omega)={\mathbb U}_I^\sigma[\zeta]\psi+{\mathbb U}_{II}^\sigma[\zeta]\omega,
\end{equation}
with
$
{\mathbb U}_I^\sigma[\zeta]\psi={\mathbb U}_I[\zeta]\psi\circ\Sigma
$ and ${\mathbb U}^\sigma_{II}[\zeta]\omega={\mathbb U}_{II}[\zeta]\bom\circ\Sigma$.
\end{nota}
\begin{remark}\label{remdec}
Straightening the boundary value problems \eqref{eqA} and \eqref{eqPhi},
${\mathbb U}_I^\sigma[\zeta]\psi$ and ${\mathbb
  U}_{II}^\sigma[\zeta]\omega$ can be alternatively defined as
$$
{\mathbb U}_I^\sigma[\zeta]\psi=\nabla_{X,z}^\sigma\phi\quad\mbox{ and
}\quad {\mathbb U}_{II}^\sigma[\zeta]\omega=\curls A
$$
with
\begin{equation}\label{eqPhiS}
\left\lbrace
\begin{array}{l}
\dsp \nabla_{X,z} \cdot P(\Sigma)\nabla_{X,z} \phi=0 \quad\mbox{ in }\quad \cS\\
\dsp \phi_\surff=\psi,\qquad {\bf e}_z\cdot P(\Sigma)\nabla_{X,z} \phi_\bottf=0,
\end{array}\right.
\end{equation}
where $P(\Sigma)=(1+\dz\sigma) J_\Sigma^{-1} (J_\Sigma^{-1})^T$, and
\begin{equation}\label{eqcurlAS}
\left\lbrace
\begin{array}{rl}
\dsp \curls \curls A&=\omega  \quad\mbox{ in }\quad \cS\\
\dsp \dives A&=0  \quad\mbox{ in }\quad \cS\\
\dsp N_b\times A_b&=0\\
\dsp N\cdot \uA&=0\\
\dsp (\curls
  A)_\parallel&=\nabla^\perp \Delta^{-1}\uom\cdot N\\
\dsp N_b\cdot\curls A_b & = 0.
\end{array}\right.
\end{equation}
\end{remark}
\subsection{Higher order estimates}\label{secthigher}

 All the properties that
can be established for $\bU$ have of course a counterpart on the
straightened velocity field $U={\mathbb U}^\sigma[\zeta](\psi,\omega)$ introduced in the previous section. For instance, proceeding as for
\eqref{noel}, we know that
$$
\forall {\bf C}\in H^1(\Omega),\quad \int_\Omega\nabla_{X,z}\bU\cdot
\nabla_{X,z}{\bf C}=\int_{\Omega}\bom\cdot \curl{\bf C}+\int_{\R^d}F\cdot \underline{C},
$$
with $\underline{C}={\bf C}_\surf$ and  $F$ as in \eqref{defF}. Working on the straightened domain $\cS$,
this is equivalent to saying that
\begin{equation}\label{transfineq}
\forall {C}\in H^1(\cS),\quad \int_\cS\nabla_{X,z} U\cdot
P(\Sigma)\nabla_{X,z}{C}=\int_{\cS}(1+\dz\sigma)\omega\cdot \curls{C}+\int_{\R^d}F\cdot
\underline{C},
\end{equation}
with $P(\Sigma)=(1+\dz\sigma) J_\Sigma^{-1} (J_\Sigma^{-1})^T$ and
\begin{align*}
\nabla_{X,z} U\cdot
P(\Sigma)\nabla_{X,z}{C}:=\sum_{i,j,k} \pa_jU^iP(\Sigma)_{jk}\pa_kC^i.
\end{align*}
The $H^1$-estimate on $\bU$ given
in Theorem \ref{prop1} yields the following $H^1$-estimate on $U$,
\begin{equation}\label{estredresse}
\Abs{U}_2+\Abs{\nabla_{X,z}U}_2^2\leq
C(\frac{1}{h_{\min}},H_0,\abs{\zeta}_{W^{2,\infty}})\big(\Abs{\omega}_{2,b}+\abs{\nabla\psi}_{H^{1/2}}\big);
\end{equation}
the theorem below provides higher order estimates on $U$, that is,
estimates on $\Abs{\Lambda^k\nabla_{X,z}U}$ for $k\in \N$. It is
important to remark that this control is not given in terms
of $\abs{\Lambda^k \nabla\psi}_{H^{1/2}}$ but in terms of $\abs{\proj
  \psi_{(\alpha)}}_2$ ($0<\abs{\alpha}\leq k+1$), where
\begin{equation}\label{defproj}
\proj=\frac{\abs{D}}{(1+\abs{D})^{1/2}}
\end{equation}
(so that $\abs{\proj
  \psi_{(\alpha)}}_2\sim \abs{\nabla\psi_{(\alpha)}}_{H^{-1/2}}$), while the {\it good unknowns} $\psi_{(\alpha)}$ are defined as
\begin{equation}\label{defVpb}
\psi_{(\alpha)}=\partial^\alpha \psi-\uw\partial^\alpha\zeta
\end{equation}
(for $\alpha=0$, we take $\psi_{(\alpha)}=\psi$). We also recall that the spaces $H^{s,k}$ have been defined
in \eqref{defHsk}: the norm $\Abs{\cdot}_{H^{s,k}}$ controls a total
number of $s$ derivatives, including at most $k$ vertical ones. Note
also that, with
the convention that a summation over an empty set is equal to zero, the
estimate of the proposition coincides of course with the estimate of
Theorem \ref{prop1} when $k=0$.
\begin{proposition} \label{horizontalregurality} Let $N\in \N$, $N\geq 5$ and
  $\zeta\in H^N(\R^d)$ be such that \eqref{hmin} is
satisfied.  Under the assumptions of Theorem \ref{prop1},
  there is a unique solution $U\in H^1(S)$ to \eqref{div-rotS}; if
  moreover $0\leq k\leq N-1$ and $\Lambda^k\omega \in L^2(\cS)$ then
  the following higher order estimates hold
$$
\Abs{U}_{H^{k+1,1}}\leq M_N \big(\abs{\proj\psi}_{H^{1}}+\sum_{\alpha\in
  \N^d, 1 <\abs{\alpha}\leq k+1}\abs{\proj\psi_{(\alpha)}}_2
+\Abs{\Lambda^k\omega}_{2,b}\big),
$$
with $\dsp M_N=C(\frac{1}{h_{\min}},H_0,\abs{\zeta}_{H^N})$.
\end{proposition}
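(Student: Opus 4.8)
The plan is to exploit the linear decomposition of Remark~\ref{remdec}, writing $U={\mathbb U}^\sigma_I[\zeta]\psi+{\mathbb U}^\sigma_{II}[\zeta]\omega=\nabla^\sigma_{X,z}\phi+\curls A$ with $\phi$ solving \eqref{eqPhiS} and $A$ solving \eqref{eqcurlAS}, and to estimate the two contributions separately. The existence and uniqueness of $U\in H^1(\cS)$ is just Theorem~\ref{prop1} transported to the flat strip (note that $\zeta\in H^N\hookrightarrow W^{2,\infty}$ since $N\ge5$ and $d\le2$), so only the higher order estimate has to be proved, and I would do this by induction on $k$.

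For the irrotational part I would simply invoke the classical elliptic theory for the potential part of the water waves (e.g. Chapter~2 of \cite{Lannes_book}): differentiating $\nabla_{X,z}\cdot P(\Sigma)\nabla_{X,z}\phi=0$ in the horizontal variables and working with the Alinhac good unknown $\phi_{(\alpha)}=\partial^\alpha\phi-(\partial^\alpha\sigma)\dz^\sigma\phi$, one obtains
$$\Abs{\nabla^\sigma_{X,z}\phi}_{H^{k+1,1}}\le M_N\Big(\abs{\proj\psi}_{H^1}+\sum_{\alpha\in\N^d,\,1<\abs{\alpha}\le k+1}\abs{\proj\psi_{(\alpha)}}_2\Big),$$
the good unknowns $\psi_{(\alpha)}$ (which are the surface traces of the $\phi_{(\alpha)}$ up to lower order terms) rather than $\partial^\alpha\psi$ being precisely what keeps the source terms under control; at order $\abs{\alpha}\le1$ plain derivatives of $\psi$ suffice, whence the term $\abs{\proj\psi}_{H^1}$, and the operator $\proj$ reflects that $\psi$ lives only in a Beppo--Levi space. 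Since $N\ge5$ all the product and commutator estimates involved here close.

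For the rotational part I would prove by induction on $k\in\{0,\dots,N-1\}$ that $\Abs{\curls A}_{H^{k+1,1}}\le M_N\Abs{\Lambda^k\omega}_{2,b}$. Writing $\widetilde U:=\curls A$, this solves \eqref{div-rotS} with $\psi=0$, so $\dives\widetilde U=0$, $\curls\widetilde U=\omega$, $\widetilde U_b\cdot N_b=0$ and $\widetilde U_\parallel\surf=\nabla^\perp\tpsi$ with $\Delta\tpsi=\uom\cdot N$; the case $k=0$ is \eqref{estredresse} with $\psi=0$ (equivalently Lemma~\ref{lemlem} transported to $\cS$). For the inductive step I would first establish the higher order version of Lemma~\ref{existtildepsi}, namely $\abs{\nabla\tpsi}_{H^{k+1/2}}\le C(\frac{1}{h_{\min}},\abs{\zeta}_{W^{1,\infty}})\Abs{\Lambda^k\omega}_{2,b}$, by testing $\Delta\tpsi=\uom\cdot N$ against $\Lambda^{2k+1}\tpsi$ and integrating by parts over $\cS$ exactly as in that lemma. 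Then I would run the energy estimate: test the weak formulation \eqref{transfineq} with $\psi=0$ (so $F$ is as in \eqref{defF} with $\widetilde{\underline V}=\nabla^\perp\tpsi-\widetilde{\underline w}\nabla\zeta$) against $C=\Lambda^{2k}\widetilde U$ (after a standard regularization of $\Lambda^k$). Moving $\Lambda^k$ onto the other factor, the left-hand side controls $\Abs{\nabla_{X,z}\Lambda^k\widetilde U}_2^2$ by positivity of $P(\Sigma)$; the interior term on the right is $\int_\cS(1+\dz\sigma)\Lambda^k\omega\cdot\curls\Lambda^k\widetilde U$, bounded by $\epsilon\Abs{\nabla_{X,z}\Lambda^k\widetilde U}_2^2+C\Abs{\Lambda^k\omega}_2^2$ and absorbed; and the boundary term $\int_{\R^d}\Lambda^k F\cdot\Lambda^k\widetilde{\underline U}$ would be treated exactly as in the proof of Lemma~\ref{lemlem}, every component of $F$ being a first order horizontal derivative of $\widetilde{\underline U}$ or of $\nabla\tpsi$, so it is bounded by $C(\abs{\zeta}_{W^{2,\infty}})\abs{\Lambda^k\widetilde{\underline U}}_{H^{1/2}}^2$ plus $\abs{\nabla\tpsi}_{H^{k+1/2}}^2$ and then absorbed through the trace lemma, Lemma~\ref{lempoincare} and Young's inequality. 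All commutators generated by the $\sigma$-dependent coefficients involve horizontal derivatives of $\zeta$ of order $\le N$ times norms of $\widetilde U$ of order $\le k$, controlled by $M_N$ and the induction hypothesis. Finally, since $\Abs{\Lambda^{k+1}\widetilde U}_2\le\Abs{\Lambda^k\widetilde U}_2+\Abs{\nabla\Lambda^k\widetilde U}_2$ and $\Abs{\Lambda^k\dz\widetilde U}_2\le\Abs{\nabla_{X,z}\Lambda^k\widetilde U}_2$, with $\Abs{\Lambda^k\widetilde U}_2$ controlled at the previous step, this yields $\Abs{\widetilde U}_{H^{k+1,1}}\le M_N\Abs{\Lambda^k\omega}_{2,b}$ and closes the induction; adding the two contributions gives the proposition.

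The main obstacle I expect is the commutator bookkeeping: one has to check that every term produced by commuting $\Lambda^k$ through the coefficients built from $\sigma$ (in $P(\Sigma)$, $\curls$, $\dives$) only costs horizontal derivatives of $\zeta$ of order at most $N$ times norms of $U$ of strictly lower order, so that the induction really closes with a constant $M_N=C(\frac{1}{h_{\min}},H_0,\abs{\zeta}_{H^N})$; the assumption $N\ge5$ is what provides the room (via Moser and Kato--Ponce type estimates, using $H^{N-1}\hookrightarrow W^{2,\infty}$ for $d\le2$) for this to work. The only conceptual subtlety — the appearance of the good unknowns $\psi_{(\alpha)}$ rather than $\partial^\alpha\psi$ in the irrotational part, which prevents a spurious loss of half a derivative — is classical and can be imported from the irrotational theory.
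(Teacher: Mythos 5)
You propose to split $U={\mathbb U}^\sigma_I[\zeta]\psi+{\mathbb U}^\sigma_{II}[\zeta]\omega$ and estimate the irrotational and rotational pieces separately. The paper does \emph{not} decompose: it tests the weak formulation \eqref{transfineq} for the full $U$ against $C=\partial^{2\beta}U$, so that the quantity appearing in the boundary term $I_3$ is exactly $\nabla\partial^\beta\psi-\uw\nabla\partial^\beta\zeta$, i.e.\ the family $\psi_{(\alpha)}$ for $\alpha=\beta+e_j$, where $\uw$ is the vertical surface velocity of the \emph{full} velocity field $U$; this is precisely the data on the right-hand side of the statement (and in the energy \eqref{defenergy}), so the estimate closes.

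Your decomposition breaks this at top order. The Alinhac good unknown arising from the irrotational problem \eqref{eqPhiS} has surface trace $\partial^\alpha\psi-\uw_I\partial^\alpha\zeta$, where $\uw_I$ is the vertical velocity of the \emph{irrotational} part $\nabla^\sigma_{X,z}\phi$ only; this is \emph{not} $\psi_{(\alpha)}$, and your parenthetical remark that the $\psi_{(\alpha)}$ ``are the surface traces of the $\phi_{(\alpha)}$ up to lower order terms'' is where the argument fails. The difference is $\uw_{II}\partial^\alpha\zeta$ ($\uw_{II}$ the rotational contribution), and for $\abs\alpha=k+1=N$ one only has $\partial^\alpha\zeta\in L^2(\R^d)$; since $\proj$ costs roughly half a derivative, $\abs{\proj(\uw_{II}\partial^\alpha\zeta)}_2$ is not bounded by $M_N=C(\ldots,\abs\zeta_{H^N})$ — it would require $\abs{\zeta}_{H^{N+1/2}}$. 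The same obstruction ruins the claimed bound $\Abs{\curls A}_{H^{k+1,1}}\le M_N\Abs{\Lambda^k\omega}_{2,b}$: running the energy estimate on $\widetilde U=\curls A$ with $\psi=0$, the main boundary term (after the $\nabla^\perp\tpsi$ contribution drops out by antisymmetry) is $-2\int\uw_{II}\nabla\partial^\beta\zeta\cdot\nabla\partial^\beta\uw_{II}$, and \eqref{prodP} produces $\abs{\proj(\uw_{II}\nabla\partial^\beta\zeta)}_2$ for $\abs\beta=k=N-1$, again out of control. The rest of your bookkeeping (induction on $k$, the commutator with $P(\Sigma)$, the higher-order Lemma~\ref{estimtpsi}, use of Lemma~\ref{lempoincare}) is in line with the paper's proof; the failure is the separation itself, which discards the cancellation that makes $\psi_{(\alpha)}$ (built with the full $\uw$) the correct good unknown. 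Repairing it would mean reconstituting that cancellation across the two pieces, i.e.\ reverting to the undecomposed argument.
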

\begin{proof}
In order to study the regularity of $U$ we take\footnote{Of course,
  this is not correct since $\partial^{2\beta}U$ has not the
  $H^1$-regularity of the test functions $C$ in \eqref{transfineq}. One
should instead consider a regularization of $\partial^{2\beta}U$, such
as, for instance $\chi(\delta \abs{D})\partial^{2\beta}U$, where
$\delta>0$ and
$\chi$ is a smooth, compactly supported function, which is equal to
one in a neighborhood of the origin. One should prove all the estimates
for this regularized version and then deduce the result by letting
$\delta \to 0$. We omit this technical step, and refer to the proof
of Lemma 2.39 in \cite{Lannes_book} where the details are provided in
a similar context.}
$C=\partial^{2\beta}U$ (with $\beta\in \N^d$ and $k=\abs{\beta}\leq N-1$) in
\eqref{transfineq},
\begin{equation*}
\int_{S}\nabla_{X,z} U\cdot P(\Sigma) \nabla_{X,z}
\partial^{2\beta}U=\int_{S}(1+\dz\sigma)\omega\cdot
\curls \partial^{2\beta} U +\int_{\R^d}F\cdot\partial^{2\beta}\underline{U}.
\end{equation*}
Integrating by parts and denoting $\Lambda=(1-\Delta)^{1/2}$, we get
\begin{align*}
\int_{\cS}\nabla_{X,z}\partial^\beta U &\cdot P(\Sigma)\nabla_{X,z}\partial^\beta
U
=\int_{\cS} \nabla_{X,z}\partial^\beta U \cdot
[\partial^\beta,P(\Sigma)]\nabla_{X,z}U\\
&
+\int_{\cS} \Lambda^{k} \omega \cdot \Lambda^{-k}\big((1+\dz\sigma )\curls \partial^{2\beta} U\big)+\int_{\R^d}\partial^\beta F\cdot\partial^\beta \uU\\
&= I_1+I_2+I_3.
\end{align*}
Since $P(\Sigma)$ is coercive in the sense that
\begin{equation}\label{Pcoerc}
\forall \Theta\in \R^{d+1},\qquad \abs{\Theta}^2\leq C(\frac{1}{h_{\min}},\abs{\zeta}_{W^{1,\infty}})\Theta\cdot P(\Sigma)\Theta
\end{equation}
(see Lemma 2.27 of \cite{Lannes_book} or Lemma 2.5 in \cite{Lannes_JAMS}), we have that
\begin{equation}\label{eqI0}
\Abs{\partial^\beta \nabla_{X,z}U}_2^2\leq  C(\frac{1}{h_{\min}},\abs{\zeta}_{W^{1,\infty}})\big(I_1+I_2+I_3),
\end{equation}
and we therefore need upper bounds on $I_i$ ($1\leq i\leq 3$).\\
- {\it Upper bound for $I_1$}.  Denoting
$Q(\Sigma)=P(\Sigma)-\mbox{Id}$, we deduce from the standard
commutator estimate
$$
\forall f \in H^{(t_0+1)}\cap H^{k}(\R^d),\quad \forall g\in
H^{k-1}(\R^d), \qquad \abs{[\partial^\beta ,f]g}_2\lesssim
\abs{f}_{H^{(t_0+1)\vee k}}\abs{g}_{H^{k-1}},
$$
(with $t_0>d/2$) that
\begin{align}
\nonumber
\abs{I_1}&\leq \Abs{\partial^\beta
  \nabla_{X,z}U}_2\Abs{Q(\Sigma)}_{L^\infty
  H^{(t_0+1)\vee
    k}}\Abs{\Lambda^{k-1}\nabla_{X,z}U}_{2}\\
\label{eqI1}
& \leq M_N \Abs{\partial^\beta
  \nabla_{X,z}U}_2\Abs{\Lambda^{k-1}\nabla_{X,z}U}_{2},
\end{align}
where we used the fact that $N\geq t_0+2$ (if $t_0>d/2$
is chosen close enough to $d/2$).\\
- {\it Upper bound for $I_2$}.
We can  write the operator $\Lambda^{-k}\big((1+\dz\sigma )\curls
\partial^{2\beta} \cdot$ as a sum of
vectorial operators with coordinates  of the form
$$
\Lambda^{-k}\partial^{2\beta}\partial_{j_2}\cdot,\quad\Lambda^{-k} (\partial_{j_1}\sigma)^l \partial^{2\beta}\partial_{j_2}\cdot,
\qquad  j_1,j_2=x,y,z, \quad l=1,2.
$$
We can therefore use the product estimate (see \cite{Horm}, p. 240)
\begin{equation}\label{prodHorm}
\forall f\in H^{r_1}(\R^d),\quad g\in H^{r_2}(\R^d),\quad
\abs{fg}_{H^r}\lesssim \abs{f}_{H^{r_1}}\abs{g}_{H^{r_2}},
\end{equation}
for all $r,r_1,r_2\in \R$ such that $r_1+r_2\geq 0$, $r\leq r_j$
($j=1,2$) and $r<r_1+r_2-d/2$ to deduce (taking $r=-k$,
$r_1=k\vee t_0$, $r_2=-k$) that
$$
\Abs{\Lambda^{-k}\big((1+\dz\sigma )\curls
\partial^{2\beta} U\big)}_{2}\leq
C(\frac{1}{h_{\min}},\abs{\nabla_{X,z}\sigma}_{L^\infty H^{k\vee t_0}})
\Abs{\partial^\beta \nabla_{X,z}U}_2.
$$
Using a
simple Cauchy-Schwarz inequality we then get that
\begin{equation}\label{eqI2}
\abs{I_2}\leq M_N\Abs{\Lambda^k \omega}_2\Abs{\partial^\beta\nabla_{X,z}U}_2.
\end{equation}
- {\it Upper bound for $I_3$}. Proceeding as for \eqref{noel1}, we get
\begin{align*}
I_3=&\int_{\R^d} 2\partial^\beta\uV\cdot \nabla\partial^\beta \uw
-(\nabla^\perp\zeta\cdot\nabla)\partial^\beta\uV^\perp\cdot\partial^\beta
\uV-[\partial^\beta,\nabla^\perp\zeta]\cdot\nabla\uV^\perp\cdot\partial^\beta\uV\\
:=&I_{31}+I_{32}+I_{33},
\end{align*}
and we now turn to give upper bounds on the three components of the
right-hand-side. Substituting
\begin{eqnarray*}
\partial^\beta
\uV&=&\partial^\beta\Vp-\partial^\beta (\uw\nabla\zeta)\\
&=&\partial^\beta(\nabla\psi+\nabla^\perp \tpsi)-(\partial^\beta
\uw)\nabla\zeta-\uw\nabla\partial^\beta\zeta-[\partial^\beta,\uw,\nabla\zeta]\\
&=& \nabla\partial^\beta\psi-\uw\partial^\beta\nabla\zeta -(\partial^\beta
\uw)\nabla\zeta+\partial^\beta \nabla^\perp\tpsi
-[\partial^\beta,\uw,\nabla\zeta]
\end{eqnarray*}
 one gets
$$
I_{31}=2\int_{\R^d}(\nabla\partial^\beta \psi-\uw\nabla\partial^\beta \zeta)\cdot
\nabla\partial^\beta\uw+2\int_{\R^d}\big(-\partial^\beta\uw\nabla\zeta-[\partial^\beta,\uw,\nabla\zeta]\big)\cdot\nabla\partial^\beta\uw
$$
(note that the term involving $\nabla^\perp\tpsi$ vanishes).
Using the product estimate
\begin{equation}\label{prodP}
\forall f,g\in H^{1/2}(\R^d),\qquad \int_{\R^d} f\partial_j g \leq
\abs{\proj f}_2\abs{g}_{H^{1/2}} \quad (1\leq j\leq d)
\end{equation}
to control the first term, and integrating by parts in the second one,
we obtain
\begin{align*}
\abs{I_{31}}\leq& 2\abs{\proj\big(
  \nabla \partial^\beta\psi-\uw\nabla\partial^\beta \zeta\big)}_2\abs{\partial^\beta
  \uw}_{H^{1/2}}\\
&+\abs{\Delta\zeta}_{\infty}\abs{\partial^\beta
  \uw}_2^2+\abs{\nabla[\partial^\beta,\uw,\nabla\zeta]}_2 \abs{\partial^\beta
  \uw}_2\\
\lesssim & \abs{\proj \big(\nabla \partial^\beta\psi-\uw\nabla\partial^\beta \zeta\big)}_2\Abs{\partial^\beta
  w}_{H^{1}}+\abs{\zeta}_{H^N} \abs{\Lambda^k \uw}_2^2,
\end{align*}
where the last inequality stems from the trace lemma, the assumption
that $N\geq 5$ and standard product estimates; we also have $\Abs{\partial^\beta w}_{H^1}\lesssim
\Abs{\nabla_{X,z}\partial^\beta w}$ by Poincar\'e's inequality. For the second term of the
right-hand-side, we also use the fact that $\Lambda^k w$ vanishes
at the bottom to write
$$
\abs{\Lambda^k \uw}_{2}^2=2\int_{\cS}
 \Lambda^kw \dz\Lambda^k w
\leq2\Abs{\Lambda^k w}_2\Abs{\dz\Lambda^k w}_2.
$$
We therefore get
$$
\abs{I_{31}}\leq M_N\big(\abs{\proj\big(\nabla \partial^\beta\psi-\uw\nabla\partial^\beta \zeta\big)}_2+\Abs{\Lambda^{k-1}\nabla_{X,z}U}_{2}\big)\Abs{\Lambda^k\nabla_{X,z}U}_2.
$$
For $I_{32}$, we make the same substitution for $\partial^\beta\uV$ as
above to obtain
\begin{align*}
I_{32}=&\int_{\R^d}\nabla^\perp\zeta\cdot\nabla\big[\big(\partial^\beta\nabla\psi-\uw\nabla\partial^\beta\zeta\big)+\partial^\beta\nabla^\perp\tpsi-[\partial^\beta,\uw,\nabla\zeta]\big]
\cdot (\partial^\beta \uV-\partial^\beta\uw \nabla\zeta)^\perp\\
&+\int_{\R^d}\big((\nabla^\perp\zeta\cdot\nabla)\nabla\zeta\cdot \nabla^\perp\zeta\big)\abs{\partial^\beta\uw}^2;
\end{align*}
proceeding as for $I_{31}$ we therefore obtain
\begin{align*}
\abs{I_{32}}\lesssim&M_N \big((\abs{\proj\big(
  \nabla\partial^\beta\psi-\uw\nabla\partial^\beta\zeta\big)}_2 +\abs{\proj\partial^\beta\nabla^\perp\tpsi}_2+\abs{\proj\Lambda^{k-1}w}_2)\abs{\partial^\beta\uU}_{H^{1/2}}+\abs{\partial^\beta\uw}_2^2\big)\\
\leq& M_N \big(\abs{\proj\big(
  \nabla\partial^\beta\psi-\uw\nabla\partial^\beta\zeta\big)}_2+\abs{\proj\partial^\beta\nabla^\perp\tpsi}_2+\Abs{\Lambda^{k-1}\nabla_{X,z}U}_{2}\big)\Abs{\Lambda^k\nabla_{X,z}U}_{2}.
\end{align*}
We therefore need the following lemma.
\begin{lemma}\label{estimtpsi}
Under the assumptions of the proposition, the solution $\tpsi$ to the
equation $\Delta\tpsi=\uom\cdot N$ furnished by Lemma
\ref{existtildepsi} satisfies the estimate
$$
\abs{
  \Lambda^k\nabla\tpsi}_{H^{1/2}} \leq M_N
\Abs{\Lambda^k\omega}_{2,b}.
$$
\end{lemma}
\begin{proof}
Proceeding as in the proof of Lemma \ref{existtildepsi} and using the same notations, we take the $L^2$-scalar
product of the equation $\Delta\tpsi=\uom\cdot N$ with
$\Lambda^{2k+1}\tpsi$ to obtain
\begin{align*}
\abs{
  \Lambda^k\nabla\tpsi}_{H^{1/2}}=&-\int_{\R^d}\Lambda^{2k+1}\tpsi^{\rm
  ext}_b
\omega_b\cdot
N_b-\int_{\cS}(1+\dz\sigma)\nabla_{X,z}^\sigma\Lambda^{2k+1}\tpsi^{\rm
  ext}\cdot \omega\\
=& -\int_{\R^d}\Lambda^{2k+1}\tpsi^{\rm ext}_b
\omega_b\cdot N_b-\int_\cS
(1+\dz\sigma)(J_\Sigma^{-1})^T\nabla_{X,z}\Lambda^{k+1}\tpsi^{\rm ext}\cdot\Lambda^k\omega\\
& -\int_{\cS} \nabla_{X,z}\Lambda^{k+1}\tpsi^{\rm ext}\cdot [\Lambda^k,(1+\dz\sigma)
J_{\Sigma}^{-1}] \omega.
\end{align*}
One readily deduces that
$$
\abs{
  \Lambda^k\nabla\tpsi}_{H^{1/2}} \lesssim \abs{\Lambda^k
  \tpsi_b^{\rm ext}}_{\dot{H}^{1/2}}\abs{\Lambda^k \omega_b\cdot
  N_b}_{H_0^{-1/2}}+ M_N \Abs{\nabla_{X,z}\Lambda^{k+1}\tpsi^{\rm ext}}_2\Abs{\Lambda^k\omega}_2,
$$
and the result follows from the fact that
$$
\abs{\Lambda^k
  \tpsi_b^{\rm
    ext}}_{\dot{H}^{1/2}}+\Abs{\Lambda^{k+1}\nabla_{X,z}\tpsi^{\rm ext}}_2\lesssim
\abs{\Lambda^k\nabla \tpsi}_{H^{1/2}}.
$$
\end{proof}
Using this lemma we obtain the following bound on $I_{32}$,
$$
\abs{I_{32}}
\leq M_N \big(\abs{\proj\big(
  \nabla\partial^\beta\psi-\uw\nabla\partial^\beta\zeta\big)}_2\\
+
\Abs{\Lambda^k\omega}_{2,b}+\Abs{\Lambda^{k-1}\nabla_{X,z}U}_{2}\big)\Abs{\Lambda^k
  \nabla_{X,z}U}_{2}.
$$
For $I_{33}$ we use again the product estimates \eqref{prodHorm} and
\eqref{prodP}, and the trace lemma
to get
\begin{eqnarray*}
\abs{I_{33}}&\leq& M_N \abs{\Lambda^{k-3/2}\nabla\uU}_2
\abs{\Lambda^{1/2}\partial^\beta\uU}_2\\
&\leq& M_N\Abs{\Lambda^{k-1}\nabla_{X,z}U}_{2}\Abs{\Lambda^k\nabla_{X,z}U}_{2}.
\end{eqnarray*}
Gathering the estimates on $I_{31}$, $I_{32}$ and $I_{32}$, we finally get
\begin{equation}
\abs{I_3}\leq M_N \Big(\abs{\proj\big(
  \nabla\partial^\beta\psi\!-\!\uw\nabla\partial^\beta\zeta\big)}_2\\
\label{eqI4}+\Abs{\Lambda^k\omega}_{2,b} )+\Abs{\Lambda^{k-1}\nabla_{X,z}U}_{2}\Big)\Abs{\Lambda^k\nabla_{X,z}U}_{2}.
\end{equation}

\medbreak
We can then deduce from \eqref{eqI0} and \eqref{eqI1}, \eqref{eqI2}
and \eqref{eqI4} that for all $0<k\leq N-1$ and all  $\beta\in \N^d\backslash\{0\}$ such that
$\abs{\beta}\leq k$, one has
\begin{align*}
\Abs{\partial^{\beta}\nabla_{X,z}U}_2^2\leq M_N \Big(\hspace{-0.3cm}\sum_{1<\abs{\alpha}\leq
  k+1}\hspace{-0.3cm}\abs{\proj\psi_{(\alpha)}}_2+\Abs{\Lambda^k\omega}_{2,b}
+\Abs{\Lambda^{k-1}\nabla_{X,z}U}_{2}\Big)  \Abs{\Lambda^k\nabla_{X,z}U}_{2}.
\end{align*}
Summing these inequalities for all $0<\abs{\beta}\leq k$, this
yields a control on $\Abs{\Lambda^{k}\nabla_{X,z}U}_2$,
and using
the $H^1$-estimate furnished by Theorem \ref{prop1} for the
case $\beta=0$, the result follows by a finite induction on $k$.
\end{proof}
Theorem \ref{horizontalregurality} provides an $H^{k+1,1}$-estimate of
$U$. We now deduce a
more general $H^{k+1,l+1}$ estimate of $U$.
\begin{cor}\label{corHkk}
Let $N\in \N$, $N\geq 5$ and
  $\zeta\in H^N(\R^d)$. Under the assumptions of Theorem \ref{prop1},
  there is a unique solution $U\in H^1(S)$ to \eqref{div-rotS}; if
  moreover $0\leq l\leq k\leq N-1$ and $\omega\in H^{k,l}(\cS)$ then
$$
\Abs{U}_{H^{k+1,l+1}}\leq M_N \Big(\abs{\proj\psi}_{H^{1}}+\sum_{1 <\abs{\alpha}\leq
  k+1}\abs{\proj\psi_{(\alpha)}}_2+\Abs{\omega}_{H^{k,l}}
+\abs{\Lambda^k (\omega_b\cdot N_b)}_{H_0^{-1/2}}\Big),
$$
with $\omega_b=\omega_\bottf$ and $M_N$ as in Theorem \ref{horizontalregurality}.
\end{cor}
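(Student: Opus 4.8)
The plan is to argue by induction on the number $l$ of vertical derivatives, keeping $k$ fixed, the case $l=0$ being exactly Proposition \ref{horizontalregurality} (together with the trivial bound $\Abs{\Lambda^k\omega}_{2,b}\le \Abs{\omega}_{H^{k,0}}+\abs{\Lambda^k(\omega_b\cdot N_b)}_{H_0^{-1/2}}$). The engine of the induction is that the two interior equations $\curls U=\omega$ and $\dives U=0$ let one recover the top vertical derivative $\dz U$ \emph{algebraically}, trading each vertical derivative for one horizontal derivative of $U$ at the cost of one derivative of $\sigma$ (hence of $\zeta$). Indeed, writing $U=(V,w)^T$, the horizontal part of $\curls U=\omega$ reads $\dz^\sigma V=\nabla^\sigma_{X,z} w-\omega_h^\perp$ while $\dives U=0$ reads $\dz^\sigma w=-\nabla^\sigma_{X,z}\cdot V$; expanding $\dz^\sigma$ and $\nabla^\sigma_{X,z}$ through \eqref{notaMR2} and eliminating $\dz w$ between these two identities (the $2\times 2$ matrix $\mathrm{Id}+\nabla\sigma(\nabla\sigma)^T$ that appears is invertible) one obtains an identity of the schematic form
\[
\dz U=\mathcal{M}(\nabla_{X,z}\sigma)\,\nabla U+\mathcal{N}(\nabla_{X,z}\sigma)\,\omega,
\]
where $\nabla U$ denotes the \emph{horizontal} gradient of $U$ only, and $\mathcal M,\mathcal N$ are smooth (rational, with denominator a power of $1+\dz\sigma\ge h_{\min}/H_0$) matrix-valued functions of the first derivatives of $\sigma$.

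Granting this, I would run the inductive step $l-1\to l$ (valid for $l\le k$) as follows. Apply $\partial^\beta\dz^{l}$ to the identity above with $\abs\beta\le k-l$; this expresses $\partial^\beta\dz^{l+1}U$ in terms of $\partial^\beta\dz^{l}(\mathcal{M}\,\nabla U)$ and $\partial^\beta\dz^{l}(\mathcal{N}\,\omega)$. Distributing the derivatives by Leibniz, the leading contribution of the first term is $\mathcal{M}\,\partial^\beta\dz^{l}\nabla U$, whose $L^2(\cS)$ norm is $\lesssim\abs{\mathcal M}_\infty\Abs{\Lambda^{(k+1)-l}\dz^l U}_2\le\abs{\mathcal M}_\infty\Abs{U}_{H^{k+1,l}}$, which is already controlled by the previous step of the induction; the terms in which at least one derivative falls on $\mathcal M$ carry strictly fewer derivatives on $U$ and, by the anisotropic product and commutator estimates used in the proof of Proposition \ref{horizontalregurality}, are bounded by $M_N$ times lower $H^{\cdot,\cdot}$ norms of $U$ (note that $\dz^2\sigma=0$, so differentiating $\mathcal M$ produces only horizontal derivatives of $\zeta$). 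Likewise the leading contribution of the second term is $\mathcal{N}\,\partial^\beta\dz^{l}\omega$, of $L^2$ norm $\lesssim M_N\Abs{\omega}_{H^{k,l}}$, and the remainder is again lower order. Summing over $\abs\beta\le k-l$ and adding the $H^{k+1,l}$ bound from the previous step yields
$$
\Abs{U}_{H^{k+1,l+1}}\le M_N\Big(\abs{\proj\psi}_{H^{1}}+\sum_{1 <\abs{\alpha}\leq k+1}\abs{\proj\psi_{(\alpha)}}_2+\Abs{\omega}_{H^{k,l}}+\abs{\Lambda^k (\omega_b\cdot N_b)}_{H_0^{-1/2}}\Big),
$$
the bottom-trace term entering only through the base case and $\Abs{\omega}_{H^{k,l-1}}\le\Abs{\omega}_{H^{k,l}}$ upgrading the right-hand side monotonically; iterating up to $l=k$ gives the corollary.

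The main difficulty here is bookkeeping rather than analysis: one must check that each application of the algebraic identity converts exactly one $\dz$ into one horizontal derivative, so that at every stage no more than $l+1$ vertical derivatives and no more than $k+1$ total derivatives ever act on $U$; and one must verify that $\mathcal{M}(\nabla_{X,z}\sigma)$, $\mathcal{N}(\nabla_{X,z}\sigma)$ and $(\mathrm{Id}+\nabla\sigma(\nabla\sigma)^T)^{-1}$ lie in the spaces (of type $L^\infty((-H_0,0);H^{k}(\R^d))\cap W^{k,\infty}$) for which the tame product and commutator estimates apply, with norms controlled by $C(\tfrac1{h_{\min}},H_0,\abs{\zeta}_{H^N})$. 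Since $\sigma(X,z)=\tfrac{z+H_0}{H_0}\zeta(X)$ is affine in $z$, with $\dz\sigma=\zeta/H_0$, all of this reduces to standard product estimates for the coefficients in $H^s(\R^d)$, which hold because $N\ge 5$ and $k\le N-1$; the non-vanishing depth condition \eqref{hmin} keeps $1+\dz\sigma$ bounded away from $0$, so the denominators cause no trouble.
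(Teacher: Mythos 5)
Your argument is correct and takes essentially the same route as the paper: use the two interior equations $\curls U=\omega$ and $\dives U=0$ to express $\dz U$ algebraically in terms of horizontal derivatives of $U$ and of $\omega$, with coefficients rational in $\nabla_{X,z}\sigma$, then close a finite induction on $l$ via the anisotropic product estimate of Lemma \ref{lemHkk}, the base case $l=0$ being Proposition \ref{horizontalregurality}. The only cosmetic difference is the algebra used to reach the identity: you eliminate $\dz w$ and invert $\mathrm{Id}+\nabla\sigma(\nabla\sigma)^T$, whereas the paper projects $\dz^\sigma U$ onto $\R\tilde N$ and its orthogonal complement to obtain \eqref{dzU}, but the two computations produce the same kind of formula and lead to the same estimates.
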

\begin{proof}
We can rewrite the first two equations of \eqref{div-rotS} under the form
$$
\big[\left(\begin{array}{c}\nabla\\ 0\end{array}\right)+\tilde N\dz^\sigma
\big]\times U=\omega\quad\mbox{ and }\quad
\big[\left(\begin{array}{c}\nabla\\ 0\end{array}\right)+\tilde N\dz^\sigma
\big]\cdot U=0,
$$
with $\tilde N=(-\nabla\sigma^T,1)^T$, and therefore
$$
\tilde N\cdot \dz^\sigma U=-\nabla\cdot V\quad\mbox{ and }\quad
\tilde N\times \dz^\sigma U=\omega+\left(\begin{array}{c}\nabla^\perp
    w\\ -\nabla^\perp\cdot V\end{array}\right).
$$
From the identity
$$
\dz^\sigma U=\frac{1}{1+\abs{\nabla\sigma}^2}\Big(\tilde N\cdot
\dz^\sigma U \tilde N+(\tilde N\times \dz^\sigma U)\times \tilde N\Big),
$$
we deduce
\begin{equation}\label{dzU}
\dz U=\frac{1+\dz\sigma}{1+\abs{\nabla\sigma}^2}\Big(-(\nabla\cdot V)
\tilde N+\omega\times \tilde N  -\left(\begin{array}{c} \nabla
    w+(\nabla^\perp\cdot V) \nabla^\perp\sigma\\
    \nabla\sigma\cdot\nabla w \end{array}\right)\Big).
\end{equation}
This identity will be used to trade one vertical derivative of $U$
with one horizontal one, using the product estimate provided by the
following lemma.
\begin{lemma}\label{lemHkk}
Let $N\in \N$, $N\geq 5$, and $1\leq k\leq N-1$. Then for $f\in
H^{N-1}(\cS)$ and $g\in H^{k}(\cS)$, one has
$$
\forall 0\leq l\leq k,\qquad\Abs{\Lambda^{k-l}\dz^l (fg)}_2\lesssim \Abs{f}_{H^{N-1}} \Abs{g}_{H^{k,l}}.
$$
\end{lemma}
\begin{proof}[Proof of the lemma]
One can decompose $\Lambda^{k-l}\dz^l (fg)$ as a sum of terms of the
form
$$
\Lambda^{k-l} (\dz^{l'}f \dz^{l-l'}g), \quad 0\leq l'\leq l.
$$
Choosing $t_0>d/2$ such that $N>2t_0+3/2$, these terms can then be bounded from above using the product estimate \eqref{prodHorm},
$$
\abs{\Lambda^{k-l} (\dz^{l'}f \dz^{l-l'}g)(z)}_2\lesssim
\abs{\dz^{l'}f (z)}_{A}
\abs{\dz^{l-l'}g(z)}_{B}+\left\langle \abs{\dz^{l'} f(z)}_{B}
  \abs{\dz^{l-l'}g(z)}_{A}\right\rangle_{k-l>t_0},
$$
where the term between brackets should be removed from the r.h.s. when
$k-l\leq t_0$, and
where $(A,B)=(H^{t_0},H^{k-l})$ or $(A,B)=(H^{k-l},H^{t_0})$.
Integrating in $z$, we easily deduce
\begin{eqnarray*}
\Abs{\Lambda^{k-l} (\dz^{l'}f \dz^{l-l'}g)}_2 &\lesssim& \Abs{\dz^{l'}f
  (z)}_{L^a A}
\abs{\dz^{l-l'}g(z)}_{L^b B}\\
& &+\left\langle \abs{\dz^{l'} f(z)}_{L^c B}
  \abs{\dz^{l-l'}g(z)}_{L^d A}\right\rangle_{k-l>t_0},
\end{eqnarray*}
with $(a,b)$ and $(c,d)$ being equal to $(2,\infty)$ or
$(\infty,2)$. The choice of $A$, $B$ and $a$, $b$, $c$ and $d$ depends
on several cases
\begin{enumerate}
\item If $t_0+l'+1/2\leq N-1$ and $k-l+l'+1/2\leq N-1$ then
  $(A,B)=(H^{t_0},H^{k-l})$ and $(a,b)=(c,d)=(\infty,2)$.
\item If $t_0+l'+1/2\leq N-1$ and $k-l+l'+1/2 >N-1$ (and therefore
  $k=N-1$, $l=l'$), then
  $(A,B)=(H^{t_0},H^{k-l})$ and $(a,b)=(\infty,2)$,
  $(c,d)=(2,\infty)$.
\item If $t_0+l'+1/2> N-1$ and $k-l+l'+1/2\leq N-1$ then
  $(A,B)=(H^{k-l},H^{t_0})$ and $(a,b)=(\infty,2)$ (there is no need
  to specify $(c,d)$ since one then has $k-l<t_0$).
\item If $t_0+l'+1/2> N-1$ and $k-l+l'+1/2 > N-1$ then
  $(A,B)=(H^{k-l},H^{t_0})$ and $(a,b)=(c,d)=(2,\infty)$.
\end{enumerate}
The results then follows from the continuous embedding $L^\infty H^{r}\leq
H^{r+1/2,1}$ ($s\in \R$, see for instance Proposition 2.13 of
\cite{Lannes_book}). For instance, in the first case, this yields
\begin{eqnarray*}
\Abs{\Lambda^{k-l} (\dz^{l'}f \dz^{l-l'}g)}_2
&\lesssim&
\Abs{f}_{H^{t_0+l'+1/2,l'+1}}
\Abs{g}_{H^{k-l',l-l'}}\\
& &+\left\langle \Abs{f}_{H^{k-l+l'+1/2,l'+1}}
\Abs{g}_{H^{l-l'+t_0,l-l'+1}}\right\rangle_{k-l>t_0}\\
&\lesssim& \Abs{f}_{H^{N-1}}
\Abs{g}_{H^{k,l}},
\end{eqnarray*}
where we used the assumptions corresponding to the first case. The
other cases are treated similarly.
\end{proof}
Let $1\leq l\leq k$; taking the $H^{k,l}$ norm of \eqref{dzU}, we obtain with the help of
the lemma that
$$
\Abs{\dz U}_{H^{k,l}}\leq
C(\Abs{\sigma}_{H^{N}})\big(\Abs{U}_{H^{k+1,l}}+\Abs{\omega}_{H^{k,l}}\big)
$$
and therefore
$$
\Abs{ U}_{H^{k+1,l+1}}
\leq C(\Abs{\sigma}_{H^{N}})\big(\Abs{U}_{H^{k+1,l}}+\Abs{\omega}_{H^{k,l}}\big).
$$
By a finite induction on $l$, we readily obtain
$$
\Abs{U}_{H^{k+1,l+1}}\leq
C(\Abs{\sigma}_{H^{N}})\big(\Abs{U}_{H^{k+1,1}}+\Abs{\omega}_{H^{k,l}}\big),
$$
and the result then follows from Theorem \ref{horizontalregurality}.
\end{proof}

\subsection{Time derivatives}\label{sectproofpropshape}

For  the analysis of our formulation \eqref{ZCSgen} of the water waves
equations with vorticity, we shall need to control time derivatives of
the solution $U={\mathbb U}^\sigma[\zeta](\psi,\omega)$ to
\eqref{div-rotS}. Such a control cannot be obtained with the same
methods as the control on
space derivatives obtained in the previous section (i.e. by taking
time derivatives of $U$ as test functions in \eqref{transfineq}). We
deal with this issue in this section.
We first need the following notation.
\begin{nota}\label{nothT}
We say that $\zeta\in C([0,T];W^{1,\infty}(\R^d))$ satisfy
\eqref{hmin}$_T$ if \eqref{hmin} is uniformly satisfied by all
$\zeta(t,\cdot)$ with $t\in [0,T]$.
\end{nota}
\begin{proposition}\label{propshapeU}
Let $T>0$ and $\zeta\in C^1([0,T];W^{2,\infty}(\R^d))$
 satisfy
\eqref{hmin}$_T$. Let also $\psi\in C^1([0,T];\dot{H}^{3/2}(\R^d))$ and $\omega \in C^1([0,T];L^2(\cS)^{d+1})$ be
such that $\big(\nabla_{X,z}^{\sigma} \cdot \omega\big) (t)=0$ for all $t\in
[0,T]$ and $\omega_b\cdot N_b\in C^1([0,T];H_0^{-1/2}(\R^d))$. Then one has
$$
\dt  \big({\mathbb U}^\sigma[\cdot](\psi,\omega)\big)={\mathbb
  U}^\sigma[\zeta]\big(\dt\psi -\uw\dt \zeta +\frac{\nabla}{\Delta}\cdot
(\uom^\perp_h \dt\zeta),\dt^\sigma\omega\big)+\dt\sigma
\dz^\sigma {\mathbb U}^\sigma[\zeta](\psi,\omega),
$$
where $(\uV^T,\uw)^T={\mathbb U}^\sigma[\zeta](\psi,\omega)_\surff$.
\end{proposition}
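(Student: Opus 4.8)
The idea is to rewrite the right-hand side so that it is visibly a solution of a straightened div-curl problem. Subtracting the transport term $\dt\sigma\,\dz^\sigma U$ and recognising the remainder as the covariant derivative $\dt^\sigma U=\dt U-\frac{\dt\sigma}{1+\dz\sigma}\dz U$ of \eqref{notaMR2}, the identity to be proved is
$$
\dt^\sigma U={\mathbb U}^\sigma[\zeta]\Big(\dt\psi-\uw\dt\zeta+\frac{\nabla}{\Delta}\cdot(\uom_h^\perp\dt\zeta),\ \dt^\sigma\omega\Big).
$$
I would first check that $t\mapsto U(t)={\mathbb U}^\sigma[\zeta(t)](\psi(t),\omega(t))$ is differentiable in $t$ with values in the relevant space: this is a routine difference-quotient argument using the linearity of ${\mathbb U}^\sigma[\zeta]$ in $(\psi,\omega)$, the $C^1$ time-dependence of $(\zeta,\psi,\omega)$, and the uniform bounds of Theorem \ref{prop1} and Corollary \ref{corHkk} (one smooths horizontally as in the footnote to the proof of Proposition \ref{horizontalregurality}). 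Granting this, by the uniqueness statement of Theorem \ref{prop1}, applied on $\cS$ as in Notation \ref{notastr} (the energy estimates of \S\ref{sectproofth1} in fact also give uniqueness in the larger class of $L^2$ vector fields with $L^2$ curl, which is where $\dt^\sigma U$ a priori lives), it is enough to verify that $\dt^\sigma U$ solves the straightened problem \eqref{div-rotS} with datum $\big(\dt\psi-\uw\dt\zeta+\frac{\nabla}{\Delta}\cdot(\uom_h^\perp\dt\zeta),\,\dt^\sigma\omega\big)$, and then to reconstruct $\dt U=\dt^\sigma U+\dt\sigma\,\dz^\sigma U$.

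\textbf{Interior equations and bottom condition.} These are easy once one recalls that the operators $\partial_i^\sigma$ ($i\in\{t,x,y,z\}$) pairwise commute --- immediate from $\partial_i^\sigma({\bf F}\circ\Sigma)=(\partial_i{\bf F})\circ\Sigma$ (see \eqref{notaMR1} and \cite{MasmoudiRousset}), or from a short computation with \eqref{notaMR2} --- so that $\dt^\sigma$ commutes with $\curls$, $\dives$ and $\dz^\sigma$. Applying $\dt^\sigma$ to $\curls U=\omega$, $\dives U=0$ and $\dives\omega=0$ gives $\curls(\dt^\sigma U)=\dt^\sigma\omega$, $\dives(\dt^\sigma U)=0$ and $\dives(\dt^\sigma\omega)=0$. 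Since $\dt\sigma$ vanishes at $z=-H_0$, one has $(\dt^\sigma U)_\bottf=\dt(U_\bottf)$ and $(\dt^\sigma\omega)_b\cdot N_b=\dt(\omega_b\cdot N_b)\in H^{-1/2}_0(\R^d)$ by hypothesis, which both shows the datum is admissible and, upon differentiating $U_b\cdot N_b=0$ in time, yields the bottom boundary condition for $\dt^\sigma U$.

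\textbf{Surface condition (the crux).} Using $(\dt^\sigma U)\surff=(\dt\bU)\surf$ --- which is \eqref{notaMR1} with $i=t$, $\bU=U\circ\Sigma^{-1}$ --- and the chain rule $\dt\underline{U}=(\dt\bU)\surf+\dt\zeta\,(\dz\bU)\surf$, the definition \eqref{notapar} of the $\parallel$-component gives
$$
(\dt^\sigma U)_\parallel=\dt\Vp-\uw\nabla\dt\zeta-\dt\zeta\,(\dz\bU)_\parallel ,
$$
and since $\dt\Vp=\nabla\dt\psi+\nabla^\perp\dt\tpsi$, $\ \uw\nabla\dt\zeta=\nabla(\uw\dt\zeta)-\dt\zeta\nabla\uw$, and $\nabla\uw-(\dz\bU)_\parallel=(\nabla\mathbf{w}-\dz\mathbf{V})\surf=\uom_h^\perp$ --- the last equality being $\curl\bU=\bom$ rewritten as $\nabla\mathbf{w}-\dz\mathbf{V}=\bom_h^\perp$, evaluated at the surface --- we get
$$
(\dt^\sigma U)_\parallel=\nabla(\dt\psi-\uw\dt\zeta)+\nabla^\perp\dt\tpsi+\dt\zeta\,\uom_h^\perp .
$$
Decomposing $\dt\zeta\,\uom_h^\perp=\nabla\big(\frac{\nabla}{\Delta}\cdot(\uom_h^\perp\dt\zeta)\big)+\nabla^\perp\big(\frac{\nabla^\perp}{\Delta}\cdot(\uom_h^\perp\dt\zeta)\big)$, the gradient parts reproduce exactly the first component of the target datum; and since the functions involved lie in Beppo-Levi spaces, $\nabla^\perp f=\nabla^\perp g$ follows from $\Delta f=\Delta g$, so the orthogonal-gradient parts match provided $\Delta\dt\tpsi+\nabla^\perp\cdot(\uom_h^\perp\dt\zeta)=\underline{\dt^\sigma\omega}\cdot N$, that is (using $\nabla^\perp\cdot(\uom_h^\perp\dt\zeta)=\nabla\cdot(\uom_h\dt\zeta)$),
$$
\Delta\dt\tpsi+\nabla\cdot(\uom_h\dt\zeta)=\underline{\dt^\sigma\omega}\cdot N .
$$
Now $\Delta\tpsi=\uom\cdot N$ and $\dt N=(-\nabla\dt\zeta,0)^T$ give $\Delta\dt\tpsi=\underline{\dt\omega}\cdot N-\uom_h\cdot\nabla\dt\zeta$, so the left side is $\underline{\dt\omega}\cdot N+\dt\zeta\,\nabla\cdot\uom_h$; on the other hand at $z=0$ one has $\dt^\sigma\omega=\dt\omega-\frac{\dt\zeta}{1+\zeta/H_0}\dz\omega$, so the right side is $\underline{\dt\omega}\cdot N-\frac{\dt\zeta}{1+\zeta/H_0}\underline{\dz\omega}\cdot N$; hence the identity reduces to
$$
\nabla\cdot\uom_h=-\frac{1}{1+\zeta/H_0}\,\underline{\dz\omega}\cdot N ,
$$
which is precisely the trace at $z=0$ of the constraint $\dives\omega=0$ (written out with $\nabla^\sigma_{X,z}$ as in \eqref{defA}, using $\nabla\sigma\surff=\nabla\zeta$). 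This closes the verification, and uniqueness gives the result.

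\textbf{Main obstacles.} Two points require real care. First, the rigour of the time-differentiation and the exact functional framework: under the stated hypotheses one only has $U(t)\in H^1(\cS)$, so $\dt U(t)$ sits a priori in $L^2$ and the modified surface datum is merely in $\dot H^{1/2}$; the clean way round is to prove the identity first for smooth $(\zeta,\psi,\omega)$ --- where all traces and time derivatives are classical --- and then pass to the limit, both sides depending continuously on $(\zeta,\psi,\omega)$ in the given norms thanks to the linear estimates of \S\ref{sect3}; alternatively one differentiates the weak formulation \eqref{transfineq} in $t$ against time-independent test functions and reorganises it directly into the weak form of \eqref{div-rotS} for $\dt^\sigma U$, so that no more than $L^2$-regularity with $L^2$ curl is ever needed. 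Second, the surface computation: the bookkeeping of traces through the chain rule must be done carefully, and the only genuinely non-formal step is recognising that the surface trace of the divergence-free constraint $\dives\omega=0$ is exactly the identity needed to match the orthogonal-gradient parts --- this is where the hypothesis $(\nabla^\sigma_{X,z}\cdot\omega)(t)=0$ enters.
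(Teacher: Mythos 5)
Your proof is correct and takes a genuinely different, arguably cleaner route than the paper's. The paper keeps the Hodge--Weyl split $\bU=\nabla_{X,z}\Phi+\curl\bA$ and time-differentiates the two boundary-value problems \eqref{eqPhiS} and \eqref{eqcurlAS} separately via Alinhac's good-unknown identity \eqref{goodunknown}; the vector-potential piece is the expensive one, because the time derivative of $N\cdot\uA=0$ is inhomogeneous, forcing the introduction of an auxiliary harmonic corrector $\varphi$, after which the remaining boundary data must be reorganised, using the surface trace of $\dives\omega=0$, so as to land in the class the reconstruction map accepts. You instead isolate $\dt^\sigma U=\dt U-\dt\sigma\,\dz^\sigma U$ and verify directly that it solves the straightened div--curl problem \eqref{div-rotS} with the modified datum: the exact commutativity of the $\partial_i^\sigma$ operators (which is precisely \eqref{goodunknown} in disguise) handles the interior equations and the bottom condition, a short chain-rule and curl computation gives the tangential surface condition, and the divergence-free constraint $\dives\omega=0$ enters at exactly the analogous spot. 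This bypasses the vector potential and the corrector $\varphi$, keeps all the bookkeeping on a single field, and makes the role of each hypothesis transparent; both arguments close by invoking the uniqueness statement of Theorem \ref{prop1}. I checked the three identities on which your surface computation hinges, namely $(\dt^\sigma U)_\parallel=\dt\Vp-\uw\nabla\dt\zeta-\dt\zeta(\dz\bU)_\parallel$, then $\nabla\uw-(\dz\bU)_\parallel=\uom_h^\perp$, and finally $\nabla\cdot\uom_h=-\underline{\dz^\sigma\omega}\cdot N$ from the surface trace of $\dives\omega=0$: all three are correct. Your closing remark on rigour is fair, but note it places you at the same level as the paper (which also differentiates the boundary-value problems in time only formally), not behind it.
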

\begin{proof}
The main ingredient in the proof is the following identity
\begin{equation}\label{goodunknown}
\delta (\partial^\sigma_j f)=\partial_j^\sigma(\delta
f-\delta\sigma \dz^\sigma f)+\delta\sigma
\dz^\sigma \partial_j^\sigma f\qquad (j=x,y,z),
\end{equation}
where $\delta$ can be any linearization operator ($\delta=\dt$ here). The quantity $\delta
f-\delta\sigma\dz^\sigma f$ is called Alinhac's good unknown after
\cite{Alinhac}. Its role in the water waves equations was noticed in
\cite{LannesJAMS} but it was in \cite{AlazardMetivier} that its
interpretation as Alinhac's good unknown was understood (see also the
discussion in \cite{MasmoudiRousset}).\\
Decomposing ${\mathbb U}^\sigma[\zeta](\psi,\omega)$ as in
Remark \ref{remdec}, we are led to compute the time derivatives of
${\mathbb U}_I^\sigma[\zeta]\psi$ and ${\mathbb
  U}_{II}^\sigma[\zeta]\omega$:\\
- {\it Computation of $\dt {\mathbb U}_I[\zeta]\psi$}. Recalling
  ${\mathbb U}_I^\sigma[\zeta]\psi=\nabla^\sigma_{X,z}\phi$ with
  $\phi$ solving \eqref{eqPhiS}, we have, according to \eqref{goodunknown},
$$
\dt  {\mathbb U}_I^\sigma[\cdot]\psi=\nabla^\sigma_{X,z}(\dt\phi-\dt\sigma\dz^\sigma
\phi)+\dt\sigma \dz^\sigma\nabla_{X,z}^\sigma \phi.
$$
On the other hand, and after remarking that
$$
(1+\dz\sigma)\nabla_{X,z}^\sigma\cdot\nabla_{X,z}^\sigma=\nabla_{X,z}\cdot
P(\Sigma)\nabla_{X,z},
$$
 we can differentiate \eqref{eqPhiS}
  with respect to time to obtain
$$
\left\lbrace
\begin{array}{l}
\dsp \nabla_{X,z} \cdot P(\Sigma)\nabla_{X,z} (\dt\phi-\dt\sigma \dz^\sigma\phi)=0 \quad\mbox{ in }\quad \cS\\
\dsp
(\dt\phi-\dt\sigma\dz^\sigma\phi)_\surff=(\dt \phi-\dt\zeta
\dz^\sigma \phi)_\surff,\qquad \dz (\dt\phi-\dt\sigma\dz^\sigma\phi)_\bottf=0,
\end{array}\right.
$$
where $P(\Sigma)=(1+\dz\sigma) J_\Sigma^{-1} (J_\Sigma^{-1})^T$, and
where we used the fact that
$\dt\sigma_\surff=\dt\zeta$ and $\dt\sigma_\bottf=0$. It
follows that
\begin{equation}\label{formderUI}
\dt {\mathbb U}_I^\sigma[\cdot]\psi={\mathbb U}_I^\sigma[\zeta](\dt \psi-\uw_I\dt\zeta )
+\dt\sigma \dz^\sigma {\mathbb U}_I^\sigma[\zeta]\psi,
\end{equation}
where $\uw_I$ is the vertical component of ${\mathbb
  U}_I^\sigma[\zeta]\psi$ evaluated at the surface.\\
- {\it Computation of $\dt {\mathbb U}_{II}[\zeta]\omega$}. Recalling that ${\mathbb
    U}_{II}^\sigma[\zeta]\omega=\curls A$ with
  $A$ solving \eqref{eqcurlAS}, we have, according to \eqref{goodunknown},
$$
\dt {\mathbb U}_{II}^\sigma[\cdot]\omega=\curls(\dt A-\dt\sigma\dz^\sigma
A)+\dt\sigma \dz^\sigma \curls A.
$$
Differentiating \eqref{eqcurlAS} with respect to time, we also have
$$
\left\lbrace
\begin{array}{rl}
\dsp \curls\curls (\dt A-\dt\sigma
\dz^\sigma A)&=\dt\omega-\dt\sigma \dz^\sigma\omega\\
\dives(\dt A-\dt\sigma
\dz^\sigma A)&=0,
\end{array}\right.
$$
inside the flat strip $\cS$, together with the boundary conditions
(with the notation $U_{II}=(V_{II}^T,w_{II})^T:=\curls A$)
$$
\left\lbrace
\begin{array}{rl}
\dsp N_b\times  (\dt A-\dt\sigma
\dz^\sigma A)_\bottf&=0\\
\dsp N\cdot  (\dt A-\dt\sigma
\dz^\sigma A)_\surff&=\nabla\dt\zeta\cdot \uA_h-\dt\sigma N\cdot
\dz^\sigma A_\surff\\
\dsp \left( \curls(\dt A-\dt\sigma
\dz^\sigma A)_\surff \right)_\parallel&=\nabla^\perp
\dt\tpsi-\dt\zeta (\dz U_{II})_\parallel-\uw_{II}\nabla\dt\zeta\\
\dsp N_b\cdot\curls  (\dt A-\dt\sigma
\dz^\sigma A)_\bottf & = 0.
\end{array}\right.
$$
In order to simplify the boundary conditions, let us observe that
when evaluated at the surface, the equations $\dives A=0$ and $\curls
U_{II}=\omega$ give
$$
N\cdot \dz^\sigma A_\surff=-\nabla\cdot \uA_h
\quad \mbox{ and }\quad
-(\dz^\sigma U_{II})_\parallel =-\nabla \uw_{II}+\uom_h^\perp
$$
and that
\begin{eqnarray*}
\Delta\dt\tpsi&=&\dt\uom\cdot N-\uom_h\cdot \nabla\dt\zeta\\
&=&(\dt\uom-\pa_t\sigma\dz^\sigma \omega_\surff)\cdot N +(\dz^\sigma \omega)_\surff\cdot N\dt\zeta-\uom_h\cdot
\nabla\dt\zeta\\
&=&(\dt\uom-\pa_t\sigma\dz^\sigma \omega_\surff)\cdot N-\nabla\cdot (\uom_h\dt\zeta),
\end{eqnarray*}
where we used the fact that $\dives \omega=0$ to derive the last
equation. It follows that
$$
\nabla^\perp\dt\tpsi=\frac{\nabla^\perp}{\Delta}\big((\dt\uom-\pa_t\sigma\dz^\sigma \omega_\surff)\cdot N\big)-\Pi_\perp (\uom_h^\perp\dt\zeta),
$$
where we recall that $\Pi=\frac{\nabla\nabla^T}{\Delta}$ and
$\Pi_\perp=\frac{\nabla^\perp (\nabla^\perp)^T}{\Delta}$ are
respectively the
orthogonal projectors onto gradient and orthogonal gradient vector fields.
These three identities imply that the boundary conditions
simplify into
$$
\left\lbrace
\begin{array}{rl}
\dsp N_b\times  (\dt A-\dt\sigma
\dz^\sigma A)_\bottf&=0\\
\dsp N\cdot  (\dt A-\dt\sigma
\dz^\sigma A)_\surff&=\nabla\cdot (\dt\zeta\cdot \uA_h)\\
\dsp \left( \curls(\dt A-\dt\sigma
\dz^\sigma A)_\surff \right)_\parallel&=\Pi(\uom_h^\perp \dt\zeta)-\nabla(\uw_{II}\dt\zeta) +\frac{\nabla^\perp}{\Delta}\big(\dt^\sigma\omega_\surf\cdot N\big)\\
\dsp N_b\cdot\curls  (\dt A-\dt\sigma
\dz^\sigma A)_\bottf & = 0.
\end{array}\right.
$$
Let us now decompose $ (\dt A-\dt\sigma
\dz^\sigma A)$ into
$$
 (\dt A-\dt\sigma
\dz^\sigma A)=B+\nabla^\sigma_{X,z}\varphi,
$$
where $\varphi$ solves
$$
\left\lbrace
\begin{array}{l}
\nabla_{X,z}\cdot P(\Sigma)\nabla_{X,z} \varphi=0 \quad\mbox{ in }\quad \cS,\\
N\cdot \nabla_{X,z}^\sigma\,_\surff \varphi=\nabla\cdot (\dt
\zeta\cdot \uA_h),\qquad N_b\cdot \nabla_{X,z}^\sigma\,_\bottf
\varphi=0;
\end{array}\right.
$$
and $B$ solves therefore the same equations as $\dt A-\dt\sigma
\dz^\sigma A$ but where the second boundary condition is now
homogeneous. It follows that
\begin{eqnarray*}
\curls(\dt A-\dt\sigma\dz^\sigma A)&=&\curls B\\
&=&{\mathbb U}^\sigma[\zeta]\big(\frac{\nabla}{\Delta}\cdot
(\uom^\perp_h \dt\zeta)-\uw_{II}\dt \zeta ,\dt^\sigma \omega\big)
\end{eqnarray*}
and therefore
\begin{equation}\label{formderUII}
\dt {\mathbb U}_{II}^\sigma[\cdot]\omega ={\mathbb U}^\sigma[\zeta]\big(\frac{\nabla}{\Delta}\cdot
(\uom^\perp_h \dt\zeta)-\uw_{II}\dt\zeta ,\dt^\sigma\omega\big)+\dt\sigma
\dz^\sigma {\mathbb U}_{II}^\sigma[\zeta]\omega.
\end{equation}

\medbreak

The proposition is then a direct consequence of \eqref{decompS},
\eqref{formderUI} and \eqref{formderUII}.
\end{proof}

The proposition also allows us to derive the following $H^{N-1}$
control on $\dt U$.
\begin{cor}\label{corotime}
Let the assumptions of Proposition \ref{propshapeU} be satisfied and
let $N\in \N$, $N\geq 5$. Then one has
\begin{align*}
\Abs{\dt U}_{H^{N-1,1}}&\leq C(M_N,\abs{\dt\zeta}_{H^{N-1}}) \\
&\times \Big(\abs{\nabla \dt \psi}_{H^{1/2}}+\sum_{1 <\abs{\alpha}\leq N-1}\abs{\proj\dt
  \psi_{(\alpha)}}_2+\Abs{\Lambda^{N-2}\dt\omega}_{2,b}\\
& \qquad +\abs{\nabla\psi}_{H^{1/2}}+\!\!\!\sum_{1<\abs{\alpha}\leq N}\abs{\proj
  \psi_{(\alpha)}}_2+\Abs{\omega}_{H^{N-1,1}}+\abs{\omega_b\cdot
  N_b}_{H_0^{-1/2}}\Big).
\end{align*}
\end{cor}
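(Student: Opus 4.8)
The plan is to differentiate the formula for $\dt U$ provided by Proposition \ref{propshapeU}: setting
$$
\dot\psi:=\dt\psi-\uw\dt\zeta+\frac{\nabla}{\Delta}\cdot(\uom_h^\perp\dt\zeta),
$$
one has $\dt U={\mathbb U}^\sigma[\zeta](\dot\psi,\dt^\sigma\omega)+\dt\sigma\,\dz^\sigma U$, with $(\uV^T,\uw)^T=U\surff$, and I would bound the two terms of this sum separately. The first term solves the straightened div-curl problem with data $(\dot\psi,\dt^\sigma\omega)$, so its $H^{N-1,1}$-norm is controlled by the higher order estimates of \S\ref{secthigher}; the second term only involves $U$ itself, already estimated in Proposition \ref{horizontalregurality} and Corollary \ref{corHkk}, together with $\dt\zeta$.

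For the second term, $\dt\sigma=\frac{z+H_0}{H_0}\dt\zeta$ so $\Abs{\dt\sigma}_{H^{N-1}(\cS)}\lesssim\abs{\dt\zeta}_{H^{N-1}}$; using the identity \eqref{dzU} to replace a vertical derivative of $U$ by horizontal derivatives of $U$ and one copy of $\omega$, and then Lemma \ref{lemHkk} to distribute the derivatives, I get
$$
\Abs{\dt\sigma\,\dz^\sigma U}_{H^{N-1,1}}\leq C(\abs{\zeta}_{H^N},\abs{\dt\zeta}_{H^{N-1}})\big(\Abs{U}_{H^{N,1}}+\Abs{\omega}_{H^{N-1,1}}\big),
$$
and $\Abs{U}_{H^{N,1}}$ is then bounded by Proposition \ref{horizontalregurality} (with $k=N-1$) and Corollary \ref{corHkk}. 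The only slightly delicate point is the bottom contribution $\abs{\Lambda^{N-1}(\omega_b\cdot N_b)}_{H_0^{-1/2}}$ hidden in $\Abs{\Lambda^{N-1}\omega}_{2,b}$: I would control it through $\omega_b\cdot N_b=\nabla^\perp\cdot V_b$ (Remark \ref{vorbot}), which bounds it by $\abs{V_b}_{H^{N-1/2}}$ whose high-frequency part is $\lesssim\Abs{\omega}_{H^{N-1,1}}$ by the trace lemma, the low-frequency part being absorbed into $\abs{\omega_b\cdot N_b}_{H_0^{-1/2}}$. This yields the bound for the second term in terms of the right-hand side of the corollary.

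For the first term I would apply Proposition \ref{horizontalregurality} with $k=N-2$ to $\tilde U:={\mathbb U}^\sigma[\zeta](\dot\psi,\dt^\sigma\omega)$, obtaining a control of $\Abs{\tilde U}_{H^{N-1,1}}$ by $\abs{\proj\dot\psi}_{H^1}$, $\sum_{1<\abs{\alpha}\leq N-1}\abs{\proj\dot\psi_{(\alpha)}}_2$ and $\Abs{\Lambda^{N-2}\dt^\sigma\omega}_{2,b}$, where $\dot\psi_{(\alpha)}=\partial^\alpha\dot\psi-\widetilde{\underline w}\,\partial^\alpha\zeta$ and $\widetilde{\underline w}$ is the surface trace of the vertical component of $\tilde U$. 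I would then treat the three quantities in turn. First, $\Abs{\Lambda^{N-2}\dt^\sigma\omega}_{2,b}\leq\Abs{\Lambda^{N-2}\dt\omega}_{2,b}+C(\abs{\zeta}_{H^N},\abs{\dt\zeta}_{H^{N-1}})\Abs{\omega}_{H^{N-1,1}}$, using $\dt^\sigma\omega=\dt\omega-\frac{\dt\sigma}{1+\dz\sigma}\dz\omega$ and $\dt\sigma_\bottf=0$ (so the bottom trace is unchanged). Second, $\abs{\proj\dot\psi}_{H^1}\lesssim\abs{\nabla\dt\psi}_{H^{1/2}}+C(\abs{\dt\zeta}_{H^{N-1}})(\abs{\uw}_{H^{3/2}}+\abs{\uom_h}_{H^{1/2}})$, the last two factors being controlled by the trace lemma together with Proposition \ref{horizontalregurality} and $\Abs{\omega}_{H^{1,1}}$. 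Third, for the good unknowns: from $\dt U=\tilde U+\dt\sigma\,\dz^\sigma U$ and $(\dt U)\surff=\dt(U\surff)$ one gets $\widetilde{\underline w}=\dt\uw-\dt\zeta\,(\dz^\sigma U)_v\surff$, and then the algebra of Alinhac's good unknown (and the definition \eqref{defVpb}) gives $\dot\psi_{(\alpha)}=\dt\psi_{(\alpha)}+\rho_\alpha$, with $\rho_\alpha$ a sum of a symmetric commutator $[\partial^\alpha,\uw,\dt\zeta]$, of the products $\dt\zeta\,\partial^\alpha\uw$ and $\dt\zeta\,(\dz^\sigma U)_v\surff\,\partial^\alpha\zeta$, and of $\partial^\alpha\frac{\nabla}{\Delta}\cdot(\uom_h^\perp\dt\zeta)$. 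Each $\abs{\proj\rho_\alpha}_2$ is then estimated by the product estimates \eqref{prodHorm} and \eqref{prodP} and standard commutator estimates, by the trace lemma, by Lemma \ref{estimtpsi} (which enters through $\Delta\tpsi=\uom\cdot N$), and by the bounds on $U$ of Proposition \ref{horizontalregurality} and Corollary \ref{corHkk}; this reproduces precisely the terms $\abs{\nabla\psi}_{H^{1/2}}$, $\sum_{1<\abs{\alpha}\leq N}\abs{\proj\psi_{(\alpha)}}_2$, $\Abs{\omega}_{H^{N-1,1}}$ and $\abs{\omega_b\cdot N_b}_{H_0^{-1/2}}$ of the corollary.

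The main obstacle is this last step: checking that every term in $\rho_\alpha$ is genuinely of lower order, the critical case being $\dt\zeta\,\partial^\alpha\uw$ with $\abs{\alpha}$ close to $N-1$, for which a naive product bound would cost one derivative. What saves the day is exactly the mechanism already exploited in the proof of Proposition \ref{horizontalregurality}: only $\proj\rho_\alpha$ in $L^2$ is needed, i.e. an $H^{1/2}$-type control of $\rho_\alpha$, and the good-unknown cancellation between the $\partial^\alpha\dt\psi$ and $\uw\,\partial^\alpha\dt\zeta$ contributions prevents any top-order derivative from falling on $\uw$; one must nonetheless run through the Sobolev exponents case by case and keep track of the fact that the bounds on $U$ bring in the wider range $1<\abs{\alpha}\leq N$ and the bottom term $\abs{\omega_b\cdot N_b}_{H_0^{-1/2}}$.
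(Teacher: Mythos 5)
Your proof follows essentially the same route as the paper's: decompose $\dt U=A+B$ via Proposition~\ref{propshapeU} with $A={\mathbb U}^\sigma[\zeta](\psi^t,\dt^\sigma\omega)$ and $B=\dt\sigma\,\dz^\sigma U$, estimate $A$ by Proposition~\ref{horizontalregurality} and then unwind $\psi^t_{(\alpha)}$ into $\dt\psi_{(\alpha)}$ plus lower-order remainders using the Alinhac good-unknown cancellation, and estimate $B$ by Lemma~\ref{lemHkk} and Corollary~\ref{corHkk}, with identical key identities and lemmas throughout. The one small addition you make explicit -- controlling the $\abs{\Lambda^{N-1}(\omega_b\cdot N_b)}_{H_0^{-1/2}}$ term that $\Abs{U}_{H^{N,1}}$ brings in by splitting low and high frequencies and invoking the trace lemma (or Remark~\ref{vorbot}) -- is a point the paper leaves implicit, and your treatment of it is correct.
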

\begin{proof}
Introducing the notations
$$
\psi^t=\dt \psi -\uw \dt \zeta+\frac{\nabla}{\Delta}\cdot
(\uom_h^\perp \dt\zeta) \quad\mbox{ and }\quad
\psi^t_{(\alpha)}=\partial^\alpha \psi^t-{\mathbb w}^\sigma[\zeta](\psi^t,\pa_t^\sigma \omega)_\surff\partial^\alpha
\zeta
$$
(with $\alpha\in \N^d$ and ${\mathbb w}^\sigma[\zeta]$ is the vertical
component of the mapping ${\mathbb U}^\sigma[\zeta]$ defined in
Notation \ref{notastr}), we get from Proposition \ref{propshapeU}
that
\begin{eqnarray}
\nonumber
\dt U&=&{\mathbb U}^\sigma[\zeta](\psi^t,\dt^\sigma \omega)+\dt\sigma
\dz^\sigma U\\
\label{decompdtU}
&=&A+B,
\end{eqnarray}
we  therefore turn to control $A$ and $B$:\\
- {\it Control of $A$}. Using Proposition \ref{horizontalregurality},
\begin{equation}
\Abs{A}_{H^{N-1,1}}\leq M_N\big(\abs{\nabla \psi^t}_{H^{1/2}}+\sum_{1 <\abs{\alpha}\leq
  N-1}\abs{\proj\psi^t_{(\alpha)}}_2
\label{CT1}
+\Abs{\Lambda^{N-2}\dt^\sigma\omega}_{2,b}\big).
\end{equation}
We now make the following observations,
\begin{eqnarray}
\nonumber
\abs{\nabla\psi^t}_{H^{1/2}}&\leq& \abs{\nabla\dt
  \psi}_{H^{1/2}}+C(\abs{\dt\zeta}_{W^{2,\infty}})\big(\abs{\uw}_{H^{3/2}}+\abs{\uom_h}_{H^{1/2}}\big)\\
\label{CT2}
&\leq& \abs{\nabla\dt
  \psi}_{H^{1/2}}+C(\abs{\dt\zeta}_{W^{2,\infty}})\big(\Abs{U}_{H^{2,1}}+\Abs{\omega}_{H^{1,1}}\big)
\end{eqnarray}
(the second inequality stemming from the trace lemma), and, for all
$\alpha\in \N^d$, $\abs{\alpha}\leq N-1$,
\begin{align*}
\psi^t_{(\alpha)}&=\dt \psi_{(\alpha)}+\big(\dt \uw -{\mathbb w}^\sigma[\zeta](\psi^t,\pa_t^\sigma
\omega)_\surff\big) \partial^\alpha\zeta
-[\partial^\alpha,\uw]\dt
\zeta+\partial^\alpha\frac{\nabla}{\Delta}\cdot (\uom_h^\perp\dt
\zeta)\\
&=\dt \psi_{(\alpha)}+\big( \dt \zeta \dz^\sigma w_\surff \big) \partial^\alpha\zeta
-[\partial^\alpha,\uw]\dt
\zeta+\partial^\alpha\frac{\nabla}{\Delta}\cdot (\uom_h^\perp\dt
\zeta),\\
\end{align*}
where the formula of Proposition \ref{propshapeU} has been used to
derive the last identity. By standard product estimates and the trace lemma, this yields
\begin{eqnarray}
\nonumber
\abs{\proj\psi^t_{(\alpha)}}_2&\leq&\abs{\proj \dt
  \psi_{(\alpha)}}_2\\
\nonumber
& &+C(\abs{\zeta}_{H^N},\abs{\dt
  \zeta}_{H^{N-3/2}})\big(  \abs{\dz^\sigma w_\surff}_{H^{3/2}}+
\abs{\uw}_{H^{N-1/2}}+\abs{\uom_h^\perp }_{H^{N-3/2}}\big)   \\
\label{CT3}
&\leq& \abs{\proj \dt
  \psi_{(\alpha)}}_2+C(\abs{\zeta}_{H^N},\abs{\dt
  \zeta}_{H^{N-3/2}})\big(\Abs{U}_{H^{N,1}}+\Abs{\omega}_{H^{N-1,1}}\big).
\end{eqnarray}
Using Proposition \ref{horizontalregurality} to control $\Abs{U}_{H^{N,1}}$,
we obtain from \eqref{CT1},
\eqref{CT2} and \eqref{CT3} that
\begin{align*}
&\Abs{A}_{H^{N-1,1}}\leq M_N\big(\abs{\nabla \dt \psi}_{H^{1/2}}+\sum_{ 1<\abs{\alpha}\leq N-1}\abs{\proj\dt
  \psi_{(\alpha)}}_2+\Abs{\Lambda^{N-2}\dt^\sigma\omega}_{2,b}\big)\\
&\quad+C(\abs{\zeta}_{H^N},\abs{\dt \zeta}_{H^{N-\frac32}})\big(\abs{\nabla\psi}_{H^{\frac12}}+\!\!\!\!\!\!\sum_{1 <\abs{\alpha}\leq N}\!\!\abs{\proj
  \psi_{(\alpha)}}_2+\Abs{\omega}_{H^{N-1,1}}+\abs{\omega_b\cdot
  N_b}_{H_0^{-\frac12}}\big).
\end{align*}
- {\it Control of $B$}. We get from the product estimates
  of Lemma \ref{lemHkk} that
\begin{align*}
\Abs{B&}_{H^{N-1,1}}\leq \Abs{\dt \sigma}_{H^{N-1,1}}\Abs{\dz^\sigma
  U}_{H^{N-1,1}}\\
&\leq  C(M_N,\abs{\dt\zeta}_{H^{N-1}}) \big(\abs{\nabla\psi}_{H^{1/2}}+\!\!\!\!\sum_{1 <\abs{\alpha}\leq N}\!\!\!\abs{\proj
  \psi_{(\alpha)}}_2+\Abs{\omega}_{H^{N-1,1}}+\abs{\omega_b\cdot
  N_b}_{H_0^{-1/2}}\big),
\end{align*}
where we used Corollary \ref{corHkk} to derive the
second inequality.

\medbreak

The estimate of the corollary is then a consequence of these two
controls and of the observation that
$$\Abs{\Lambda^{N-2}\dt^\sigma
  \omega}_{2,b}\leq \Abs{\Lambda^{N-2}\dt \omega}_{2,b}+C(M_N,\abs{\dt
  \zeta}_{H^{N-3/2}})\Abs{\omega}_{H^{N-1,1}}.$$
\end{proof}
Another corollary of Proposition \ref{propshapeU} is that
${\mathbb U}^\sigma[\zeta](\psi,\omega)$ has a Lipschitz dependence on
its coefficients.
\begin{cor}\label{lipomega}
Let $N\in \N$, $N\geq 5$. Let also $(\zeta_j,\psi_j,\omega_j)\in
H^N(\R^d)\times \dot{H}^N(\R^d)\times H^{N-2}(\cS)$ be such that
$\nabla_{X,z}^{\sigma_j}\cdot \omega_j=0$ for $j=1,2$. Then one has
\begin{align*}
\Abs{{\mathbb U}^{\sigma_2}[\zeta_2](\psi_2,\omega_2)&-{\mathbb
    U}^{\sigma_1}[\zeta_1](\psi_1,\omega_1)}_{H^{N-2}}\leq
C(\abs{\zeta}_{H^N},\abs{\psi}_{\dot{H}^N},\Abs{\omega}_{H^{N-2}})\\
&\times \big(\abs{\zeta_2-\zeta_1}_{H^N}+\abs{\psi_2-\psi_1}_{\dot{H}^N}+\Abs{\omega_2-\omega_1}_{H^{N-2}}\big).
\end{align*}
\end{cor}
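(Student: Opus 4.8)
The plan is to obtain the Lipschitz estimate by connecting the two data triples along a smooth one–parameter path and integrating the differentiation formula of Proposition \ref{propshapeU}. Concretely, I would put $\zeta(\theta)=(1-\theta)\zeta_1+\theta\zeta_2$ and $\psi(\theta)=(1-\theta)\psi_1+\theta\psi_2$ for $\theta\in[0,1]$, with $\sigma(\theta)=\frac{1}{H_0}(z+H_0)\zeta(\theta)$; both satisfy \eqref{hmin} uniformly since the endpoints do. The delicate point is the interpolation of the vorticity, since the naive linear path $(1-\theta)\omega_1+\theta\omega_2$ violates the constraint $\nabla^{\sigma(\theta)}_{X,z}\cdot\omega(\theta)=0$. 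To fix this I would use the Piola identity $(1+\dz\sigma)\,\nabla^\sigma_{X,z}\cdot\omega=\nabla_{X,z}\cdot\big((1+\dz\sigma)J_\Sigma^{-1}\omega\big)$: the transformed fields $\widetilde\omega_j:=(1+\dz\sigma_j)J_{\Sigma_j}^{-1}\omega_j$ are divergence free on $\cS$ in the flat sense, so I interpolate those linearly and push the result back, $\omega(\theta):=\frac{1}{1+\dz\sigma(\theta)}J_{\Sigma(\theta)}\big((1-\theta)\widetilde\omega_1+\theta\widetilde\omega_2\big)$. Then $\omega(\theta)$ is $\sigma(\theta)$–divergence free, interpolates between $\omega_1$ and $\omega_2$, is smooth in $\theta$ with $\Abs{\partial_\theta\omega(\theta)}_{H^{N-2}}\lesssim C(\abs{\zeta_j}_{H^N})\big(\Abs{\omega_2-\omega_1}_{H^{N-2}}+\abs{\zeta_2-\zeta_1}_{H^N}\Abs{\omega_1}_{H^{N-2}}\big)$ by product estimates, and — using that $\nabla\sigma(\theta)$ vanishes at $z=-H_0$, so that the lower–right entry of $(1+\dz\sigma(\theta))J_{\Sigma(\theta)}^{-1}$ equals $1$ at the bottom — satisfies $\omega(\theta)_b\cdot N_b=(1-\theta)\,\omega_{1,b}\cdot N_b+\theta\,\omega_{2,b}\cdot N_b\in H_0^{-1/2}(\R^d)$. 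Thus $(\zeta(\theta),\psi(\theta),\omega(\theta))$ meets the hypotheses of Proposition \ref{propshapeU} on $[0,1]$.

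Next I would apply Proposition \ref{propshapeU} with $\dt$ replaced by $\partial_\theta$, getting $\partial_\theta U(\theta)=\mathbb U^{\sigma(\theta)}[\zeta(\theta)](\psi^\theta,\partial_\theta^\sigma\omega(\theta))+\partial_\theta\sigma(\theta)\,\dz^\sigma U(\theta)$ with $U(\theta)=\mathbb U^{\sigma(\theta)}[\zeta(\theta)](\psi(\theta),\omega(\theta))$ and $\psi^\theta=\partial_\theta\psi-\uw\,\partial_\theta\zeta+\frac{\nabla}{\Delta}\cdot(\uom^\perp_h\partial_\theta\zeta)$, $(\uV^T,\uw)^T=U(\theta)_\surff$. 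The goal is a uniform bound $\Abs{\partial_\theta U(\theta)}_{H^{N-2}(\cS)}\le C(K)\big(\abs{\zeta_2-\zeta_1}_{H^N}+\abs{\psi_2-\psi_1}_{\dot H^N}+\Abs{\omega_2-\omega_1}_{H^{N-2}}\big)$, where $K$ bounds $\Abs{U(\theta)}_{H^{N-1}(\cS)}$ uniformly in $\theta$ via Corollary \ref{corHkk} (applied at order $k=l=N-2$, which is licit because $\omega(\theta)\in H^{N-2}(\cS)=H^{N-2,N-2}(\cS)$). The second term of $\partial_\theta U(\theta)$ is then bounded by $C(\abs{\zeta_2-\zeta_1}_{H^N})K$ using the product estimates of Lemma \ref{lemHkk}. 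The first term is estimated by Corollary \ref{corHkk} at order $k=l=N-3$ (enough to land in $H^{N-2}(\cS)$), noting that $\partial_\theta^\sigma\omega(\theta)=\partial_\theta\omega(\theta)-\frac{\partial_\theta\sigma}{1+\dz\sigma}\dz\omega(\theta)$ lies only in $H^{N-3}(\cS)$: one vertical derivative of $\omega(\theta)$ is consumed by the good–unknown correction, which is harmless since $\omega(\theta)$ has full vertical regularity and since $\partial_\theta\sigma$ vanishes at the bottom, so the bottom trace of $\partial_\theta^\sigma\omega(\theta)$ equals that of $\partial_\theta\omega(\theta)$. The good unknowns $\psi^\theta_{(\alpha)}$ appearing in that estimate are rewritten exactly as in the proof of Corollary \ref{corotime}, using the identity $\mathbb w^\sigma[\zeta](\psi^\theta,\partial_\theta^\sigma\omega)_\surff=\partial_\theta\uw-\partial_\theta\zeta\,\dz^\sigma w_\surff$ from Proposition \ref{propshapeU}; they then reduce to $\partial_\theta\psi_{(\alpha)}$, the increments $\partial_\theta\zeta,\partial_\theta\psi,\partial_\theta\omega$, and traces of $U(\theta)$, with no circularity, and all are controlled by $K$ and the increments. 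Finally $\mathbb U^{\sigma_2}[\zeta_2](\psi_2,\omega_2)-\mathbb U^{\sigma_1}[\zeta_1](\psi_1,\omega_1)=\int_0^1\partial_\theta U(\theta)\,d\theta$ and Minkowski's inequality yield the claim.

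The step I expect to be the main obstacle is precisely the construction and control of the divergence–compatible vorticity path $\omega(\theta)$ together with the attendant bookkeeping: one must check that the Piola pullback stays in $H^{N-2}(\cS)$ with the stated Lipschitz bound, that it preserves the bottom condition $\omega(\theta)_b\cdot N_b\in H_0^{-1/2}$ with controlled $\theta$–derivative (this is where the vanishing of $\nabla\sigma$ at $z=-H_0$ is used), and that the one–derivative loss caused by the good–unknown term is absorbed by applying Corollary \ref{corHkk} one order lower. Once these points are settled, the remaining estimates are a verbatim repetition of the proof of Corollary \ref{corotime} shifted down by one derivative.
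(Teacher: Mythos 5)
Your strategy matches the paper's: join the two data triples by a path $(\zeta^{(t)},\psi^{(t)},\omega^{(t)})$ along which the divergence constraint is preserved, write the difference of the velocities as $\int_0^1\partial_t U\,dt$, express the integrand via Proposition~\ref{propshapeU}, and estimate with Corollary~\ref{corHkk}. The one place you genuinely deviate is the construction of the divergence-free vorticity path. The paper interpolates the Eulerian vorticities linearly, $\bom^{(t)}=(1-t)\bom_1+t\bom_2$ with $\bom_j=\omega_j\circ\Sigma_j^{-1}$, and sets $\omega^{(t)}=\bom^{(t)}\circ\Sigma^{(t)}$; this trivially gives $\nabla^{\sigma^{(t)}}_{X,z}\cdot\omega^{(t)}=0$, but as written it asks one to evaluate $\bom_j$ on $\Omega^{(t)}$, which is not contained in $\Omega_j$ wherever $\zeta^{(t)}>\zeta_j$, so a (suppressed) divergence-preserving extension is implicitly used. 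Your Piola path $\omega(\theta)=\frac{1}{1+\dz\sigma(\theta)}J_{\Sigma(\theta)}\bigl((1-\theta)\widetilde\omega_1+\theta\widetilde\omega_2\bigr)$ lives entirely on the fixed strip $\cS$ and sidesteps that issue, at the modest cost of carrying the Jacobian factors through the $H^{N-2}$ product estimates (harmless since $H^{N-2}(\cS)$ is an algebra for $N\geq 5$). Both constructions give the same linear interpolation of the bottom trace, since the diffeomorphism fixes $z=-H_0$; you correctly identify $\nabla\sigma\vert_{z=-H_0}=0$ as the mechanism. The remaining bookkeeping you spell out, namely applying Corollary~\ref{corHkk} one order lower to absorb the derivative lost to $\partial_\theta^\sigma\omega(\theta)$ and unwinding $\psi^\theta_{(\alpha)}$ as in the proof of Corollary~\ref{corotime}, is exactly what the paper's one-line conclusion compresses.
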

\begin{proof}
Let us define time dependent functions on $[0,1]$ as
$$
\forall t\in [0,1],\qquad \zeta^{(t)}=\zeta_1+t(\zeta_2-\zeta_1),
\qquad
\psi^{(t)}=\psi_1+t (\psi_2-\psi_1).
$$
For every value of $\zeta^{(t)}$, one can define an explicit diffeomorphism
$\Sigma^{(t)}$ as in \S \ref{sectstraight}; we then define
$$
\omega^{(t)}=\big(\bom_1+t(\bom_2-\bom_1)\big)\circ\Sigma^{(t)}
\quad\mbox{ with }\quad
\bom_j=\omega_j\circ \Sigma_j^{-1}\quad (j=1,2);
$$
by construction, one has $\nabla_{X,z}^{\sigma^{(t)}}\cdot
\omega^{(t)}=0$. We can therefore write
$$
{\mathbb U}^{\sigma_2}[\zeta](\psi_2,\omega_2)-{\mathbb
    U}^{\sigma_1}[\zeta_1](\psi_1,\omega_1)=\int_0^1 \dt \big({\mathbb
    U}^{(\sigma^{(t)})}[\zeta^{(t)}](\psi^{(t)},\omega^{(t)}){\rm dt}
$$
and use Proposition \ref{propshapeU} to express the integrand in terms
of the time derivatives of $\zeta^{(t)}$, $\psi^{(t)}$ and
$\omega^{(t)}$. The desired estimate is then a direct consequence of
Corollary \ref{corHkk}.
\end{proof}

\subsection{Almost incompressibility of the good unknown}\label{sectAIG}

We have already mentioned in the proof of Proposition \ref{propshapeU}
the role of Alinhac's good unknown. As we shall see later, it shall also
play an important role in the energy estimates where we shall
typically have to control terms of the form
$$
(\varphi,(\partial^\alpha \uU)\cdot
N)=\int_{\cS}\varphi^\dagger(\nabla_{X,z}^\sigma\cdot
\partial^\alpha U)(1+\dz\sigma)+\int_\cS
\nabla_{X,z}\varphi^\dagger\cdot \partial^\alpha U (1+\dz\sigma),
$$
where $\varphi^\dagger$ is defined in $\cS$ and satisfies
$\varphi^\dagger_{\vert_{z=0}}=\varphi$, and with $U={\mathbb
  U}^\varphi[\zeta](\psi,\omega)$. When $\alpha=0$, the first term in
the right-hand-side vanishes since $U$ is incompressible by
construction. When $\alpha\neq 0$, this is no longer true and this
component cannot be controlled by the $L^2(\cS)$-norm of
$\abs{\alpha}$ derivatives of $U$. It is however possible to get rid of
this difficulty by working with the {\it good unknown} $U_{(\alpha)}$
instead of $\partial^\alpha U$ (and this is actually what we do in \S
\ref{sectEE} below), with
\begin{equation}\label{defUalpha}
\forall \alpha\in \N^d\backslash\{0\},\quad U_{(\alpha)}=\partial^\alpha U-\partial^\alpha \sigma \dz^\sigma U
\end{equation}
while for $\alpha=0$ we simply take $U_{(0)}=U$.
 Indeed, the good unknown is almost incompressible,
as remarked in \cite{MasmoudiRousset} and stated in the proposition
below. We also give an estimate on the curl of the good unknown.
\begin{proposition}\label{propalmostinc}
Let $N\in \N$, $N\geq 5$ and
  $\zeta\in H^N(\R^d)$. Under the assumptions of Theorem \ref{prop1}
  and denoting by $\bU={\mathbb U}[\zeta](\psi,\bom)$ the solution to
  \eqref{divrot} and by $U=\bU\circ\Sigma$ its straightened version, one
  has, for all $\alpha\in \N^d$, $0<\abs{\alpha}\leq N$,
\begin{align*}
\Abs{\nabla_{X,z}^\sigma&\cdot
  U_{(\alpha)}}_{2}+\Abs{\nabla_{X,z}^\sigma\times
  U_{(\alpha)}-\partial^\alpha \omega}_{2}\\
&\leq M_N \big(\abs{\nabla\psi}_{H^{1/2}}+\sum_{1 <\abs{\alpha'}\leq
  \abs{\alpha}}\abs{\proj\psi_{(\alpha')}}_2+\Abs{\omega}_{H^{\abs{\alpha}-1}}+\abs{\omega_b\cdot
N_b}_{H_0^{-1/2}}\big),
\end{align*}
with $M_N$, $\psi_{\alpha}$, $U_{(\alpha)}$ as in Theorem \ref{prop1},
\eqref{defVpb} and \eqref{defUalpha} respectively.
\end{proposition}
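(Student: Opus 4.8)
The proof rests on the Alinhac good-unknown identity \eqref{goodunknown}, used to commute the horizontal derivation $\partial^\alpha$ past the $\sigma$-dependent coefficients of $\nabla_{X,z}^\sigma\cdot$ and $\nabla_{X,z}^\sigma\times$. The plan is to expand, by an iterated application of \eqref{goodunknown} (equivalently, a Leibniz expansion of $\partial^\alpha$ against the coefficients $(1+\dz\sigma)^{-1}$ and $\nabla\sigma$), the quantity $\nabla_{X,z}^\sigma\cdot U_{(\alpha)}$ as $\partial^\alpha(\nabla_{X,z}^\sigma\cdot U)$ — which vanishes since $\dives U=0$ — plus a remainder $\mathcal{R}_\alpha^{\mathrm{div}}$, and $\nabla_{X,z}^\sigma\times U_{(\alpha)}$ as $\partial^\alpha(\nabla_{X,z}^\sigma\times U)=\partial^\alpha\omega$ plus a remainder $\mathcal{R}_\alpha^{\mathrm{curl}}$. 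The whole point of using the good unknown $U_{(\alpha)}=\partial^\alpha U-\partial^\alpha\sigma\,\dz^\sigma U$ instead of $\partial^\alpha U$ is that these remainders are \emph{tame}: each is a finite sum of products of derivatives of $\sigma$ — hence of $\zeta$, of order at most $\abs{\alpha}\le N$ — with derivatives of $U$ and of $\omega$, distributed so that whenever the $\sigma$-factor carries the maximal number of derivatives the $U$-factor carries at most one (and only one vertical one) and the $\omega$-factor carries at most $\abs{\alpha}-1$, and conversely.

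I would argue by induction on $\abs{\alpha}$. In the base case $\abs{\alpha}=1$, $\alpha=e_i$, summing \eqref{goodunknown} over components gives directly $\nabla_{X,z}^\sigma\cdot U_{(e_i)}=\partial_i(\nabla_{X,z}^\sigma\cdot U)-(\partial_i\sigma)\dz^\sigma(\nabla_{X,z}^\sigma\cdot U)=0$ and $\nabla_{X,z}^\sigma\times U_{(e_i)}-\partial_i\omega=-(\partial_i\sigma)\dz^\sigma\omega$. For the inductive step, writing $\alpha=\alpha'+e_i$ one has the exact relation
\[
U_{(\alpha)}=\partial_i U_{(\alpha')}+(\partial^{\alpha'}\sigma)\Big(\dz^\sigma\partial_i U-\frac{\partial_i\dz\sigma}{1+\dz\sigma}\dz^\sigma U\Big);
\]
applying $\nabla_{X,z}^\sigma\cdot$ and $\nabla_{X,z}^\sigma\times$ to this, pushing the derivations through with \eqref{goodunknown}, and invoking the induction hypothesis for $U_{(\alpha')}$, one reduces $\mathcal{R}_\alpha^{\mathrm{div}}$ and $\mathcal{R}_\alpha^{\mathrm{curl}}$ to tame expressions. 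These are then estimated in $L^2(\cS)$ by the product estimate \eqref{prodHorm}, the commutator estimate used in \S\ref{secthigher}, and the trace lemma; crucially, every vertical derivative of $U$ of order two or more that appears is eliminated by means of the identity \eqref{dzU}, which trades it for horizontal derivatives of $U$ and derivatives of $\omega$ (exactly as in the proof of Corollary \ref{corHkk}), so that only $\Abs{U}_{H^{\abs{\alpha},1}}$, $\Abs{\omega}_{H^{\abs{\alpha}-1}}$ and $\abs{\omega_b\cdot N_b}_{H_0^{-1/2}}$ enter. This yields
\[
\Abs{\nabla_{X,z}^\sigma\cdot U_{(\alpha)}}_2+\Abs{\nabla_{X,z}^\sigma\times U_{(\alpha)}-\partial^\alpha\omega}_2\le C\big(\tfrac{1}{h_{\min}},H_0,\abs{\zeta}_{H^N}\big)\big(\Abs{U}_{H^{\abs{\alpha},1}}+\Abs{\omega}_{H^{\abs{\alpha}-1}}+\abs{\omega_b\cdot N_b}_{H_0^{-1/2}}\big).
\]

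Finally, to convert the $\Abs{U}_{H^{\abs{\alpha},1}}$ on the right into the announced bound, I would apply Proposition \ref{horizontalregurality} with $k=\abs{\alpha}-1$ (admissible since $1\le\abs{\alpha}\le N$ and $\Lambda^{\abs{\alpha}-1}\omega\in L^2(\cS)$), which controls $\Abs{U}_{H^{\abs{\alpha},1}}$ by $M_N\big(\abs{\proj\psi}_{H^{1}}+\sum_{1<\abs{\alpha'}\le\abs{\alpha}}\abs{\proj\psi_{(\alpha')}}_2+\Abs{\Lambda^{\abs{\alpha}-1}\omega}_{2,b}\big)$ with $M_N=C(\frac{1}{h_{\min}},H_0,\abs{\zeta}_{H^N})$, the last term being bounded in turn by $\Abs{\omega}_{H^{\abs{\alpha}-1}}+\abs{\omega_b\cdot N_b}_{H_0^{-1/2}}$; combining with the displayed inequality gives the proposition. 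I expect the main obstacle to be the tame bookkeeping of the iterated good-unknown expansion in the inductive step — verifying that no remainder term ever needs more than one vertical derivative of $U$ or more than $\abs{\alpha}-1$ derivatives of $\omega$ — which is precisely where the cancellations forced by $\dives U=0$, $\curls U=\omega$ and the substitution \eqref{dzU} are indispensable; this part is, however, entirely parallel to the commutator analysis already performed in \S\ref{secthigher} and to the argument in \cite{MasmoudiRousset}.
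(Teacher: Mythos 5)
Your strategy is the same as the paper's at the conceptual level: commute $\partial^\alpha$ past $\nabla_{X,z}^\sigma\cdot$ and $\nabla_{X,z}^\sigma\times$ using Alinhac's good unknown, exploit $\dives U=0$, $\curls U=\omega$, and then close with the product estimates of Lemma \ref{lemHkk} and the elliptic estimate of Proposition \ref{horizontalregurality}/Corollary \ref{corHkk}. Where you diverge is in the execution: the paper does not induct on $\abs{\alpha}$ but invokes (following \cite{MasmoudiRousset}) the closed-form identity
$\partial^\alpha\partial_i^\sigma f=\partial_i^\sigma\partial^\alpha f-\dz^\sigma f\,\partial_i^\sigma\partial^\alpha\sigma+C_i(f)$,
where $C_i(f)$ is a short explicit sum of symmetric commutators $[\partial^\alpha,\cdot,\cdot]$ in which $f$ appears only through $\dz f$ (one vertical derivative) and $\sigma$ through at most $\abs{\alpha}+1$ derivatives. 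Applied to $f=U_i$ and summed, since $\dz^\sigma$ and $\partial_i^\sigma$ commute and $\dives U=0$, it gives $\nabla_{X,z}^\sigma\cdot U_{(\alpha)}=-C(U)$ in one step, and the tameness of $C(U)$ is manifest; the curl estimate is identical. Your iterated first-order version is correct in principle — your base case and the recursion $U_{(\alpha)}=\partial_iU_{(\alpha')}+\partial^{\alpha'}\sigma\bigl(\dz^\sigma\partial_iU-\tfrac{\partial_i\dz\sigma}{1+\dz\sigma}\dz^\sigma U\bigr)$ are both exact — but the intermediate expressions it produces do contain terms with two or more vertical derivatives of $U$ (e.g.\ from $\dz^\sigma(\partial^{\alpha'}\sigma\,\dz^\sigma U)$ hiding inside $\dz^\sigma U_{(\alpha')}$), and these cancel only at the very end. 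You flag this yourself and propose to manage it with \eqref{dzU}; that works, but it is substantially more bookkeeping than the paper's one-line invocation, and the reference to ``the commutator analysis in \S\ref{secthigher}'' is not quite apt, since that analysis serves a different purpose. In short: same key ideas, but the paper's closed-form commutator identity buys a genuinely shorter and more transparent proof, whereas your induction trades that for having to verify nontrivial cancellations of higher vertical derivatives along the way.
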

\begin{proof}
For the estimate on the divergence, we reproduce here the proof of \cite{MasmoudiRousset}, which is
based on the following identity, with $i=x,y,z$,
$$
\partial^\alpha\partial_i^\sigma f=\partial_i^\sigma\partial^\alpha
f-\dz^\sigma f \partial_i^\sigma \partial^\alpha\sigma+C_i(f),
$$
and where
\begin{eqnarray*}
C_i(f)&=&-[\partial^\alpha,\frac{\partial_i\sigma}{1+\dz\sigma},\dz
f]-[\partial^\alpha,\partial_i\sigma,\frac{1}{1+\dz\sigma}]\dz
f\\
& &-\partial_i\sigma
[\partial^\alpha(\frac{1}{1+\dz\sigma})+\frac{\dz\partial^\alpha\sigma}{(1+\dz\sigma)^2}]\dz f.
\end{eqnarray*}
It follows that
\begin{eqnarray*}
0&=&\partial^\alpha \nabla_{X,z}^\sigma \cdot U\\
&=& \nabla_{X,z}\cdot U_{(\alpha)}+C(U),
\end{eqnarray*}
with $C(U)=C_1(V_1)+C_2(V_2)+C_3(w)$. Using the product estimates of Lemma \ref{lemHkk}, we have in particular
$$
\Abs{C(U)}_2\leq C(\Abs{\sigma}_{H^{N}},\Abs{U}_{H^{N}}),
$$
and the result is therefore a consequence of Corollary
\ref{corHkk}. The estimate on the vorticity is obtained along the same lines.
\end{proof}
We can deduce the following property that, together with its proof, will play an important role
in the derivation of the energy estimates in \S \ref{sectEE}
below.
\begin{cor}\label{cor6}
Under the assumptions of Proposition \ref{propalmostinc}, one has, for
all $\varphi\in H^{1/2}(\R^d)$, and for all $k=x,y$, $\abs{\beta}\leq N-1$ and $\alpha$
such that $\partial^\alpha=\partial_k\partial^\beta$,
\begin{align*}
(\varphi,&\partial_k\uU_{(\beta)}\cdot N)\\
&\leq M_N
\big(\abs{\nabla\psi}_{H^{1/2}}+\sum_{ 1 <\abs{\alpha'} \leq
  \abs{\alpha}}\abs{\proj\psi_{(\alpha)}}_2+\Abs{\omega}_{H^{\abs{\alpha}-1}}+\abs{\omega_b\cdot
  N_b}_{H_0^{-1/2}}\big)\abs{\varphi}_{H^{1/2}}.
\end{align*}
\end{cor}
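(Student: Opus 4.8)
The plan is to mimic the integration-by-parts identity used at the beginning of \S\ref{sectAIG}: rather than estimating the normal trace $\partial_k\uU_{(\beta)}\cdot N$ directly (for $\abs{\beta}=N-1$ it only belongs to a negative-order Sobolev space), I would pair it against a convenient extension $\varphi^\dagger$ of $\varphi$ to the flat strip $\cS$, which turns it into a sum of two bulk integrals. Concretely, take $\varphi^\dagger=\frac{\cosh((z+H_0)\abs{D})}{\cosh(H_0\abs{D})}\varphi$, as in the proof of Lemma \ref{existtildepsi}, so that $\Abs{\varphi^\dagger}_2+\Abs{\nabla_{X,z}\varphi^\dagger}_2\lesssim\abs{\varphi}_{H^{1/2}}$. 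Since $\sigma$ vanishes at the flat bottom, $U_{(\beta)}$ restricted to $z=-H_0$ coincides with $\partial^\beta U$ there, so that $\big(\partial_k U_{(\beta)}\big)_b\cdot N_b=\partial^\alpha(U_b\cdot N_b)=0$; hence the bottom contribution of the boundary term drops out, and that identity, applied to the vector field $\partial_k U_{(\beta)}$ in place of $\partial^\alpha U$, yields $(\varphi,\partial_k\uU_{(\beta)}\cdot N)=J_1+J_2$ with $J_1=\int_{\cS}\varphi^\dagger\big(\nabla_{X,z}^\sigma\cdot\partial_k U_{(\beta)}\big)(1+\dz\sigma)$ and $J_2=\int_{\cS}\nabla_{X,z}\varphi^\dagger\cdot\partial_k U_{(\beta)}\,(1+\dz\sigma)$.

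For $J_2$ a Cauchy--Schwarz inequality and the bound on $\varphi^\dagger$ give $\abs{J_2}\lesssim C(\abs{\zeta}_{H^N})\abs{\varphi}_{H^{1/2}}\Abs{\partial_k U_{(\beta)}}_2$, and I would bound $\Abs{\partial_k U_{(\beta)}}_2$ by writing $\partial_k U_{(\beta)}=\partial^\alpha U-\partial^\alpha\sigma\,\dz^\sigma U-\partial^\beta\sigma\,\partial_k\dz^\sigma U$ (for $\beta\neq0$; the case $\beta=0$ is even simpler since $U_{(0)}=U$): this carries at most $\abs{\alpha}\le N$ derivatives of $U$, of which at most one is vertical, so Corollary \ref{corHkk} (with $k=\abs{\alpha}-1\le N-1$, $l=1$) together with the product estimates of Lemma \ref{lemHkk}---always letting the $\zeta$-derivatives fall on coefficients, which is harmless since $\zeta\in H^N$---bounds $\Abs{\partial_k U_{(\beta)}}_2$ by the right-hand side of the corollary. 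The term $J_1$ is the crux, since a direct Cauchy--Schwarz fails: $\nabla_{X,z}^\sigma\cdot\partial_k U_{(\beta)}$ formally carries $\abs{\alpha}+1$ derivatives of $U$---one more than the bound on $U$ provides, and with one vertical derivative too many. This is where Alinhac's good unknown is essential. I would write $\partial_k U_{(\beta)}=U_{(\alpha)}-\partial^\beta\sigma\,\partial_k\dz^\sigma U$, control $\nabla_{X,z}^\sigma\cdot U_{(\alpha)}$ in $L^2(\cS)$ directly by Proposition \ref{propalmostinc}, and for the remainder substitute the identity \eqref{dzU} to re-express $\dz^\sigma U$---hence $\partial_k\dz^\sigma U$---in terms of horizontal derivatives of $V$, $w$ and of $\omega$; then $\nabla_{X,z}^\sigma\cdot\big(\partial^\beta\sigma\,\partial_k\dz^\sigma U\big)$ never carries more than one vertical derivative of $(V,w,\omega)$ nor more than $\abs{\alpha}$ derivatives in total, so Lemma \ref{lemHkk} and Corollary \ref{corHkk} again bound it by the right-hand side of the corollary. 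A final Cauchy--Schwarz, $\abs{J_1}\lesssim C(\abs{\zeta}_{H^N})\abs{\varphi}_{H^{1/2}}\Abs{\nabla_{X,z}^\sigma\cdot\partial_k U_{(\beta)}}_2$, then closes the estimate.

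The main obstacle is exactly this derivative count in $J_1$: one must recover an $L^2(\cS)$ bound on a quantity that formally has one derivative too many and, just as importantly, one vertical derivative too many. The resolution, precisely as in the proof of Proposition \ref{propalmostinc}, is the combination of the good-unknown splitting with \eqref{dzU}, i.e. with the divergence-free and curl structure of $U$, which trades the extra vertical derivative for a horizontal one. The degenerate cases $\abs{\beta}\in\{0,1\}$ are easier, since the remainder then carries few derivatives and is estimated directly, and the bookkeeping of which derivatives fall on $\sigma$, on $U$, or on $\omega$ in the product estimates is routine once $N\ge5$ (one should also, as in the footnote in the proof of Proposition \ref{horizontalregurality}, first work with a regularization such as $\chi(\delta\abs{D})\partial_k U_{(\beta)}$ and let $\delta\to0$). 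As the statement of the corollary anticipates, this very argument---the splitting of $(\varphi,\cdot)$ into $J_1+J_2$ and the appeal to Proposition \ref{propalmostinc}---will be reused in the energy estimates of \S\ref{sectEE}.
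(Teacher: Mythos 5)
Your proposal is correct and follows essentially the same route as the paper: both rely on the integration-by-parts identity \eqref{IPP} applied with an extension $\varphi^\dagger$ of $\varphi$ to $\cS$, the vanishing of the bottom boundary term, and the almost-incompressibility of the good unknown from Proposition \ref{propalmostinc} to handle the divergence term. The only (immaterial) difference is one of ordering: the paper first replaces $\partial_k\uU_{(\beta)}\cdot N$ by $\uU_{(\alpha)}\cdot N$ via $\partial_k U_{(\beta)}=U_{(\alpha)}-\partial^\beta\sigma\,\partial_k\dz^\sigma U$ at the trace level, treating the remainder directly, and then integrates by parts on $U_{(\alpha)}$ alone, whereas you integrate by parts on $\partial_k U_{(\beta)}$ first and perform this decomposition inside the bulk divergence integral.
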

\begin{proof}
Remarking that, when $\beta\neq 0$,
$$
\partial_k U_{(\beta)}=U_{(\alpha)}-\partial^\beta\sigma \partial_k\dz^\sigma U
$$
(the adaptations to the case
$\beta=0$ are straightforward),
it is enough to prove  the estimate of the corollary on
$(\varphi,\uU_{(\alpha)}\cdot N)$.
Let us first give the following integration by parts formula that will
be used several times in the sequel,
\begin{equation}\label{IPP}
\int_{\cS} (\nabla_{X,z}^\sigma f)\cdot {\bf g} h =-\int_{\cS}
f\nabla_{X,z}^\sigma \cdot {\bf g} h +\int_{z=0} f {\bf
  g}\cdot N -\int_{z=-1} f {\bf g}\cdot N_b,
\end{equation}
where $h=1+\zeta$ (just remark that $h=1+\zeta=1+\dz\sigma$ is the Jacobian determinant
of the diffeomorphism $\Sigma: \cS\to \Omega$ so that this formula is
just the pullback in $\cS$ of the standard integration by parts
formula in $\Omega$). It follows from this formula that
$$
(\varphi,\uU_{(\alpha)}\cdot N)=\int_{\cS}(1+\dz\sigma) \varphi^\dagger\nabla_{X,z}^\sigma
\cdot U_{(\alpha)}+\int_{\cS}(1+\dz\sigma)\nabla_{X,z}^\sigma
\varphi^\dagger\cdot  U_{(\alpha)},
$$
with $\varphi^\dagger$ is the extension of $\varphi$ to $\cS$ given by
$\varphi^\dagger=\chi(z\abs{D})\varphi$, with $\chi$ a smooth, compactly
supported and even function equal to $1$ in a neighborhood of the
origin. Consequently,
$$
(\varphi,\uU_{(\alpha)}\cdot N)\leq
C(\frac{1}{h_{\min}},\abs{\zeta}_{W^{1,\infty}})\Abs{\varphi^\dagger}_{H^1}
\big(\Abs{U_{(\alpha)}}_2+\Abs{\nabla_{X,z}^\sigma\cdot U_{(\alpha)}}_2\big);
$$
since $\Abs{\varphi^\dagger}_{H^1}\lesssim \abs{\varphi}_{H^{1/2}}$,
the result follows from Propositions \ref{horizontalregurality} and \ref{propalmostinc}.
\end{proof}
In \S \ref{sectEE} below, we shall derive a priori estimates on
$\omega$ and on $\Abs{\partial_kU_{\beta}}_2$ ($k=x,y$,
$\abs{\beta}\leq N-1$); the corollary below shall play a crucial role
to deduce a priori estimates on the quantities
$\abs{\proj\psi_{(\alpha)}}_2$ ($\abs{\alpha}\leq N$) more closely
related to the formulation \eqref{ZCSgen} of the water waves equations
with vorticity.
\begin{cor}\label{tracecoer}
Under the assumptions of Proposition \ref{propalmostinc}, one has
$$
\abs{\proj\psi_{(\alpha)}}_2\leq M_N \big(\abs{\proj\psi}_{H^{3}}+\sum_{k=x,y,1\leq \abs{\beta}\leq
  \abs{\alpha}-1}\Abs{\partial_kU_{(\beta)}}_2+\Abs{\omega}_{H^{N-1}}+\abs{\omega_b\cdot
  N_b}_{H_0^{-1/2}}\big).
$$
\end{cor}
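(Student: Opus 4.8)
The goal is to bound $\abs{\proj\psi_{(\alpha)}}_2$ for $\abs{\alpha}\le N$ in terms of $\abs{\proj\psi}_{H^3}$, the norms $\Abs{\partial_kU_{(\beta)}}_2$ for $1\le\abs{\beta}\le\abs{\alpha}-1$, and the vorticity data. The key observation is that $\nabla\psi$ is, up to a correction determined by the vorticity, the horizontal tangential velocity $\uV_\parallel$ at the surface; more precisely from \eqref{divrot} one has $\uV_\parallel=\nabla\psi+\nabla^\perp\Delta^{-1}(\uom\cdot N)$, so $\nabla\psi=\Pi\uV_\parallel$ (the projection onto gradient fields). Applying $\partial^\alpha$ and using the definition \eqref{defVpb} of $\psi_{(\alpha)}=\partial^\alpha\psi-\uw\partial^\alpha\zeta$, one should be able to write $\nabla\psi_{(\alpha)}$ as $\Pi$ applied to $\partial^\alpha\uV_\parallel-\uw\partial^\alpha\nabla\zeta$ plus commutator terms and the (explicit, vorticity-controlled) contribution of $\nabla^\perp\Delta^{-1}(\uom\cdot N)$. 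Since $\proj=\abs{D}/(1+\abs{D})^{1/2}$ and $\Pi$ acts as a Fourier multiplier of order $0$, $\abs{\proj\psi_{(\alpha)}}_2\sim\abs{\nabla\psi_{(\alpha)}}_{H^{-1/2}}$, so it suffices to control the $H^{-1/2}$ norm of the right-hand side.

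\textbf{Key steps.} First, rewrite $\nabla\psi_{(\alpha)}$ in terms of the \emph{good unknown} $U_{(\alpha)}$. Using $\partial^\alpha\psi=\partial^\alpha(\Pi\uV_\parallel)_{\text{scalar}}$ and expanding $\partial^\alpha\uV$ via the good-unknown relation $U_{(\alpha)}=\partial^\alpha U-\partial^\alpha\sigma\,\dz^\sigma U$ at the surface (so $\partial^\alpha\uV=\underline{V}_{(\alpha)}+\partial^\alpha\zeta\,\dz^\sigma\uV$ and similarly for $\uw$), one finds that $\psi_{(\alpha)}$ equals (the gradient-part potential of) $\underline{V}_{(\alpha)\parallel}$ plus the explicit $\tpsi$-contribution plus lower-order commutator terms $[\partial^\alpha,\uw]\nabla\zeta$, $[\partial^\alpha,\cdot]$-type remainders, all of which involve at most $N-1$ derivatives of $\zeta$, $\uw$, $\uom$ and are handled by the trace lemma, Lemma~\ref{estimtpsi}, and Propositions~\ref{horizontalregurality}--\ref{propalmostinc}. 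Second, for the main term, note $\underline{V}_{(\alpha)\parallel}=\underline{V}_{(\alpha),h}+\uw_{(\alpha)}\nabla\zeta$, and write $\partial^\alpha=\partial_k\partial^\beta$ with $\abs{\beta}\le\abs{\alpha}-1$; then $\underline{V}_{(\alpha)}=\partial_k\underline{V}_{(\beta)}+(\text{lower order})$, whose $H^{-1/2}$ norm is controlled by $\Abs{\partial_kU_{(\beta)}}_2$ via the trace lemma (losing half a derivative from $\Omega$ to $\R^d$) — this is exactly the quantity allowed on the right. Third, for the very lowest orders $\abs{\alpha}\le 2$ one cannot extract a $\partial_k\partial^\beta$ with $\abs{\beta}\ge 1$; here one falls back directly on $\abs{\proj\psi}_{H^3}$ (which dominates $\abs{\proj\psi_{(\alpha)}}_2$ for $\abs{\alpha}\le 2$ by the trace lemma and product estimates, absorbing the $\uw\partial^\alpha\zeta$ term), accounting for the $H^3$ rather than $H^{3/2}$ in the statement.

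\textbf{Main obstacle.} The delicate point is the bookkeeping of regularity in the commutator and product terms: one must check that every remainder arising from distributing $\partial^\alpha$ through $\Pi$, through $\uw\nabla\zeta$, and through the good-unknown substitution can be estimated in $H^{-1/2}(\R^d)$ by $M_N$ times the already-controlled quantities $\Abs{\omega}_{H^{N-1}}$, $\abs{\omega_b\cdot N_b}_{H_0^{-1/2}}$, $\abs{\proj\psi_{(\alpha')}}_2$ for $\abs{\alpha'}<\abs{\alpha}$ — so that an induction on $\abs{\alpha}$ closes. This requires $N\ge 5$ so that $\zeta\in H^N$ controls the $W^{2,\infty}$ and $H^{N-3/2}$ norms appearing in the product/trace estimates, and care that the projector $\Pi$, being a zeroth-order (non-smoothing) multiplier, does not cost derivatives — this is why the estimates are phrased with $\proj$ and $H^{-1/2}$ throughout. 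The rest is a routine combination of the trace lemma, the product estimates \eqref{prodHorm} and \eqref{prodP}, Lemma~\ref{estimtpsi}, and Propositions~\ref{horizontalregurality} and \ref{propalmostinc}.
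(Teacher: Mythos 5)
Your plan correctly identifies the role of the good unknown and the structure \eqref{Uapa0} relating $U_{(\beta)\parallel}$ to $\nabla\psi_{(\beta)}$, and the use of a finite induction on $\abs{\alpha}$ matches the paper. But there is a genuine gap in the central step: you claim that the $H^{-1/2}(\R^d)$ norm of the surface trace of $\partial_k U_{(\beta)}$ is controlled by $\Abs{\partial_k U_{(\beta)}}_{L^2(\cS)}$ ``via the trace lemma (losing half a derivative)''. This does not work: the trace operator is not defined on $L^2(\cS)$, and there is no general estimate $\abs{\underline{f}}_{H^{-1/2}}\lesssim\Abs{f}_{L^2(\cS)}$. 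Traces into $H^{-1/2}$ are only available for vector fields with additional structure (normal traces for $H(\mbox{div})$ fields, tangential traces for $H(\mbox{curl})$ fields), and it is precisely this structure of the good unknown -- its almost-incompressibility and the fact that $\curls U_{(\beta)}\approx\partial^\beta\omega$, both quantified by Proposition~\ref{propalmostinc} -- that must be exploited.

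The paper's proof supplies exactly this missing ingredient as an inner lemma: it shows
$$
\abs{\proj\tfrac{\nabla}{\Delta}\cdot U_\parallel}_{H^1}\leq
C\big(\Abs{\Lambda U}_2+\Abs{\nabla_{X,z}^\sigma\cdot U}_2+\Abs{\curls U}_{2,b}\big),
$$
and the proof of this lemma is not a trace argument at all: one subtracts an elliptic corrector $\nabla_{X,z}^\sigma u$ (with $u$ solving a Poisson-type problem for $\dives U$) to produce a divergence-free field $\tilde U={\mathbb U}^\sigma[\zeta](\tfrac{\nabla}{\Delta}\cdot U_\parallel,\nabla_{X,z}^\sigma\times U)$, then splits $\tilde U$ into irrotational and rotational parts and invokes the irrotational elliptic theory (Proposition 3.19 of \cite{Lannes_book}) for $\tilde U_I$ and Theorem~\ref{prop1} for $\tilde U_{II}$. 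Applied to $U=U_{(\beta)}$ with the cancellations of Proposition~\ref{propalmostinc}, this gives the correct bound with $\Abs{\Lambda U_{(\beta)}}_2$, which is then reduced to $\sum_{k,\beta'}\Abs{\partial_k U_{(\beta')}}_2$ by writing $U_{(\beta)}=\partial_{k'}U_{(\beta')}+\cdots$. Without the inner lemma, your chain of estimates breaks down at the step connecting surface quantities to interior $L^2$ norms, and the induction cannot close. (The role of $\abs{\proj\psi}_{H^3}$ is also not quite as you describe -- it arises in the paper from $\abs{\uw}_{W^{2,\infty}}$ together with Proposition~\ref{horizontalregurality} rather than being a special treatment for $\abs{\alpha}\le 2$ -- but this is a minor point compared to the missing elliptic lemma.)
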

\begin{proof}
Since $\abs{\proj\psi_{(\alpha)}}_2\leq \abs{\proj
  \psi_{(\beta)}}_{H^1}+\abs{\zeta}_{H^N}\abs{\uw}_{W^{1,\infty}}$, we
first need an upper bound for $\abs{\proj
  \psi_{(\beta)}}_{H^1} $. Let us remark  that for all $\beta\in \N^d$, $\abs{\beta}\leq N-1$,
\begin{eqnarray*}
U_{(\beta)\parallel}&=&(\partial^\beta
U)_\parallel-\partial^\beta\zeta (\dz^\sigma U)_\parallel\\
&=& \partial^\beta U_\parallel-\uw\nabla\partial^\beta \zeta-[\partial^\beta,\uw,\nabla\zeta]-\partial^\beta\zeta (\dz^\sigma U)_\parallel
\end{eqnarray*}
and therefore, substituting
$U_\parallel=\nabla\psi+\nabla^\perp\tpsi$,
\begin{equation}\label{Uapa0}
U_{(\beta)\parallel}=\nabla\psi_{(\beta)}+\nabla^\perp \partial^\beta\tpsi+\nabla\uw\partial^\beta\zeta-[\partial^\beta,\uw,\nabla\zeta]-\partial^\beta\zeta (\dz^\sigma U)_\parallel,
\end{equation}
from which we get
\begin{eqnarray*}
\abs{\proj\psi_{(\beta)}}_{H^1}&\leq& \abs{\proj
  \frac{\nabla}{\Delta}\cdot
  U_{(\beta)\parallel}}_{H^1}+\abs{
  \nabla\uw\partial^\beta\zeta-[\partial^\beta,\uw,\nabla\zeta]-\partial^\beta\zeta
  (\dz^\sigma U)_\parallel }_{H^{1/2}}\\
&\leq& \abs{\proj
  \frac{\nabla}{\Delta}\cdot
  U_{(\beta)\parallel}}_{H^1}+M_N \Abs{U}_{H^{\abs{\beta}}},
\end{eqnarray*}
the last line being a consequence of the product estimates
\eqref{prodHorm} and the trace lemma. Owing to Corollary
\ref{corHkk}, we then get
\begin{eqnarray}
\nonumber
\abs{\proj\psi_{(\beta)}}_{H^1}&\leq& \abs{\proj
  \frac{\nabla}{\Delta}\cdot
  U_{(\beta)\parallel}}_{H^1}\\
\label{afterB}
& &+ M_N \big(\abs{\proj\psi}_{H^1}+\!\!\!\sum_{1 <\abs{\beta'}\leq \abs{\beta}}\!\!\abs{\proj\psi_{(\beta')}}_2+\Abs{\omega}_{H^{\abs{\beta}-1}}+\abs{\omega_b\cdot
  N_b}_{H_0^{-1/2}}\big).
\end{eqnarray}
By the trace lemma, one gets a control of $\abs{\proj
  \frac{\nabla}{\Delta}\cdot
  U_{(\beta)\parallel}}_{H^1}$ in terms of the $H^{1}$-norm of
  $U_{(\beta)}$. The lemma below shall be used together with
  Proposition \ref{propalmostinc} to get a control involving only
  horizontal derivatives of $U_{(\beta)}$ and therefore more adapted to
  the energy estimates of \S \ref{sectEE}.
\begin{lemma}
Let $\zeta\in W^{2,\infty}(\R^d)$ satisfy \eqref{hmin}. Let also $U\in
H^{1}(\cS)^3$. Then one has
$$
\abs{\proj\frac{\nabla}{\Delta}\cdot U_\parallel}_{H^1}\leq
C(\frac{1}{h_{\rm min}},H_0,\abs{\zeta}_{W^{2,\infty}})\big(\Abs{\Lambda
  U}_2+\Abs{\nabla_{X,z}^\sigma\cdot U}_2+\Abs{\curls U}_{2,b}\big).
$$
\end{lemma}
\begin{proof}
Let us denote by $u$ the solution to the boundary problem
\begin{equation}\label{BVPu}
\left\lbrace
\begin{array}{l}
\nabla_{X,z}\cdot P(\Sigma)\nabla_{X,z}u=(1+\dz\sigma)\nabla_{X,z}^\sigma\cdot
U,\\
u_\surff=0,\qquad {\bf e}_z\cdot P(\Sigma)\nabla_{X,z}u_\bottf=0;
\end{array}\right.
\end{equation}
recalling that $(1+\dz\sigma)\nabla_{X,z}^\sigma\cdot
\nabla_{X,z}^\sigma=\nabla_{X,z}\cdot P(\Sigma)\nabla_{X,z}\sigma$,
the quantity
\begin{equation}\label{3j1}
\tilde U=U-\nabla_{X,z}^\sigma u
\end{equation}
 solves the
div-curl problem
$$
\left\lbrace
\begin{array}{l}
\dsp \nabla^\sigma_{X,z}\times \tilde
U=\nabla^\sigma_{X,z}\times U,\\
\dsp \nabla^\sigma_{X,z}\cdot \tilde
U =0,\\
\dsp \tilde U_\parallel=\nabla(\frac{\nabla}{\Delta}\cdot
U_\parallel)+\nabla^\perp (\frac{\nabla^\perp}{\Delta}\cdot
U_\parallel),\\
\dsp \tilde U_\bottf\cdot N_b=0,
\end{array}\right.
$$
and therefore,
\begin{eqnarray*}
\nonumber
\tilde U&=&{\mathbb
  U}^\sigma[\zeta](\frac{\nabla}{\Delta}\cdot
U_{\parallel},\nabla^\sigma_{X,z}\times U)\\
&=&{\mathbb
  U}_I^\sigma[\zeta]\frac{\nabla}{\Delta}\cdot
U_{\parallel}+{\mathbb
  U}_{II}^\sigma[\zeta]\nabla^\sigma_{X,z}\times U\\
&:=&\tilde U_I+\tilde U_{II}
\end{eqnarray*}
 (using
the notations of \S \ref{sectstraight}). One also gets from the
irrotational theory (e.g. Proposition 3.19 in \cite{Lannes_book}) that
$$
\abs{\proj \frac{\nabla}{\Delta}\cdot U_\parallel}_{H^1}\leq
C(\frac{1}{h_{\rm min}},\abs{\zeta}_{W^{2,\infty}})\Abs{\Lambda \tilde U_I}_2;
$$
and therefore
\begin{eqnarray*}
\abs{\proj \frac{\nabla}{\Delta}\cdot U_\parallel}_{H^1}
&\leq& C(\frac{1}{h_{\rm min}},\abs{\zeta}_{W^{2,\infty}})\big(\Abs{\Lambda
  \tilde U}_2+\Abs{\Lambda \tilde U_{II}}_2\big)\\
&\leq& C(\frac{1}{h_{\rm min}},\abs{\zeta}_{W^{2,\infty}})\big(\Abs{\Lambda
  U}_2+\Abs{\Lambda \nabla_{X,z}^\sigma u}_2+\Abs{\Lambda \tilde U_{II}}_2\big),
\end{eqnarray*}
the last line stemming from \eqref{3j1}. The result follows therefore
from the estimates
$$
\Abs{\Lambda \nabla_{X,z}^\sigma u}_2\leq C(\frac{1}{h_{\rm min}},\abs{\zeta}_{W^{2,\infty}})
\Abs{\nabla_{X,z}^\sigma\cdot U}_2,
$$
which stems from a standard elliptic estimate on \eqref{BVPu}, and
$$
\Abs{\Lambda \tilde U_{II}}_2\leq C(\frac{1}{h_{\rm min}},H_0,\abs{\zeta}_{W^{2,\infty}}) \Abs{\curls U}_{2,b},
$$
which is a direct consequence of  Theorem \ref{prop1}.
\end{proof}
Using the lemma with $U=U_{(\beta)}$, we get that
\begin{align*}
\abs{\proj
  \frac{\nabla}{\Delta}&\cdot
  U_{(\beta)\parallel}}_{H^1}\leq \,C(\frac{1}{h_{\rm min}},\abs{\zeta}_{W^{2,\infty}})\big(\Abs{\Lambda
  U_{(\beta)}}_2+\Abs{\nabla_{X,z}^\sigma\cdot U_{(\beta)}}_2+\Abs{\curls
  U_{(\beta)}}_{2,b}\big)\\
 &\leq \,M_N \big(\Abs{\Lambda
   U_{(\beta)}}_2+\abs{\nabla\psi}_{H^{1/2}}+\!\!\!\sum_{1<\abs{\beta'}\leq
   \abs{\beta}}\abs{\proj\psi_{(\beta')}}_2+\Abs{\omega}_{H^{\abs{\beta}}}
+\abs{\omega_b\cdot N_b}_{H_0^{-1/2}}\big),
\end{align*}
where Proposition \ref{propalmostinc} has been used to derive the
second inequality (without the cancellations obtained by working with
the good unknown, the sum in the right-hand-side would be over all
$1\leq \abs{\beta'}\leq \abs{\beta}+1$ and the induction strategy used
below could not be implemented);
owing to \eqref{afterB}, the same control holds on
$\abs{\proj\psi_{(\beta)}}_{H^1}$. In order to get the result stated
in the corollary, notice that for all $\alpha\in \N^d$, $\alpha\neq
0$, one can write $\partial^\alpha=\partial_k\partial^\beta$ for some
$k=x,y$ and $\abs{\beta}=\abs{\alpha}-1$ so that
$$
\psi_{(\alpha)}=\partial_k
\psi_{(\beta)}+\partial^\beta\zeta \partial_k\uw,
$$
and therefore
\begin{align*}
\abs{\proj\psi_{(\alpha)}&}_2\leq \abs{\proj
  \psi_{(\beta)}}_{H^1}+\abs{\zeta}_{H^N}\abs{\uw}_{W^{2,\infty}}\\
&\leq  M_N \big(\Abs{\Lambda U_{(\beta)}}_2+\abs{\proj\psi}_{H^{3}}+\!\!\!\!\sum_{1<\abs{\beta'}\leq \abs{\beta}}\!\!\abs{\proj\psi_{(\beta')}}_2+\Abs{\omega}_{H^{N-1}}+\abs{\omega_b\!\cdot\! N_b}_{H_0^{-1/2}}\big),
\end{align*}
where we also used Proposition \ref{horizontalregurality} and the
Sobolev embedding $H^N(\cS)\subset W^{2,\infty}(\cS)$ to derive the second line.
Since for all $\abs{\beta}\geq 2$, one can write $U_{(\beta)}=\partial_{k'}
U_{(\beta')}+\partial_{k'} \dz^\sigma U \partial^{\beta'}\zeta$, for some
$k'=x,y$, and $\beta'\in \N^d$, one has
\begin{align*}
\Abs{\Lambda &U_{(\beta)}}_2\leq
\sum_{k=x,y,1\leq \abs{\beta'}\leq
  \abs{\beta}}\Abs{\partial_kU_{(\beta')}}_2+M_N\Abs{U}_{H^{\abs{\beta}}}\\
&\leq M_N\big(
\sum_{k=x,y,1\leq \abs{\beta'}\leq
 \abs{\beta}}\!\!\!\! \Abs{\partial_kU_{(\beta')}}_2+\!\!\!\sum_{1<\abs{\beta'}\leq \abs{\beta}}\!\!\abs{\proj\psi_{(\beta')}}_2 +\Abs{\omega}_{H^{N-1}}+\abs{\omega_b\!\cdot\! N_b}_{H_0^{-1/2}}\big),
\end{align*}
the last line stemming from Corollary \ref{corHkk}. We finally deduce that
\begin{align*}
\abs{\proj\psi_{(\alpha)}}_2\leq M_N \Big(&\abs{\proj\psi}_{H^{3}}+\sum_{k=x,y,1\leq \abs{\beta'}\leq
  \abs{\beta}}\Abs{\partial_kU_{(\beta')}}_2\\
&+\sum_{1 <\abs{\beta'}\leq
  \abs{\beta}}\abs{\proj\psi_{(\beta')}}_2+\Abs{\omega}_{H^{N-1}}+\abs{\omega_b\!\cdot\! N_b}_{H_0^{-1/2}}\Big).
\end{align*}
It follows from a
finite induction that
$$
\abs{\proj\psi_{(\alpha)}}_2\leq M_N \big(\abs{\proj\psi}_{H^{3}}+\!\!\sum_{k=x,y,1\leq \abs{\beta'}\leq
  \abs{\beta}}\!\!\Abs{\partial_kU_{(\beta')}}_2+\Abs{\omega}_{H^{N-1}}+\abs{\omega_b\!\cdot\! N_b}_{H_0^{-1/2}}\big),
$$
which is the desired result.

\end{proof}

\section{Well-posedness of the Hamiltonian water waves equations
  \eqref{ZCSgen}}\label{sect4}

We prove in this section the local well-posedness of the water waves
equations \eqref{ZCSgen}. Let us introduce here the energy that shall
be used to obtain this result,
\begin{align}
\nonumber
\cE^N(\zeta,\psi,\omega):=&\frac{1}{2}\abs{\zeta}_{H^N}^2+\frac{1}{2}\abs{\proj\psi}_{H^3}^2+\frac{1}{2}\sum_{\alpha\in
\N^d,0<\abs{\alpha}\leq
N}\abs{\proj\psi_{(\alpha)}}_2^2\\
\label{defenergy}
&+\frac{1}{2}\Abs{\omega}_{H^{N-1}}^2+\frac{1}{2}\abs{\omega_b\cdot
N_b}_{H_0^{-1/2}},
\end{align}
where we recall that
$\psi_{(\alpha)}=\partial^\alpha\psi-\uw\partial^\alpha\zeta$, that
$\proj$ is defined in \eqref{defproj}, and $H_0^{-1/2}(\R^d)$ in \eqref{defH0m}.
To this energy, we associate the functional space $E_T^N$ defined for
all $T\geq 0$ as
\begin{align}
\nonumber
E_T^N=\{(\zeta,\psi,\omega)\in C\big([0,T];H^2(\R^d)&\times \dot{H}^2(\R^d)\times
H^2(\cS)^3\big), \\
\label{defETN}
& \cE^N((\zeta,\psi,\omega)(\cdot))\in L^\infty([0,T])\}.
\end{align}
We also denote by $\mfN(\zeta,\psi,\omega)$ any constant of the form
\begin{equation}\label{defmfN}
\mfN(\zeta,\psi,\omega)=C\big(\frac{1}{h_{\rm min}},H_0,\cE^N(\zeta,\psi,\omega)\big).
\end{equation}
\subsection{A priory energy estimates for the vorticity}
We recall that the equation for the vorticity $\bom$ is given by
$$
\pa_t \bom +(\bU\cdot\nabla_{X,z})\bom=
(\bom\cdot\nabla_{X,z})\bU\quad \mbox{in }\quad \Omega_t,
$$
with $\bU={\mathbb U}[\zeta](\psi,\bom)$ and ${\mathbb
  U}[\zeta](\psi,\bom)$ as in Definition \ref{defimappings}. As explained in Remark \ref{straightvort},
the vorticity $\bom$ is studied through the {\it straightened
  vorticity} $\omega=\bom\circ \Sigma$, where $\Sigma:\cS\to \Omega_t$
is the diffeomorphism introduced in \S \ref{sectstraight}. Written in terms of $\omega$ rather than $\bom$, using  the notations
\eqref{notaMR1} and recalling the notation ${\mathbb
  U}^\sigma[\zeta](\psi,\omega)=\bU\circ\Sigma$,  the
vorticity equation becomes,
\begin{equation}\label{eq10bis}
\dt^\sigma \omega+{\mathbb U}^\sigma[\zeta](\psi,\omega)\cdot\nabla_{X,z}^\sigma \omega=\omega\cdot\nabla_{X,z}^\sigma {\mathbb U}^\sigma[\zeta](\psi,\omega)
\quad\mbox{in }\quad \cS.
\end{equation}
Solving this equation on a domain with boundaries like the flat strip
$\cS$ requires in general boundary conditions on the vorticity. We show
below that such boundary conditions are not needed to derive a priori
estimates\footnote{The issue of constructing a solution is a bit more
  complex than the derivation of a priori estimates; this aspect will be addressed in the proof of Theorem \ref{theo1}.} if the kinematic equation holds.
\begin{proposition}\label{EEvorticity}
Let $N\in \N$, $N\geq 5$, $T>0$ and $(\zeta,\psi,\omega)\in E^N_T$ be
such that \eqref{hmin}$_T$ and \eqref{eq10bis} hold on $[0,T]$. If moreover
$\dt\zeta-{\mathbb U}^\sigma[\zeta](\psi,\omega)_\surff\cdot N=0$ on $[0,T]$, then
the following estimate holds
\begin{align*}
\frac{d}{dt} \Big(\Abs{\omega}_{H^k}^2+\abs{\omega_b\cdot N_b}_{H_0^{-1/2}}\Big)\leq \mfN(\zeta,\psi,\omega),
\end{align*}
with $\mfN(\zeta,\psi,\omega)$ as defined in \eqref{defmfN} and
$H_0^{-1/2}(\R^d)$ given by \eqref{defH0m}.
\end{proposition}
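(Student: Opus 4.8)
The key structural observation, which I would establish first, is that once the kinematic equation holds, the transport operator appearing in \eqref{eq10bis} is tangent to $\partial\cS$, so that \emph{no} boundary condition on $\omega$ is needed to run energy estimates. Writing ${\mathbb U}^\sigma[\zeta](\psi,\omega)=(V^T,w)^T$ and using \eqref{notaMR2},
\[
\dt^\sigma+{\mathbb U}^\sigma[\zeta](\psi,\omega)\cdot\nabla_{X,z}^\sigma
=\dt+V\cdot\nabla+b\,\dz,\qquad b=\frac{w-V\cdot\nabla\sigma-\dt\sigma}{1+\dz\sigma}.
\]
At the flat bottom $\sigma$, $\nabla\sigma$ and $\dt\sigma$ all vanish while $w_b=0$ by construction of ${\mathbb U}^\sigma$, so $b\bottf=0$; at the surface, $\nabla\sigma\surff=\nabla\zeta$ and $\dt\sigma\surff=\dt\zeta$, so that $b\surff=(\uw-\uV\cdot\nabla\zeta-\dt\zeta)/(1+\dz\sigma\surff)$, which vanishes precisely because $\dt\zeta={\mathbb U}^\sigma[\zeta](\psi,\omega)\surff\cdot N=\uw-\uV\cdot\nabla\zeta$ is assumed.

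For the control of $\Abs{\omega}_{H^k}$, the plan is to apply $\partial^\alpha$ (for $\alpha\in\N^d$ with $\abs\alpha\le k$) to \eqref{eq10bis} and take the $L^2(\cS)$ scalar product with $\partial^\alpha\omega$. The left-hand side yields $\frac12\frac{d}{dt}\Abs{\partial^\alpha\omega}_2^2$, the contribution $-\frac12\int_\cS(\nabla\cdot V+\dz b)\abs{\partial^\alpha\omega}^2$ — and, crucially, \emph{no} boundary term, since the integration by parts in $z$ carries the prefactor $b\surff=b\bottf=0$ — and the commutators $[\partial^\alpha,V\cdot\nabla]\omega$, $[\partial^\alpha,b\dz]\omega$; the right-hand side contributes $\int_\cS\partial^\alpha(\omega\cdot\nabla_{X,z}^\sigma U)\cdot\partial^\alpha\omega$, whose only delicate term is the one in which all derivatives hit $U$. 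This last term can be treated either directly, using $\Abs{\partial^\alpha\nabla_{X,z}^\sigma U}_2\lesssim C(\abs\zeta_{H^N})\Abs{U}_{H^{k+1,1}}$, or, if one prefers not to use the full strength of Proposition \ref{horizontalregurality}, by first rewriting $\partial^\alpha\nabla_{X,z}^\sigma U$ in terms of Alinhac's good unknown $U_{(\alpha)}$ via the identity recalled in the proof of Proposition \ref{propalmostinc} and then invoking Proposition \ref{propalmostinc}. Either way, all resulting terms are bounded by $\mfN(\zeta,\psi,\omega)$ by product and commutator estimates together with the controls on $U$, $\nabla_{X,z}U$ and $\tpsi$ furnished by Proposition \ref{horizontalregurality}, Corollary \ref{corHkk}, Proposition \ref{propalmostinc} and Lemma \ref{estimtpsi}. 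The one point requiring care is that those controls produce quantities of the form $\abs{\Lambda^j(\omega_b\cdot N_b)}_{H_0^{-1/2}}$ with $j$ up to $k$; these are handled by combining the elementary inequality $\abs{f}_{H_0^s}\lesssim_s\abs{f}_{H^s}+\abs{f}_{H_0^{-1/2}}$ with the trace bound $\abs{\omega_b\cdot N_b}_{H^{N-3/2}}\lesssim\Abs{\omega}_{H^{N-1}}$, both contributions being present in $\cE^N(\zeta,\psi,\omega)$ — which is exactly why $\cE^N$ in \eqref{defenergy} keeps track of both $\Abs{\omega}_{H^{N-1}}$ and $\abs{\omega_b\cdot N_b}_{H_0^{-1/2}}$. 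Summing over $\abs\alpha\le k$ then gives $\frac{d}{dt}\Abs{\omega}_{H^k}^2\le\mfN(\zeta,\psi,\omega)$.

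It remains to control the bottom vorticity. Evaluating \eqref{eq10bis} at $z=-H_0$, and using there that $b=0$, that $w_b=0$ (whence $\nabla w_b=0$ and $(\partial_i^\sigma w)\bottf=0$ for $i=x,y$) and that $\dives U=0$ forces $(\dz^\sigma w)\bottf=-\nabla\cdot V_b$, one finds that $\omega_b\cdot N_b$ solves the conservative transport equation
\[
\dt(\omega_b\cdot N_b)+\dive\big((\omega_b\cdot N_b)\,V_b\big)=0\qquad\mbox{in }\R^d.
\]
A standard $H_0^{-1/2}(\R^d)$ energy estimate for this equation — using the control of $V_b$ as a trace of $U$ provided by Proposition \ref{horizontalregurality} and Corollary \ref{corHkk} — then yields $\frac{d}{dt}\abs{\omega_b\cdot N_b}_{H_0^{-1/2}}\le\mfN(\zeta,\psi,\omega)$, and adding the two estimates proves the proposition.

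The main obstacle is not any single estimate but the combination of two structural facts: first, recognizing that the transported quantity is governed by a vector field tangent to $\partial\cS$, so that the kinematic equation is exactly what is needed to dispense with boundary data for $\omega$; and second, the bookkeeping showing that every norm of $U$, $U_{(\alpha)}$ and $\tpsi$ entering the energy inequality — including the $H_0^{-1/2}$-type norms of the bottom vorticity — is genuinely controlled by $\cE^N$, which relies on the $H_0^s/H^s$ interpolation recalled above and on the separate conservative equation satisfied by $\omega_b\cdot N_b$ at the flat bottom.
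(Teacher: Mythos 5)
Your structural observations are the right ones and match the paper's strategy: the coefficient of $\dz$ in the rewritten transport operator (your $b$, the paper's ${\mathbb a}[\zeta](\psi,\omega)$) vanishes at $z=0$ \emph{because} the kinematic equation holds and at $z=-H_0$ because $w_b=0$ and $\sigma,\nabla\sigma,\dt\sigma$ all vanish there, so no boundary condition on $\omega$ is needed; and your derivation of the conservative law $\dt(\omega_b\cdot N_b)+\nabla\cdot\big((\omega_b\cdot N_b)V_b\big)=0$ at the flat bottom, using $b\bottf=0$, $w_b=0$ and $\dives U=0$, is exactly the paper's.

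There is, however, one genuine gap in the $H^k$ estimate. You differentiate only by $\partial^\alpha$ with $\alpha\in\N^d$, i.e.\ purely horizontal multi-indices, and then claim ``Summing over $\abs\alpha\le k$ then gives $\frac{d}{dt}\Abs{\omega}_{H^k}^2\le\mfN$.'' That is not what you obtain: summing horizontal derivatives controls $\Abs{\Lambda^k\omega}_2$, not $\Abs{\omega}_{H^k}=\sum_{j=0}^k\Abs{\Lambda^{k-j}\dz^j\omega}_2$, and vertical derivatives of $\omega$ cannot simply be traded for horizontal ones (the divergence-free constraint gives only one scalar relation, not three). The paper differentiates the transported equation \eqref{eqqq} by $\partial^\beta\dz^j$ with $\abs\beta+j\le N-1$. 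The good news is that the very structure you isolated is what saves this: when you integrate by parts in $z$ the term $\int_\cS b\,\dz(\partial^\beta\dz^j\omega)\cdot\partial^\beta\dz^j\omega$, the boundary contribution again carries the prefactor $b\surff=b\bottf=0$ regardless of $j$, and the commutators $[\partial^\beta\dz^j,V\cdot\nabla]\omega$, $[\partial^\beta\dz^j,b\,\dz]\omega$, $\partial^\beta\dz^j(\omega\cdot\nabla_{X,z}^\sigma U)$ are then handled by the $H^{k+1,l+1}$ estimates of Corollary \ref{corHkk} (not merely the $H^{k+1,1}$ version you quote, since you now create up to $j+1$ vertical derivatives of $U$). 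You should therefore replace $\partial^\alpha$ by $\partial^\beta\dz^j$ throughout and invoke Corollary \ref{corHkk} with the appropriate $l$; as stated, the argument proves strictly less than the proposition asserts. A secondary remark: of the two alternatives you offer for the term where all derivatives hit $U$, the first (via $\Abs{U}_{H^{k+1,l+1}}$) is the one that works; Proposition \ref{propalmostinc} controls only $\nabla_{X,z}^\sigma\cdot U_{(\alpha)}$ and $\nabla_{X,z}^\sigma\times U_{(\alpha)}-\partial^\alpha\omega$, not the full gradient $\partial^\alpha\nabla_{X,z}^\sigma U$, so the ``good-unknown route'' you sketch does not by itself close this term.
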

\begin{proof}
The first step is to rewrite \eqref{eq10bis} under the form
\begin{equation}\label{eqqq}
 \dt\omega+{\mathbb V}^{\sigma}[\zeta](\psi,\omega)\cdot\nabla
 \omega+{\mathbb a}[\zeta](\psi,\omega)\dz\omega=\omega\cdot\nabla_{X,z}^{\sigma} {\mathbb U}^{\sigma}[\zeta](\psi,\omega)
\end{equation}
with, denoting $\tilde N=(-\nabla{\sigma}^T,1)^T$ (so that $\tilde N_{\vert_{z=0}}=N$),
\begin{align*}
{\mathbb a}[\zeta](\psi,\omega)&=\frac{1}{1+\dz\sigma}\Big( {\mathbb
   U}^{\sigma}[\zeta](\psi,\omega)\cdot \tilde N-\dt \sigma\Big)\\
&=\frac{1}{1+\dz\sigma}\Big( {\mathbb
   U}^{\sigma}[\zeta](\psi,\omega)\cdot \tilde N-\frac{z+H_0}{H_0}
 {\mathbb
   U}^\sigma[\zeta](\psi,\omega)_\surff\cdot N\Big)
\end{align*}
where we used the definition of $\sigma$ (see \S \ref{sectstraight}) and the fact that the
kinematic equation is satisfied to derive the second equality. Denoting $U={\mathbb U}^\sigma[\zeta](\psi,\omega)$, $a={\mathbb
  a}[\zeta](\psi,\omega)$ etc., the first step is to
perform an $L^2$ a priori estimate on the equation
\begin{equation}\label{vortL2}
\dt \omega+V\cdot \nabla  \omega +a \dz \omega=f,
\end{equation}
with $f\in L^2(\cS)$. Taking the
$L^2$ scalar product of this equation with
$\omega$, we get
\begin{eqnarray*}
\frac{1}{2}\dt \Abs{\omega}_{L^2}^2-\frac{1}{2}\int_\cS
(\nabla  \cdot V+\dz a)\abs{\omega}^2
=\int_\cS  f \cdot \omega,
\end{eqnarray*}
where we used the fact that $a$ vanishes at the bottom and at the
surface. We deduce that
\begin{eqnarray}
\label{estvortL2}
\dt \Abs{\omega}_{2}^2
&\lesssim& \Abs{U}_{W^{1,\infty}}\Abs{\omega}_{2}^2+\Abs{\omega}_2\Abs{f}_{2}.
\end{eqnarray}
In order to get an $L^2$ a priori estimate for
\eqref{eq10bis} we must take  $f=\omega\cdot
\nabla_{X,z}^\sigma U$, and therefore
$$
\Abs{f}_{L^2}\leq
C(\abs{\zeta}_{W^{1,\infty}})\Abs{\omega}_{2}\Abs{U}_{W^{1,\infty}}
$$
and we can therefore deduce from \eqref{estvortL2} and Proposition
\ref{horizontalregurality} that
$$
\frac{d}{dt}\Abs{\omega}_{2}^2\leq \mfN(\zeta,\psi,\omega).
$$
We now prove higher order a priori estimates. For all $j\in\N$ and
$\beta\in \N^{d}$ with $\abs{\beta}+j\leq N-1$, we get after applying
$\partial^\beta\dz^j$ to \eqref{eqqq},
$$
\dt \partial^\beta\dz^j\omega+V\cdot
\nabla \partial^\beta\dz^j\omega+a\dz \partial^\beta\dz^j \omega=f_{\beta,j},
$$
with
$$
f_{\beta,j}=-[\partial^\beta\dz^j,V]\dz\omega-[\partial^\beta\dz^j,a]\dz\omega+\partial^\beta\dz^j \big(\omega\cdot
\nabla_{X,z}^\sigma U\big).
$$
This equation is of the form \eqref{vortL2} and we therefore get from
\eqref{estvortL2} that
\begin{equation}\label{estvortL3}
\dt \Abs{\partial^\beta\dz^j\omega}_{L^2_\sigma}^2
\lesssim \Abs{U}_{W^{1,\infty}}\Abs{\omega}_2^2+ \Abs{\partial^\beta\dz^j\omega}_{2}\Abs{f_{\beta,j}}_{2};
\end{equation}
with product estimates similar to those of Lemma \ref{lemHkk}, we also
get
$$
\Abs{f_{\beta,j}}_{L^2_\sigma}\leq
C\big(\abs{\zeta}_{H^{N}},\Abs{U}_{H^{N}},\Abs{\omega}_{H^{N-1}}\big)
$$
and therefore
$$
\dt \Abs{\partial^\beta\dz^j\omega}_{L^2_\sigma}^2\leq \mfN(\zeta,\psi,\omega),
$$
which provides the $H^k$-estimate on $\omega$ of the proposition.\\
Evaluating the
vorticity equation at the bottom, one gets as in Remark \ref{remmm}
that
\begin{equation}\label{estbotvort}
\dt (\omega_b\cdot N_b)+\nabla\cdot (\omega_b\cdot N_b V_b)=0,
\end{equation}
and therefore
$$
\dt \abs{\omega_b\cdot N_b}_{H^{-1/2}_0}^2\leq \abs{(\omega_b\cdot
  N_b)V_b}_{H^{1/2}}\abs{\omega_b\cdot N_b}_{H_0^{-1/2}};
$$
as a  consequence of the trace lemma and Corollary \ref{corHkk}, we deduce finally
\begin{equation}\label{88b}
\dt \abs{\omega_b\cdot N_b}_{H^{-1/2}_0}^2\leq \mfN(\zeta,\psi,\omega),
\end{equation}
and the proof is complete.
\end{proof}

\subsection{Quasilinearization of the equations}

The following proposition gives the structure of the equations solved
by the derivatives of the solutions to the water waves equations
\eqref{ZCSgen}. Note that for the equations on $\psi$, it is crucial
to work with Alinhac's good unknown. We also recall that the condition
\eqref{hmin}$_T$ is defined in Notation \ref{nothT}, while the
function space $E^N_T$ is introduced in \eqref{defETN}.
\begin{proposition}\label{prop3}
Let $N\geq 5$, $T>0$ and
$(\zeta,\psi,\omega)\in E^N_T$ be a solution to the water waves
equations \eqref{ZCSgen} on the time interval $[0,T]$. If
\eqref{hmin}$_T$ is satisfied then for all
$k=x,y$, and $\beta\in \N^d$, $\abs{\beta}\leq N-1$, one has, with
$\alpha\in \N^d$ such that
$\partial^\alpha=\partial^\beta \partial_k$.
\begin{eqnarray*}
(\dt +\uV\cdot\nabla)
\partial^\alpha\zeta-\partial_k\uU_{(\beta)}\cdot N&=&R^1_\alpha,\\
(\dt +\uV\cdot \nabla) (U_{(\beta)\parallel}\cdot {\bf e}_k)+\mfa \partial^\alpha
\zeta&=&R^2_\alpha,\\
(\dt^\sigma+U\cdot\nabla_{X,z}^\sigma)\partial^\beta \omega&=&R^3_\beta,
\end{eqnarray*}
with $\mfa=g+(\dt+\uV\cdot\nabla)\uw$ and where
$$
\abs{R^1_\alpha}_2+\abs{R^2_\alpha}_2+\abs{\proj R^2_\alpha}_2+\Abs{R^3_\beta}_2\leq \mfN(\zeta,\psi,\omega),
$$
with $\mfN(\zeta,\psi,\omega)$ as in \eqref{defmfN}.
\end{proposition}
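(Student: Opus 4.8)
The plan is to apply the horizontal derivative $\partial^\alpha=\partial_k\partial^\beta$ (with $|\beta|\le N-1$, $k=x,y$) directly to the three equations of \eqref{ZCSgen}, keeping careful track of which commutator terms and which quadratic terms can be absorbed into a remainder of size $\mfN(\zeta,\psi,\omega)$ using the a priori control of $U={\mathbb U}^\sigma[\zeta](\psi,\omega)$ provided by Propositions \ref{horizontalregurality}, \ref{propalmostinc} and Corollary \ref{corHkk}. The only genuinely delicate bookkeeping is to exhibit the \emph{good unknowns} $\psi_{(\alpha)}$ (equivalently $U_{(\beta)\parallel}\cdot{\bf e}_k$) and the \emph{Alinhac good unknown} $\partial^\beta\omega$ on the straightened strip, so that the principal parts of the linearized system appear as a symmetrizable hyperbolic system and the error terms no longer contain $|\alpha|$ derivatives falling on $U$ in a non-integrable way.

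\textbf{Equation on $\zeta$.} Applying $\partial^\alpha$ to the kinematic equation $\dt\zeta-\uU\cdot N=0$ and writing $\uU\cdot N=\uw-\uV\cdot\nabla\zeta$, one gets $(\dt+\uV\cdot\nabla)\partial^\alpha\zeta-\partial^\alpha\uU\cdot N=[\partial^\alpha,\uV\cdot\nabla]\zeta+(\text{terms from }\partial^\alpha(\uw))$ rearranged. The key point is to replace $\partial^\alpha\uU$ by $\uU_{(\alpha)}=\partial^\alpha\uU-\partial^\alpha\sigma\,\dz^\sigma\uU$ and then write $\uU_{(\alpha)}=\partial_k\uU_{(\beta)}+(\text{lower order})$ using $\partial_k U_{(\beta)}=U_{(\alpha)}-\partial^\beta\sigma\,\partial_k\dz^\sigma U$ as in the proof of Corollary \ref{cor6}. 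All the resulting commutators $[\partial^\alpha,\uV\cdot\nabla]\zeta$, the trace of $\partial^\alpha\sigma\,\dz^\sigma\uU\cdot N$, and the mismatch between $\partial^\alpha\uU$ and $\partial_k\uU_{(\beta)}$ are products of at most $N$ derivatives of $(\zeta,U)$ and are bounded by $\mfN(\zeta,\psi,\omega)$ via the trace lemma, Proposition \ref{horizontalregurality} and standard product estimates; this defines $R^1_\alpha$.

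\textbf{Equation on $\psi$ and the pressure coefficient $\mfa$.} Here I would not differentiate the $\psi$-equation of \eqref{ZCSgen} directly, but instead start from the equivalent equation \eqref{eqVp} for $U_\parallel$ (equivalently, take $\partial^\alpha$ of \eqref{4eq1} and reconstruct $\nabla\psi_{(\alpha)}$). Using $\psi_{(\alpha)}=\partial^\alpha\psi-\uw\partial^\alpha\zeta$ and identity \eqref{Uapa0} to pass between $\psi_{(\beta)}$ and $U_{(\beta)\parallel}$, the computation produces the transport operator $(\dt+\uV\cdot\nabla)$ acting on $U_{(\beta)\parallel}\cdot{\bf e}_k$ plus a term $\mfa\,\partial^\alpha\zeta$ with $\mfa=g+(\dt+\uV\cdot\nabla)\uw$: the $g\nabla\zeta$ term gives $g\partial^\alpha\zeta$, and the $-\frac12\nabla((1+|\nabla\zeta|^2)\uw^2)$ term, after commuting $\partial^\alpha$ past $\zeta$-dependent coefficients, delivers the $(\dt+\uV\cdot\nabla)\uw\cdot\partial^\alpha\zeta$ contribution; the quadratic $\frac12|U_\parallel|^2$ term, the projector term $\frac{\nabla^\perp}{\Delta}(\uom\cdot N\,\uV)$, the vorticity contribution $\nabla^\perp\tpsi$, and all commutators contribute to $R^2_\alpha$. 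The $\proj R^2_\alpha$ control is obtained exactly as in Corollary \ref{tracecoer}: one uses the product estimate \eqref{prodP} and the almost-incompressibility of $U_{(\beta)}$ (Proposition \ref{propalmostinc}) to avoid a spurious $N+1$-th derivative. Finally, the $\omega$-equation is just \eqref{eq10bis} differentiated by $\partial^\beta$, written with $\dz^\sigma$ traded via $\partial_i^\sigma\partial^\beta=\partial^\beta\partial_i^\sigma-\dz^\sigma(\cdot)\partial^\beta\partial_i^\sigma\sigma+\dots$ as in Proposition \ref{propalmostinc}, so that the transport operator is $\dt^\sigma+U\cdot\nabla_{X,z}^\sigma$ and $R^3_\beta$ collects the commutators and the stretching term $\partial^\beta(\omega\cdot\nabla_{X,z}^\sigma U)$, bounded in $L^2(\cS)$ by $\mfN$ using Lemma \ref{lemHkk}.

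\textbf{Main obstacle.} The hard part is not any single estimate but the systematic verification that, after substituting the good unknowns $U_{(\beta)\parallel}$ and $\partial^\beta\omega$, \emph{no} remainder term secretly contains the top-order quantity it is supposed to be lower than — in particular, that the coupling term $\frac{\nabla^\perp}{\Delta}(\uom\cdot N\,\uV)$ and the term $\nabla^\perp\partial^\beta\tpsi$ in \eqref{Uapa0} are controlled via Lemma \ref{estimtpsi} by $\Abs{\Lambda^{N-1}\omega}_{2,b}\le\mfN$ rather than by something worse, and that the commutator $[\partial^\alpha,\uV\cdot\nabla]$ acting on the good unknown, which a priori produces $|\alpha|$ derivatives of $\uV$, is tamed because $\uV$ loses only half a derivative relative to $\psi_{(\alpha)}$ via the trace lemma and Proposition \ref{horizontalregurality}. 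Once this bookkeeping is done, every remainder is a finite sum of products estimated by $C(1/h_{\min},H_0,\cE^N)=\mfN(\zeta,\psi,\omega)$, which is the claim.
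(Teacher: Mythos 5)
Your plan correctly identifies the overall structure of the paper's argument: apply $\partial^\alpha=\partial_k\partial^\beta$, substitute the good unknowns $\psi_{(\alpha)}$ and $U_{(\beta)\parallel}$, extract $\mfa$ from the $g\zeta$ and $\uw^2$ terms, use Propositions \ref{horizontalregurality}, \ref{propalmostinc}, Corollary \ref{corHkk} to bound the remainders, and treat the vorticity equation as a byproduct of the $H^k$-estimates in Proposition \ref{EEvorticity}. Your handling of the $\zeta$-equation (via $\partial_k U_{(\beta)}=U_{(\alpha)}-\partial^\beta\sigma\,\partial_k\dz^\sigma U$) matches the paper's.

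However, there is a genuine gap in your treatment of the $\psi$-equation. You write that the terms $\partial^\alpha\frac{\nabla^\perp}{\Delta}\cdot(\uom\cdot N\,\uV)$ and $\nabla^\perp\partial^\alpha\tpsi$ ``are controlled via Lemma \ref{estimtpsi} by $\Abs{\Lambda^{N-1}\omega}_{2,b}$''. This is not correct, and the issue is not cosmetic: individually, the product term $\partial^\alpha\frac{\nabla^\perp}{\Delta}\cdot(\uom\cdot N\,\uV)$ is \emph{not} controllable in the norm $\abs{\proj\cdot}_2$ required of $R^2_\alpha$. With $|\alpha|\le N$, the operator $\partial^\alpha\frac{\nabla^\perp}{\Delta}$ is of order $N-1$, while by the trace lemma $\uom\cdot N\in H^{N-3/2}$ and $\uV\in H^{N-1/2}$; the product $\uom\cdot N\,\uV$ is thus in $H^{N-3/2}$, and applying an operator of order $N-1$ lands only in $H^{-1/2}$, which is one full derivative short of the $H^{1/2}$ regularity implicit in the $\proj$-norm. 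Lemma \ref{estimtpsi} controls $\abs{\Lambda^k\nabla\tpsi}_{H^{1/2}}$, not this bilinear expression. The actual mechanism in the paper is a \emph{cancellation}: after writing $\Vp=\nabla\psi+\nabla^\perp\tpsi$, the contribution $\uV\cdot\nabla^\perp\partial^\alpha\tpsi=\uV\cdot\frac{\partial^\alpha\nabla^\perp}{\Delta}(\uom\cdot N)$ arising from the transport of $\Vp$ combines with $\partial^\alpha\frac{\nabla^\perp}{\Delta}\cdot(\uom\cdot N\,\uV)$ into the commutator $[\frac{\partial^\alpha\nabla^\perp}{\Delta},\uV](\uom\cdot N)$, which gains one derivative by the general commutator estimate (Theorem 3 in \cite{Lannes_JFA}) and is therefore bounded in $H^{1/2}$ by $\abs{\uV}_{H^{N-1/2}}\abs{\uom\cdot N}_{H^{N-3/2}}\le\mfN$. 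Without exhibiting this commutator structure, the estimate $\abs{\proj R^2_\alpha}_2\le\mfN$ fails.

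Relatedly, to pass from the equation on $\psi_{(\alpha)}$ to the stated equation on $U_{(\beta)\parallel}\cdot{\bf e}_k$, the paper proves an auxiliary lemma showing that $(\dt+\uV\cdot\nabla)f_j\approx 0$ for the three correction terms $f_1={\bf e}_k\cdot\nabla^\perp\partial^\beta\tpsi$, $f_2=[\partial^\beta,\uw,\partial_k\zeta]$, $f_3=\partial^\beta\zeta(\dz^\sigma U)_\parallel\cdot{\bf e}_k$ in the identity \eqref{Uapa}; in particular, for $f_1$ one needs the evolution equation for $\uom\cdot N$ obtained by evaluating the vorticity equation at the surface (Remark \ref{remmm}), not Lemma \ref{estimtpsi}. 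Your proposal does not identify this step, which is where Corollary \ref{corotime} (control of time derivatives of $U$) is actually used. These two points -- the commutator cancellation and the transport of the correction terms -- are the genuine content that your proposal is missing.
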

\begin{proof}
Let us consider the first equation of \eqref{ZCSgen}. We use the
notation
$$
f\sim 0 \quad\mbox{ iff } \quad \abs{f}_2\leq \mfN(\zeta,\psi,\omega).
$$
 Applying
$\partial^\alpha$ to the first equation of \eqref{ZCSgen}, we get
$$
\dt\partial^\alpha\zeta-\partial^\alpha \uU\cdot
N-\uU\cdot \partial^\alpha N=[\partial^\alpha,\uU,N].
$$
One also has
$$
[\partial^\alpha,\uU,N]=\sum_{0 < \beta
  <\alpha}*_\beta \partial^{\alpha-\beta}\uU\cdot \partial^\beta N,
$$
where $*_\beta$ are scalar coefficients of no importance. We deduce
easily from the product estimate \eqref{prodHorm}  and the assumption
$N\geq 5$ that
$$
\abs{[\partial^\alpha,\uU,N]}_2\leq C(\abs{\uU}_{H^{N-1}},\abs{\zeta}_{H^N}),
$$
and therefore, owing to the trace lemma and Proposition \ref{horizontalregurality},
$[\partial^\alpha,\uU,N]\sim 0$. This yields
$$
\dt\partial^\alpha\zeta-\partial^\alpha \uU\cdot
N-\uU\cdot \partial^\alpha N\sim 0.
$$
Since $\uU\cdot \partial^\alpha N=-\uV\cdot\nabla\partial^\alpha
\zeta$ and $\partial^\alpha \uU\cdot N\sim \partial_k\uU_{(\beta)}\cdot N$, one
readily deduces the first estimate of the proposition.\\
We now consider the second equation of \eqref{ZCSgen}.
Remarking that
$$
\uw=\frac{\uU\cdot N+\nabla\zeta\cdot\Vp}{1+\abs{\nabla\zeta}^2},
$$
we can write the second equation of \eqref{ZCSgen} under the form
$$
\dt\psi+g\zeta+\frac{1}{2}\abs{\Vp}^2-\frac{1}{2}(1+\abs{\nabla\zeta}^2)\uw^2-\frac{\nabla^\perp}{\Delta}\cdot(\uom\cdot
N \uV)=0.
$$
After applying $\partial_k$ we obtain that
$$
\dt\partial_k\psi+g\partial_k\zeta+\uV\cdot(\partial_k\Vp-\uw\nabla\partial_k\zeta)-\uw\partial_k(\uU\cdot
N)-\partial_k\frac{\nabla^\perp}{\Delta}\cdot(\uom\cdot
N\uV)=0,
$$
or equivalently, after substituting
$\Vp=\nabla\psi+\nabla^\perp\tpsi$,
$$
\dt\partial_k\psi+g\partial_k\zeta+\uV\cdot\big((\partial_k\nabla\psi-\uw\nabla\partial_k\zeta)+\partial_k\nabla^\perp\tpsi\big)-\uw\partial_k(\uU\cdot
N)-\partial_k\frac{\nabla^\perp}{\Delta}\cdot(\uom\cdot
N\uV)=0.
$$
Before differentiating $\beta$ times this equation, it is convenient to introduce the notation
$$
f\approx g \iff \abs{f-g}_2 +\abs{\proj(f-g)}_2 \leq \mfN(\zeta,\psi,\omega)
$$
and to state the following lemma.
\begin{lemma}
The following identities hold,
\begin{eqnarray*}
[\partial^\beta,\uV]\cdot
\big((\nabla\partial_k\psi-\uw\nabla\partial_k\zeta)+\nabla^\perp\partial_k\tpsi
\big)&\approx& 0,\\
- \left[\partial^\beta,\uw\right]\partial_k(\uU\cdot N)-\underline{V}\cdot [\pa^\beta, \underline{w}]\nabla\pa_k\zeta&\approx& 0.
\end{eqnarray*}
\end{lemma}
\begin{proof}[Proof of the lemma]
Let us first observe that
\begin{eqnarray*}
\abs{\uU}_{{H}^{N-1/2}} &\leq& \mfN(\zeta,\psi,\omega),\\
\abs{[\proj ,f]g}_2&\lesssim& \abs{\nabla f}_{H^{t_0}}\abs{g}_2 ,\\
\abs{[\partial^\beta,f]g}_{{H}^{1/2}}&\lesssim & \abs{f}_{H^{N-1/2}}\abs{g}_{{H}^{N-3/2}};
\end{eqnarray*}
the first estimate is a direct consequence of the trace lemma and
Proposition \ref{horizontalregurality}, while the second and third ones
follow from the general commutator estimates of Theorem 3 in
\cite{Lannes_JFA}. These identities allow us to write
\begin{eqnarray*}
\abs{\proj [\partial^\beta,\uV]\cdot
(\nabla\partial_k\psi+\nabla^\perp\partial_k\tpsi-w\nabla\partial_k
\zeta)}_2&\leq& \mfN(\zeta,\psi,\omega)(1+\abs{\nabla\tpsi}_{H^{N-1/2}})\\
&\leq &\mfN(\zeta,\psi,\omega),
\end{eqnarray*}
the last line being a consequence of Lemma \ref{estimtpsi}; this
yields the first assertion of the lemma. For the second one, we
proceed along the same lines (it is important to note the most
singular terms in $\zeta$ of both terms cancel one each other).
\end{proof}
Applying $\partial^\beta$ (with $\beta\neq 0$) to the equation on $\partial_k\psi$ we therefore
get, using the lemma,
$$
\dt\partial^\alpha\psi+g\partial^\alpha\zeta+\uV\cdot\big((\partial^\alpha\nabla\psi-\uw\nabla\partial^\alpha\zeta)+\partial^\alpha\nabla^\perp\tpsi\big)-\uw\partial^\alpha(\uU\cdot
N)-\partial^\alpha\frac{\nabla^\perp}{\Delta}\cdot
(\uom\cdot
N\uV)\approx 0.
$$
Using the evolution equation on $\zeta$, we can substitute $\partial^\alpha
(\uU\cdot N)=\partial_t\partial^\alpha\zeta $ to obtain
\begin{eqnarray*}
\dt\psi_{(\alpha)}+\mfa\partial^\alpha\zeta+\uV\cdot\nabla
\psi_{(\alpha)}&\approx&\partial^\alpha\frac{\nabla^\perp}{\Delta}\cdot(\uom\cdot
N\uV)-\uV\cdot \nabla^\perp\partial^\alpha\tpsi\\
&=&[\frac{\partial^\alpha\nabla^\perp}{\Delta},\uV](\uom\cdot N).
\end{eqnarray*}
Now, a  general commutator estimate (Theorem 3 in
\cite{Lannes_JFA}) implies that
\begin{eqnarray*}
\abs{[\frac{\partial^\alpha\nabla^\perp}{\Delta},\uV](\uom\cdot
  N)}_{H^{1/2}} &\lesssim& \abs{\uV}_{H^{N-1/2}}\abs{\uom\cdot
  N}_{H^{N-3/2}}\\
&\lesssim& \Abs{V}_{H^{N,1}}\Abs{\omega}_{H^{N-1,1}}\\
&\lesssim&  \mfN(\zeta,\psi,\omega),
\end{eqnarray*}
where we used the trace lemma to derive the second inequality, and
Proposition \ref{horizontalregurality} to get the third one. It
follows that
\begin{equation}\label{eqpsialpha}
\dt\psi_{(\alpha)}+\mfa\partial^\alpha\zeta+\uV\cdot\nabla
\psi_{(\alpha)}\approx 0.
\end{equation}
We now have to relate $\psi_{(\alpha)}$ to the quantity
$U_{(\beta)\parallel} \cdot {\bf e}_k$ used in the statement of the
proposition. Proceeding as for \eqref{Uapa0}, we get
\begin{equation}\label{Uapa}
U_{(\beta)\parallel}\cdot {\bf e}_k=\psi_{(\alpha)}+{\bf
  e}_k\cdot \nabla^\perp \partial^\beta
\tpsi-[\partial^\beta,\uw,\partial_k\zeta]-\partial^\beta\zeta
(\dz^\sigma U)_\parallel\cdot {\bf e}_\kappa.
\end{equation}
We now need the following lemma.
\begin{lemma}
Let $f_1$, $f_2$ and $f_3$ be defined as
$$
f_1={\bf
  e}_k\cdot \nabla^\perp \partial^\beta
\tpsi,\qquad
f_2=[\partial^\beta,\uw,\partial_k\zeta],\qquad
f_3=\partial^\beta\zeta
(\dz^\sigma U)_\parallel\cdot {\bf e}_k;
$$
then one has
$$
(\dt +\uV\cdot \nabla)f_j\approx 0 \qquad (j=1,2,3).
$$
\end{lemma}
\begin{proof}[Proof of the lemma]
Let us first prove the lemma for $f_1$. We recall that
$\Delta\tpsi=\uom\cdot N$ and that we get from the vorticity equation
evaluated at the surface that
$$
(\dt +\uV\cdot \nabla) (\uom\cdot N)+\uom\cdot N
\nabla\cdot \uV=0
$$
(see Remark \ref{remmm}). Applying ${\bf e}_k\cdot
\nabla^\perp\partial^\beta\Delta^{-1}$ to this equation, we get
therefore
\begin{eqnarray*}
(\dt +\uV\cdot \nabla)f_1&=&-{\bf e}_k\cdot
\nabla^\perp\partial^\beta\Delta^{-1}\big(\uom\cdot N\nabla\cdot
\uV\big)\\
&\approx& 0,
\end{eqnarray*}
the second identity stemming from standard commutator estimates, the
trace lemma, and Proposition \ref{horizontalregurality}.\
To treat the case of $f_2$, it is sufficient to prove that
$$
(i)\quad (\dt+\uV\cdot \nabla)\partial^{\beta'}\uw\approx 0 \quad\mbox{ and
}\quad (ii)\quad (\dt+\uV\cdot \nabla)\partial^{\beta'}\partial_k
\zeta\approx 0,
$$
for all $0<\beta'<\beta$. For $(i)$, the space derivative is
controlled through Proposition \ref{horizontalregurality}, while the
time derivative is controlled by Corollary \ref{corotime} in terms of $\mfN$
and $\abs{\proj \dt \psi_{\alpha'}}_2$, with $\abs{\alpha'}\leq
N-1$. Now, \eqref{eqpsialpha} gives a control of this last quantity in
terms of
$\mfN$ since $\mfa \partial^{\alpha'}\zeta\approx 0$ when
$\abs{\alpha'}\leq N-1$; the identity (i) is therefore proved. For
(ii), we just have to remark that $\uU_{(\alpha')}\cdot N\sim 0$
when $\abs{\alpha'}\leq N-1$ (which is the case if we take
$\partial^{\alpha'}=\partial^{\beta'}\partial_k$) and to use the first identity given in
the statement of the proposition and proved above.\\
We finally turn to $f_3$, which is a direct consequence of Proposition
\ref{horizontalregurality}, Corollary \ref{corotime} and (ii) above.
\end{proof}
Owing to this lemma and \eqref{Uapa}, we can replace $\psi_{(\alpha)}$
by $U_{(\beta)\parallel}\cdot {\bf e}_k$ in \eqref{eqpsialpha}, and
the second assertion of the proposition is proved. The third and last
assertion of the proposition is a simple byproduct of the proof of
Proposition \ref{EEvorticity}.
\end{proof}

\subsection{A priori estimates on the full equations}\label{sectEE}

We recall that the energy $\cE^N(\zeta,\psi,\omega)$ is defined in
\eqref{defenergy},
$$
\cE^N(\zeta,\psi,\omega):=\frac{1}{2}\abs{\zeta}_{H^N}^2+\frac{1}{2}\abs{\proj\psi}_{H^3}^2+\frac{1}{2}\sum_{0<\abs{\alpha}\leq
N}\abs{\proj\psi_{(\alpha)}}_2^2+\frac{1}{2}\Abs{\omega}_{H^{N-1}}^2+\frac{1}{2}\abs{\omega_b\cdot
N_b}^2.
$$
The following proposition gives an a priori estimate for this energy
provided that the {\it Rayleigh-Taylor} criterion is uniformly
satisfied (i.e. $\mfa$ remains strictly positive on the time interval
considered) and that the water depth does not vanish.
\begin{proposition}\label{prop3000}
Let $N\geq 5$, $T>0$ and
$(\zeta,\psi,\omega)\in E^N_T$ be a solution to the water waves
equations \eqref{ZCSgen} on the time interval $[0,T]$. If
\eqref{hmin}$_T$ is satisfied and if
$$
\exists a_0>0, \quad\forall t\in [0,T],\quad \mfa(t)\geq a_0,
$$
with $\mfa$ as defined in Proposition \ref{prop3}, then
$$
\forall 0\leq t\leq T,\qquad \cE^N(\zeta,\psi,\omega)(t)\leq
C(T,\frac{1}{a_0},\frac{1}{h_{\rm min}},H_0,\cE^N(\zeta^0,\psi^0,\omega^0)).
$$
\end{proposition}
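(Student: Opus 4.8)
The plan is to run a Gronwall argument on an energy that is equivalent to $\cE^N$ but better adapted to the quasilinear structure of Proposition \ref{prop3}. I would work with the modified energy
\begin{align*}
\bar\cE^N(\zeta,\psi,\omega):=\tfrac12\abs{\zeta}_2^2+\tfrac12\abs{\proj\psi}_{H^3}^2&+\tfrac12\!\!\!\sum_{0<\abs{\alpha}\le N}\!\!\!\Big[(\mfa\,\partial^\alpha\zeta,\partial^\alpha\zeta)+(\cG[\zeta]\psi_{(\alpha)},\psi_{(\alpha)})\Big]\\
&+\tfrac12\Abs{\omega}_{H^{N-1}}^2+\tfrac12\abs{\omega_b\cdot N_b}_{H_0^{-1/2}}^2,
\end{align*}
where $\cG[\zeta]$ is the irrotational Dirichlet--Neumann operator and $(\cdot,\cdot)$ is the $L^2(\R^d)$ pairing. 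The first step is to check $\bar\cE^N\sim\cE^N$ under the standing assumptions: $a_0\le\mfa\le\mfN$ gives $\sum_\alpha(\mfa\,\partial^\alpha\zeta,\partial^\alpha\zeta)\sim\abs{\zeta}_{H^N}^2$; the coercivity and continuity of $\cG[\zeta]$ in finite depth (from the irrotational theory, e.g. \cite{Lannes_book}) give $(\cG[\zeta]\psi_{(\alpha)},\psi_{(\alpha)})\sim\abs{\proj\psi_{(\alpha)}}_2^2$; and Corollary \ref{tracecoer} together with \eqref{Uapa0} lets one pass freely between $\psi_{(\alpha)}$ and $U_{(\beta)\parallel}\cdot{\bf e}_k$.

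Next I differentiate $\bar\cE^N$ in time. The vorticity contribution is immediate: Proposition \ref{EEvorticity} gives $\frac{d}{dt}\big(\Abs{\omega}_{H^{N-1}}^2+\abs{\omega_b\cdot N_b}_{H_0^{-1/2}}^2\big)\le\mfN$. The terms $\abs{\zeta}_2^2$ and $\abs{\proj\psi}_{H^3}^2$ are subprincipal (recall $N\ge5$): differentiating the first two equations of \eqref{ZCSgen}, using $\dt\zeta=\cGg[\zeta](\psi,\omega)\in H^{N-1}$ and, for $\psi$, applying $\proj\Lambda^3$ and pairing with $\proj\Lambda^3\psi$, every contribution is bounded by $\mfN$ via the product estimates, the trace lemma, Proposition \ref{horizontalregurality} and Lemma \ref{estimtpsi}. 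For the principal sum, fix $\alpha$ with $\partial^\alpha=\partial_k\partial^\beta$; Proposition \ref{prop3} (and the lemma on $f_1,f_2,f_3$ in its proof, which converts the equation on $U_{(\beta)\parallel}\cdot{\bf e}_k$ into one on $\psi_{(\alpha)}$) yields
$$
(\dt+\uV\cdot\nabla)\partial^\alpha\zeta=\partial_k\uU_{(\beta)}\cdot N+R^1_\alpha,\qquad(\dt+\uV\cdot\nabla)\psi_{(\alpha)}=-\mfa\,\partial^\alpha\zeta+\widetilde R_\alpha,
$$
with $\abs{R^1_\alpha}_2+\abs{\widetilde R_\alpha}_2+\abs{\proj\widetilde R_\alpha}_2\le\mfN$. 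Differentiating $(\mfa\,\partial^\alpha\zeta,\partial^\alpha\zeta)$ and $(\cG[\zeta]\psi_{(\alpha)},\psi_{(\alpha)})$ then produces four types of terms, three of which are routine: the \emph{coefficient time-derivatives} $\tfrac12(\dt\mfa\,\partial^\alpha\zeta,\partial^\alpha\zeta)$ and $\tfrac12((\dt\cG[\zeta])\psi_{(\alpha)},\psi_{(\alpha)})$, controlled by $\mfN$ once one knows $\abs{\dt\mfa}_{L^\infty}\le\mfN$ (a Rayleigh--Taylor-type bound obtained by differentiating $\mfa=g+(\dt+\uV\cdot\nabla)\uw$ and invoking the already available bounds --- classical but not entirely trivial) and the shape-derivative formula for $\cG[\zeta]$; the \emph{transport terms} $(\mfa\,\partial^\alpha\zeta,\uV\cdot\nabla\partial^\alpha\zeta)$ and $(\cG[\zeta]\psi_{(\alpha)},\uV\cdot\nabla\psi_{(\alpha)})$, handled by integration by parts and by the usual water-waves commutator estimate between $\cG[\zeta]$ and $\uV\cdot\nabla$ (cf. \cite{Lannes_JFA,AlazardMetivier,Lannes_book}); and the \emph{source terms} $(\mfa\,\partial^\alpha\zeta,R^1_\alpha)$ and $(\cG[\zeta]\psi_{(\alpha)},\widetilde R_\alpha)$, bounded by $\mfN$ using $\abs{R^1_\alpha}_2\le\mfN$ and $\abs{\proj\widetilde R_\alpha}_2\le\mfN$ (the latter after one integration by parts, so that only $\proj\widetilde R_\alpha$ is needed).

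The heart of the proof --- and the step I expect to be the main obstacle --- is the \emph{cross term}
$$
(\mfa\,\partial^\alpha\zeta,\partial_k\uU_{(\beta)}\cdot N)-(\cG[\zeta]\psi_{(\alpha)},\mfa\,\partial^\alpha\zeta)=\big(\mfa\,\partial^\alpha\zeta,\ \partial_k\uU_{(\beta)}\cdot N-\cG[\zeta]\psi_{(\alpha)}\big),
$$
which must be shown to be $\le\mfN$; equivalently, the principal part of the normal velocity $\partial_k\uU_{(\beta)}\cdot N$, viewed as acting on the energy-level unknown, must coincide with the symmetric operator $\cG[\zeta]$ appearing in $\bar\cE^N$. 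For the irrotational part $\uU_I=\mathbb U_I^\sigma[\zeta]\psi$ this follows from the good-unknown identity \eqref{goodunknown} applied to the elliptic problem \eqref{eqPhiS}: $(\uU_I)_{(\beta)}\cdot N=\cG[\zeta]\psi_{(\beta)}$ modulo terms which, after one more horizontal derivative and the shift $\partial_k\psi_{(\beta)}=\psi_{(\alpha)}-\partial^\beta\zeta\,\partial_k\uw$, are bounded in $L^2$ by $\mfN$ (here one uses that, by Corollary \ref{tracecoer}, $\psi_{(\beta)}$ with $\abs{\beta}=N-1$ is controlled one order above the top energy level, so the order-one commutator $[\partial_k,\cG[\zeta]]$ still lands in $L^2$). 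For the rotational part $\uU_{II}=\mathbb U_{II}^\sigma[\zeta]\omega$ one uses that the div--curl problem \eqref{eqcurlAS} gains one derivative, so that $(\uU_{II})_{(\beta)}\in H^1(\cS)$ and is almost incompressible by Proposition \ref{propalmostinc}; Corollary \ref{cor6} and the integration-by-parts identity \eqref{IPP} then show that its contribution to the cross term is $\le\mfN$ without any derivative loss on $\zeta$, while the rotational piece $\nabla^\perp\partial^\beta\widetilde\psi$ of $U_{(\beta)\parallel}$ (which $\cG[\zeta]$ does not see) is absorbed into the same remainder by means of Lemma \ref{estimtpsi}. Summing over $0<\abs{\alpha}\le N$ and adding the vorticity and subprincipal contributions gives $\frac{d}{dt}\bar\cE^N\le\mfN(\zeta,\psi,\omega)=C(\tfrac1{h_{\min}},H_0,\cE^N)$, hence $\frac{d}{dt}\bar\cE^N\le\widetilde C(\tfrac1{h_{\min}},H_0,a_0,\bar\cE^N)$ with $\widetilde C$ nondecreasing in its last argument; a (nonlinear) Gronwall argument on $[0,T]$, legitimate since $(\zeta,\psi,\omega)\in E^N_T$ ensures $\cE^N$ is a priori bounded on $[0,T]$, then yields $\bar\cE^N(t)\le C(T,\tfrac1{a_0},\tfrac1{h_{\min}},H_0,\bar\cE^N(0))$, and the equivalence $\bar\cE^N\sim\cE^N$ transfers this to $\cE^N$, which is the claim.
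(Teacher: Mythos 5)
Your overall strategy is recognizably the ``irrotational'' symmetrization: replace the interior quantity $\int_\cS(1+\dz\sigma)\abs{\partial_kU_{(\beta)}}^2$ that the paper carries in its modified energy by the boundary quantity $(\cG[\zeta]\psi_{(\alpha)},\psi_{(\alpha)})$, and argue that $\partial_k\uU_{(\beta)}\cdot N$ agrees with $\cG[\zeta]\psi_{(\alpha)}$ up to remainders. The equivalence $\bar\cE^N\sim\cE^N$, the treatment of the low-order and transport terms, and the use of Corollary \ref{tracecoer} are all fine. But the one step you yourself flag as ``the main obstacle'' is, as written, a genuine gap.

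The problem is regularity bookkeeping in the cross term
$$
\big(\mfa\,\partial^\alpha\zeta,\ \partial_k\uU_{(\beta)}\cdot N-\cG[\zeta]\psi_{(\alpha)}\big),\qquad \abs{\alpha}=N.
$$
At top order, $\mfa\,\partial^\alpha\zeta$ is only $L^2(\R^d)$ (one controls $\zeta\in H^N$, nothing more). On the other side, the rotational contribution $\partial_k(\uU_{II})_{(\beta)}\cdot N$ is the trace of an $L^2(\cS)$ quantity (since $\omega\in H^{N-1}$ and the div--curl problem gains one derivative, $U_{II,(\beta)}\in H^1(\cS)$, so $\partial_k U_{II,(\beta)}\in L^2(\cS)$), hence it lives only in $H^{-1/2}(\R^d)$. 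The pairing $L^2\times H^{-1/2}$ is not defined. You appeal to Corollary \ref{cor6} and the integration-by-parts identity \eqref{IPP} to ``show that its contribution to the cross term is $\le\mfN$'', but Corollary \ref{cor6} bounds $(\varphi,\partial_k\uU_{(\beta)}\cdot N)$ by $M_N(\cdots)\abs{\varphi}_{H^{1/2}}$, which requires $\varphi\in H^{1/2}$, not $L^2$; this is precisely why the paper can use it for $(R^2_\alpha,\partial_k\uU_{(\beta)}\cdot N)$ (since $\abs{\proj R^2_\alpha}_2\le\mfN$ gives half a derivative for free) but it does not apply to $\mfa\,\partial^\alpha\zeta$. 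The irrotational piece has the same problem, mediated by the mismatch of good unknowns: the Alinhac good unknown for the elliptic problem \eqref{eqPhiS} is $\partial^\beta\psi-\uw_I\partial^\beta\zeta$ with the irrotational vertical velocity $\uw_I$, whereas $\psi_{(\beta)}$ in your energy uses the full $\uw$. The discrepancy $\cG[\zeta]\big((\uw_I-\uw)\partial^\beta\zeta\big)$ lands in $L^2$ for $\abs{\beta}=N-1$, but after applying $\partial_k$ it drops to $H^{-1}$, not $L^2$, contradicting your claim that ``after one more horizontal derivative the remainder is bounded in $L^2$''.

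The paper's proof sidesteps this entirely, and that is the key structural point you are missing. It never estimates the cross term at all: multiplying the $\zeta$-equation of the quasilinear system by $\mfa\,\partial^\alpha\zeta$ and the $U_{(\beta)\parallel}\cdot{\bf e}_k$-equation by $\partial_k\uU_{(\beta)}\cdot N$, and adding, the two occurrences of $(\mfa\,\partial^\alpha\zeta,\partial_k\uU_{(\beta)}\cdot N)$ cancel \emph{exactly}, with opposite signs, before any estimate is needed. What remains is the single symmetric quantity $\big((\dt+\uV\cdot\nabla)(U_{(\beta)\parallel}\cdot{\bf e}_k),\ \partial_k\uU_{(\beta)}\cdot N\big)$, and the paper shows via Green's identity \eqref{IPP}, Lemma \ref{lemma2} and Proposition \ref{propalmostinc} that it equals $\tfrac12\dt\int_\cS(1+\dz\sigma)\abs{\partial_kU_{(\beta)}}^2$ up to $\mfN$. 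This is why their modified energy carries the interior $L^2(\cS)$-norm of $\partial_kU_{(\beta)}$ rather than $(\cG[\zeta]\psi_{(\alpha)},\psi_{(\alpha)})$: the interior energy is what the Green's-identity computation naturally produces, and it absorbs the irrotational and rotational pieces simultaneously, with no need to identify $\partial_k\uU_{(\beta)}\cdot N$ with any Dirichlet--Neumann operator. To salvage your route you would need some additional cancellation between $\partial_k(\uU_{II})_{(\beta)}\cdot N$ and the good-unknown mismatch $\cG[\zeta](\uw_{II}\partial^\alpha\zeta)$; nothing in your argument exhibits such a cancellation, and I do not see why it should hold.
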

\begin{proof}
Energy estimates are classically obtained by controlling the time
derivative of this expression. This has already partially been done in
Proposition \ref{EEvorticity} for the vorticity. We deal here with the
other components of the energy.\\
{\bf Step 1}. {Control of the low order terms in
  $(\zeta,\psi)$}. Computing the time derivative of these terms, one gets
\begin{eqnarray*}
\frac{1}{2}\frac{d}{dt}\big(\abs{\zeta}_2^2+\abs{\proj\psi}_{H^3}^2\big)&=&(\zeta,\dt
\zeta)+(\Lambda^3\proj\psi,\Lambda^3\proj \dt\psi)\\
&\leq& \abs{\zeta}_2\abs{\dt \zeta}_2+\abs{\proj\psi}_{H^3}\abs{\proj\dt\psi}_{H^3}.
\end{eqnarray*}
Replacing $\dt\zeta$ by $\uU\cdot N$ according to the first equation
of \eqref{ZCSgen} and replacing similarly $\dt \psi$ using the second
equation of \eqref{ZCSgen}, we get by standard product estimates and
the trace lemma that
\begin{eqnarray*}
\abs{\dt \zeta}_2+\abs{\proj\dt\psi}_{H^3}&\leq&
C(\abs{\zeta}_{H^N},\abs{\uU}_{H^{7/2}},\abs{\uom}_{H^{5/2}})\\
&\leq& C(\abs{\zeta}_{H^N},\Abs{\uU}_{H^{4,1}},\Abs{\omega}_{H^{3}})
\end{eqnarray*}
We then easily deduce with the help of Proposition
\ref{horizontalregurality} that
\begin{equation}\label{control0}
\frac{1}{2}\frac{d}{dt}\big(\abs{\zeta}_2^2+\abs{\proj\psi}_{H^3}^2\big)\leq \mfN(\zeta,\psi,\omega).
\end{equation}
{\bf Step 2}. {Control of the higher order terms in $(\zeta,U)$}. We
do not directly give a control of the components of the energy
that involve $\partial^\alpha\zeta$ and $\psi_{(\alpha)}$, for all
$\alpha\neq 0$. The control of $\psi_{(\alpha)}$ will indeed be obtained
indirectly (through Corollary \ref{tracecoer}) from the control of $\partial_kU_{(\beta)}$ (in the flat strip)
derived here.
We do not for this
purpose directly use \eqref{ZCSgen} as above, but rather the system
exhibited in Proposition \ref{prop3}, namely,
\begin{equation}\label{eq1alpha}
\left\lbrace\begin{array}{l}
\dsp (\dt +\uV\cdot\nabla)
\partial^\alpha\zeta-\partial_k\uU_{(\beta)}\cdot N=R^1_\alpha,\\
\dsp (\dt +\uV\cdot \nabla) (U_{(\beta)\parallel}\cdot {\bf e}_k)+\mfa \partial^\alpha \zeta=R^2_\alpha,
\end{array}\right.
\end{equation}
where $\partial^\alpha=\partial_k\partial_\beta$,
with $k=x,y$ and $0\leq \abs{\beta}\leq N-1$.
The
nondiagonal terms in \eqref{eq1alpha} can be cancelled out in the
energy estimates by multiplying the first equation by
$\mfa \partial^\alpha\zeta$, and the second one by $\partial_k\uU_{(\beta)}\cdot N$. More precisely,
multiplying the first equation of \eqref{eq1alpha} by
$\mfa\partial^\alpha\zeta$ and integrating over $\R^d$, we get
\begin{eqnarray*}
\nonumber
\frac{1}{2}\dt
(\mfa \partial^\alpha\zeta,\partial^\alpha\zeta)-(\partial_k\uU_{(\beta)}\cdot
N,\mfa \partial^\alpha\zeta)&=&(R_\alpha^1,\mfa \partial^\alpha
\zeta)-\frac{1}{2}(\dt \mfa \partial^\alpha\zeta,\partial^\alpha
\zeta)\\
& &-(\uV\cdot
\nabla \partial^\alpha\zeta,\mfa\partial^\alpha\zeta)\\
&\leq& \mfN(\zeta,\psi,\omega),
\end{eqnarray*}
the last line being a consequence of Proposition \ref{prop3} and Corrollay \ref{corotime} (and a
simple integration by parts for the last term of the right-hand-side).
Multiplying the second equation of \eqref{eq1alpha} by $\partial_k\uU_{(\beta)}\cdot N$
and integrating over $\R^d$, we also get
\begin{align*}
\nonumber
 \big(\dt (\Vpb \cdot {\bf e}_k),\partial_k\uU_{(\beta)}\cdot N\big)
 +\big(\mfa \partial^\alpha \zeta,\partial_k\uU_{(\beta)}\cdot
 N\big) &+\big(\uV\cdot
 \nabla (\Vpb\cdot {\bf e}_k),\partial_k\uU_{(\beta)}\cdot
 N\big)\\
&=\big(R^2_\alpha,\partial_k \uU_{(\beta)}\cdot N)\\
&\leq \mfN(\zeta,\psi,\omega),
\end{align*}
the last line stemming from Proposition \ref{prop3} and Corollary \ref{propalmostinc}.
Summing up these two identities, we get
\begin{equation}\label{esten3}
\frac{1}{2}\dt
(\mfa \partial^\alpha\zeta,\partial^\alpha\zeta)+
\big((\dt+\uV\cdot\nabla) (\Vpb\cdot {\bf e}_k),\partial_k\uU_{(\beta)}\cdot N\big)\leq \mfN(\zeta,\psi,\omega).
\end{equation}
Let us denote by $\Vpb^\flat$ the following
  extension of $\Vpb$,
$$
\Vpb^\flat=V_{(\beta)}+w_{(\beta)}\nabla\sigma;
$$
focusing our attention on the second term of the
left-hand-side of \eqref{esten3}, and remarking that the kinematic
equation implies
\begin{equation}\label{lift}
(\dt+\uV\cdot\nabla) (\Vpb\cdot {\bf
  e}_k)={(\dt^\sigma+U\cdot\nabla_{X,z}^\sigma) (\Vpb^\flat\cdot {\bf
  e}_k)}_\surff,
\end{equation}
we can deduce from Green's identity that
\begin{align*}
\big((\dt+\uV\cdot\nabla) &(\Vpb\cdot {\bf
  e}_k),\partial_k\uU_{(\beta)}\cdot N\big)\\
&=\int_\cS (1+\dz\sigma)(\dt^\sigma+U\cdot\nabla_{X,z}^\sigma) (\Vpb^\flat\cdot {\bf
  e}_k)\nabla_{X,z}^\sigma\cdot (\partial_k U_{(\beta)})\\
&+ \int_\cS (1+\dz\sigma)\nabla_{X,z}^\sigma\big[(\dt^\sigma+U\cdot\nabla_{X,z}^\sigma) (\Vpb^\flat\cdot {\bf
  e}_k)\big]\cdot \partial_k U_{(\beta)}.
\end{align*}
The first term of the right-hand-side is controlled by
$\mfN(\zeta,\psi,\omega)$ by a simple application of Cauchy-Schwartz
inequality and Propositions \ref{propshapeU} and
\ref{propalmostinc}. We can also remark that it is possible to commute
the operators $\nabla_{X,z}^\sigma$ and $(\dt^\sigma+U\cdot\nabla_{X,z}^\sigma) (\Vpb^\flat\cdot {\bf
  e}_k)$ in the second term of the right-hand-side, up to terms that
can similarly be controlled by $\mfN(\zeta,U,\omega)$. We therefore have
\begin{align}
\nonumber
\big((\dt+\uV\cdot\nabla) &(\Vpb\cdot {\bf
  e}_k),\partial_k\uU_{(\beta)}\cdot N\big)\\
\label{BTZ}
&=\int_\cS (1+\dz\sigma)\big[(\dt^\sigma+U\cdot\nabla_{X,z}^\sigma) \nabla_{X,z}^\sigma (\Vpb^\flat\cdot {\bf
  e}_k)\big]\cdot \partial_k U_{(\beta)}+\mbox{l.o.t.}
\end{align}
where l.o.t. stands for lower order terms that can be controlled by
$\mfN(\zeta,\psi,\omega)$.
We now need the following lemma to relate the quantity
$\nabla_{X,z}^\sigma (\Vpb^\flat\cdot{\bf e}_k)$ to $\partial_k U_{(\beta)}$.
\begin{lemma}\label{lemma2}
Denoting ${\bf t}_k={\bf e}_k+\partial_k \sigma {\bf e}_z $, one has
$$
\nabla_{X,z}^\sigma (\Vpb^\flat\cdot{\bf e}_k)=\partial_kU_{(\beta)}+{\bf t}_k\times \partial^\beta\omega+r_\alpha,
$$
with
$$
r_\alpha=(\big(\nabla_{X,z}^\sigma \times
U_{(\beta)}\big)-\partial^\beta \omega)+w_{(\beta)}\nabla_{X,z}^\sigma \partial_k\sigma;
$$
in particular,
$$
\Abs{(\dt^\sigma+U\cdot\nabla_{X,z}^\sigma)r_\alpha}_2\leq \mfN(\zeta,\psi,\omega).
$$
\end{lemma}
\begin{proof}
Remarking that
$$
\nabla_{X,z}^\sigma (U_{(\beta)}\cdot {\bf e_j})=\partial_j^\sigma
U_{(\beta)}+{\bf e}_j\times \big(\nabla_{X,z}^\sigma \times U_{(\beta)}\big)\qquad (j=x,y,z),
$$
one  computes
\begin{eqnarray*}
\nabla_{X,z}^\sigma
(\Vpb^\flat\cdot{\bf e}_k)&=&\partial^{\sigma}_kU_{(\beta)}+{\bf e}_k\times \big(\nabla_{X,z}^\sigma \times U_{(\beta)}\big)+\dz^\sigma U_{(\beta)}\partial_k
\sigma\\
& &+{\bf
  e}_z\times \big(\nabla_{X,z}^\sigma \times U_{(\beta)}\big)\partial_k\sigma
+w_{(\beta)}\nabla_{X,z}^\sigma \partial_k\sigma\\
&=& \partial_k U_{(\beta)}+{\bf
  t}_\kappa\times
 \big(\nabla_{X,z}^\sigma \times U_{(\beta)}\big)+w_{(\beta)}\nabla_{X,z}^\sigma \partial_k\sigma,
\end{eqnarray*}
and the result follows from Proposition \ref{propalmostinc} and
Corollary \ref{corotime}.
\end{proof}
A direct consequence of \eqref{BTZ} and the Lemma is that
\begin{align*}
\big((\dt+\uV\cdot\nabla) &(\Vpb\cdot {\bf
  e}_k),\partial_k\uU_{(\beta)}\cdot N\big)\\
&=\int_\cS (1+\dz\sigma)\big[(\dt^\sigma+U\cdot\nabla_{X,z}^\sigma)
\partial_k U_{(\beta)}\big]\cdot \partial_k U_{(\beta)}\\
&+\int_\cS (1+\dz\sigma)\big[{\bf t}_k\times (\dt^\sigma+U\cdot\nabla_{X,z}^\sigma)
\partial^\beta\omega\big]\cdot \partial_k U_{(\beta)}+\mbox{l.o.t.}\\
&=A+B
\end{align*}
(with the same meaning as above for l.o.t.). We now turn to analyze
$A$ and $B$.
\begin{itemize}
\item Analysis of $A$. Using the integration by
part formula \eqref{IPP} together with the identity
$$
\dt \int_\cS (1+\dz\sigma) fg=\int_\cS (1+\dz\sigma)\dt^\sigma f\, g
+\int_{\cS} (1+\dz\sigma) f \dt^\sigma g +\int_{z=0} fg \dt \zeta,
$$
we can write
\begin{eqnarray}
\nonumber
A&=&\frac{1}{2}\dt \int_\cS (1+\dz\sigma) \abs{\partial_k
  U_{(\beta)}}^2-\frac{1}{2}\int_\cS (\nabla_{X,z}^\sigma\cdot
U)\abs{\partial_k U_{(\beta)}}_2^2\\
\nonumber
& &-\frac{1}{2}\int_{z=0}\abs{\partial_k\uU_{(\beta)}}^2(\dt\zeta-\uU\cdot
N)\\
\label{Baiona1}
&=&\frac{1}{2}\dt \int_\cS (1+\dz\sigma) \abs{\partial_k
  U_{(\beta)}}^2,
\end{eqnarray}
where we used that fact that $\nabla_{X,z}^\sigma\cdot U=0$ and
$\dt\zeta-\uU\cdot N=0$ by the first equation of \eqref{ZCSgen}.
\item Analysis of $B$. By Cauchy-Schwarz inequality, we have
\begin{eqnarray}
\nonumber
B&\leq& C(\abs{\zeta}_{W^{1,\infty}})\Abs{(\dt^\sigma+U\cdot
  \nabla_{X,z}^\sigma)\partial^\beta \omega}_2\Abs{\partial_k
  U_{(\beta)}}_2\\
\label{Baiona2}
&\leq & \mfN(\zeta,\psi,\omega),
\end{eqnarray}
the last line stemming from the third assertion of Proposition \ref{prop3}
and Proposition \ref{horizontalregurality}.
\end{itemize}
We deduce from this analysis that
$$
\big((\dt+\uV\cdot\nabla) (\Vpb\cdot {\bf
  e}_k),\partial_k\uU_{(\beta)}\cdot N\big)
=\frac{1}{2}\dt \int_\cS(1+\dz\sigma)\abs{\partial_k U_{(\beta)}}_2^2+\mbox{l.o.t.}
$$
so that \eqref{esten3} yields
\begin{equation}\label{EEmain}
\dt\left\lbrace
(\mfa \partial^\alpha\zeta,\partial^\alpha\zeta)+ \int_\cS(1+\dz\sigma)\abs{\partial_k U_{(\beta)}}_2^2\right\rbrace
\leq \mfN(\zeta,\psi,\omega).
\end{equation}
{\bf Step 4}. {Energy estimate on the modified energy.} Let us define
the modified energy $\tilde\cE^N=\tilde\cE^N(\zeta,\psi,\omega)$ as
\begin{eqnarray*}
\tilde\cE^N&=&\abs{\zeta}_{2}^2+\abs{\proj\psi}_{H^3}^2\\
& +&\sum_{k=x,y,0<\abs{\beta}\leq
  N-1}\big[(\partial_k\partial^\beta\zeta,\mfa\partial_k\partial^\beta
\zeta)+\int_\cS (1+\dz\sigma)\abs{\partial_k U_{(\beta)}}_2^2 \big]
+\Abs{\omega}_{H^{N-1}}^2.
\end{eqnarray*}
It follows directly from Steps 1, 2 and 3, and the vorticity estimates of
Proposition \ref{EEvorticity} that
$$
\frac{d}{dt}\tilde\cE^N(\zeta,\psi,\omega)\leq \mfN(\zeta,\psi,\omega).
$$
Under the assumption made on $\mfa$, one can write, for all
$\alpha\neq 0$,
$$
\abs{\partial^\alpha\zeta}_2^2\leq \frac{2}{a_0}(\mfa \partial^\alpha\zeta,\partial^\alpha\zeta)
$$
so that, with the help of Corollary \ref{tracecoer} we have
$$
\mfN(\zeta,\psi,\omega)\leq C\big(\frac{1}{a_0},\tilde\cE^N(\zeta,\psi,\omega)\big)
$$
and therefore
\begin{equation}\label{pouf}
\frac{d}{dt}\tilde\cE^N(\zeta,\psi,\omega)\leq C\big(\frac{1}{a_0},\tilde\cE^N(\zeta,\psi,\omega)\big).
\end{equation}
We classically deduce from this differential inequality that for all
$0\leq t \leq T$,
\begin{equation}\label{EEmod}
\tilde\cE^N(\zeta,\psi,\omega)(t)\leq
C(T,\frac{1}{a_0},\frac{1}{h_{\rm min}},H_0,\tilde\cE^N(\zeta^0,\psi^0,\omega^0)).
\end{equation}
{\bf Step 5.} {Conclusion.} Corollary \ref{tracecoer} implies that
$$
\cE^N(\zeta,\psi,\omega)\leq C(\frac{1}{a_0},\frac{1}{h_{\rm min}}, \tilde\cE^N(\zeta,\psi,\omega))
$$
while we get from Proposition \ref{horizontalregurality} that
$$
\tilde\cE^N(\zeta^0,\psi^0,\omega^0)\leq C(\frac{1}{h_{\rm min}},\cE^N(\zeta^0,\psi^0,\omega^0)).
$$
These two inequalities, together with \eqref{EEmod} imply that for all
$0\leq t\leq T$, one has
$$
\cE^N(\zeta,\psi,\omega)(t)\leq C(T,\frac{1}{a_0},\frac{1}{h_{\rm min}},H_0,\cE^N(\zeta^0,\psi^0,\omega^0));
$$
this is exactly the result stated in the proposition.
\end{proof}

\subsection{Main result}

As explained in Remark \ref{straightvort}, it is easier to give a functional
framework for our well-posedness result if we replace the equation on
the vorticity $\bom$
by an equation on the straightened vorticity $\omega=\bom\circ \Sigma$ in the water
waves equations \eqref{ZCSgen}, where we recall that
\begin{equation}\label{diffS}
\Sigma(t,X,z)=(X,z+\sigma(t,X,z))
\quad\mbox{ and }\quad
\sigma(t,X,z)=\frac{1}{H_0}(z+H_0)\zeta(t,X).
\end{equation}
Recalling that according to  Notation \ref{notastr}, we have ${\mathbb
  U}^\sigma[\zeta](\psi,\omega)={\mathbb
  U}[\zeta](\psi,\omega)\circ\Sigma$, with horizontal and vertical
components ${\mathbb V}^\sigma[\zeta](\psi,\omega)$ and ${\mathbb w}^\sigma[\zeta](\psi,\omega)$, we also denote
\begin{align*}
{\mathbb U}_\parallel^\sigma[\zeta](\psi,\omega):=&\underline{\mathbb V}^\sigma[\zeta](\psi,\omega)+\underline{\mathbb w}^\sigma[\zeta](\psi,\omega)\nabla\zeta    ={\mathbb U}_\parallel[\zeta](\psi,\bom)
\end{align*}
(as usual, quantities evaluated at the surface $z=0$ are underlined).
Owing to \eqref{eq10bis}, instead of the set of evolution equations \eqref{ZCSgen} on
$(\zeta,\psi,\bom)$, we are therefore concerned with the following set of
evolution equations on $(\zeta,\psi,\omega)$,
\begin{equation}\label{ZCSgens}
\left\lbrace
\begin{array}{l}
\dsp \dt \zeta-\underline{\mathbb U}^\sigma[\zeta](\psi,\omega)\cdot N=0,\\
\dsp \dt \psi +g \zeta+\frac{1}{2}
 \babs{{\mathbb
     U}_\parallel^\sigma[\zeta](\psi,\omega)}^2-\frac{1}{2}(1+\abs{\nabla\zeta}^2)\underline{{\mathbb
   w}}^\sigma[\zeta](\psi,\omega)^2 \\
\hspace{4.2cm}\dsp -\frac{\nabla^\perp}{\Delta}\cdot\big(
\uom\cdot N \underline{\mathbb
  V}^\sigma[\zeta](\psi,\omega)\big)=0,\\
\dsp \dt^\sigma \omega+{\mathbb U}^\sigma[\zeta](\psi,\omega)\cdot\nabla_{X,z}^\sigma
\omega=\omega\cdot\nabla_{X,z}^\sigma {\mathbb U}^\sigma[\zeta](\psi,\omega).
\end{array}\right.
\end{equation}
Note that as for \eqref{ZCSgen}-\eqref{DFvort}, these equations make
sense if $\omega$ is divergence free in the sense that
\begin{equation}\label{DFvort2}
\dives\omega=0\quad \mbox{ in }\quad {\mathcal S},
\end{equation}
but that we omit this constraint since it is propagated by the
equation on $\omega$ if is initially satisfied.\\
We also recall the definition \eqref{defenergy} of the energy
$$
\cE^N(\zeta,\psi,\omega):=\frac{1}{2}\abs{\zeta}_{H^N}^2+\frac{1}{2}\abs{\proj\psi}_{H^3}^2+\frac{1}{2}\sum_{0<\abs{\alpha}\leq
N}\abs{\proj\psi_{(\alpha)}}_2^2+\frac{1}{2}\Abs{\omega}_{H^{N-1}}^2+\frac{1}{2}\abs{\omega_b\cdot
N_b}_{H_0^{-1/2}}^2
$$
and of the associated functional space $E_T^N$ defined in \eqref{defETN}
for all $T\geq 0$ as
\begin{align*}
\nonumber
E_T^N=\{(\zeta,\psi,\omega)\in C\big([0,T];H^2(\R^d)&\times \dot{H}^2(\R^d)\times
H^2(\cS)\big), \\
& \cE^N((\zeta,\psi,\omega)(\cdot))\in L^\infty([0,T])\};
\end{align*}
we also denote by $E_0^N$ the set of $(\zeta,\psi,\omega)\in
H^2(\R^d)\times \dot{H}^2(\R^d)\times H^2(\cS)^3$ of finite energy. We
recall finally that the Raighley-Taylor coefficient ${\mathfrak a}$ is
defined\footnote{The notation ${\mathfrak a}={\mathfrak
    a}(\zeta,\psi,\omega)$ suggests that ${\mathfrak a}$ is a function
of $(\zeta,\psi,\omega)$ and not of their time derivatives. We
actually use an alternative definition of ${\mathfrak a}$ where $\dt \big(\underline{\mathbb
  w}^\sigma[\zeta](\psi,\omega)\big)$ is written in terms of $\dt \psi$, $\dt\zeta$
and $\dt^\sigma\omega$ through Proposition \ref{propshapeU}, and where
these time derivatives are replaced by purely spatial operators using
the time evolution equations
provided by \eqref{ZCSgens}.} as
$$
{\mathfrak a}={\mathfrak a}(\zeta,\psi,\omega)=g+(\dt+\underline{\mathbb
  V}^\sigma[\zeta](\psi,\omega))\underline{\mathbb
  w}^\sigma[\zeta](\psi,\omega).
$$
\begin{thm}\label{theo1}
Let $N\geq 5$ and $\Theta^0=(\zeta^0,\psi^0,\omega^0)\in E_0^N$ be such that
$\omega^0$ satisfies the divergence free condition
\eqref{DFvort2}. Assume moreover that
$$
\exists h_{\rm min}>0,\exists a_0>0,\quad H_0+\zeta^0>h_{\rm
  min},\quad {\mathfrak a}(\zeta^0,\psi^0,\omega^0)>a_0.
$$
Then there exists $T>0$ and a unique solution $\Theta\in E^N_T$ to
\eqref{ZCSgens} satisfying the divergence free constraint
\eqref{DFvort2}, and with initial condition $\Theta^0$. Moreover,
$$
\frac{1}{T}=c^1\quad\mbox{ and }\quad \sup_{t\in
  [0,T]}\cE^N(\Theta(t))=c^2
$$
with $\dsp c^j=C(\cE^N(\Theta^0),\frac{1}{h_{\rm min}},H_0,\frac{1}{a_0})$ for $j=1,2$.
\end{thm}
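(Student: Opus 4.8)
The strategy is the classical one for quasilinear problems: construct the solution as the limit of an iterative scheme, using the a priori estimates of \S\ref{sectEE} to obtain uniform bounds and the Lipschitz estimate of \S\ref{sect3} to obtain convergence. Two features require special care: the Rayleigh--Taylor condition $\mfa>a_0$ and the non-vanishing depth condition must be propagated along the scheme, and the vorticity equation is a transport equation on the strip $\cS$ for which no boundary condition is available (this is the point flagged in the footnote to Proposition~\ref{EEvorticity}).

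I would build a sequence $\Theta^n=(\zeta^n,\psi^n,\omega^n)$ as follows. Set $\Theta^0(t)\equiv\Theta^0$ (or a smoothing in time). Given $\Theta^n$, with associated diffeomorphism $\Sigma^n$, velocity $U^n={\mathbb U}^{\sigma^n}[\zeta^n](\psi^n,\omega^n)$ and normal $N^n$, first determine $(\zeta^{n+1},\psi^{n+1})$ as the solution of the \emph{linear} system obtained by freezing the vorticity at $\omega^n$ and the coefficients of the generalized Bernoulli equation at step $n$ (this is legitimate because ${\mathbb U}^{\sigma^n}[\zeta^n](\cdot,\cdot)$ is linear in its arguments, so after freezing one factor in each quadratic term the equation for $\psi^{n+1}$ is linear); then, with $\sigma^{n+1}$ the diffeomorphism attached to $\zeta^{n+1}$, define $\omega^{n+1}$ as the solution of the linear transport equation $\dt^{\sigma^{n+1}}\omega^{n+1}+U^n\cdot\nabla_{X,z}^{\sigma^{n+1}}\omega^{n+1}=\omega^{n+1}\cdot\nabla_{X,z}^{\sigma^{n+1}}U^n$. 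The crucial point is that the vertical transport speed of this equation, equal to $\tfrac{1}{1+\dz\sigma^{n+1}}\big(U^n\cdot\tilde N^{n+1}-\dt\sigma^{n+1}\big)$, vanishes at $z=0$ and $z=-H_0$ by virtue of the kinematic equation satisfied by $\zeta^{n+1}$ — this is why $\zeta^{n+1}$ is computed \emph{before} $\omega^{n+1}$ — so the characteristics stay in $\cS$, the equation is globally solvable on $[0,T]$ with no boundary data, and the solution remains divergence free since $\nabla_{X,z}^{\sigma^{n+1}}\!\cdot\omega^{n+1}$ itself solves a homogeneous linear transport equation.

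Next I would establish uniform bounds and convergence. By continuity in time, $H_0+\zeta^{n+1}\ge h_{\rm min}/2$ and $\mfa(\Theta^{n+1})\ge a_0/2$ hold on some interval $[0,T^{n+1}]$, and the energy estimates of Propositions~\ref{EEvorticity} and \ref{prop3000} have exact analogues for this linearized system (the coercivity of the modified energy uses only $\mfa\ge a_0/2$, the div-curl controls of \S\ref{sect3}, and the Alinhac good-unknown cancellations of Propositions~\ref{propshapeU} and \ref{propalmostinc}); a standard bootstrap then yields a time $T$, independent of $n$, with $\sup_{[0,T]}\cE^N(\Theta^n)\le c^2$ and $1/T=c^1$ of the claimed form. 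The differences $\Theta^{n+1}-\Theta^n$ solve linear equations whose right-hand sides are controlled — via Corollary~\ref{lipomega} for the Lipschitz dependence of ${\mathbb U}^\sigma$ and standard stability estimates for linear transport and for the linear $(\zeta,\psi)$-system — by $C(c^2)$ times $\|\Theta^n-\Theta^{n-1}\|$ in the lower-order norm of $C([0,T];H^{N-1}\times\dot H^{N-1}\times H^{N-2})$, with a factor small for $T$ small; hence $(\Theta^n)$ converges in that norm to some $\Theta$ solving \eqref{ZCSgens}--\eqref{DFvort2}. The uniform $E^N_T$-bound and weak lower semicontinuity of the energy give $\Theta\in E^N_T$; continuity in time in the strong topology of $E^N$ (and then $C^1$ regularity through the equations) is recovered by a Bona--Smith argument — regularize the initial datum, apply the contraction estimate between the corresponding solutions, pass to the limit — and uniqueness follows from the same stability estimate. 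Since $t\mapsto\Theta(t)$ is then $E^N$-continuous, the continuity of $\mfa$ as a nonlinear functional (itself a consequence of Corollary~\ref{lipomega} and Proposition~\ref{propshapeU}) shows that $\mfa(\Theta(t))>a_0/2$ and $H_0+\zeta(t)>h_{\rm min}/2$ persist on $[0,T]$, giving the stated quantitative control.

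The main obstacle I expect is the vorticity step: one must solve a transport equation on a domain with boundary but \emph{without} boundary data, which forces the consistency choice of computing $\zeta^{n+1}$ before $\omega^{n+1}$ so that the vertical transport speed vanishes on $\partial\cS$, and one must then verify that every estimate of \S\ref{sectEE} — in particular the cancellations obtained by working with the good unknowns $U_{(\alpha)}$, $\psi_{(\alpha)}$ and the coercivity of the modified energy under the Rayleigh--Taylor condition — survives the linearization and does not degenerate as $n\to\infty$. The Bona--Smith continuity step is the other delicate point, since the regularizing operators must be made to interact cleanly with the $\zeta$-dependent geometry of the strip.
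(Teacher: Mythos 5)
Your strategy --- a Picard-type iterative scheme with a linearized system at each step --- is genuinely different from the paper's. The paper does not iterate: it regularizes the equations with a horizontal mollifier $J^\iota$, a vertical mollifier $K^\iota[a\dz]$ (a nonstandard damped-transport operator, needed because Fourier truncation is unavailable in $z$), a dispersive regularization $\delta J^\iota\Lambda\zeta$, a projection $\pi[\zeta]$ of the vorticity onto divergence-free fields, and a pressure correction $Q$ solving an elliptic problem; it then solves the regularized system by the Cauchy--Lipschitz theorem in a Banach space, derives energy estimates uniform in $\iota$, and passes to the limit $\iota\to 0$ then $\delta\to 0$. An iterative scheme is a legitimate alternative and, if it could be made to close, would bypass the mollifiers and the $\delta$-regularization entirely. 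However, your construction of $\omega^{n+1}$ has a gap that in fact forces you back onto most of the same technical apparatus.

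The issue is the diffeomorphism mismatch. You transport $\omega^{n+1}$ with $U^n$ using $\dt^{\sigma^{n+1}}$ and $\nabla_{X,z}^{\sigma^{n+1}}$, but $U^n={\mathbb U}^{\sigma^n}[\zeta^n](\psi^n,\omega^n)$ is the pullback of the velocity by $\Sigma^n$, not $\Sigma^{n+1}$. Two consequences. First, the vertical transport speed
$\frac{1}{1+\dz\sigma^{n+1}}\bigl(U^n\cdot\tilde N^{n+1}-\dt\sigma^{n+1}\bigr)$
at $z=0$ equals $\underline{U}^n\cdot N^{n+1}-\dt\zeta^{n+1}$, which vanishes only if $\zeta^{n+1}$ satisfies the \emph{linearized} kinematic equation $\dt\zeta^{n+1}=\underline{U}^n\cdot N^{n+1}$ --- not ``the kinematic equation satisfied by $\zeta^{n+1}$'', which involves $U^{n+1}$. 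This can be arranged, but then you must check that the resulting linear $(\zeta^{n+1},\psi^{n+1})$ system is still the one for which the good-unknown cancellations and the coercivity of the modified energy (Steps~2--4 of Proposition~\ref{prop3000}) survive; that is a real verification, not a remark. Second, and more seriously, your argument that $\nabla_{X,z}^{\sigma^{n+1}}\cdot\omega^{n+1}$ solves a homogeneous transport equation requires $\nabla_{X,z}^{\sigma^{n+1}}\cdot U^n=0$, whereas $U^n$ is only divergence free for the operator $\nabla_{X,z}^{\sigma^n}$. The natural repair is to re-solve the div-curl problem on the new domain, taking $U^n={\mathbb U}^{\sigma^{n+1}}[\zeta^{n+1}](\psi^n,\omega^n)$, but this needs $\omega^n$ to be $\sigma^{n+1}$-divergence free, which it is not --- so you are forced to introduce the projection $\pi[\zeta^{n+1}]$ of Step~3 of the paper's proof, and in general also a pressure-type correction as in \eqref{eqpress}, to keep the scheme consistent. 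In short, the iterative route avoids the mollifiers and the $\delta$-term, but not the divergence-free bookkeeping; once that is put in, the two proofs differ less than your sketch suggests. The remainder of your argument (uniform bounds via the a priori estimates, contraction in a lower-order norm via Corollary~\ref{lipomega}, Bona--Smith continuity, propagation of the Rayleigh--Taylor and non-vanishing depth conditions) is the standard quasilinear closing and is sound.
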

\begin{proof}
The strategy of the proof is the following. In Step 1, we give an
equivalent formulation of the water waves equations \eqref{ZCSgens2}; in view of
regularizing these equation, we define and study in Step 2 mollifiers
in the horizontal and vertical variables. Since the divergence free
constraint \eqref{DFvort2} is a consequence of the particular
structure of the vorticity equation in \eqref{ZCSgens2}, it may not
hold with regularized equations; this would be an obstruction to solving the div-curl problem studied in
\S \ref{sectdivcurl1}, which requires that the vorticity be divergence
free. We therefore
explain in Step 3 how to project any vector field in $H^1(\cS)^{d+1}$ on
its divergence free component. The regularized equations are then
defined in Step 4, and solved with standard ODEs tools. The solution
thus obtained is studied in Step 5, where we prove {\it a posteriori }that its vorticity
is  divergence free. Energy estimates inspired by the a priori
estimates of Proposition \ref{prop3000} are then established in Step 6
and used in Step 7 to prove that the solutions to the regularized
equations converge to a solution of \eqref{ZCSgens2}. \\
{\it For the sake of simplicity, we often write $U={\mathbb
    U}^\sigma[\zeta](\psi,\omega)$, $a={\mathbb
    a}[\zeta](\psi,\omega)$, etc. when no confusion is possible.}\\
{\bf Step 1.} Modification of the vorticity equation. As explained in
the proof of Proposition \ref{EEvorticity}, it is equivalent to solve
the equations \eqref{ZCSgens} with the vorticity equation replaced by
$$
 \dt\omega+{\mathbb V}^{\sigma}[\zeta](\psi,\omega)\cdot\nabla
 \omega+{\mathbb a}[\zeta](\psi,\omega)\dz\omega=\omega\cdot\nabla_{X,z}^{\sigma} {\mathbb U}^{\sigma}[\zeta](\psi,\omega)
$$
with, denoting $N(z)=(-\nabla{\sigma}^T,1)^T$ (so that $N(0)=N$),
$$
{\mathbb a}[\zeta](\psi,\omega)=\frac{1}{1+\dz\sigma}\Big( {\mathbb
   U}^{\sigma}[\zeta](\psi,\omega)\cdot N(z)-\frac{z+H_0}{H_0}
 {\mathbb
   U}^\sigma[\zeta](\psi,\omega)_\surff\cdot N\Big).
$$
The equations \eqref{ZCSgens} are therefore equivalent to
\begin{equation}\label{ZCSgens2}
\dt \Theta+ {\mathcal F}(\Theta)=0,
\end{equation}
with
$$
\Theta= \left(\begin{array}{c} \zeta\\ \psi \\
    \omega \end{array}\right),\qquad {\mathcal F}(\Theta)\left(\begin{array}{c} {\mathcal F}_1(\Theta)\\ {\mathcal F}_2(\Theta)\\ {\mathcal F}_3(\Theta) \end{array}\right)
$$
and
\begin{eqnarray*}
{\mathcal
  F}_1(\Theta)&=&-\underline{\mathbb
  U}^\sigma[\zeta](\psi,\omega)\cdot N,\\
{\mathcal
  F}_2(\Theta)&=&g  \zeta+\frac{1}{2}
 \babs{{\mathbb U}^\sigma_\parallel[\zeta](\psi,\omega)}^2-\frac{1}{2}(1+\abs{\nabla\zeta}^2)\underline{{\mathbb
   w}}^\sigma[\zeta](\psi,\omega)^2,\\
{\mathcal
  F}_3(\Theta)&=&{\mathbb
  V}^{\sigma}[\zeta](\psi,\omega)\cdot\nabla
\omega+{\mathbb
  a}[\zeta](\psi,\omega)\dz\omega-\omega\cdot \nabla_{X,z}^{\sigma} {\mathbb U}^{\sigma}[\zeta](\psi,\omega).
\end{eqnarray*}

\noindent
{\bf Step 2.} Definition of the mollifiers. For the horizontal
variables, we use a standard mollifier. We define, for all
$0<\iota<1$, the mollifier $J^\iota=\chi(\iota \abs{D})$, where $\chi:
\R\to\R$ is a smooth, even, and compactly supported function equal to
$1$ in a neighborhood of the origin. The mollifying properties of
$J^\iota$ are classical and straightforwardly deduced from standard
results on Fourier multipliers. We shall in particular use the fact
that
\begin{equation}\label{propJ}
\forall s,t\in \R,\quad \exists C^\iota_{s,t}>0,\quad\forall f\in H^t(\R^d)\qquad \abs{J^\iota
  f}_{H^s}\leq C^\iota_{s,t}\abs{f}_{H^t}
\end{equation}
and
\begin{equation}\label{propJ2}
\forall \iota^1,\iota^2>0,\, \forall s\in \R,\, \exists
C>0,\,\forall f\in H^{s+1}(\R^d),\quad
\abs{(J^{\iota^1}\!\!-J^{\iota^2}\!)f}_{H^s}\leq C \abs{\iota^1\!-\iota^2}\abs{f}_{H^{s+1}}.
\end{equation}
For the vertical variable for which Fourier transform cannot be used,
we use another kind or regularization based on the following lemma; in
the statement, we use the following functional spaces, where $a$ is
some smooth enough function defined on $\cS$,
\begin{align*}
H(a\dz,\cS):=&\{f\in L^2(\cS); a\dz f\in L^2(\cS)\},\\
H^k(a\dz,\cS):=&\{f\in H^k(\cS),\forall \beta\in \N^d,\forall j\in \N,
\abs{\beta}+j\leq k, a\dz \partial^\beta\dz^j f\in L^2(\cS)\}\\
V_a(\cS):=&\{f\in H^{-1}(\cS), \exists \tilde f\in L^2(\R^d), f=a\dz \tilde f \}.
\end{align*}
\begin{lemma}\label{lemmaK}
Let $N\geq 5$. Let $a\in W^{1,\infty}(\cS)$ be such that $a_\surff=a_\bottf=0$ and $0<\iota<1$. \\
{\bf i.} The
mapping
$$
(1-\iota^2\dz(a^2\dz\cdot)): \begin{array}{lcl}
H(a\dz,\cS)&\to& H^{-1}(\cS)\\
f &\mapsto &f-\iota^2\dz(a^2\dz f)
\end{array}
$$
is well defined and one-to-one.\\
{\bf ii.} One also has $L^2(\cS)+V_a(\cS)\subset
\mbox{\textnormal{Range }}(1-\iota^2\dz(a^2\dz\cdot))$.\\
{\bf iii.} Let $a\in H^{N-1}(\cS)$, and $0\leq k\leq N-1$. Then, for
$\iota>0$ small enough, the mapping
 $$
K^\iota[a\dz]:
\begin{array}{lcl}
H^k(\cS)&\to &H^k(a\dz,\cS)\\
f&\mapsto& (1-\iota^2\dz(a^2\dz\cdot))^{-1}(a\dz f)
\end{array}
$$
is well defined and continuous; if $k\leq N-2$, the result remains true if
one assumes only that $a\in H^{N-2}(\cS)$.
\\
{\bf iv.} Under the same assumptions, one also has
$$
\forall \beta\in \N^d, \forall j\in \N, \abs{\beta}+j \leq k, \qquad
(\partial^\beta \dz^j K^\iota[a\dz]f,\partial^\beta \dz^jf)\leq C(\Abs{a}_{H^{N-1}})\Abs{f}_{H^k};
$$
if $k\leq N-2$, one can replace $\Abs{a}_{H^{N-1}}$ by
$\Abs{a}_{H^{N-2}}$ in the right-hand-side.\\
{\bf v.} For all $0<\iota_1\leq \iota_2<1$ and all $f\in H^{N-1}(\cS)$, one has
$$
\Abs{K^{\iota_1}[a\dz]f-K^{\iota_2}[a\dz] f}_{H^{N-3}}\leq \abs{\iota_1-\iota_2}C(\Abs{a}_{H^{N-1}})\Abs{f}_{H^{N-1}}.
$$
\end{lemma}
\begin{proof}[Proof of the lemma]
The fact that $(1-\iota^2\dz(a^2\dz\cdot))$ is one-to-one follows immediately by taking
$f=g$ in the following integration by parts formula (recall that $a$
is assumed to cancel at the boundaries)
\begin{align}
\nonumber
\forall f,g\in H(a\dz,S),\qquad \big((1-\iota^2\dz(a^2\dz\cdot)) f,g\big)&=(f,g)+\iota^2(a\dz
f,a\dz g)\\
\label{IPPreg}
&:=B(f,g).
\end{align}
In order to prove the second point of the lemma, we need to prove that
for all $f_1,f_2\in L^2(\cS)$, there exists $u\in H(a\dz,\cS)$ such
that
$$
(1-\iota^2\dz(a^2\dz\cdot)) u=f_1+\dz( a f_2).
$$
We prove the existence of a variational solution, i.e. of $u\in
H(a\dz,\cS)$ such that
$$
\forall g\in H(a\dz;\cS),\qquad B(u,g)=(f_1+\dz(a f_2), g);
$$
since the bilinear form is obviously continuous and coercive on
$H(a\dz,\cS)$, the existence of a variational solution is granted by
Lax-Milgram's theorem if we can prove that $g\mapsto (f_1+\dz(a f_2),
g)$ defines a continuous linear form on $H(a\dz,\cS)$. Since $a$
vanishes at the boundaries, one has
\begin{align*}
(f_1+\dz (a f_2),g)=&(f_1,g)-(f_2,a\dz g)\\
\leq&\Abs{f_1}_2\Abs{g}_2+\Abs{f_2}_2\Abs{a\dz g}_2,
\end{align*}
which implies the desired continuity property. Note moreover for later
use that, owing to \eqref{IPPreg}, the solution $u=(1-\iota^2\dz(a^2\dz\cdot))^{-1}(f_1+\dz(a
f_2))$ satisfies the bound
\begin{equation}\label{eqregreg}
\Abs{u}_2+\iota \Abs{a\dz u}_2\leq 2 \Abs{f_1}_2+\frac{2}{\iota}\Abs{f_2}_2.
\end{equation}
From the second point, one can define $K^\iota[a\dz]: L^2(\cS)\to
H(a\dz,\cS)$. For $f\in L^2(\cS)$ let $u=K^\iota[a\dz] f$. We
need to prove that if $f\in H^k(\cS)$, then $u\in H^k(a\dz,\cS)$.
Applying $\partial^\beta\dz^j$, with $\abs{\beta}+j\leq k$ to the
relation
$$
(1-\iota^2\dz(a^2\dz\cdot)) u =a\dz f=-(\dz a)f +\dz(a f),
$$
 we obtain
\begin{equation}\label{ihold}
(1-\iota^2\dz(a^2\dz\cdot)) \partial^\beta\dz^j u=f_1+\dz(a f_2)
\end{equation}
where $f_1$ and $f_2$ are of the form
\begin{eqnarray*}
f_1&=&\dz [\partial^\beta\dz^j,a]f
-\partial^\beta\dz^j\big((\dz a)f\big)+\\
& &+\iota^2\dz\Big[\sum_{\abs{\beta'}+j'\geq 1,\abs{\beta''}+j''\geq 1}*(\partial^{\beta'}\dz^{j'}a) (\partial^{\beta''}\dz^{j''}a)\partial^{\beta-\beta'-\beta''}\dz^{j-j'-j''}\dz u\Big]\\
f_2&=&
\partial^\beta\dz^j
f+\iota^2\sum_{\abs{\beta'}+j'\geq 1}*(\partial^{\beta'}\dz^{j'}a)\partial^{\beta-\beta'}\dz^{j-j'}\dz u,
\end{eqnarray*}
where we denoted by $*$ numerical coefficients of no importance
here. From \eqref{eqregreg} and \eqref{ihold}, and using the product
estimates as in the proof of Lemma \ref{lemHkk}, we have
\begin{eqnarray*}
\Abs{\partial^\beta\dz^ju}_2+\iota
\Abs{a\dz \partial^\beta\dz^ju}_2&\lesssim&
\Abs{a}_{H^{N-1}}\Abs{f}_{H^k}+\iota^2\Abs{a}_{H^{N-1}}^2\Abs{u}_{H^{k}}\\
& &+\frac{1}{\iota}\big(\Abs{f}_{H^k}+\iota^2\Abs{a}_{H^{N-1}}\Abs{u}_{H^k}\big).
\end{eqnarray*}
Summing over all $\abs{\beta}+j\leq k$ and taking $\iota$ small enough
to absorb the terms in $\Abs{u}_{H^k}$ in the right-hand-side by the
left-hand-side of the inequality, we get
\begin{equation}\label{star}
\Abs{u}_{H^k}+\iota \sum_{\abs{\beta}+j\leq k}
\Abs{a\dz\partial^\beta\dz^j u}_2\leq C(\Abs{a}_{H^{N-1}},\frac{1}{\iota})\Abs{f}_{H^k},
\end{equation}
which proves the third point of the lemma.\\
For the fourth point, let us first prove the case $k=0$. With $\tilde
f=(1-\iota^2\dz(a^2\dz \cdot))^{-1}f$, we have
\begin{align*}
(K^\iota[a\dz]f,f)=&\big(a\dz (1-\iota^2\dz(a^2\dz\cdot)) \tilde
f,\tilde f\big)\\
=&-\frac{1}{2}\big((\dz a) \tilde f,  \tilde f\big)+\iota^2 \big( a\dz \tilde f, \dz
(a \tilde f)\big),
\end{align*}
and therefore
\begin{align*}
(K^\iota[a\dz]f,f)&\leq C(\Abs{\dz a}_\infty)(\Abs{\tilde
  f}_2^2+\iota^2\Abs{a\dz\tilde f}_2^2)\\
&\leq C(\Abs{\dz a}_\infty)\Abs{f}_2,
\end{align*}
the second inequality stemming from \eqref{eqregreg}; the result is
therefore proved when $k=0$. When $0<k\leq N-1$, we write, for all
$\abs{\beta}+j\leq k$,
$$
(\partial^\beta\dz^j K^\iota[a\dz]f,\partial^\beta\dz^j
f)=(\partial^\beta\dz^j f,K^\iota[a\dz]\partial^\beta\dz^j
f)+\big(\partial^\beta\dz^j f,  [\partial^\beta\dz^j, K^\iota[a\dz ]]f\big);
$$
we can use the case $k=0$ to control the first term of the r.h.s. The
commutator $[\partial^\beta\dz^j, K^\iota[a\dz ]]f$ can then be
controlled with the same computations as in the proof of the third
point, but without the term $\partial^\beta\dz^j f$ in $f_2$, which
was responsible for the $\iota^{-1}$ singularity in \eqref{star}. We
therefore have
$$
\Abs{[\partial^\beta\dz^j, K^\iota[a\dz ]]f}\leq C(\Abs{a}_{H^{N-1}})\Abs{f}_{H^k},
$$
and the result follows.\\
For the last point of the lemma, let us write $u_j=K^{\iota_j}[a\dz]f$
($j=1,2$). One computes that
$$
u_1-u_2=-(\iota_2^2-\iota^2_2)(1-\iota_1^2\dz(a^2\dz\cdot))^{-1}\dz(a^2\dz u_2);
$$
using \eqref{star}, we deduce that
\begin{align*}
\Abs{u_1-u_2}_{H^{N-3}}&\leq
\abs{\iota_2^2-\iota_1^2}C(\Abs{a}_{H^{N-1}})\Abs{\dz(a^2\dz u_2)}_{H^{N-3}}\\
&\leq
\abs{\iota_2^2-\iota_1^2}C(\Abs{a}_{H^{N-1}})\Abs{u_2}_{H^{N-1}}\\
&\leq
\frac{\abs{\iota_2^2-\iota_1^2}}{\iota_2}C(\Abs{a}_{H^{N-1}})\Abs{f}_{H^{N-1}}
\end{align*}
(the last line follows from the computations performed in the proof of
the third point), which implies the result stated in the lemma.
\end{proof}

\noindent
{\bf Step 3.} Relaxing the divergence-free condition on the
vorticity. The div-curl problem has been solved in \S
\ref{sectdivcurl1} assuming that
$\nabla_{X,z}\cdot \bom=0$, or equivalently $\nabla_{X,z}^\sigma\cdot
\omega=0$. Consistently with Definition \ref{divfree}, we introduce
the notations
\begin{align*}
H^k(\dives_0,\cS)=&\{\omega\in H^k(\cS),\nabla_{X,z}^\sigma\cdot \omega=0\},\\
H^k_b(\dives_0,\cS)=&\{\omega\in H^k(\dives_0,\cS),\omega_\bottf\cdot N_b\in
H_0^{-1/2}\}.
\end{align*}
If $\omega\not\in H^k_b(\dives_0,\cS)$, it is therefore
not possible to define ${\mathbb U}^\sigma[\zeta](\psi,\omega)$. The vorticity equation in
\eqref{ZCSgens} preserves the divergence free-property,  so that these equations make
sense if the vorticity field is initially diverge-free. However, the
regularization of the vorticity equation introduced in Step 3 below
does not preserve {\it a priori} the divergence free condition (we
only show {\it a posteriori} that it does so) and we therefore have
to define the projection
onto divergence-free vector fields as follows, for all $1\leq k\leq N-1$,
$$
\pi[\zeta]: \begin{array}{lcl}
H_b^k(\cS)&\to &H^k(\dives_0,\cS)\\
\omega&\mapsto& \omega-\nabla_{X,z}^\sigma \phi,
\end{array}
$$
where we denote by $H_b^k(\cS)$ the set of all $\omega\in H^k(\cS)$
such that $\omega_b\cdot N_b\in H_0^{-1/2}(\R^d)$, while $\phi$ solves the boundary value problem
$$
\left\lbrace
\begin{array}{l}
\nabla_{X,z}\cdot
P(\Sigma)\nabla_{X,z}\phi=(1+\dz\sigma)\nabla_{X,z}^\sigma\cdot \omega,\\
\phi_\surff=0,\qquad {\bf e}_z\cdot
P(\Sigma)\nabla_{X,z}\phi_\bottf=0,
\end{array}\right.
$$
and where we recall that $P(\Sigma)=(1+\dz\sigma) J_\Sigma^{-1}
(J_\Sigma^{-1})^T$ and $(1+\dz\sigma)\nabla_{X,z}^\sigma\cdot
\nabla_{X,z}^\sigma=\nabla_{X,z}\cdot P(\sigma)\nabla_{X,z}$. By
simple elliptic estimates similar to those of Corollary \ref{corHkk},
we get that
for all $0\leq j\leq N-1$,
\begin{align*}
\Abs{\nabla_{X,z}\phi}_{H^k}\leq &M_N
(\Abs{\omega}_{H^k}+\abs{\omega_b\cdot N_b}_{H_0^{-1/2}}).
\end{align*}
In particular, if $\omega\in H^1(\cS)$ and $\omega_b\cdot N_b\in
H_0^{-1/2}(\R^d)$  (note that the $H^1$ regularity is imposed on
$\omega$ so that the normal trace of $\omega$ at the bottom makes
sense) then $\pi[\zeta]\omega \in H^1_b(\cS,\dives_0,\cS)$;
when $\omega\in H^1(\cS)\backslash H^1_b(\dives_0,\cS)$, we
can therefore replace ${\mathbb U}^\sigma[\zeta](\psi,\omega)$
 by ${\mathbb U}^\sigma[\zeta](\psi,\pi[\zeta]\omega)$, which is well
defined. We also have that for all $1\leq k\leq N-1$,
\begin{equation}\label{eqPi}
\forall \omega\in H^k_b(\cS), \qquad
\Abs{\pi[\zeta]\omega}_{H^k}\leq
M_N\big(\Abs{\omega}_{H^k}+\abs{\omega_b\cdot N_b}_{H_0^{-1/2}}\big).
\end{equation}

\noindent
{\bf Step 4.} Existence of a local solution for a regularized
system. We consider the following regularization of the water waves
equations \eqref{ZCSgens2},
\begin{equation}\label{WWreg}
\dt \Theta+ {\mathcal F}^{\delta,\iota}(\Theta)=0,
\end{equation}
with $0<\delta,\iota<1$ and
\begin{align*}
{\mathcal
  F}_1^{\delta,\iota}(\Theta)=&-J^\iota \big(\underline{\mathbb
  U}^\sigma[\zeta](\psi,\pi[\zeta]\omega)\cdot N\big),\\
{\mathcal
  F}_2^{\delta,\iota}(\Theta)=&g J^\iota \zeta+\frac{1}{2} J^\iota
 \babs{{\mathbb U}^\sigma_\parallel[\zeta](\psi,\pi[\zeta]\omega)}^2
-\frac{1}{2}J^\iota\big[(1+\abs{\nabla\zeta}^2)\underline{{\mathbb
   w}}^\sigma[\zeta](\psi,\omega)^2\big]+\delta J^\iota \Lambda \zeta,\\
{\mathcal
  F}_3^{\delta,\iota}(\Theta)=&{\bf g}^\iota-\nabla_{X,z}^\sigma Q,
\end{align*}
with
\begin{align*}
{\bf g}^\iota=&
J^\iota\big({\mathbb
  V}^{\sigma}[\zeta](\psi,\pi[\zeta]\omega)\cdot\nabla
\omega\big)+K^\iota[{{\mathbb
  a}}[\zeta](\psi,\pi[\zeta]\omega)\dz]\omega\\
&-J^\iota\big(\omega\cdot \nabla_{X,z}^{\sigma} {\mathbb
  U}^{\sigma}[\zeta](\psi,\pi[\zeta]\omega)\big).
\end{align*}
We also consider regularized initial conditions,
$$
\Theta_{\vert_{t=0}}=(J^\iota \zeta^0, J^\iota \psi^0,\omega^0).
$$
The reasons why such a regularization is introduced are listed below:
\begin{itemize}
\item Projection onto the divergence free component of the
  vorticity. As already mentioned, the mapping ${\mathbb
    U}^\sigma[\zeta](\psi,\omega)$ is well defined provided that
  $\nabla_{X,z}^\sigma\cdot \omega=0$. Since such a condition is not
  necessarily propagated from the initial condition by the regularized
  vorticity equation, we have to replace ${\mathbb
    U}^\sigma[\zeta](\psi,\omega)$ by ${\mathbb
    U}^\sigma[\zeta](\psi,\pi[\zeta]\omega)$ in all its occurrences.
\item Mollifiers. Horizontal derivatives are regularized with the
  mollifier $J^\iota$. For the vertical derivative, the mollifier
  $K^\iota[{a}\dz]$ is used; this is made possible by the fact
  that ${a}$ vanishes at the surface and at the bottom.
\item Dispersive regularization with parameter $\delta$. The purpose
  of the term $\delta J^\iota \Lambda \zeta$ in ${\mathcal
    F}_2^{\delta,\iota}(\Theta)$ is to allow for the control of an extra
  $1/2$-derivative on $\zeta$, uniformly with respect to $\iota$, and
  that will be used to control commutator terms due to the above mollifiers.
\item Pressure term $Q$. If we simply regularize the equation as
  explained above, the equation does not preserve the divergence free
  condition anymore. In order to recover this property, we
  choose the pressure $Q$ so that $\dt\dives \omega=0$. This leads to\footnote{We use the fact that
  $\dt^\sigma=\dt-\frac{\dt\sigma}{1+\dz\sigma\dz}\dz$, and substitute
$$
\dt\sigma=\frac{z+H_0}{H_0}J^\iota(\uU\cdot N),
$$
which holds provided that the regularized kinematic equation is satisfied.}
\begin{equation}\label{eqpress}
\left\lbrace\begin{array}{l}
\dsp \nabla_{X,z}\cdot P(\Sigma)\nabla_{X,z}
Q=(1+\dz\sigma)\nabla_{X,z}^\sigma\cdot {\bf g}^\iota\\
\hspace{4cm}\dsp +\nabla_{X,z}^\sigma\Big(\frac{z+H_0}{H_0}J^\iota(\uU\cdot N)\Big)\cdot\dz\omega,\\
\dsp Q_\surff=0,\qquad {\bf e}_z\cdot P(\Sigma)\nabla_{X,z} Q=0
\end{array}\right.
\end{equation}
(we recall that $(1+\dz\sigma)\nabla_{X,z}^\sigma\cdot
\nabla_{X,z}^\sigma=\nabla_{X,z}\cdot P(\Sigma)\nabla_{X,z}$).
\end{itemize}
Before proceeding to construct a solution to the regularized equations
\eqref{WWreg}, we need to provide some estimates on the pressure term
$\nabla_{X,z}^\sigma Q$. We shall use the following lemma.
\begin{lemma}\label{estpressQ}
Let $k=N-2,N-1$, ${\bf g}\in H^k(\cS)$ and ${\bf h}\in
  W^{k,\infty}((-H_0,0);H^k(\R^d))$ be such that ${\bf
    h}_\bottf=0$. Then there is a unique solution $Q\in
  \dot{H}^{k+1}(\cS)$ to
$$
\left\lbrace\begin{array}{l}
\nabla_{X,z}\cdot P(\Sigma)\nabla_{X,z}
Q=(1+\dz\sigma)\nabla_{X,z}^\sigma\cdot {\bf g}+{\bf h}\cdot \dz \omega,\\
Q_\surff=0,\qquad {\bf e}_z\cdot P(\Sigma)\nabla_{X,z}^\sigma Q_\bottf=0,
\end{array}\right.
$$
and one has
$$
\Abs{\nabla_{X,z}^\sigma Q}_{H^{k}}\leq M_N\big(\Abs{{\bf
    g}}_{H^{k}}+\Abs{{\bf h}}_{W^{k,\infty}H^k}\Abs{\omega}_{H^k}\big).
$$
\end{lemma}
\begin{proof}[Proof of the lemma]
Existence of a solution follows classically from Lax-Milgram's
theorem. In order to get an $L^2$-estimate on $\nabla_{X,z}Q$ (and
therefore on $\nabla_{X,z}^\sigma Q$), one
multiplies by $Q$, integrates by parts, and use the coercivity
property \eqref{Pcoerc} of $P(\Sigma)$ to get
$$
\Abs{\nabla_{X,z}Q}_2^2\leq M_N \Abs{{\bf
    g}}_2\Abs{\nabla_{X,z}Q}_2+\int_{\R^d} g_b\cdot N_b
Q_b+\Abs{{\bf h}}_{W^{1,\infty}}\Abs{\omega}_2\Abs{\nabla_{X,z}Q},
$$
where we used for the last term the fact that ${\bf h}$ vanishes at the
bottom, and $Q$ vanishes at the surface. From the trace lemma we then
get
$$
\Abs{\nabla_{X,z}Q}_2\leq M_N (\Abs{{\bf
    g}}_{H^{0,1}}+\Abs{{\bf h}}_{W^{1,\infty}}\Abs{\omega}_2\big).
$$
In order to control horizontal derivatives of $\nabla_{X,z} Q$, just
remark that $\tilde Q=\Lambda^s Q$ solves
$$
\left\lbrace\begin{array}{l}
\dsp \nabla _{X,z}\cdot P(\Sigma)\nabla_{X,z}
\tilde Q=(1+\dz\sigma)\nabla_{X,z}^\sigma\cdot \big(\Lambda^s{\bf
  g}-J_\Sigma[\Lambda^s,P(\Sigma)]\nabla_{X,z}Q\big)\\
\hspace{3.3cm}\dsp+{\bf h}\cdot \dz (\Lambda^s\omega)+f,\\
\dsp \tilde Q_\surff=0,\qquad {\bf e}_z\cdot P(\Sigma)\nabla_{X,z}^\sigma
\tilde Q_\bottf=-{\bf e}_z\cdot [\Lambda^s,P(\Sigma)]\nabla_{X,z}Q_\bottf,
\end{array}\right.
$$
with $f=[\Lambda^s,\frac{N(z)}{1+\dz\sigma}]\dz{\bf g}+[\Lambda^s,{\bf
h}]\dz\omega$. Proceeding as above, we get
$$
\Abs{\nabla_{X,z}\tilde Q}_2\leq M_N (\Abs{{\bf
    g}}_{H^{s,1}}+\Abs{[\Lambda^s,P(\Sigma)]\nabla_{X,z}Q}_2+\Abs{{\bf h}}_{W^{1,\infty}}\Abs{\omega}_{H^{s,0}}+\Abs{f}_2\big).
$$
Controlling the commutator  in terms of $s-1$ derivatives of
$\nabla_{X,z}Q$ one get after a finite induction (see Proposition 2.36
if \cite{Lannes_book} for details on the control of this commutator
term) that for all $0\leq s\leq N-1$,
$$
\Abs{\Lambda^s\nabla_{X,z}Q}_2=\Abs{\nabla_{X,z}\tilde Q}_2\leq M_N (\Abs{{\bf
    g}}_{H^{s,1}}+\Abs{{\bf h}}_{W^{1,\infty}}\Abs{\omega}_{H^{s,0}}+\Abs{f}_2\big).
$$
Using commutator estimates (e.g. Corollary B.16 in \cite{Lannes_book})
and the expression of $f$, we also get (with $t_0>d/2$)
$$
\Abs{f}_2\leq M_N\big(\Abs{{\bf g}}_{H^{s,1}}+\Abs{{\bf h}}_{L^\infty
  H^{s\vee (t_0+1)}}\Abs{\omega}_{H^{s,1}}\big),
$$
and therefore
$$
\Abs{\nabla_{X,z}Q}_{H^{s,0}}\leq M_N (\Abs{{\bf
    g}}_{H^{s,1}}+\Abs{{\bf h}}_{W^{1,\infty}
  H^{s\vee (t_0+1)}}\Abs{\omega}_{H^{s,1}}\big).
$$
Using the equation to express $\dz^2 Q$ in terms of first and second
order derivatives of $Q$ involving at most one vertical derivative,
we get, for all $1\leq k\leq s$,
$$
\Abs{\nabla_{X,z}Q}_{H^{s,k}}\leq M_N (\Abs{{\bf
    g}}_{H^{s,k}}+\Abs{{\bf h}}_{W^{k,\infty}
  H^{s\vee (t_0+1)}}\Abs{\omega}_{H^{s,k}}\big),
$$
and the result of the lemma follows.
\end{proof}
Applying the lemma to \eqref{eqpress} with $k=N-2$, ${\bf g}={\bf g}^\iota$ and ${\bf
  h}=\nabla_{X,z}^\sigma
(\frac{z+H_0}{H_0}J^\iota(\uU\cdot N)$), we deduce from the product
estimate $\Abs{fg}_{H^{N-2}}\lesssim
\Abs{f}_{H^{N-2}}\Abs{g}_{H^{N-2}}$ and the third point of Lemma \ref{lemmaK} that
\begin{eqnarray}
\nonumber
\Abs{\nabla_{X,z}^\sigma Q}_{H^{N-2}}&\lesssim&
\big(\Abs{U}_{H^{N-1}}+\Abs{a}_{H^{N-2}}\big)\Abs{\omega}_{H^{N-2}}\\
\label{estQ}
&\lesssim&
C(\abs{\zeta}_{H^{N}},\abs{\nabla\psi}_{H^{N-1}},\Abs{\omega}_{H^{N-2}},\abs{\omega_b\cdot
N_b}_{H_0^{-1/2}}),
\end{eqnarray}
where we used the definition of $a$ in terms of $U$ and Corollary \ref{corHkk} to derive
the second inequality.

We can now proceed to construct a solution to the regularized system
\eqref{WWreg}. Let us  introduce the space ${\mathbb X}$ and its open
subset ${\mathbb X}_0$ as
\begin{align*}
{\mathbb X}&=H^{N}(\R^d)\times
\dot{H}^{N}(\R^d)\times H_b^{N-2}(\cS), \\
{\mathbb X}_0&=\{(\zeta,\psi,\omega)\in {\mathbb X}, \quad
\inf_{\R^d}\abs{H_0+\zeta}>0,\quad {\mathfrak a}(\zeta,\psi,\omega)>0\}.
\end{align*}
From \eqref{propJ} and Corollary \ref{corHkk}, ${\mathcal F}_j^{\delta,\iota}$ ($j=1,2$) define smooth
mappings\footnote{This follows from the fact that
  ${\mathbb U}^\sigma[\zeta](\psi,\omega)$ has a Lipschitz dependence
  on $\zeta$, $\psi$ and $\omega$, as shown in Corollary \ref{lipomega}.} on ${\mathbb X}_0$ with values in $H^\infty(\R^d)$. Using also the third point of Lemma \ref{lemmaK}, together with
\eqref{estQ}, ${\mathcal F}_3^{\delta,\iota}$ defines a smooth mapping
on ${\mathbb X}_0$ with values in $H^{N-2}(\cS)$.
We can therefore deduce that ${\mathcal F}^{\iota,\delta}$ maps
${\mathbb X}_0$ into ${\mathbb X}$ provided that for
all $\Theta\in {\mathbb X}_0$, one has ${\mathcal
  F}^{\delta,\iota}_3(\Theta)_\bottf\cdot N_b\in H_0^{-1/2}(\R^d)$. One computes after recalling
that $a$ and $w$ vanish at the bottom,
$$
{\mathcal
  F}^{\delta,\iota}_3(\Theta)_\bottf\cdot N_b=J^\iota\big(V\cdot\nabla
\omega_v\big)_\bottf-J^\iota\big(\omega_v\dz^\sigma
w\big)_\bottf,
$$
where $\omega_v$ stands for the vertical component of $\omega$. Now,
since by construction $U$ is divergence free, one has $\dz^\sigma
w_\bottf=-\nabla\cdot V_b$ and therefore
$$
{\mathcal
  F}^\iota_3(\Theta)_\bottf\cdot N_b=\nabla\cdot J^\iota\big(V
\omega_v\big)_\bottf,
$$
which implies the desired result. It follows that ${\mathcal F}^{\iota,\delta}$
defines a smooth mapping  ${\mathbb X}_0$ with values in ${\mathbb X}$; by
standard results on ODEs, there exists a maximal existence time
$T^{\iota,\delta}$ such that there exists a unique solution $\Theta\in
C^1([0,T^{\iota,\delta});{\mathbb X})$ to \eqref{WWreg}.

\noindent
{\bf Step 5.} Properties of the solution to the regularized
system. The solution constructed in the previous step has some extra
regularity properties. One has for instance $(\zeta,\psi)\in
C([0,T^{\iota,\delta});H^\infty(\R^d)\times \dot{H}^\infty(\R^d))$. One deduces
in particular that for all $\alpha\in \N^d$, $1<\abs{\alpha}\leq N$,
one has $\abs{\partial^\alpha\psi -\underline{\mathbb
    w}^\sigma[\zeta](\psi,\pi[\zeta]\omega)\partial^\alpha\zeta}_{\dot{H}^{1/2}}<\infty$. We
can therefore deduce from the third point of Lemma \ref{lemmaK},
Corollary \ref{corHkk} and \eqref{eqPi} that, $\zeta$ and $\psi$ being
fixed, the mapping  ${\mathcal
    F}_3^{\iota,\delta}(\zeta,\psi,\cdot)$ maps $H^{N-1}(\cS)$ into
  itself, from which one classically deduces that
  $\omega\in C([0,T^{\iota,\delta});H^{N-1}(\cS))$. In addition to
  this regularity property, we now show that $\omega$ remains
  divergence free. After remarking that
\begin{align*}
\dives\dt\omega&=\dives(\partial^\sigma_t\omega+\dt\sigma\dz^\sigma\omega)\\
&=\partial^\sigma_t\dives\omega+\nabla_{X,z}^\sigma (\dt\sigma)\cdot
\dz^\sigma\omega+\dt\sigma\dz^\sigma\dives \omega\\
&=\dt \dives\omega+\nabla_{X,z}^\sigma(\dt\sigma)\cdot \dz^\sigma \omega,
\end{align*}
we can use the definition \eqref{eqpress} of the pressure $Q$, and
apply $\dives$ to the vorticity equation to get
$$
\dt \dives \omega+\nabla_{X,z}^\sigma(\dt\sigma)\cdot \dz^\sigma \omega-\nabla_{X,z}^\sigma
\Big(\frac{z+H_0}{H_0}J^\iota(\uU\cdot N)\Big)\cdot \dz^\sigma\omega=0.
$$
Since we get from the equation on $\zeta$ and the definition of
$\sigma$ that $\frac{z+H_0}{H_0}J^\iota(\uU\cdot N)=\dt\sigma$, we
finally obtain that
$$
\dt \dives\omega=0;
$$
since the initial condition $\omega^0$ is assumed to be divergence
free, this yields that $\dives \omega=0$ on $[0,T^{\iota,\zeta})$. A
consequence of this is that $\omega=\pi[\zeta]\omega$, so that we can
drop all the occurrences of $\pi[\zeta]$ in \eqref{WWreg}.

\noindent
{\bf Step 6.} Uniform energy estimates. Proceeding exactly as for
Proposition \ref{prop3000}, but with the regularized equations \eqref{WWreg},
we obtain, with the same notations,
\begin{eqnarray*}
(\dt +J^\iota(\uV\cdot\nabla))
\partial^\alpha\zeta-J^\iota\partial_k\uU_{(\beta)}\cdot N&=&J^\iota R^1_\alpha,\\
(\dt +J^\iota(\uV\cdot \nabla)) (U_{(\beta)\parallel}\cdot {\bf e}_k)+J^\iota(\mfa \partial^\alpha
\zeta)+\delta J^\iota \Lambda \partial^\alpha \zeta&=&J^\iota R^2_\alpha+\tilde R^2_\alpha,\\
(\dt^\sigma+J^\iota(V\cdot \nabla) +K^\iota[a^\iota\dz])\partial^\beta \omega&=&R^3_\beta,
\end{eqnarray*}
with the  $R^j_\alpha$ satisfying the same estimates as in Proposition
\ref{prop3} (for the vorticity equation, we use the fact that
$\Abs{\nabla_{X,z}Q}_{H^{N-1}}\leq \mfN(\zeta,\psi,\omega)$, which is a
consequence of Lemma \ref{estpressQ}), and
$$
\tilde R_\alpha^2=-(1-J^\iota)(\dt
\uw \partial^\alpha\zeta)+[\uw,J^\iota]\partial^\alpha
\big(\underline{{\mathbb U}}^\sigma[\zeta](\psi,\omega)\cdot N\big);
$$
the control of this extra term (coming from commutators with the mollifiers
$J^\iota$) in $\dot{H}^{1/2}$ norm requires a control of $\zeta$ in
$H^{N+1/2}(\R^d)$ instead of $H^{N}(\R^d)$, namely,
$$
\abs{\proj \tilde R_\alpha^2}_2\leq \mfN(\zeta,\psi,\omega)(1+\abs{\partial^\alpha\zeta}_{H^{1/2}});
$$
the dispersive regularization $\delta J^\iota \Lambda \zeta$ has been
added to the second equation in order to control this extra term.
Proceeding exactly as for \eqref{esten3} except for the fact that the
first equation is multiplied by $\mfa \partial^\alpha\zeta+\delta
\Lambda\partial^\alpha\zeta$ instead of $\mfa \partial^\alpha\zeta$,
we obtain therefore
\begin{align}
\nonumber
\frac{1}{2}\dt \big[
(\mfa \partial^\alpha\zeta,\partial^\alpha\zeta)+\delta \abs{\partial^\alpha\zeta}_{H^{1/2}}^2\big]&+
\big((\dt+J^\iota(\uV\cdot\nabla\cdot)) (\Vpb\cdot {\bf
  e}_k),\partial_k\uU_{(\beta)}\cdot N\big)\\
\label{esten3reg}
&\leq \mfN(\zeta,\psi,\omega) (1+\abs{\partial^\alpha\zeta}_{H^{1/2}}).
\end{align}
As in the proof of Proposition \ref{prop3000}, the second term is handled using
Green's formula; because of the presence of the mollifiers,
\eqref{lift} must be replaced by
$$
(\dt+J^\iota(\uV\cdot\nabla\cdot)) (\Vpb\cdot {\bf
  e}_k)=\big({(\dt +J^\iota(V \cdot \nabla)+K^\iota[a^\iota \dz] ) (\Vpb^\flat\cdot {\bf
  e}_k)\big)}_\surff
$$
(we recall that $a^\iota$ vanishes at the surface).
We are therefore led to replace \eqref{BTZ} by
\begin{align*}
\nonumber
\big((\dt+&J^\iota(\uV\cdot\nabla\cdot)) (\Vpb\cdot {\bf
  e}_k),\partial_k\uU_{(\beta)}\cdot N\big)\\
=&\int_\cS (1+\dz\sigma)\big[(\dt +J^\iota(V\cdot \nabla)+K^\iota[a^\iota \dz] ) \nabla_{X,z}^\sigma (\Vpb^\flat\cdot {\bf
  e}_k)\big]\cdot \partial_k U_{(\beta)}
+\mbox{l.o.t.}
\end{align*}
From Lemma \ref{lemma2}, we therefore get
\begin{align*}
\big((\dt+&J^\iota(\uV\cdot\nabla)) (\Vpb\cdot {\bf
  e}_k),\partial_k\uU_{(\beta)}\cdot N\big)\\
=&\int_\cS (1+\dz\sigma)\big[(\dt +J^\iota(V\cdot \nabla)+K^\iota[a^\iota \dz] )
\partial_k U_{(\beta)}\big]\cdot \partial_k U_{(\beta)}\\
&+\int_\cS (1+\dz\sigma)\big[{\bf t}_k\times(\dt +J^\iota(V\cdot \nabla)+K^\iota[a^\iota \dz] )
\partial^\beta\omega\big]\cdot \partial_k U_{(\beta)}+\mbox{l.o.t.}\\
=&A+B
\end{align*}
Integrating by parts (using the fourth point of Lemma \ref{lemmaK} for
the term involving $K^\iota[a^\iota\dz]$, one readily obtains that the
identity \eqref{Baiona1} remains true up to lower order terms, while
$B$ is controlled as in \eqref{Baiona2}. The energy inequality
\eqref{pouf} can therefore be adapted into
\begin{equation}\label{pouf2}
\frac{d}{dt}\tilde\cE_\delta^N(\zeta,\psi,\omega)\leq C\big(\frac{1}{a_0},\frac{1}{\delta},\tilde\cE_\delta^N(\zeta,\psi,\omega)\big),
\end{equation}
where
$$
\tilde\cE_\delta^N(\zeta,\psi,\omega)=\tilde\cE^N(\zeta,\psi,\omega)+\delta \abs{\zeta}^2_{H^{N+1/2}}
$$
{\bf Step 7.} Conclusion. The energy estimate \eqref{pouf2} is uniform
with respect to $\iota$. In particular, the solutions
$(\Theta^{\iota,\delta})_{\iota,\delta}$ to \eqref{WWreg} exist on a
non trivial time interval
$[0,T^\delta]$ independent of $\iota$ and one can prove that, $\delta>0$
being fixed, the sequence $(\theta^{\iota,\delta})_{\iota,\delta}$ is
a Cauchy sequence in $H^2\times \dot H^2\times H^{2}(\cS)$ as
$\iota\to 0$.
\begin{lemma}\label{lemCS}
For all $0<\delta<1$, $(\Theta^{\iota,\delta})_{\iota}$ is a
Cauchy sequence, as $\iota\to 0$, in $C([0,T^\delta];H^2(\R^d)\times
\dot{H}^2(\R^d)\times H^{2}(\cS))$.
\end{lemma}
\begin{proof}
We omit the proof of this result because it is very similar to what
happens in the irrotational case, and the proof of Lemma 4.28 in
\cite{Lannes_book} can therefore easily be adapted. Note that this
result is a consequence of \eqref{propJ2}. The only additional
property that is needed here compared to the irrotational case is that
a similar property holds for the vertical mollifier $K^\iota[a\dz]$,
but this is already proved in the fifth point of Lemma \ref{lemmaK}.
\end{proof}
The end of the proof is then quite similar to the irrotational case
and we only give the main steps (see \S 4.3.4.4 of \cite{Lannes_book}
for more details). One first deduces the existence of a limit
$\Theta^\delta \in C([0,T^\delta];H^2(\R^d)\times
\dot{H}^2(\R^d)\times H^{2}(\cS))$ to the sequence
$(\Theta^{\iota,\delta})_{\iota}$ as $\iota\to 0$, and that this
limit solves
$$
\dt \Theta^\delta +{\mathcal F}^{\delta,0}=0,
$$
where ${\mathcal F}^{\delta,0}$ is deduced from ${\mathcal
  F}^{\delta,\iota}$ by setting $\iota$ to $0$; moreover,
$\Theta^\delta$ also satisfies the energy estimate
\eqref{pouf2}. Because of the dependence on $\frac{1}{\delta}$ of this
energy estimate, one cannot directly extract a converging sequence
from $(\Theta^\delta)_{\delta}$ as $\delta\to 0$. However, this
singular dependence on $\delta$ of the energy estimate comes from the
term $(1+\abs{\partial^\alpha\zeta}_{H^{1/2}})$ in the right-hand-side
of \eqref{esten3reg}, which itself comes from the control of $\tilde
R^2_\alpha$. Since this term vanishes when $\iota=0$, one can replace
\eqref{pouf2} by a nonsingular (with respect to $\delta$) energy
estimate,
\begin{equation}\label{pouf3}
\frac{d}{dt}\tilde\cE_\delta^N(\zeta,\psi,\omega)\leq C\big(\frac{1}{a_0},\tilde\cE_\delta^N(\zeta,\psi,\omega)\big).
\end{equation}
This uniform bound can then be used to prove, as in Lemma \ref{lemCS}
that $(\Theta^\delta)_{0<\delta<1}$ is a Cauchy sequence as $\delta
\to 0$ and that it converges in $C([0,T];H^2(\R^d)\times
\dot{H}^2(\R^d)\times H^{2}(\cS))$ (with $T>0$ independent of $\delta$) to a solution $\Theta$ of the
non-regularized equations \eqref{ZCSgens2}. Moreover, the solutions
satisfies \eqref{pouf3} with $\delta$ set to $0$, namely,
$$
\frac{d}{dt}\tilde\cE^N(\zeta,\psi,\omega)\leq
C\big(\frac{1}{a_0},\tilde\cE^N(\zeta,\psi,\omega)\big),
$$
and the bound on $\cE^N(\Theta)$ given in the statement of the theorem
follows as for the a priori estimates of Proposition
\ref{prop3000}. Uniqueness of the solution is then obtained by
estimating the difference of two solutions in $C([0,T];H^2(\R^d)\times
\dot{H}^2(\R^d)\times H^{2}(\cS))$ using the uniform bound provided by
the energy estimates, along lines quite similar to the proof of Lemma \ref{lemCS}.
\end{proof}

\section{Asymptotic regimes}\label{sect5}

\subsection{The dimensionless free surface Euler  equations}

The fluid motion depends qualitatively on several physical parameters:
the typical amplitude $a$ of the waves, the depth at rest $H_0$, and
the typical horizontal scale $L$. Using these quantities, it is
possible to form two dimensionless parameters,
$$
\eps=\frac{a}{H_0},\qquad \mu=\frac{H_0^2}{L^2};
$$
the parameter $\eps$ is often called nonlinearity (or amplitude)
parameters, and the parameter $\mu$ is the shallowness parameter.\\
We also use $a$, $H_0$ and $L$ to define dimensionless variables and
unknowns (written with a tilde),
$$
\tilde z=\frac{z}{H_0},\qquad \tilde X=\frac{X}{L},\qquad \tilde\zeta=\frac{\zeta}{a};
$$
the non dimensionalization of the time variable and of the velocity,
rotational, and pressure
fields is less obvious, and is based on the linear analysis of the
equations (see for instance \cite{Lannes_book}, Chapter 1),
$$
\tilde {\bf V}=\frac{{\bf V}}{V_0}, \quad \tilde {\bf w}=\frac{{\bf
    w}}{w_0},\quad
\tilde
t=\frac{t}{t_0},\quad \tilde P=\frac{P}{P_0}
$$
with
$$
V_0=a\sqrt{\frac{g}{H_0}},\quad
w_0=\frac{aL}{H_0}\sqrt{\frac{g}{H_0}},\quad
t_0=\frac{L}{\sqrt{gH_0}},\quad P_0=\rho g H_0.
$$
With these variables and unknowns, and with the notations
$$
\bU^\mu=\left(\begin{array}{c}\sqrt{\mu}{\bf V}\\ {\bf w}\end{array}\right),\qquad
\nabla^\mu=\left(\begin{array}{c}\sqrt{\mu}\nabla\\
    \dz\end{array}\right),
\qquad
N^\mu=\left(\begin{array}{c}-\eps\sqrt{\mu}\nabla\zeta\\ 1\end{array}\right),
$$
and
$$
\curlm=\nabla^\mu\times,\qquad \divem=(\nabla^\mu)^T,\qquad \uU^\mu=(\sqrt{\mu}\uV^T,\uw)^T:=\bU^\mu_{\vert_{z=\eps\zeta}},
$$
the incompressible Euler equations take the form
(omitting the tildes),
\begin{align*}
\dt \bU^\mu+\frac{\eps}{\mu} {\bf U}^\mu
\cdot\nabla^\mu\bU^\mu&=-\frac{1}{\eps}\big(\nabla^\mu P + {\bf
  e}_z\big)&\mbox{in }\Omega,\\
\divem {\bf U}^\mu&=0&\mbox{in }\Omega,
\end{align*}
where $\Omega$ now stands for the dimensionless fluid domain,
$$
\Omega=\{(X,z)\in \R^{d+1},\quad -1<z<\eps\zeta(t,X)\},
$$
and with the non vanishing depth condition now reading
\begin{equation}\label{hminND}
\exists h_{\rm min}>0,\qquad \forall X\in \R^d, \quad 1+\eps\zeta\geq
h_{\rm min}.
\end{equation}
Finally, the boundary conditions on the velocity read in dimensionless form,
\begin{align*}
\dsp \dt \zeta -\frac{1}{\mu}\uU^\mu\cdot N^\mu&=0\quad \mbox{ at the surface,}\\
{\bU}^\mu_\bottff\cdot N^\mu_b&=0\quad\mbox{ at the bottom},
\end{align*}
where $N_b^\mu={\bf
  e}_z$, while for the pressure, we still have
$$
P=0\quad \mbox{ at the surface}.
$$

\subsection{Notations}

We give here a list of notations specific to the study of the shallow
water regime. Most of them are the dimensionless version of notations
already used in the dimensional case; for the sake of clarity, we
write them in the same way. Below is a list adaptations we need to
make to handle the dimensionless case:
\begin{eqnarray*}
\Omega=\{(X,z), \, -H_0< z<\zeta(X)\} &\leadsto& \Omega=\{(X,z), \,
-1< z<\eps \zeta(X)\} \\
\cS=\R^d\times (-H_0,0)&\leadsto&\cS=\R^d\times (-1,0),\\
\sigma(X,z)=\zeta\frac{z+H_0}{H_0}&\leadsto&
\sigma(X,z)=\eps\zeta(z+1),\\
\proj=\frac{\abs{D}}{(1+\abs{D})^{1/2}} &\leadsto &
\proj=\frac{\abs{D}}{(1+\sqrt{\mu}\abs{D})^{1/2}},\\
\abs{u}_{H_0^{-1/2} }=\abs{\frac{1}{\abs{D}}u}_{H^{1/2}}&\leadsto& \abs{u}_{H_{0}^{-1/2}}=\abs{\frac{(1+\sqrt{\mu}\abs{D})^{1/2}}{\abs{D}}u}_2.
\end{eqnarray*}
We also adapt the notation \eqref{notapar} as follows,
\begin{equation}\label{notaparmu}
A_\parallel=\frac{1}{\sqrt{\mu}}\uA_h+\eps \uA_v\nabla\zeta
\end{equation}
so that $\underline{A}\times
N^\mu=\sqrt{\mu}\left(\begin{array}{l}-\Ap^\perp\\-\eps\sqrt{\mu}\Ap^\perp\cdot
    \nabla\zeta\end{array}\right)$. Finally, we also write
\begin{equation}\label{nablasigmamu}
\nabla^{\sigma,\mu}=\left(\begin{array}{c}
    \sqrt{\mu}\nabla\\0\end{array}\right)+\left(\begin{array}{c}-\sqrt{\mu}\nabla\sigma
    \\1\end{array}\right)\dz^\sigma,
\end{equation}

\subsection{The dimensionless generalized ZCS formulation}

According to the notation \eqref{notaparmu}, we have
$$
U^\mu_\parallel=\uV+\eps\uw\nabla\zeta=\frac{1}{\sqrt{\mu}}\big( \uU^\mu\times
N^\mu)_h^T,
$$
and proceeding as in \S \ref{sectTheeq}, we deduce the following
dimensionless version of \eqref{eqVp},
\begin{equation}\label{eqVpND}
\dsp \dt \Vp^\mu +\nabla\zeta+\frac{\eps}{2}\nabla \abs{\Vp^\mu}^2-\frac{\eps}{2\mu}\nabla\big(
(1+\eps^2\mu\abs{\nabla\zeta}^2)\uw^2\big)+\eps\uom_\mu\cdot N^\mu\uV^\perp=0,
\end{equation}
where $\uom_\mu={\bom_\mu}_{\vert_{z=\eps\zeta}}$ and $\bom_\mu$ is given by
$$
\bom_\mu=\left(\begin{array}{c}\frac{1}{\sqrt{\mu}}\big(\dz {\bf V}^\perp-\nabla^\perp  {\bf w}\big)\\
-\nabla \cdot {\bf V}^\perp
\end{array}\right)=\frac{1}{{\mu}}\curlm \bU^\mu.
$$
The dimensionless version of the orthogonal decomposition of $\Vp$
performed in \S \ref{sectTheeq} is then given by
$$
\Vp^\mu=\nabla\psi+\nabla^\perp \tpsi,
$$
with $\Delta \tpsi=\uom_\mu\cdot N^\mu$. The
equation on $\psi$ corresponding to the dimensionless version of
\eqref{4eq1} is therefore
\begin{equation}\label{4eq1ND}
 \dt \psi +\zeta+\frac{\eps}{2} \abs{\Vp^\mu}^2-\frac{\eps}{2\mu}\big(
(1+\abs{\nabla\zeta}^2)\uw^2\big)+\eps\frac{\nabla}{\Delta}\cdot\big( \uom_\mu\cdot N^\mu\uV^\perp\big)=0.
\end{equation}
Finally, the dimensionless vorticity equation is obtained by applying
$\nabla^\mu$ to the dimensionless Euler equation,
\begin{equation}\label{eqrotND}
\dt \bom_\mu+\frac{\eps}{\mu}\bU^\mu\cdot \nabla^\mu
\bom_\mu=\frac{\eps}{\mu}\bom_\mu\cdot \nabla^\mu \bU^\mu.
\end{equation}
In order to write $\bU^\mu$ as a function of
$\zeta$, $\psi$ and $\bom_\mu$, we need to solve the following dimensionless
version of the div-curl problem \eqref{divrot},
\begin{equation}\label{divrotND}
\left\lbrace
\begin{array}{lll}
\curlm \bU^\mu &= \mu\bom_\mu&\quad \mbox{ in }\quad \Omega\\
\divem \bU^\mu &=0&\quad \mbox{ in }\quad \Omega\\
\Vp^\mu&= \nabla\psi+\nabla^\perp \Delta^{-1}(\uom_\mu\cdot N^\mu)&\quad\mbox{ at the surface}\\
U^\mu_b\cdot N_b&=0 &\quad\mbox{ at the bottom};
\end{array}\right.
\end{equation}
we write the solution
$$
\bU^\mu={\mathbb U}^\mu[\eps\zeta](\psi,\bom_\mu)
=\left(\begin{array}{c}
\sqrt{\mu}{\mathbb
  V}[\eps\zeta](\psi,\bom_\mu)\\
{\mathbb
  w}[\eps\zeta](\psi,\bom_\mu)
\end{array}\right);
$$
the generalized Zakharov-Craig-Sulem formulation takes therefore the
following form in dimensionless form,
\begin{equation}\label{ZCSgenND}
\left\lbrace
\begin{array}{l}
\dsp \dt \zeta-\frac{1}{\mu}\underline{\mathbb
  U}^\mu[\eps\zeta](\psi,\bom_\mu)\cdot N^\mu=0,\\
\dsp \dt \psi +\zeta\!+\!\frac{\eps}{2}
 \babs{{\mathbb
     U}^\mu_\parallel[\eps\zeta](\psi,\bom_\mu)}^2-\frac{\eps}{2\mu}(1+\eps^2\mu\abs{\nabla\zeta}^2)
 \underline{{\mathbb w}}[\eps\zeta](\psi,\bom_\mu)^2 \\
\dsp \hspace{4.35cm}-\eps\frac{\nabla^\perp}{\Delta}\cdot\big(
\uom_\mu\cdot N^\mu \underline{\mathbb
  V}[\eps\zeta](\psi,\bom_\mu)\big)=0,\\
\dsp \dt \bom_\mu+\frac{\eps}{\mu}{\mathbb U}^\mu[\eps\zeta](\psi,\bom_\mu)\cdot\nabla^\mu
\bom_\mu=\frac{\eps}{\mu}\bom_\mu\cdot\nabla^\mu {\mathbb U}^\mu[\eps\zeta](\psi,\bom_\mu).
\end{array}\right.
\end{equation}

\subsection{Statement of the main result}\label{statMRD}

As in the dimensional case, the statement of the well-posedness result
requires to work with a straightened vorticity. We therefore use a diffeomorphism $\Sigma$ to straighten the fluid
domain; it now takes the form $\Sigma(X,z)=(z,z+\sigma(X,z))$ where
$\sigma(X,z)=(1+z)\ep\zeta(X)$ and maps the strip  $\cS=\R^d\times
(-1,0)$ to $\Omega$. We denote $U^\mu:= \bU^\mu\circ \Sigma$,
$\omega_\mu:= \bom_\mu\circ \Sigma$, etc., and also
\begin{align*}
\mathbb{U}^{\sigma,\mu}[\ep\zeta](\psi,\omega_\mu)
\left(\begin{array}{c}
\sqrt{\mu}\mathbb{V}^{\sigma}[\ep\zeta](\psi,\omega_\mu)\\
\mathbb{w}^{\sigma}[\ep\zeta](\psi,\omega_\mu)
\end{array}\right).
:=
\mathbb{U}^{\mu}[\ep\zeta](\psi,\bom_\mu)\circ \Sigma.
\end{align*}
The well-posedness result deals therefore with the following
straightened version of \eqref{ZCSgenND},
\begin{equation}\label{ZCSgenNDS}
\left\lbrace
\begin{array}{l}
\dsp \dt \zeta-\frac{1}{\mu}\underline{\mathbb
  U}^{\sigma,\mu}[\eps\zeta](\psi,\omega_\mu)\cdot N^\mu=0,\\
\dsp \dt \psi +\zeta\!+\!\frac{\eps}{2}
 \babs{{\mathbb
     U}^{\sigma,\mu}_\parallel[\eps\zeta](\psi,\omega_\mu)}^2-\frac{\eps}{2\mu}(1+\eps^2\mu\abs{\nabla\zeta}^2)
 \underline{{\mathbb w}}^{\sigma}[\eps\zeta](\psi,\bom_\mu)^2 \\
\dsp \hspace{4.35cm}-\eps\frac{\nabla^\perp}{\Delta}\cdot\big(
\uom_\mu\cdot N^\mu \underline{\mathbb
  V}^{\sigma}[\eps\zeta](\psi,\omega_\mu)\big)=0,\\
\dsp \dt^\sigma \omega_\mu+\frac{\eps}{\mu}{\mathbb U}^{\sigma,\mu}[\eps\zeta](\psi,\omega_\mu)\cdot\nabla^{\sigma,\mu}
\omega_\mu=\frac{\eps}{\mu}\omega_\mu\cdot\nabla^{\sigma,\mu} {\mathbb U}^{\sigma,\mu}[\eps\zeta](\psi,\omega_\mu),
\end{array}\right.
\end{equation}
together with the divergence free condition on $\omega_\mu$ which is
propagated from the initial condition,
\begin{equation}\label{DFvort2mu}
\nabla^{\sigma,\mu}\cdot \omega_\mu=0\quad \mbox{ in }\quad {\mathcal S}.
\end{equation}
The statement of the theorem also requires the introduction of the dimensionless energy
\begin{align}
\nonumber
\cE^N(\zeta,\psi,\omega_\mu):=&\frac{1}{2}\abs{\zeta}_{H^N}^2+\frac{1}{2}\abs{\proj\psi}_{H^3}^2+\frac{1}{2}\sum_{0<\abs{\alpha}\leq
N}\abs{\proj\psi_{(\alpha)}}_2^2\\
\label{NRJmu}
&+\frac{1}{2}\Abs{\omega_\mu}_{H^{N-1}}^2+\frac{1}{2}\abs{\omega_{\mu,b}\cdot
N_b}_{H^{-1/2}_{0}}^2,
\end{align}
with $\psi_{(\alpha)}=\partial^\alpha\psi-\eps
\underline{w}\partial^\alpha\zeta$ (and $\underline{w}=\underline{\mathbb w}^{\sigma}[\eps\zeta](\psi,\omega_\mu)$),
and we still denote by $\mfN(\zeta,\psi,\omega_\mu)$ any constant of
the form
$$
\mfN(\zeta,\psi,\omega_\mu)=C\big(\frac{1}{h_{\rm
    min}},\cE^N(\zeta,\psi,\omega_\mu)\big),
$$
and by $E_T^N$ the associated functional space defined in
\eqref{defETN}. Note also that the dimensionless version of the
Rayleigh-Taylor coefficient is
$$
{\mathfrak a}={\mathfrak a}(\zeta,\psi,\omega_\mu)=1+\eps (\dt+\underline{\mathbb
  V}^\sigma[\eps\zeta](\psi,\omega_\mu))\underline{\mathbb
  w}^\sigma[\eps\zeta](\psi,\omega_\mu).
$$
The theorem states that the solution furnished by Theorem \ref{theo1} exists on a time interval
$[0,T/\eps]$, with $T$ independent of $\eps\in (0,\eps_0)$ and $\mu\in
(0,\mu_0)$, and that it is uniformly bounded on this time interval.
\begin{thm}\label{theomainND}
Let $\eps_0,\mu_0>0$, $\eps\in(0,\eps_0)$, $\mu\in(0,\mu_0)$, and $N\geq
5$. Let also $\Theta^0=(\zeta^0,\psi^0,\omega_\mu^0)\in E_0^N$ be such that
$\omega_\mu^0$ satisfies the divergence free condition
\eqref{DFvort2mu}. Assume moreover that
$$
\exists h_{\rm min}>0,\exists a_0>0,\quad 1+\eps \zeta^0>h_{\rm
  min},\quad {\mathfrak a}(\zeta^0,\psi^0,\omega^0)>a_0.
$$
Then there exists $T>0$ (independent of $\eps$ and $\mu$), and a unique solution $\Theta\in E^N_{T/\eps}$ to
\eqref{ZCSgenNDS} satisfying the divergence free constraint
\eqref{DFvort2mu}, and with initial condition $\Theta^0$. Moreover,
$$
\frac{1}{T}=c^1\quad\mbox{ and }\quad \sup_{t\in
  [0,T]}\cE^N(\Theta(t))=c^2
$$
with $\dsp c^j=C(\cE^N(\Theta^0),\frac{1}{h_{\rm min}},\frac{1}{a_0},\eps_0,\mu_0)$ for $j=1,2$.
\end{thm}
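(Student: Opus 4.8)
The plan is to obtain Theorem \ref{theomainND} by rerunning the entire analysis of Sections \ref{sect3} and \ref{sect4} on the dimensionless equations \eqref{ZCSgenNDS}, tracking all powers of $\eps$ and $\mu$ and checking that the estimates are uniform in $\mu\in(0,\mu_0)$ and that the only source of time growth in the energy inequality carries a prefactor $\eps$. First I would set up the dimensionless div-curl theory: the diffeomorphism $\Sigma$, the straightened operators $\nabla^{\sigma,\mu}$ from \eqref{nablasigmamu}, and the dimensionless counterparts of Theorem \ref{prop1}, Proposition \ref{horizontalregurality} and Corollaries \ref{corHkk}, \ref{corotime}. The crucial point here is that the matrix $P(\Sigma)$ and the Poincaré/equivalence lemmas (Lemmas \ref{lempoincare}, \ref{leminterm}, \ref{equivnorm}) behave well with respect to the anisotropic scaling $\bU^\mu=(\sqrt\mu{\bf V}^T,{\bf w})^T$, $\nabla^\mu=(\sqrt\mu\nabla^T,\dz)^T$: the coercivity constant of $P(\Sigma)$ is bounded uniformly in $\mu\leq\mu_0$ (this is exactly why the non dimensionalization \eqref{notaparmu} with a $1/\sqrt\mu$ on the horizontal part and the modified $\proj=\abs{D}/(1+\sqrt\mu\abs{D})^{1/2}$ were chosen), so the div-curl estimates hold with constants $M_N$ depending only on $h_{\min}^{-1}$, $\abs{\zeta}_{H^N}$, $\eps_0$, $\mu_0$. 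I would emphasize that no new ideas are needed here, only bookkeeping; in particular the existence and uniqueness of $\bU^\mu$ for each fixed $\mu$ follow verbatim from Theorem \ref{prop1} after rescaling.

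\textbf{Energy estimates and lifespan.} Next I would redo the quasilinearization of Proposition \ref{prop3} and the energy estimate of Proposition \ref{prop3000} in dimensionless variables. The key structural observation is that the commutator and transport terms $R^j_\alpha$ in the quasilinearized system all come with an explicit factor $\eps$ (they arise from the nonlinear terms $\frac{\eps}{2}\abs{\Vp^\mu}^2$, $\eps\uom_\mu\cdot N^\mu\uV$, and $\frac{\eps}{\mu}\bU^\mu\cdot\nabla^\mu$ in \eqref{ZCSgenNDS}), whereas the linear part — the wave operator with Rayleigh-Taylor coefficient $\mfa=1+\eps(\dt+\uV\cdot\nabla)\uw$ — contributes no time growth once one multiplies by $\mfa\partial^\alpha\zeta$ and by $\partial_k\uU_{(\beta)}\cdot N^\mu$ and sums, exactly as in \eqref{esten3}--\eqref{EEmain}. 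The almost-incompressibility of the good unknown (Proposition \ref{propalmostinc}) and the trace coercivity estimate (Corollary \ref{tracecoer}) go through unchanged with the modified $\proj$. This yields a differential inequality of the form $\frac{d}{dt}\tilde\cE^N\leq \eps\, C(a_0^{-1},\tilde\cE^N)$, uniform in $\mu$, from which Grönwall gives existence and uniform bounds on a time interval $[0,T/\eps]$ with $T$ depending only on the stated constants. The well-posedness itself (construction via the regularized system of Step 4 in the proof of Theorem \ref{theo1}) is inherited directly: apply Theorem \ref{theo1} with $\zeta$ replaced by $\eps\zeta$ to get a local solution, then use the uniform energy estimate to continue it up to time $T/\eps$.

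\textbf{Uniformity in $\mu$: the main obstacle.} The delicate point — and the one I expect to require the most care — is the uniformity of \emph{every} estimate as $\mu\to0$, since the equations are singular in $\mu$ (the terms $\frac1\mu\uU^\mu\cdot N^\mu$ and $\frac\eps\mu\bU^\mu\cdot\nabla^\mu$ formally blow up). This is handled by the anisotropic choice of norms: one shows that $\frac1\mu\uU^\mu\cdot N^\mu=\frac1\mu(\uom_\mu\cdot N^\mu\text{-type terms})$ actually stays bounded because $\uw=O(\mu^0)$ in the chosen scaling and the factor $\frac1\mu$ is compensated by the $\sqrt\mu$'s hidden in $\nabla^\mu$ and $N^\mu$. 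Concretely, one must check that the $H^1$-estimate of Theorem \ref{prop1}, the higher-order estimates of Proposition \ref{horizontalregurality}, Corollary \ref{corotime}, and the pressure estimate (Lemma \ref{estpressQ}) all have constants independent of $\mu$; this reduces to the uniform coercivity of $P(\Sigma)$ and to the fact that the Fourier multiplier $\proj=\abs{D}/(1+\sqrt\mu\abs{D})^{1/2}$ has operator norm $\lesssim 1$ from $H^{1/2}$ to $L^2$ uniformly in $\mu\leq\mu_0$, together with the uniform-in-$\mu$ version of the trace and smoothing lemmas. Once these are in place, the proof of Theorem \ref{theo1} — the regularization scheme, the ODE argument, the Cauchy-sequence passage to the limit in $\iota$ and then $\delta$ — transfers with no changes, and the final bound on $\cE^N(\Theta)$ follows as in Proposition \ref{prop3000}. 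I would close by remarking that the non dimensionalization of the vorticity, namely working with $\bom_\mu=\frac1\mu\curlm\bU^\mu$, is validated \emph{a posteriori} precisely because it is this quantity, and not $\curlm\bU^\mu$ itself, that appears with a bounded coefficient in the dimensionless div-curl problem \eqref{divrotND} and in the energy \eqref{NRJmu}.
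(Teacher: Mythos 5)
Your overall strategy matches the paper's: establish $\mu$-uniform estimates for the dimensionless div-curl problem, redo the quasilinearization and energy estimates tracking $\eps$, and conclude by combining these with the existence furnished by Theorem \ref{theo1}. You also correctly identify that the $O(\eps)$ prefactor in the differential inequality for the energy is what produces the lifespan $T/\eps$, and that the choice $\bom_\mu=\frac{1}{\mu}\curlm\bU^\mu$ is validated a posteriori.

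However, your claim that the $\mu$-uniformity of the div-curl theory is ``only bookkeeping'' is where the proposal misses the genuine technical difficulty of this theorem. Tracking powers of $\mu$ through Lemma \ref{existtildepsi} in the straightforward way produces, for the solution $\tpsi$ of $\Delta\tpsi=\uom_\mu\cdot N^\mu$, the estimate
$$\abs{\nabla\tpsi}_2\leq \sqrt{\mu}\,C\Bigl(\Abs{\bom_\mu}_2+\tfrac{1}{\sqrt{\mu}}\abs{\omega_{\mu,b}\cdot N_b^\mu}_{H_0^{-1/2}}\Bigr),$$
and the $\frac{1}{\sqrt{\mu}}$ in front of the bottom vorticity comes from the boundary term at $z=-1$ in the integration by parts, where the duality pairing with $H_0^{-1/2}$ produces $\sqrt{\mu}|\xi|/(1+\sqrt{\mu}|\xi|)^{1/2}$ which is not bounded below uniformly. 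This singularity is harmless in the $L^2$ estimate $\Abs{\bU^\mu}_2\leq\sqrt{\mu}\cdot(\cdots)$ of Proposition \ref{prop1mu} because the outer $\sqrt{\mu}$ absorbs it, but it would be fatal in the $H^1$ estimate $\Abs{\nabla^\mu\bU^\mu}_2\leq\mu\cdot(\cdots)$ that feeds the vorticity energy estimate: there the transport term $\frac{\eps}{\mu}a\dz\omega_\mu$ requires $\Abs{\nabla^\mu U^\mu}_\infty=O(\mu)$ with \emph{no} residual $\mu^{-1/2}$, otherwise the $\frac{\eps}{\mu}\Abs{\nabla^\mu U^\mu}_\infty$ prefactor in \eqref{estvortmu} would blow up. The paper resolves this in Lemma \ref{lemmasave} by controlling $\abs{\proj\nabla\tpsi}_2$ (rather than $\abs{\nabla\tpsi}_{H^{1/2}}$) with a \emph{different} harmonic extension $\tpsi_0^{\rm ext}=\sinh(\sqrt{\mu}(z+1)|D|)/\sinh(\sqrt{\mu}|D|)\,\tpsi$ that vanishes at the bottom; this kills the boundary term entirely and gives a $\mu$-independent bound $\abs{\proj\nabla\tpsi}_2\leq C\Abs{\bom_\mu}_2$. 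Lemma \ref{lemlemmu2} then uses this to get $\Abs{\nabla^\mu\bU^\mu}_2\leq\mu\,C(\Abs{\bom_\mu}_2+\abs{\omega_{\mu,b}\cdot N_b^\mu}_{H_0^{-1/2}})$ with the right power of $\mu$. This is a new idea, not a rescaling of the $\mu=1$ argument, and without it your energy estimate would not close uniformly in $\mu$.

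A smaller issue: your assertion that $\proj=\abs{D}/(1+\sqrt{\mu}\abs{D})^{1/2}$ has operator norm $\lesssim1$ from $H^{1/2}$ to $L^2$ uniformly in $\mu$ is false; as $\mu\to0$ the multiplier approaches $\abs{D}$, and the uniform bound is $\abs{\proj f}_2\lesssim\abs{f}_{H^1}$, with the $H^{1/2}$ version degenerating like $\mu^{-1/4}$. This is consistent with the paper's use of the sharper $\mu$-dependent trace and smoothing estimates (Lemma \ref{lemlemmu}), which you also need but did not spell out. Your account of the vorticity scaling and of the kinematic equation's $\frac{1}{\mu}$ prefactor is essentially correct but likewise requires Proposition \ref{prop1mu} rather than hand-waving about $\uw=O(\mu^0)$.
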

\begin{remark}
Note that no smallness assumption is made on $\eps_0$ and $\mu_0$. The
theorem furnishes in particular an existence time and bounds on the
solutions that are relevant to study many asymptotic regimes of
interest in oceanography:
\begin{itemize}
\item The (large amplitude) shallow water regime: here $\eps\sim 1$ and $\mu\ll
1$. We get an existence on a time interval of order $1$, uniformly
with respect to $\mu$.
\item The long wave (also called Boussinesq, or KdV in dimension
  $d=1$) regime: here $\eps\sim \mu \ll 1$. The existence is then on a
  larger time interval of order $O(1/\eps)$ with uniform bounds on
  this time scale.
\item The deep water regime. Here $\mu\sim 1$ and $\eps\ll 1$, and
  asymptotics can be studied in terms of $\eps$, on a time interval of
  order $O(1/\eps)$.
\end{itemize}
\end{remark}
\subsection{Proof of Theorem \ref{theomainND}}

Theorem \ref{theo1} furnishes the existence of a solution. We just
need to prove the necessary bounds on the solution with respect to
$\eps$ and $\mu$. This is done by deriving uniform a priori estimates
on the solution. The derivation of these estimates follows sometimes
the same steps as for the dimensional case already treated, but
sometimes require specific attention. We only focus on these latter
aspects, and omit (or only sketch) the proof of the former ones.\\
The dependence on $\eps, \mu$ of the div-curl problem is investigated
in \S \ref{DCpar}, the vorticity energy estimates are addressed in \S
\ref{sectVED}, and the a priori estimates on the full equations are
finally derived in \S \ref{sectAPD}.\\
{\it We always assume throughout this section that $\eps\in
  (0,\eps_0)$ and $\mu\in (0,\mu_0)$ for some $\eps_0,\mu_0>0$. For
  the sake of clarity, we never make explicit the dependence on
  $\eps_0$ and $\mu_0$.}

\subsubsection{The div-curl problem with parameters}\label{DCpar}

We can still invoke Theorem \ref{prop1} to insure the existence and
uniqueness of a solution to \eqref{divrotND}, however, the dependence
on the parameter $\mu$ is not obvious in the estimates on the solution
provided in Theorem \ref{prop1} and special attention must be paid to
avoid singular terms as $\mu \to 0$ (in the estimates involving the
bottom vorticity for instance). This dependence is made precise in
the following proposition.
\begin{proposition}\label{prop1mu}
Let $\zeta\in W^{2,\infty}(\R^d)$ be such that \eqref{hminND} is satisfied. Then
for all $\bom_\mu\in L^2(\Omega)^{3}$ such that $\divem
\bom_\mu=0$, and all $\psi\in \dot{H}^{3/2}(\R^d)$,  there exists a unique solution $\bU\in H^1(\Omega)^{3}$ to
\eqref{divrotND}, and one can decompose it as $\bU^\mu=\curlm{\bf
  A}+\nabla^\mu \Phi$, where $\Phi\in \dot{H}^2(\Omega)$ solves
$$
\left\lbrace\begin{array}{l}
(\dz^2+\mu \Delta)\Phi=0\qquad \mbox{ \rm in }\quad \Omega\\
\Phi_\surf=\psi,\qquad \dz\Phi_\bott=0,
\end{array}\right.
$$
while $\bA\in \dot{H}^2(\Omega)^{3}$
solves
$$
\left\lbrace
\begin{array}{rll}
\dsp \curlm \curlm \bA&=\mu\bom_\mu &\mbox{ \textnormal{in} }\Omega,\\
\dsp \divem \bA&=0 &\mbox{ \textnormal{in} }\Omega,\\
\dsp N^\mu_b\times A_b&=0&\\
\dsp N^\mu\cdot \underline{A}&=0&\\
\dsp (\curlm
  \bA)_\parallel &=\nabla^\perp
\Delta^{-1}\uom_\mu\cdot N^\mu,&\\
\dsp N^\mu_b\cdot(\curlm \bA)_\bott & = 0.&
\end{array}\right.
$$
Moreover, one has
\begin{align*}
\Abs{\bU^\mu}_2
&\leq \sqrt{\mu}C(\abs{\zeta}_{W^{2,\infty}},\frac{1}{h_{\rm
    min}}) \big(\sqrt{\mu}\Abs{\bom_\mu}_{2}+\abs{\bom_{\mu,b}\cdot N_\mu}_{H_{0}^{-1/2}}+\abs{\proj\psi}_2\big),\\
\Abs{\nabla^\mu \bU^\mu}_2&\leq
\mu C(\abs{\zeta}_{W^{2,\infty}},\frac{1}{h_{\rm min}})
\big(\Abs{\bom_\mu}_{2}+\abs{\bom_{\mu,b}\cdot
  N_b^\mu}_{H_{0}^{-1/2}}+\abs{\proj\psi}_{H^1}\big).
\end{align*}
\end{proposition}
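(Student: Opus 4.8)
The plan is to transfer the dimensional result of Theorem \ref{prop1} to the dimensionless setting \eqref{divrotND} by a careful change of variables that makes the parameters $\eps$ and $\mu$ explicit, and then to track exactly how the powers of $\mu$ enter the estimate. First I would unfold the definitions: write $\bU^\mu=(\sqrt{\mu}\,{\bf V}^T,{\bf w})^T$, $\nabla^\mu=(\sqrt{\mu}\nabla^T,\dz)^T$, $\curlm=\nabla^\mu\times$, $\divem=(\nabla^\mu)^T$, and observe that the anisotropic operators $\curlm$, $\divem$ can be conjugated to the standard $\curl$, $\dive$ by the diagonal rescaling of the vertical variable $z\mapsto z/\sqrt{\mu}$ (equivalently, by rescaling the horizontal variable). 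Under this rescaling the domain $\Omega=\{-1<z<\eps\zeta\}$ becomes a domain of the form $\{-1/\sqrt\mu<z'<\eps\zeta/\sqrt\mu\}$ whose slope is $\sqrt\mu\,\eps\nabla\zeta=O(\sqrt\mu)$, so the $W^{2,\infty}$ norm of the boundary parametrization in the rescaled variables is controlled by $\abs{\zeta}_{W^{2,\infty}}$ uniformly in $\mu$, while the non‑vanishing depth condition \eqref{hminND} guarantees a lower bound on the rescaled depth that degenerates like $\sqrt\mu$ but only in the \emph{favorable} direction (the domain gets thicker, which is harmless since all the constants in Lemmas \ref{lempoincare}–\ref{equivnorm} and in Theorem \ref{prop1} depend only on $1/h_{\min}$, $H_0$ and the slope).

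Second, I would set up the div-curl problem for $\bU^\mu$ directly, exactly mimicking \S\ref{sectproofth1}: reduce to $\psi=0$ by subtracting $\nabla^\mu\Phi$ with $\Phi$ solving $(\dz^2+\mu\Delta)\Phi=0$, $\Phi_\surf=\psi$, $\dz\Phi_\bott=0$ (this is the $\mu$‑dependent Laplace problem, whose solvability and estimates are standard, e.g. Chapter 2 of \cite{Lannes_book}); then look for $\tbU^\mu=\curlm{\bf A}$ with ${\bf A}$ solving the stated system; then construct ${\bf A}$ variationally as in Lemma \ref{1exthm}, using the coercivity of the anisotropic analogue of the bilinear form $a(\bA,{\bf C})=\int_\Omega\curlm\bA\cdot\curlm{\bf C}$ (the analogue of Lemma \ref{equivnorm}, which holds with $\mu$‑uniform constants because the slope of the rescaled domain is $O(\sqrt\mu)$); and finally invoke the dimensionless analogue of Lemma \ref{existtildepsi} to handle $\tpsi$ solving $\Delta\tpsi=\uom_\mu\cdot N^\mu$. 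The auxiliary estimate there will produce $\abs{\nabla\tpsi}_{H^{1/2}}\lesssim \sqrt\mu\,\Abs{\bom_\mu}_2+\abs{\bom_{\mu,b}\cdot N^\mu_b}_{H_0^{-1/2}}$ once one tracks the $\sqrt\mu$ that multiplies the horizontal gradient in $\nabla^{\sigma,\mu}$ and the fact that the right-hand side of the weak formulation pairs $v^{\rm ext}$ with $(1+\dz\sigma)\nabla^{\sigma,\mu}v^{\rm ext}\cdot\omega_\mu$.

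Third, and this is where the bookkeeping is delicate, I would track the powers of $\mu$ through the two estimates. The cleanest way is: run the whole argument in the rescaled (isotropic) variables, apply Theorem \ref{prop1} verbatim to get $\Abs{\bU}_2+\Abs{\nabla\bU}_2^2\le C(1/h_{\min},H_0,\abs{\zeta}_{W^{2,\infty}})(\Abs{\bom}_{2,b}+\abs{\nabla\psi}_{H^{1/2}})$ for the rescaled fields, and then undo the rescaling. Each vertical integration picks up a Jacobian factor $1/\sqrt\mu$; each horizontal derivative that was hiding inside $\nabla^\mu$ carries $\sqrt\mu$; the source $\mu\bom_\mu$ on the right of $\curlm\curlm{\bf A}=\mu\bom_\mu$ brings one factor $\mu$; and the bottom vorticity norm $\abs{\bom_{\mu,b}\cdot N^\mu_b}_{H_0^{-1/2}}$ is defined with the $\mu$‑modified weight $(1+\sqrt\mu\abs{D})^{1/2}/\abs{D}$, which is precisely what makes the bottom term $\mu$‑uniform. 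Collecting these, the $L^2$ bound on $\bU^\mu$ comes out with an overall $\sqrt\mu$ and the gradient bound with an overall $\mu$, matching the statement. The main obstacle is exactly this last step: making sure that no spurious negative power of $\mu$ appears, in particular in the term coming from $\tpsi$ and in the bottom-vorticity contribution — one must use the $\mu$‑dependent definitions of $\proj$ and of $\abs{\cdot}_{H_0^{-1/2}}$ given in \S\ref{statMRD}, and check that the trace and Poincaré inequalities in the rescaled domain are applied in the direction in which the depth is large (harmless) rather than small. I would also double-check that the Laplace problem for $\Phi$ contributes only $\sqrt\mu\,\abs{\proj\psi}_2$ at the $L^2$ level and $\mu\,\abs{\proj\psi}_{H^1}$ at the gradient level, which is the standard shallow-water behaviour of the Dirichlet–Neumann operator (cf. Chapter 3 of \cite{Lannes_book}); uniqueness then follows, as in the dimensional case, from the gradient estimate applied to the difference of two solutions.
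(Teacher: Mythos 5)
Your proposal takes a genuinely different route — conjugate the anisotropic operators to the standard ones by a change of variables, apply Theorem \ref{prop1} as a black box, and undo the rescaling — whereas the paper never rescales: it repeats the variational construction of \S\ref{sectproofth1} directly in the dimensionless setting and tracks $\mu$ through each step by means of the $\mu$-refined trace inequalities of Lemma \ref{lemlemmu} and the $\tpsi$-estimates of Lemmas \ref{lemtpsimu} and \ref{lemmasave}. Two points in your plan don't hold up.

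First, a bookkeeping error: the vertical rescaling $z\mapsto z/\sqrt\mu$ does \emph{not} make $\nabla^\mu$ isotropic (it sends $\dz$ to $\frac{1}{\sqrt\mu}\partial_{z'}$, giving $(\sqrt\mu\nabla,\frac{1}{\sqrt\mu}\partial_{z'})$, and it makes the rescaled surface slope $O(1/\sqrt\mu)$, not $O(\sqrt\mu)$), so it is \emph{not} equivalent to rescaling $X'=X/\sqrt\mu$. The horizontal rescaling is the one that works; under it the depth is unchanged (not ``thicker''), and the slope is $O(\sqrt\mu)$ as you say.

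Second, and more seriously: the gradient estimate of the proposition is strictly sharper than what the black-box rescaling can produce. Theorem \ref{prop1} bounds $\Abs{\bU}_2$ and $\Abs{\nabla_{X,z}\bU}_2$ by the \emph{same} quantity $\Abs{\bom}_{2,b}+\abs{\nabla\psi}_{H^{1/2}}$. Carrying this through the change of variables $X'=X/\sqrt\mu$ (with the dimensionless $H_0^{-1/2}$ weight $(1+\sqrt\mu\abs D)^{1/2}/\abs D\sim(1+\mu\abs D^2)^{1/4}/\abs D$) yields
$\Abs{\nabla^\mu\bU^\mu}_2 \lesssim \mu\Abs{\bom_\mu}_2+\sqrt\mu\,\abs{\bom_{\mu,b}\cdot N^\mu_b}_{H_0^{-1/2}}+\dots$,
i.e.\ only $\sqrt\mu$ in front of the bottom-vorticity term, whereas the proposition claims $\mu$ in front of both vorticity contributions. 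This gap is exactly what the paper flags: the $L^2$ estimate \eqref{L2mu} does carry the $\sqrt\mu$ (singular) bottom term, but for the $H^1$ estimate a cancellation is needed. It is obtained by redoing the boundary-integral argument of Lemma \ref{lemlem} in dimensionless form (Lemma \ref{lemlemmu2}), and the decisive ingredient is Lemma \ref{lemmasave}: by pairing $\Delta\tpsi=\uom_\mu\cdot N^\mu$ with $\frac{D^2}{1+\sqrt\mu\abs D}\tpsi$ and choosing the extension $\tpsi_0^{\rm ext}=\frac{\sinh(\sqrt\mu(z+1)\abs D)}{\sinh(\sqrt\mu\abs D)}\tpsi$ that vanishes at $z=-1$, the bottom boundary term disappears and one gets $\abs{\proj\nabla\tpsi}_2\lesssim\Abs{\bom_\mu}_2$ with no $1/\sqrt\mu$ and no bottom-vorticity norm at all. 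No argument that uses Theorem \ref{prop1} verbatim can see this: the separation of the $L^2$ and $H^1$ bounds happens inside the proof, not at the level of the theorem's statement. You acknowledge the danger of a spurious negative power of $\mu$ but do not say how the rescaling would actually avoid it — and it doesn't.
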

\begin{proof}
The existence/uniqueness of  $\bU^\mu$, follows directly (up to a
rescaling) from Theorem \ref{prop1}. The fact that $\nabla^\mu \Phi$
satisfies the estimates of the proposition is known from the
irrotational case (Corollary 2.40 in \cite{Lannes_book}); by
linearity, we can therefore assume that $\psi=0$ and therefore
$\bU^\mu=\curlm {\bf A}$ with ${\bf A}$ as in the statement of the
proposition; we also know from the proof of Theorem \ref{prop1} that $\bA$ is the unique
solution to the dimensionless version of the variational equation
\eqref{varform},
\begin{align}
\int_{\Om}\curlm\bA\cdot \curlm {\bf C}=\mu\int_{\Omega}\bom_\mu\cdot
{\bf C}+\sqrt{\mu}\int_{\R^d}(\nabla\tilde{\psi},\sqrt{\mu}\eps\nabla\tilde{\psi}\cdot\nabla\zeta)\cdot\underline{C}, \label{mu4}\end{align}
for all ${\bf C}\in {\mathfrak X}^\mu$, and where the space $\mathfrak{X}^\mu$ is given by
\begin{align*}
\mathfrak{X}^\mu=\{{\bf C}\in H^1(\Omega)^{d+1}\quad \nabla^\mu\cdot {\bf
  C}=0, \quad N^\mu\times C_b=0, \quad N_b^\mu\cdot\underline{ C}=0\}.
\end{align*}
We shall use the following lemma instead of the standard trace lemma
that does not provide a sharp dependence on $\mu$.
\begin{lemma}\label{lemlemmu}
{\bf i.} For all ${\bf C}\in {\mathfrak X}^\mu$, one has
$$
\Abs{{\bf C}}_2\leq C(\abs{\zeta}_{W^{1,\infty}})\Abs{\dz {\bf C}}_2
\quad\mbox{ and }\quad
\Abs{\nabla^\mu {\bf C}}_2\leq
C(\abs{\zeta}_{W^{2,\infty}})\Abs{\curlm {\bf C}}_2.
$$
(it is not necessary that $\bf C$ be divergence free for the first inequality).\\
{\bf ii.} For all ${\bf C}\in H^1(\Omega)$ such that ${\bf
  C}_\bottff=0$, one has
$$
\abs{(1+\sqrt{\mu}\abs{D})^{1/2} \underline C}_2\leq
C(\abs{\zeta}_{W^{1,\infty}},\frac{1}{h_{\rm
    min}})\Abs{\nabla^\mu{\bf C}}_2.
$$
{\bf iii.} For all ${\bf C}\in H^1(\Omega)$, such that ${\bf
  C}_\bottff=0$, one has
$$
\abs{\proj\underline{C}}_{2}\leq
\frac{1}{\sqrt{\mu}}C(\abs{\zeta}_{W^{1,\infty}})\Abs{\nabla^\mu {\bf C}}_2;
$$
if ${\bf C}$ does not vanish at the bottom, then we still have
$$
\abs{\proj\underline{C}}_{2}\leq
\frac{1}{\sqrt{\mu}}C(\abs{\zeta}_{W^{1,\infty}})\big(\Abs{\nabla^\mu
  {\bf C}}_2+\Abs{{\bf C}}_2).
$$
\end{lemma}
\begin{proof}[Proof of the lemma]
For the first point, one just has to track the dependance on $\mu$ in the proofs of Lemmas
\ref{lempoincare}, \ref{leminterm} and \ref{equivnorm}). For the
second point, denoting by $\Sigma: {\mathcal S}\to \Omega$ the diffeomorphism
defined by $\Sigma(X,z)=(X,z+(1+z)\eps\zeta)$, and writing $C={\bf
  C}\circ\Sigma$, we have
\begin{eqnarray*}
\abs{(1+\sqrt{\mu}\abs{D})^{1/2}\underline
  C}_2^2&=&2\Re\int_{\R^d}\int_{-1}^{\eps\zeta}(1+\sqrt{\mu}\abs{\xi})\widehat{C}\dz\widehat{C}\\
&\leq &2 (\Abs{C}_2+\sqrt{\mu}\Abs{\nabla C}_2)\Abs{\dz C}_2\\
&\leq&  C(\frac{1}{h_{\rm
      min}},\abs{\zeta}_{W^{1,\infty}})(\Abs{{\bf
      C}}_2+\Abs{\nabla^\mu {\bf C}}_2)\Abs{\dz {\bf C}}_2,
\end{eqnarray*}
and the result follows from the first part of the lemma.\\
For the third point, we have with $C={\bf C}\circ\Sigma$,
\begin{align*}
|\mathfrak{P}\underline C|_{2}^2&=
|\mathfrak{P}{C}(\cdot,0)|_{2}^2\\
&=
\int_{\R^d}\frac{|\xi|^2}{1+\sqrt{\mu}|\xi|}|\widehat{{C}}(\xi,0)|_{L^2}^2\\
& \leq 2\int_{\R^d}\int_{-1}^0\frac{|\xi|^2}{1+\sqrt{\mu}|\xi|}|\widehat{{C}}(\xi,z)||\widehat{\pa_z{C}}(\xi,z)|d\xi dz\\
&= \frac{2}{\mu}\int
\frac{\sqrt{\mu}|\xi|}{1+\sqrt{\mu}|\xi|}|\widehat{\sqrt{\mu}\nabla{C}}(\xi,z)||\widehat{\pa_z{C}}(\xi,z)|d\xi
dz.
\end{align*}
From Cauchy-Schwarz inequality and Plancherel's identity, we then get
\begin{align*}
|\mathfrak{P}\underline C|_{2}^2&\leq
\frac{C}{\mu}||\sqrt{\mu}\nabla\tilde{C}||_{L^2(\cS)}||\pa_z
\tilde{C}||_{L^2(\cS)}\\
&\leq \frac{1}{\mu}C(|\zeta|_{W^{1,\infty}})
||\nabla^\mu {\bf C}||^2_{L^2(\Omega)},
\end{align*}
which implies the result.\\
Finally, if ${\bf C}$ does not vanish at the bottom, then one can
apply the result to $\tilde{\bf C}:=\varphi(X,z){\bf C}$, where
$\varphi\in W^{1,\infty}(\Omega)$ is equal to one in a neighborhood of the surface, and
vanishes at the bottom. This yields the result since $\tilde{\bf
  C}_\surf=\underline{C}$ and $\Abs{\nabla^\mu \tilde{\bf
    C}}_2\lesssim \Abs{\nabla^\mu{\bf C}}_2+\Abs{{\bf C}}_2$.
\end{proof}
We can use
\eqref{mu4} and the lemma to get a control on the $L^2$-norm of $\bU^\mu$,
\begin{align*}
\Abs{\bU^\mu}_2^2&\leq
C(\abs{\zeta}_{W^{2,\infty}})\Big(\mu\Abs{\bom_\mu}_2\Abs{{\bf
    A}}_2+\sqrt{\mu}\abs{\proj\tilde\psi}_{2}\babs{(1\!+\!\sqrt{\mu}\abs{D})^{1/2}(\underline A_h\!+\!\eps\sqrt{\mu}\nabla\zeta
  \underline A_v)}_2\Big)\\
&\leq C(\abs{\zeta}_{W^{2,\infty}},\frac{1}{h_{\rm min}})\big(\mu\Abs{\bom_\mu}_2+\sqrt{\mu}\abs{\proj\tilde\psi}_2\big)\Abs{{\bf
    U^\mu}}_2
\end{align*}
and therefore
\begin{align}
\nonumber
\Abs{\bU^\mu}_2&\leq C(\abs{\zeta}_{W^{2,\infty}},\frac{1}{h_{\rm
    min}})\big(\mu\Abs{\bom_\mu}_2+\sqrt{\mu}\abs{\proj\tilde\psi}_2\big)\\
\label{L2mu}
&\leq {\mu} C(\abs{\zeta}_{W^{2,\infty}},\frac{1}{h_{\rm
    min}}) \big(\Abs{\bom_\mu}_{2}+\frac{1}{\sqrt{\mu}}\abs{\omega_{\mu,b}\cdot N^{\mu}_b}_{H_{0}^{-1/2}}\big),
\end{align}
where we used the fact that $\abs{\proj \tpsi}_2\leq
\abs{\nabla\tpsi}_2$ and the following lemma that makes explicit the
dependence on $\mu$ of the estimate given in Lemma \ref{existtildepsi}.
\begin{lemma}\label{lemtpsimu}
The solution $\tpsi$ to the equation
$\Delta\tpsi=\underline\omega_\mu\cdot N^\mu$ satisfies
$$
\abs{\nabla\tpsi}_2\leq
\sqrt{\mu}C(\abs{\zeta}_{W^{1,\infty}},\frac{1}{h_{\rm
    min}})(\Abs{\bom_\mu}_{2}+\frac{1}{\sqrt{\mu}}\abs{\omega_{\mu,b}\cdot N_{b}^\mu}_{H_{0}^{-1/2}}).
$$
and
$$
\abs{(1+\sqrt{\mu}\abs{D})^{1/2}\nabla\tpsi}_{2}\leq
\sqrt{\mu}C(\abs{\zeta}_{W^{1,\infty}},\frac{1}{h_{\rm
    min}})(\Abs{\bom_\mu}_{2}+\frac{1}{\sqrt{\mu}}\abs{\omega_{\mu,b}\cdot N_b^\mu}_{H_{0}^{-1/2}}).
$$
\end{lemma}
\begin{proof}
Multiplying the equation $\Delta\tpsi=\underline\omega_\mu\cdot N^\mu$
by $\tpsi$, we get as in the proof of Lemma \ref{existtildepsi},
\begin{eqnarray*}
\abs{\nabla\tilde\psi}_{2}^2&=&-\int_{\R^d}\tpsi
\underline{\omega}_\mu\cdot N^\mu\\
&=&-\int_{\R^d}\tpsi^{\rm ext}_b
\omega_{\mu,b}\cdot
N^\mu_b-\int_{\cS}(1+\dz\sigma)\nabla^{\sigma,\mu}\tpsi^{\rm ext}\cdot \omega_\mu.
\end{eqnarray*}
where $\omega_\mu=\bom_\mu\circ\Sigma$ (and $\Sigma$ as in the proof of
Lemma \ref{lemlemmu}) and $\nabla^{\sigma,\mu}$ as in \eqref{nablasigmamu},
while $\tpsi^{\rm ext}$ is now given by $\dsp \tpsi^{\rm
  ext}=\frac{\cosh(\sqrt{\mu}(z+1)\abs{D})}{\cosh(\sqrt{\mu}\abs{D})}\tpsi$.
We deduce that
\begin{align*}
\abs{\nabla\tilde\psi}_{2}^2
&\leq\sqrt{\mu}
\babs{\frac{\abs{D}}{(1+\sqrt{\mu}\abs{D})^{1/2}}\tpsi^{\rm
    ext}_\bottff}_2\,\babs{\frac{(1+\sqrt{\mu}\abs{D})^{1/2}}{\sqrt{\mu}\abs{D}}(\bom_{\mu,b}\cdot
N_b^\mu)}_2\\
&+ C(\abs{\zeta}_{W^{1,\infty}},\frac{1}{h_{\rm
    min}})\Abs{\bom_\mu}_2\Abs{\nabla^\mu \tpsi^{\rm ext}}_2\\
&\leq \sqrt{\mu}C(\abs{\zeta}_{W^{1,\infty}},\frac{1}{h_{\rm
    min}})\big(\Vert
\bom_\mu\Vert_{2}+\frac{1}{\sqrt{\mu}}\abs{\omega_{\mu,b}\cdot N_b^\mu}_{H_{0}^{-1/2}}\big)\abs{\nabla\tpsi}_2,
\end{align*}
which gives the first estimate of the lemma. For the second one, we
multiply the equation by $(1+\sqrt{\mu}\abs{D})\psi$ and proceed
as above to get
\begin{align*}
\babs{(1+&\sqrt{\mu}\abs{D})^{1/2}\nabla\tilde\psi}^{2}_{2}
\leq C(\abs{\zeta}_{W^{1,\infty}},\frac{1}{h_{\rm
    min}})\Abs{\bom_\mu}_2\Abs{(1+\sqrt{\mu}\abs{D})\nabla^\mu \tpsi^{\rm ext}}_2\\
&+
\sqrt{\mu}\Babs{(1+\sqrt{\mu}\abs{D})^{1/2}\abs{D}\tpsi^{\rm
    ext}_b}_2\,\Babs{\frac{(1+\sqrt{\mu}\abs{D})^{1/2}}{\sqrt{\mu}\abs{D}}(\bom_{\mu,b}\cdot
N_b^\mu)}_2\\
&\leq \sqrt{\mu}C(\abs{\zeta}_{W^{1,\infty}},\frac{1}{h_{\rm
    min}})\big(\Vert
\bom_\mu\Vert_{2}+\abs{\omega_{\mu,b}\cdot N_b^\mu}_{H_{0}^{-1/2}}\big)\babs{(1+\sqrt{\mu}\abs{D})^{1/2}\nabla\tpsi}_{2},
\end{align*}
(in both terms of the right-hand-side in the first inequality, a
smoothing argument must be used to gain half-a-derivative; the
dependence of this smoothing on $\mu$ is crucial here; it is of the
form $(1+\sqrt{\mu}\abs{D})^{-1/2}$ -- see for instance Lemma 2.20
in \cite{Lannes_book}). The result follows directly.
\end{proof}
Similarly, the dependence on $\mu$ of the $H^1$-estimate of Lemma
\ref{lemlem} must be made precise. For the energy estimates on the
vorticity, it shall be crucial that no $1/\sqrt{\mu}$ singularity
appears in front of the
bottom vorticity term of this $H^1$-estimate. For the
$L^2$-estimate \eqref{L2mu}, this singularity comes from the control
of $\abs{\proj\tpsi}_{2}$; for the $H^1$-estimate this term is
expected to be replaced by $\abs{\proj\tpsi}_{H^1}$, which is also
singular. However, it turns out that a control in terms of
$\abs{\proj\nabla\tpsi}_{2}$ is enough, and that for this term, low
frequencies are damped, and the $1/\sqrt{\mu}$ singularity can be
removed. This is done in the following lemma.
\begin{lemma}\label{lemmasave}
The solution $\tpsi$ to the equation $\Delta\tpsi=\uom_\mu\cdot
N^\mu$ satisfies
$$
\abs{\proj\nabla\tpsi}_2\leq C(\frac{1}{h_{\rm min}},\abs{\zeta}_{W^{1,\infty}})\Abs{\bom_\mu}_2.
$$
\end{lemma}
\begin{proof}
Proceed as in the proof of Lemma \ref{lemtpsimu} to obtain
\begin{align*}
\abs{\proj\nabla\tpsi}_2^2&=-\int_{\R^d}\Big(\frac{D^2}{1+\sqrt{\mu}\abs{D}}\tpsi\Big)\uom_\mu\cdot
N\\
&=-\int_{\cS}(1+\dz\sigma)\nabla^{\sigma,\mu}
\big(\frac{D^2}{1+\sqrt{\mu}\abs{D}}\tpsi^{\rm ext}_0\big)\cdot \omega_\mu,
\end{align*}
where we have chose a different extension of $\tpsi$ than in the proof
of Lemma \ref{lemtpsimu}, namely,
$$
\tpsi_0^{\rm ext}=\frac{\sinh(\sqrt{\mu}(z+1)\abs{D})}{\sinh(\sqrt{\mu}\abs{D})}\tpsi;
$$
in particular, $\tpsi_0^{\rm ext}$ vanishes at the bottom, and this is
the reason why no bottom boundary term appears in the expression
above. One readily deduces that
\begin{equation}\label{save0}
\abs{\proj\nabla\tpsi}_2^2\leq
C(\abs{\zeta}_{W^{1,\infty}},\frac{1}{h_{\rm min}})\Abs{\nabla^{\sigma,\mu}\Big(\frac{D^2}{1+\sqrt{\mu}\abs{D}}\tpsi^{\rm ext}_0\Big)}_2\Abs{\bom_\mu}_2,
\end{equation}
and we therefore need to control
$\Abs{\nabla^{\sigma,\mu}\Big(\frac{D^2}{1+\sqrt{\mu}\abs{D}}\tpsi^{\rm
    ext}_0\Big)}_2$. We distinguish the horizontal and vertical
derivatives involved in $\nabla^{\sigma,\mu}$; we start with the
vertical derivative, which is the the most delicate:\\
- {\it Control of $\Abs{\frac{D^2}{1+\sqrt{\mu}\abs{D}}\dz\tpsi^{\rm
    ext}_0}_2$.} One has
\begin{align*}
\bAbs{\frac{D^2}{1+\sqrt{\mu}\abs{D}}\dz\tpsi^{\rm
    ext}_0}_2^2&= \bAbs{D^2 \frac{\sqrt{\mu}
    \abs{D}}{1+\sqrt{\mu}\abs{D}}\frac{\cosh(\sqrt{\mu}(z+1)\abs{D})}{\sinh(\sqrt{\mu}\abs{D})}\tpsi}_2^2\\
&=\int_{\R^d}\int_{-1}^0\babs{\sqrt{\mu}\abs{\xi}\frac{\cosh(\sqrt{\mu}(z+1)\abs{\xi})}{\sinh(\sqrt{\mu}\abs{\xi})}}^2\babs{\widehat{\proj\nabla\tpsi}}^2
dz d\xi.
\end{align*}
Let $F(r)=\frac{1}{2}r+\frac{1}{4}\sinh(2r)$ ($F$ is the
primitive of $\cosh(r)^2$ vanishing at zero); integrating with respect
to $z$ in the above expression, we get
\begin{align*}
\bAbs{\frac{D^2}{1+\sqrt{\mu}\abs{D}}\dz\tpsi^{\rm
    ext}_0}_2&=
\int_{\R^d}\frac{F(\sqrt{\mu}\abs{\xi})}{\sinh(\sqrt{\mu}\abs{\xi})^2}\babs{\widehat{\proj\nabla\tpsi}}^2
dz d\xi.
\end{align*}
Since $\frac{F(\sqrt{\mu}\abs{\xi})}{\sinh(\sqrt{\mu}\abs{\xi})^2}$ is
uniformly bounded from above (with respect to $\xi$ and $\mu$), we
deduce finally from Plancherel's identity that
$$
\Abs{\frac{D^2}{1+\sqrt{\mu}\abs{D}}\dz\tpsi^{\rm
    ext}_0}_2\lesssim \abs{\proj\nabla\tpsi}_2.
$$
- {\it Control of $\Abs{\frac{D^2}{1+\sqrt{\mu}\abs{D}}\sqrt{\mu}\nabla\tpsi^{\rm
    ext}_0}_2$.} One has
\begin{align*}
\bAbs{\frac{D^2}{1+\sqrt{\mu}\abs{D}}\sqrt{\mu}\nabla\tpsi^{\rm
    ext}_0}_2&\leq \Abs{D^2 \tpsi_0^{\rm ext}}_2\\
&\leq \babs{\frac{D^2}{(1+\sqrt{\mu}\abs{D})^{1/2}}\tpsi}_2=\abs{\proj \nabla\tpsi}_2,
\end{align*}
the second inequality stemming from a smoothing argument similar to
the one used to control the vertical derivative.
\medbreak
Together with \eqref{save0}, these two controls give the result of the lemma.
\end{proof}
We can therefore provide a control of $\nabla^\mu \bU^\mu$ without the
$1/\sqrt{\mu}$ singularity in front of the bottom vorticity component.
\begin{lemma}\label{lemlemmu2}
The following estimate holds,
$$
\Abs{\nabla^\mu \bU^\mu}_2\leq \mu
C(\abs{\zeta}_{W^{2,\infty}},\frac{1}{h_{\rm min}})
\big(\Abs{\bom_\mu}_{2}+\abs{\omega_{\mu,b}\cdot N_b^\mu}_{H_{0}^{-1/2}}\big).
$$
\end{lemma}
\begin{proof}
The proof is similar to that one of Lemma \ref{lemlem}. Here we
present the main differences. The dimensionless version of
\eqref{noel1} is
\begin{align}
\nonumber
\int_{\Om}\abs{\nabla^\mu \bU^\mu}^2&=\mu^2\int_{\Om}\abs{\omega_\mu}^2
+2\mu\int_{\R^d} \uV\cdot\nabla\uw-\mu^2\eps\int_{\R^d}\left(\nabla^\perp\zeta\cdot\nabla\right)
\uV^\perp\cdot \uV\\
\label{sarde}
&:= \mu^2\int_{\Om}\abs{\omega_\mu}^2+\mu I_1+\mu^2 I_2.
\end{align}
Now we proceed to bound the integral $I_1$ and $I_2$. We recall that $\uV=\nabla^\perp\widetilde{\psi}-\eps\uw\nabla\zeta$ and we can write
\begin{align*}
I_1
&\leq 2\eps\int_{\R^d}\babs{\nabla \zeta\cdot \nabla\uw \uw}\\
&\leq
C(|\zeta|_{W^{2,\infty}}) \Abs{\nabla^\mu \bU^\mu}_2\Abs{\bU^\mu}_2,
\end{align*}
where we used the fact that, since $w_b=0$, one has
$|\uw|^2_{2}\leq ||{\bf w}||_{2}||\pa_z{\bf w}||_{2}$. Together with
\eqref{L2mu}, this yields
\begin{align*}
I_1&\leq C(|\zeta|_{W^{2,\infty}},\frac{1}{h_{\rm
    min}})\big(\mu\Abs{\bom_\mu}_2+\sqrt{\mu}\abs{\proj\tilde\psi}_2\big)\Abs{\nabla^\mu
  \bU^\mu}_2\\
&\leq \sqrt{\mu} C(|\zeta|_{W^{2,\infty}},\frac{1}{h_{\rm
    min}})\big(\sqrt{\mu}\Abs{\bom_\mu}_2+\abs{\omega_{\mu,b}\cdot
N_b}_{H_{0}^{-1/2}}\big)\Abs{\nabla^\mu
  \bU^\mu}_2,
\end{align*}
the last inequality stemming from the observation that
$\abs{\proj\tpsi}_2\leq \abs{\nabla\tpsi}_2$ and Lemma \ref{lemtpsimu}.\\
For $I_2$, after
substituting $\uV=\nabla^\perp\widetilde{\psi}-\eps\uw\nabla\zeta$,
the integrand can be written as a sum of terms of the form
$$
C(\zeta)\partial^2\tpsi\partial\tpsi, \quad
C(\zeta) w\pa^2\tpsi, \quad C(\zeta)\pa
{\uw}\pa\tpsi, \quad
C(\zeta){\uw}\pa\tpsi, \quad
C(\zeta){\uw}^2,
$$
where $C(\zeta)$ stand for any polynomial expression in the first and
second order derivatives of $\zeta$, while $\pa$ and $\pa^2$ stand
here for any first and second order partial derivative
respectively. We consequently get
$$
I_2\leq
C(\abs{\zeta}_{W^{2,\infty}})\big[\abs{(1+\sqrt{\mu}\abs{D})^{1/2}\nabla\tpsi}_2(\abs{\proj\nabla\tpsi}_2+\abs{\proj
  {\uw}}_2+\abs{{\uw}}_2)+\abs{\underline{w}}_2^2\big].
$$
Remarking that $\abs{\proj f}_2\leq
\mu^{-1/2}\abs{(1+\sqrt{\mu}\abs{D})^{1/2}f}_2$, recalling that
$\abs{\underline{\bf w}}_2^2\leq \Abs{{\bf w}}_2\Abs{\dz{\bf w}}_2$, and with the help of
\eqref{L2mu} and Lemmas \ref{lemlemmu}, \ref{lemtpsimu} and \ref{lemmasave}, this yields
\begin{align*}
I_2\leq&
\sqrt{\mu}C(\abs{\zeta}_{W^{2,\infty}},\frac{1}{h_{\rm
    min}})\big(\sqrt{\mu}\Abs{\bom_\mu}_2+\frac{1}{\sqrt{\mu}}\abs{\omega_{\mu,b}\cdot
  N_b}_{H_{0}^{-1/2}}\big) \Abs{\bom_\mu}_2\\
&+
C(\abs{\zeta}_{W^{2,\infty}},\frac{1}{h_{\rm
    min}})\big(\Abs{\bom_\mu}_2+\frac{1}{\sqrt{\mu}}\abs{\omega_{\mu,b}\cdot
  N_b}_{H_{0}^{-1/2}}\big)\Abs{\nabla^\mu {\bf U}^\mu}_2.
\end{align*}
Gathering the estimates on $I_1$ and $I_2$, one readily gets
\begin{align*}
I_1+\mu I_2&\leq
\sqrt{\mu} C(\abs{\zeta}_{W^{2,\infty}},\frac{1}{h_{\rm min}})\big(
\sqrt{\mu}\Abs{\bom_\mu}_2+\abs{\omega_{\mu,b}\cdot
  N_b}_{H_{0}^{-1/2}}\big)\Abs{\nabla^\mu {\bf
  U}^\mu}_2\\
&+{\mu}C(\abs{\zeta}_{W^{2,\infty}},\frac{1}{h_{\rm min}})\big(
\sqrt{\mu}\Abs{\bom_\mu}_2+\abs{\omega_{\mu,b}\cdot
  N_b}_{H_{0}^{-1/2}}\big) \Abs{\bom_\mu}_2;
\end{align*}
with
\eqref{sarde}, this yields the result.
\end{proof}
The result of the proposition directly follows from \eqref{L2mu},
Lemma \ref{lemlemmu2} (and the aforementioned estimates on the
irrotational part).
\end{proof}

For higher order regularity estimates, we use as in \S
\ref{sectstraight} the straightened version of the velocity and
vorticity introduced in \S \ref{statMRD}.
The proposition below shows how the higher order estimate of
Corollary \ref{corHkk} depends on $\mu$.
\begin{proposition}\label{horizontalregularitymu} Let $N\in {\mathbb
    N}$, $N\geq 5$. Then for all $0\leq l\leq k\leq N-1$, the straightened
  velocity $U^\mu=\mathbb{U}^{\sigma,\mu}[\ep\zeta](\psi,\omega_\mu)$ satisfies the estimate
\begin{align*}
\Abs{\nabla^\mu U^\mu}_{H^{k,l}}\leq \mu M_N \Big(\abs{\proj\psi}_{H^1}+\hspace{-2mm}\sum_{1<\abs{\alpha}\leq k+1}\hspace{-1mm}\abs{\proj
  \psi_{(\alpha)}}_2+\Abs{\omega_\mu}_{H^{k,l}}+\abs{\Lambda^k\omega_{\mu,b}\cdot N_b}_{H_{0}^{-1/2}}\Big),
\end{align*}
where $\psi_{(\alpha)}:=\pa^\alpha\psi-\eps\uw\pa^\alpha\zeta$ (and $\underline{w}=\mathbb{w}^{\sigma}[\ep\zeta](\psi,\omega_\mu)_\surff$).
\end{proposition}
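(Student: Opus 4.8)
The plan is to derive Proposition \ref{horizontalregularitymu} as the dimensionless, $\mu$-dependent analogue of Theorem \ref{horizontalregurality} and Corollary \ref{corHkk}, by rescaling the diffeomorphism-straightened variational identity \eqref{transfineq} and carefully tracking every factor of $\mu$. First I would set up the rescaled div-curl problem: straightening \eqref{divrotND} via $\Sigma(X,z)=(X,z+(1+z)\eps\zeta)$ yields that $U^\mu=\mathbb{U}^{\sigma,\mu}[\eps\zeta](\psi,\omega_\mu)$ solves the weak formulation
$$
\forall C\in H^1(\cS),\quad \int_\cS \nabla^\mu_{X,z} U^\mu\cdot P^\mu(\Sigma)\nabla^\mu_{X,z} C=\mu\int_\cS(1+\dz\sigma)\omega_\mu\cdot\curls^{\!\mu} C+\sqrt{\mu}\int_{\R^d}F^\mu\cdot\underline{C},
$$
where $P^\mu(\Sigma)$ is the obvious $\mu$-weighted version of $P(\Sigma)$ (still coercive with constant independent of $\mu$, by Lemma \ref{lemlemmu}.i), and $F^\mu$ is the dimensionless surface term analogous to $F$ in \eqref{defF}, now involving $\uw$, $\nabla\tpsi$ and the good unknowns $\psi_{(\alpha)}=\pa^\alpha\psi-\eps\uw\pa^\alpha\zeta$. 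The base case $k=0$ is exactly Proposition \ref{prop1mu}, which already gives the $\mu$-factor $\Abs{\nabla^\mu U^\mu}_2\leq \mu C(\ldots)(\Abs{\omega_\mu}_2+\abs{\omega_{\mu,b}\cdot N_b}_{H_0^{-1/2}}+\abs{\proj\psi}_{H^1})$.

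Next I would run the induction on $k$ exactly as in the proof of Theorem \ref{horizontalregurality}: take $C=\partial^{2\beta}U^\mu$ (with the usual regularization caveat) with $|\beta|=k\leq N-1$, integrate by parts, and split into $I_1+I_2+I_3$. For $I_1$ (the commutator with $P^\mu(\Sigma)$) the commutator estimates are unchanged and one gets $|I_1|\leq M_N\Abs{\partial^\beta\nabla^\mu U^\mu}_2\Abs{\Lambda^{k-1}\nabla^\mu U^\mu}_2$ with no extra $\mu$. For $I_2$ the factor $\mu$ in front of $\int(1+\dz\sigma)\omega_\mu\cdot\curls^{\!\mu}\partial^{2\beta}U^\mu$ is the key: writing $\curls^{\!\mu}\partial^{2\beta}U^\mu$ in terms of $\partial^\beta\nabla^\mu U^\mu$ and using the product estimate \eqref{prodHorm} as before gives $|I_2|\leq \mu M_N\Abs{\Lambda^k\omega_\mu}_2\Abs{\partial^\beta\nabla^\mu U^\mu}_2$. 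For $I_3=\sqrt{\mu}\int_{\R^d}\partial^\beta F^\mu\cdot\partial^\beta\underline{U^\mu}$ one reproduces the $I_{31},I_{32},I_{33}$ decomposition; the products $f\partial_j g$ on $\R^d$ are handled with \eqref{prodP}, and crucially the trace/Poincaré estimates must be invoked in their $\mu$-weighted form (Lemma \ref{lemlemmu}.ii–iii) so that $\abs{\partial^\beta\underline{U^\mu}}_{H^{1/2}}\lesssim \mu^{-1/2}\Abs{\nabla^\mu\partial^\beta U^\mu}_2$; the $\sqrt{\mu}$ in front then combines with this $\mu^{-1/2}$ to give no net singularity, while the $\uw$-terms are controlled by $\Abs{\nabla^\mu U^\mu}$ and the $\nabla\tpsi$-terms by the $\mu$-dependent version of Lemma \ref{estimtpsi} — i.e. Lemmas \ref{lemtpsimu} and \ref{lemmasave} — which precisely deliver $\abs{\Lambda^k\nabla\tpsi}$-type bounds by $\sqrt{\mu}\,(\Abs{\omega_\mu}+\mu^{-1/2}\abs{\omega_{\mu,b}\cdot N_b}_{H_0^{-1/2}})$, so that the bottom-vorticity term appears with a clean (non-singular) coefficient. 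Collecting, one obtains $\Abs{\partial^\beta\nabla^\mu U^\mu}_2^2\leq \mu^2 M_N^2(\ldots)$ after absorbing $\Abs{\partial^\beta\nabla^\mu U^\mu}_2$, and the finite induction on $|\beta|\leq k$ closes the $H^{k,1}$ estimate. Finally, to upgrade $H^{k,1}$ to $H^{k,l+1}$ one repeats the argument of Corollary \ref{corHkk}: the identity \eqref{dzU} becomes a $\mu$-weighted identity $\dz U^\mu=\ldots(-(\nabla\cdot V^\mu)\tilde N^\mu+\mu\,\omega_\mu\times\tilde N^\mu-\ldots)$ trading one vertical derivative against horizontal ones (plus $\mu\Abs{\omega_\mu}_{H^{k,l}}$), and Lemma \ref{lemHkk} applies verbatim; a finite induction on $l$ gives $\Abs{\dz U^\mu}_{H^{k,l}}\leq C(\Abs{\sigma}_{H^N})(\Abs{U^\mu}_{H^{k+1,l}}+\mu\Abs{\omega_\mu}_{H^{k,l}})$ and hence the stated bound (recalling $\Abs{\sigma}_{H^N}\lesssim \eps\abs{\zeta}_{H^N}$ and $\eps\leq\eps_0$).

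The main obstacle I anticipate is the bookkeeping of $\mu$-powers in the surface term $I_3$: one must verify that every place where the standard trace lemma $\abs{\underline{C}}_{H^{1/2}}\lesssim\Abs{C}_{H^1}$ was used in the proof of Theorem \ref{horizontalregurality} is replaced by the sharp $\mu$-weighted trace inequality of Lemma \ref{lemlemmu} (with the $(1+\sqrt{\mu}|D|)^{1/2}$ weight, i.e. the $\proj$ and $H_0^{-1/2}$ norms as redefined in \S\ref{sect5}), and that the $\nabla\tpsi$-contributions — which in the dimensional case were absorbed into $\Abs{\Lambda^k\omega}_{2,b}$ via Lemma \ref{estimtpsi} — are now absorbed via Lemmas \ref{lemtpsimu}–\ref{lemmasave} without reintroducing a $1/\sqrt{\mu}$ in front of the bottom vorticity. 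This is exactly the subtlety flagged in the text before Lemma \ref{lemmasave}: the naive estimate produces $\mu^{-1/2}\abs{\omega_{\mu,b}\cdot N_b}_{H_0^{-1/2}}$, and one must instead use that $\abs{\proj\nabla\tpsi}_2$ (low frequencies damped) is bounded by $\Abs{\omega_\mu}_2$ alone. Once this single point is handled correctly, the rest of the proof is a routine, if tedious, rescaling of the arguments already given in \S\ref{secthigher}.
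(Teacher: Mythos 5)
Your proposal follows essentially the same strategy as the paper: you correctly identify the coercive $P^\mu(\Sigma)$ variational identity tested against $\partial^{2\beta}U^\mu$, the decomposition into $I_1,I_2,I_3$ with the $\mu$-factor emerging from $I_2$, the need to use the $\mu$-weighted trace inequalities of Lemma \ref{lemlemmu} and the $\tilde\psi$-estimates of Lemmas \ref{lemtpsimu}--\ref{lemmasave} in $I_3$, the base case from Proposition \ref{prop1mu}, and the final upgrade to $H^{k,l}$ via the $\mu$-weighted version of the $\dz U$ identity. The only slight imprecision is your normalization of the surface term (you write $\sqrt{\mu}F^\mu$ with $F^\mu$ "analogous to $F$"; the paper's $F^\mu$ carries nonuniform powers $\sqrt{\mu},\mu^{3/2},\mu$ in its components so no external prefactor is needed), but you flagged the $\mu$-bookkeeping as the delicate point and the net power counting you describe is the right one.
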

\begin{proof}
The proof is based on the dimensionless version of \eqref{transfineq}
which reads
\begin{align}\label{weakmubanda}
\int_{S}\nabla^\mu U^\mu \cdot P^\mu(\Sigma)\nabla^\mu U^\mu =&\mu\int_{\cS}(1+\sigma_z)\omega_\mu\cdot
\nabla^{\sigma,\mu}\times C +\int_{\R^d}F^\mu\cdot \underline{C},
\end{align}
for all $C\in H^1(S)$, and with $\nabla^{\sigma,\mu}$ as in
\eqref{nablasigmamu} while
\begin{align*}
P^\mu(\Sigma)=(1+\sigma_z)\left(J^\mu_{\Sigma}\right)^{-1}\left(J^\mu_{\Sigma}\right)^{-1\,\,T},\quad\mbox{
with }\quad
\left(J^\mu_{\Sigma}\right)^{-1\,\,T}=\left(\begin{array}{ccc} 1 & 0 & \frac{-\sqrt{\mu\sigma_x}}{1+\sigma_z}\\
0 & 1 & \frac{-\sqrt{\mu}\sigma_y}{1+\sigma_z}\\ 0 & 0 & \frac{1}{1+\sigma_z},\end{array}\right)
\end{align*}
and
\begin{align*}
F^\mu&=\left(N^\mu\times\nabla^\mu \times \bU^\mu
+(N^\mu\cdot\nabla^\mu)\bU^\mu\right)_\surf\\
&=(\sqrt{\mu}\nabla\uw-\mu^\frac{3}{2}(\nabla\zeta^\perp\cdot\nabla)\uV^\perp
,-\mu\nabla\cdot \uV).
\end{align*}
The key point is that the matrix $P^\mu(\Sigma)$ is uniformly coercive
with respect to $\mu$ so that the same structure as for Proposition
\ref{horizontalregurality} can be used for the proof. In particular,
\eqref{eqI0} is replaced by
\begin{align*}
\Abs{\nabla^\mu\pa^\beta U^\mu}_2^2\leq M_N(I_1+I_2+I_3),
\end{align*}
with the following definition and upper bounds on $I_i$ , $i=1,2,3$:\\
- \emph{Upper bound for $I_1$}. Proceeding exactly as for \eqref{eqI1} we get
\begin{align*}
I_1&=\int_{\cS}\nabla^\mu \partial^\beta
U\cdot[\partial^\beta,P^\mu(\Sigma)]\nabla^\mu U\\
&\leq M_N ||\nabla^\mu \pa^\beta U^\mu||_{2}||\Lambda^{k-1}\nabla^\mu U^\mu||_{2}.
\end{align*}
- \emph{Upper bound for $I_2$}. With straightforward adaptations,
  we get as for Proposition \ref{horizontalregurality} that
\begin{align*}
I_2&=\int_{S}\Lambda^k\omega_\mu \cdot \Lambda^{-k}\nabla^{\sigma,\mu}\times\pa^{2\beta}U^\mu\\
&\leq \mu M_N||\Lambda^k\omega_\mu||_{2}||\pa^\beta\nabla^\mu U^\mu||_{2}.
\end{align*}
- \emph{Upper bound for $I_3$} We split $I_3$ into three terms
\begin{align*}
I_3=&2\mu\int_{\R^d}\pa^\beta\nabla\uw\cdot\pa^\beta\uV-
\mu^2\eps\int_{\R^d}(\nabla^\perp\zeta\cdot\nabla)\pa^\beta\uV^\perp\cdot\pa^\beta\uV\\
& -\mu^2\eps\int_{\R^d}[\pa^\beta,\nabla^\perp\zeta]\cdot\nabla\uV^\perp\cdot\pa^\beta\uV\\
=& I_{31}+I_{32}+I_{33}.
\end{align*}
Replacing the product estimate \eqref{prodP} by
$$
\forall f,g\in H^{1/2}(\R^d),\qquad \int_{\R^d} f\partial_j g \leq
\abs{\proj f}_2\babs{(1+\sqrt{\mu}\abs{D})^{1/2}g}_{2} \quad (1\leq j\leq d),
$$
and using the $\mu$-dependent version of the trace lemma furnished by
the second point of Lemma \ref{lemlemmu}, the upper bound on $I_{31}$
given in the proof of Proposition \ref{horizontalregurality} can be adapted into
$$
{I_{31}}\leq M_N\big(\mu\abs{\proj\big(\nabla \partial^\beta\psi-\eps
  \uw\nabla\partial^\beta \zeta\big)}_2+\sqrt{\mu}\Abs{\Lambda^{k-1}\nabla^\mu U^\mu}_{2}\big)\Abs{\Lambda^k
\nabla^\mu U^\mu}_{2}.
$$
For $I_{32}$, we proceed as for $I_{31}$ to get
\begin{align*}
I_{32}\leq \mu^2 M_N\Big[&\big(\abs{\proj\big(\nabla \partial^\beta\psi-\eps
  \uw\nabla\partial^\beta
  \zeta\big)}_2+\abs{\proj\partial^\beta\tpsi}_2+\abs{\proj
  \Lambda^{k-1}\underline{w}}_2\big)\\
&\times
\frac{1}{\sqrt{\mu}}\abs{(1+\sqrt{\mu}\abs{D})^{1/2}\pa^\beta\underline{ U}^\mu}_2+\abs{\partial^\beta \underline{w}}_2^2\Big];
\end{align*}
with Lemma \ref{lemlemmu} and the inequality
$\abs{\pa^\beta\underline{w}}_2^2\leq \Abs{\pa^\beta w}_2\Abs{\pa^\beta
\dz w}_2$, we therefore get
\begin{align*}
I_{32}\leq  M_N &\big(\mu\abs{\proj\big(\nabla \partial^\beta\psi-\eps
  \uw\nabla\partial^\beta
  \zeta\big)}_2+\mu\abs{\proj\partial^\beta\tpsi}_2+{\sqrt{\mu}}\Abs{
  \Lambda^{k-1}\nabla^\mu{ w}}_2\big)\Abs{\Lambda^k\nabla^\mu{U}^\mu}_2.
\end{align*}
Finally, one readily gets that
\begin{align*}
I_{33}&\leq M_N \Abs{\Lambda^{k-1}\nabla^\mu U^\mu}_2\Abs{\Lambda^k
  \nabla^\mu U^\mu}_2.
\end{align*}
Summing up the upper bounds on $I_{31}$, $I_{32}$ and $I_{33}$, we
finally get
$$
I_{3}\leq  M_N \Big(\mu\abs{\proj\big(\nabla \partial^\beta\psi-\eps
  \uw\nabla\partial^\beta
  \zeta\big)}_2+\mu\abs{\proj\partial^\beta\tpsi}_2+\Abs{
  \Lambda^{k-1}\nabla^\mu{
    U^\mu}}_2\Big)\Abs{\Lambda^k\nabla^\mu{U}^\mu}_2.
$$
\medbreak
As a result of these upper bounds on $I_1$, $I_2$ and $I_3$, we obtain
\begin{align*}
\Abs{\nabla^\mu\partial^\beta U^\mu}_2^2\leq M_N \Big(&\mu\abs{\proj\big(\nabla \partial^\beta\psi-\eps
  \uw\nabla\partial^\beta
  \zeta\big)}_2+\mu\abs{\proj\partial^\beta\tpsi}_2+\Abs{
  \Lambda^{k-1}\nabla^\mu{
    U^\mu}}_2\\
&+\mu\Abs{\Lambda^k \omega_\mu}_2\Big)\times \Abs{\Lambda^k\nabla^\mu{U}^\mu}_2.
\end{align*}
with $k=|\beta|$.
With the same induction method as in the proof of
Proposition \ref{horizontalregurality},  one then deduces
$$
\Abs{\Lambda^k\nabla^\mu U^\mu}_2\leq M_N
\big(\mu\sum_{1<\abs{\alpha}\leq k+1}\abs{\proj \psi_{(\alpha)}}_2+\mu\abs{\proj\partial^\beta\tpsi}_2 +\mu\Abs{\Lambda^k \omega_\mu}_2+\Abs{
  \nabla^\mu{
    U^\mu}}_2\big);
$$
with the estimate on
$\nabla^\mu U^\mu$ provided by Proposition \ref{prop1mu}, and the following
estimate that generalizes Lemma \ref{lemtpsimu} in the spirit of Lemma \ref{estimtpsi}
$$
\abs{\proj\partial^\beta\tpsi}\leq \abs{\Lambda^k \nabla\tpsi}_2\leq\sqrt{\mu}
C(\abs{\zeta}_{W^{1,\infty}},\frac{1}{h_{\rm
    min}})(\Abs{\Lambda^k\omega_\mu}_{2}+\frac{1}{\sqrt{\mu}}\abs{\omega_{\mu,b}\cdot N_\mu}_{H_{0}^{-1/2}}),
$$
we finally get a dimensionless version of the upper bound
of Proposition \ref{horizontalregurality}
\begin{align*}
\Abs{\Lambda^k\nabla^\mu U^\mu}_2\leq \mu M_N \Big(&\abs{\proj\psi}_{H^1}+\hspace{-3mm}\sum_{1<\abs{\alpha}\leq k+1}\hspace{-2mm}\abs{\proj
  \psi_{(\alpha)}}_2+\Abs{\Lambda^k \omega_\mu}_2+\abs{\Lambda^k\omega_{\mu,b}\cdot N_b}_{H_{0}^{-1/2}}\Big).
\end{align*}
Following the same steps as in the proof of Corollary \ref{corHkk}, we
deduce an $H^{k,l}$ estimate of $\nabla^\mu U^\mu$ from this $L^2$
estimate of $\Lambda^k \nabla^\mu U^\mu$.
\end{proof}

The adaptation to the dimensionless case of the other results
presented in \S \ref{sectproofpropshape} and \S \ref{sectAIG} are
then straightforward and we therefore omit them.
\subsubsection{A priori estimates for the vorticity}\label{sectVED}

 We prove here that the
estimates of Proposition \ref{EEvorticity} can be replaced in the
dimensionless case by
\begin{align}
\label{estvortmu}
\frac{d}{dt} \Big(\Abs{\omega_\mu}_{H^k}^2+\abs{\omega_{\mu,b}\cdot
  N_b}_{H_{0}^{-1/2}}\Big)\leq \eps  \mfN(\zeta,\psi,\omega).
\end{align}
We proceed as in the proof of Proposition
\ref{EEvorticity}, to obtain the following dimensionless version of the
vorticity equation \eqref{eqqq}
$$
 \dt\omega_\mu+\eps{\mathbb V}^{\sigma}[\eps\zeta](\psi,\omega_\mu)\cdot\nabla
 \omega+\frac{\eps}{\mu}{\mathbb a}[\eps\zeta](\psi,\omega_\mu)\dz\omega_\mu=\frac{\eps}{\mu}\omega_\mu\cdot\nabla^{\sigma,\mu} {\mathbb U}^{\sigma,\mu}[\eps\zeta](\psi,\omega_\mu)
$$
with, denoting $\tilde N^\mu=(-\sqrt{\mu}\nabla{\sigma}^T,1)^T$ (so
that $\tilde N^\mu_{\vert_{z=0}}=N^\mu$),
\begin{align*}
{\mathbb a}[\eps\zeta](\psi,\omega_\mu)&=\frac{1}{1+\dz\sigma}\Big( {\mathbb
   U}^{\sigma,\mu}[\eps\zeta](\psi,\omega_\mu)\cdot \tilde N^\mu-\mu\dt \sigma\Big)\\
&=\frac{1}{1+\dz\sigma}\Big( {\mathbb
   U}^{\sigma,\mu}[\eps\zeta](\psi,\omega_\mu)\cdot \tilde N^\mu-\frac{z+H_0}{H_0}
 \underline{\mathbb
   U}^{\sigma,\mu}[\eps\zeta](\psi,\omega_\mu)\cdot N^\mu\Big),
\end{align*}
and we are led to study the following equation instead of \eqref{vortL2}
$$
\dt \omega_\mu+\eps V\cdot \nabla  \omega +\frac{\eps}{\mu}a \dz
\omega_\mu=\eps f,
$$
(with $V={\mathbb V}^\sigma[\eps\zeta](\psi,\omega_\mu)$ and $a={\mathbb
  a}[\eps\zeta](\psi,\omega_\mu)$). The $L^2$-estimate
\eqref{estvortL2} must be refined to get a good dependence on $\mu$. Taking the
$L^2$ scalar product of this equation with
$\omega_\mu$, we get
\begin{eqnarray*}
\frac{1}{2}\dt \Abs{\omega_\mu}_{L^2}^2-\frac{\eps}{2}\int_\cS
(\nabla  \cdot V+\frac{1}{\mu}\dz a)\abs{\omega_\mu}^2
=\eps\int_\cS  f \cdot \omega_\mu,
\end{eqnarray*}
and therefore
$$
\dt \Abs{\omega_\mu}_{2}^2
\lesssim\frac{\eps}{\mu}\big(\Abs{\nabla^\mu U^\mu}_{\infty}+\sqrt{\mu}\Abs{U^\mu}_\infty\big)\Abs{\omega_\mu}_{2}^2+\eps\Abs{\omega_\mu}_2\Abs{f}_{2}.
$$
From the continuous embedding $H^{N-1}(\cS)\subset L^\infty(\cS)$ and
the fact that $\Vert U^\mu \Vert_{H^{N-1}}\leq \Vert U^\mu \Vert_2
+\frac{1}{\sqrt{\mu}}\Vert \nabla^\mu U^\mu \Vert_{H^{N-2}}$, we
deduce that
\begin{eqnarray*}
\dt \Abs{\omega_\mu}_{2}^2
&\lesssim& \frac{\eps}{\mu}\big(\Abs{\nabla^\mu
  U^\mu}_{H^{N-1}}+\sqrt{\mu}\Abs{U^\mu}_2\big)\Abs{\omega_\mu}_{2}^2+\Abs{\omega_\mu}_2\Abs{f}_{2}\\
&\lesssim & \eps \mfN(\zeta,\psi,\bom) \Abs{\omega_\mu}_{2}^2+\Abs{\omega_\mu}_2\Abs{f}_{2}.,
\end{eqnarray*}
the second inequality stemming from Propositions \ref{prop1mu} and
\ref{horizontalregularitymu}. Using this generalization of
\eqref{estvortL2}, we obtain the estimate on the $H^k$-norm of
$\omega_\mu$ of\eqref{estvortmu}. For the bottom vorticity, we have to
replace \eqref{estbotvort} by
$$
\dt (\omega_{\mu,b}\cdot N_b)+\eps\nabla\cdot (\omega_{\mu,b}\cdot N_b V_b)=0,
$$
and therefore
\begin{align*}
\dt\abs{\omega_{\mu,b}\cdot N_b}_{H^{-1/2}_0}&\leq \eps\babs{(1+\sqrt{\mu}\abs{D})^{1/2}\big((\omega_{\mu,b}\cdot
  N_b)V_b\big)}_{2}\\
&\leq \eps \mfN(\zeta,\psi,\omega_\mu),
\end{align*}
the last inequality stemming from the trace lemma, standard product
estimates, and Proposition \ref{horizontalregularitymu}. This
completes the proof of \eqref{estvortmu}.

\subsection{A priori estimates on the full equations}\label{sectAPD}

In dimensionless variable, the good unknown becomes
$$
\forall \alpha\in \N^d\backslash\{0\}, \qquad
U^\mu_{(\alpha)}=\partial^\alpha U^\mu-\partial^\alpha\sigma
\dz^\sigma U^\mu.
$$
With a straightforward adaptation,  and using the div-curl estimates derived in \S \ref{DCpar}, the quasilinear structure exhibited
in Proposition \ref{prop3} takes the following form in dimensionless variables,
\begin{eqnarray*}
(\dt +\eps \uV\cdot\nabla)
\partial^\alpha\zeta-\frac{1}{\mu}\partial_k\uU^\mu_{(\beta)}\cdot
N^\mu&=&\eps R^1_\alpha,\\
(\dt +\eps \uV\cdot \nabla) (U^\mu_{(\beta)\parallel}\cdot {\bf e}_k)+\mfa \partial^\alpha
\zeta&=&\eps R^2_\alpha,\\
(\dt^\sigma+\frac{\eps}{\mu}U^\mu\cdot\nabla^{\sigma,\mu})\partial^\beta \omega_\mu&=&\eps R^3_\beta,
\end{eqnarray*}
with $\mfa=1+(\dt+\eps \uV\cdot\nabla)\uw$ and where
$$
\abs{R^1_\alpha}_2+\abs{\proj R^2_\alpha}_2+\Abs{R^3_\beta}_2\leq \mfN(\zeta,\psi,\omega_\mu).
$$
The following dimensionless version of \eqref{esten3} can then be derived
along the same lines as in the dimensional case,
$$
\frac{1}{2}\dt
(\mfa \partial^\alpha\zeta,\partial^\alpha\zeta)+
\big((\dt+\eps\uV\cdot\nabla) (\Vpb^\mu\cdot {\bf
  e}_k),\frac{1}{\mu}\partial_k\uU^\mu_{(\beta)}\cdot N^\mu\big)\leq
\eps \mfN(\zeta,\psi,\omega_\mu),
$$
from which, proceed as for \eqref{EEmain}, we get
$$
\dt\left\lbrace
(\mfa \partial^\alpha\zeta,\partial^\alpha\zeta)+ \int_\cS(1+\dz\sigma)\abs{\partial_k U^\mu_{(\beta)}}_2^2\right\rbrace
\leq \eps \mfN(\zeta,\psi,\omega_\mu).
$$
Together with the vorticity estimate \eqref{estvortmu}, and mimicking
Steps 4 and 5 of the proof of Proposition \ref{prop3000}, we get that
for all $0\leq t\leq T/\eps$, one has
$$
\cE^N(\zeta,\psi,\bom_\mu)(t)\leq C(T,\frac{1}{a_0},\frac{1}{h_{\rm min}},\cE^N(\zeta^0,\psi^0,\omega_\mu^0)).
$$
By a classical prolongation argument, this allows one to extend the
solution provided by Theorem \ref{theo1} on a time interval
$[0,T/\eps]$, with $T$ independent of $\eps$ and $\mu$, thus
completing the proof of Theorem \ref{theomainND}.

\subsection{Justification of the shallow water equations with
  vorticity}\label{sectSWvort}

Shallow water models provide simplified models for the propagation of
water waves when $\mu\ll 1$. In the irrotational case, they are
typically stated as a set of equations coupling the evolution of the
surface elevation $\zeta$ to the vertically averaged horizontal
velocity $\ovV$,
$$
\ovV(t,X,z)=\frac{1}{h(t,X)}\int_{-1}^{\eps \zeta(t,X)} {\bf
  V}(t,X,z)dz\qquad (h=1+\eps\zeta).
$$
Various models exist, depending on the precision of the
approximation, and possible smallness assumptions on $\eps$. The
derivation and justification of these shallow water models is now well
understood; we refer to
\cite{Lannes_book} for references and a detailed description of the many shallow
water models and for their rigorous justification. In the so called
{\it shallow water, large amplitude} regime corresponding to
$$
\eps=1,\qquad \mu\ll 1,
$$
one obtains for instance at first order (i.e. up to $O(\mu)$ terms)
the well known Nonlinear Shallow Water (or Saint-Venant) equations
\begin{equation}\label{NSW}
\left\lbrace
\begin{array}{l}
\dt \zeta+\nabla\cdot (h\ovV)=0,\\
\dt \ovV+\ovV\cdot \nabla\ovV+\nabla\zeta=0.
\end{array}\right.
\end{equation}
A byproduct of the derivation and justification of this model is that
the horizontal velocity ${\bf V}$ is, at the precision of the model,
independent of the vertical variable $z$ (in the physics literature,
this is is often an assumption, called ``columnar motion''
assumption).
A consequence is that at the precision of the model, the velocity at
the surface $\uV$ is equal to the averaged velocity $\ovV$ for which
\eqref{NSW} is derived. This is of importance since direct
experimental data are more accessible for $\uV$ (using buoys for
instance) than for $\ovV$.

In the rotational case, the picture is less clear and there does not
exist any fully justified shallow water model. Even at the formal
level, there is no real consensus in the physics literature. The
assumption of ``columnar motion'' is often made to derive asymptotic
models, but, as shown below, it is in general wrong at the precision
of the model. As shown in \cite{CastroLannes} by the authors, a consequence of this fact is that even though the
Nonlinear Shallow Water model \eqref{NSW} remains the
same\footnote{This statement is not obvious and deserves a proof!}
system \eqref{NSW} in presence of
vorticity when written in $(\zeta,\ovV)$ variables, the recovery of
the velocity $\uV$ at the surface requires the resolution of one more
equation,
\begin{equation}\label{NSWcor}
\uV=\ovV-\sqrt{\mu} Q,
\quad\mbox{ with }\quad
\dt Q+\ovV\cdot \nabla Q+Q\cdot \nabla \ovV=0,
\end{equation}
The goal of this section is to show that Theorem \ref{theomainND}
provides all the necessary bounds on the solution to justify the
formal computations of \cite{CastroLannes}, and therefore to bring a
full justification of the Nonlinear Shallow Water
model\footnote{Other, more precise, models are derived in
  \cite{CastroLannes}, showing for instance the creation of horizontal
vorticity from an initially purely vertical vorticity. A good
local well-posedness theory for these models is the only thing to prove
in order to deduce a full justification along the procedure described here for
the NSW model \eqref{NSW}-\eqref{NSWcor}.} \eqref{NSW}-\eqref{NSWcor} as a model for the description of
shallow water waves in presence of vorticity.

\subsubsection{Derivation of the model}\label{sectderivSW}

For the sake of completeness, we sketch here the derivation of the NSW
model \eqref{NSW}-\eqref{NSWcor}. We refer to \cite{CastroLannes} for
details. For clarity, we also use the notation
$$
f=O(\mu^\alpha) \iff \exists k\geq 0,\forall n\geq 0, \quad \abs{f}_{H^n}\leq \mu^\alpha C({\mathcal
E}^{n+k}(\zeta,\psi,\omega_\mu))
$$
for function defined on $\R^d$, and with obvious adaptation for
functions defined on $\Omega$. We recall that the energy ${\mathcal
E}^{n+k}(\zeta,\psi,\omega_\mu)$ is controlled uniformly with respect
to $\mu$ by Theorem \ref{theomainND}.

Let us consider therefore $(\zeta,\psi,\omega_\mu)$, the solution provided by Theorem
\ref{theomainND}. We denote $\bom_\mu=\omega_\mu\circ \Sigma^{-1}$ the
corresponding vorticity in the fluid domain. One can show that the
velocity field $\bU^\mu$ has the following structure,
\begin{equation}\label{structU}
\bU^\mu
=\left(\begin{array}{c}
\sqrt{\mu}{\bf V}\\
{\bf w}
\end{array}\right)=
\left(\begin{array}{c}
\sqrt{\mu}\ovV+\mu \big(\int_z^\zeta (\bom_\mu)_h^\perp-Q\big)+O(\mu^{3/2})\\
-\mu(1+z)\Delta\psi-\mu^{3/2}\nabla\cdot \int_{-1}^z \int_{z'}^\zeta (\bom_\mu)_h^\perp -\mu^2\int_{-1}^z
\nabla\cdot V^{(1)}
\end{array}\right),
\end{equation}
with
$$
Q:=\frac{1}{h}\int_{-1}^\zeta\int_{z'}^\zeta ({\bom}_\mu)_h^\perp.
$$
Plugging this expression into the vorticity equation \eqref{eqrotND},
we obtain, for the horizontal components,
$$
\dt (\bom_\mu)_h+\ovV\cdot \nabla (\bom_\mu)_h-(1+z)\nabla\cdot
\ovV\dz (\bom_\mu)_h=(\bom_\mu)_h\cdot \nabla \ovV-(\nabla^\perp\cdot \ovV)(\bom_\mu)_h^\perp+O(\sqrt{\mu});
$$
integrating this equation then gives the following equation for $Q$,
\begin{equation}\label{eqQ}
\dt
Q+Q\cdot \nabla \ovV+\ovV\cdot\nabla Q=O(\sqrt{\mu}).
\end{equation}

Using \eqref{structU} again, we can relate $\Vp^\mu$ to the
averaged velocity $\overline{V}$ through the approximation
\begin{align*}
\Vp^\mu&=\uV+\uw \nabla\zeta\\
&=\overline{V}-\sqrt{\mu} Q +O(\mu)
\quad\mbox{ where }\quad Q:=\frac{1}{h}\int_{-1}^\zeta\int_{z'}^\zeta ({\bom}_\mu)_h^\perp.
\end{align*}
This approximation is then plugged into \eqref{eqVpND} to obtain
\begin{align}
\nonumber
\dsp \dt \ovV+\ovV\cdot \nabla \ovV+\nabla\zeta&=\sqrt{\mu}\big[\dt
Q+Q\cdot \nabla \ovV+\ovV\cdot\nabla Q\big]+O(\mu)\\
\label{eqVr}
&=O(\mu),
\end{align}
the second identity stemming from \eqref{eqQ}.

Using the exact relation $\uU^\mu\cdot N^\mu=-\mu\nabla\cdot(h\ovV)$
in the equation for the surface elevation, we get moreover
\begin{equation}\label{eqkin2}
\dt \zeta+\nabla\cdot(h\ovV)=0.
\end{equation}
The Nonlinear Shallow Water model with vorticity
\eqref{NSW}-\eqref{NSWcor} is corresponds therefore to \eqref{eqQ},
\eqref{eqVr} and \eqref{eqkin2} without all the terms of order $O(\mu)$.

\subsubsection{Justification of the model}

Let us denote by $(\zeta_{\rm SW},\ovV_{\rm
  SW},Q_{\rm SW})$  the exact solution  to \eqref{NSW}-\eqref{NSWcor}
with initial conditions
\begin{equation}\label{CINSW}
\zeta_{\rm SW}^0=\zeta^0,\quad \ovV_{\rm
  SW}^0=\frac{1}{1+\zeta^0}\int_{-1}^{\zeta^0}{\bf V}^0,\quad
Q_{\rm SW}^0=\frac{1}{1+\zeta^0}\int_{-1}^{\zeta^0}\int_{z'}^{\zeta^0} ({\bom}_\mu^0)_h^\perp.
\end{equation}
The following proposition shows that $(\zeta_{\rm SW},\ovV_{\rm
  SW},Q_{\rm SW})$ is a good approximation\footnote{In the statement
  of the theorem, the quantities $\ovV$ and $Q$ are given by
$$
\ovV=\frac{1}{1+\zeta}\int_{-1}^{\zeta} {\mathbb V}[\zeta](\psi,\bom_\mu),\quad
Q=\frac{1}{1+\zeta}\int_{-1}^{\zeta}\int_{z'}^{\zeta} ({\bom}_\mu)_h^\perp,
$$
with $\bom_\mu=\omega_\mu\circ \Sigma^{-1}$ and ${\mathbb
  V}[\zeta](\psi,\bom_\mu)$ the horizontal component of the ${\mathbb
  U}[\zeta](\psi,\bom_\mu)$, as given in Definition \ref{defimappings}.}
at order $O(\mu)$ to the
full water waves equations \eqref{ZCSgenNDS}.
\begin{proposition}
Let $N\in \N$ be large enough, $\eps=1$ and $\mu\in (0,1)$. Let
$(\zeta^0,\psi^0,\omega_\mu^0)$ be such that the assumptions of
Theorem \ref{theomainND} are satisfied.\\
There exists $T>0$ (independent of $\mu$) such that \\
{\bf i.} There exists a
unique solution
$(\zeta_{\rm SW},\ovV_{\rm SW},Q_{\rm SW}) \in C([0,T];H^N(\R^d)\times
H^N(\R^d)^2\times H^{N-1}(\R^d)^d)$ to \eqref{NSW}-\eqref{NSWcor} with
initial condition \eqref{CINSW};\\
{\bf ii.} There exists
a unique solution $(\zeta,\psi,\omega_\mu)\in E^N_T$ to
\eqref{ZCSgenNDS} with initial data $(\zeta^0,\psi^0,\omega_\mu^0)$;\\
{\bf iii.} The following error estimates hold,
$$
\abs{\zeta-\zeta_{\rm SW}}_{L^\infty([0,T]\times \R^d)}+\abs{\ovV-\ovV_{\rm
    SW}}_{L^\infty([0,T]\times \R^d)}+\sqrt{\mu}\abs{Q-Q_{\rm SW}}_{L^\infty([0,T]\times \R^d)}
\leq \mu c,
$$
with $\dsp c=C\big({\mathcal
  E}^N(\zeta^0,\psi^0,\omega_\mu^0),\frac{1}{h_{\rm min}},\frac{1}{a_0}\big)$.
\end{proposition}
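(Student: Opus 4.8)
The plan is to prove the three statements in order, the first two being essentially applications of earlier results and the third — the error estimate — being the real content.

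For \textbf{(i)}, the system \eqref{NSW}-\eqref{NSWcor} is a symmetrizable quasilinear hyperbolic system (the Saint-Venant part \eqref{NSW} is classically symmetrizable once $h=1+\zeta>0$, and the equation on $Q$ is a linear transport-type equation driven by $\ovV$ and $\nabla\ovV$), so local well-posedness in $H^N\times H^N\times H^{N-1}$ with a lifespan $T$ depending only on $\abs{\zeta^0}_{H^N}$, $\abs{\ovV^0_{\rm SW}}_{H^N}$, $\abs{Q^0_{\rm SW}}_{H^{N-1}}$ and $1/h_{\min}$ follows from standard theory. The only point to check is that the initial data \eqref{CINSW} are controlled by $\cE^N(\zeta^0,\psi^0,\omega_\mu^0)$ uniformly in $\mu$; this follows from Proposition \ref{prop1mu} and Proposition \ref{horizontalregularitymu} (which bound $\ovV^0$ in terms of $\abs{\proj\psi^0}$, the good unknowns, and $\Abs{\omega_\mu^0}$), together with the trivial bound on $Q^0_{\rm SW}$ by $\Abs{\omega^0_\mu}_{H^{N-1}}$. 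Statement \textbf{(ii)} is exactly Theorem \ref{theomainND} (with $\eps=1$), which also furnishes the uniform-in-$\mu$ bound $\sup_{[0,T]}\cE^N(\zeta,\psi,\omega_\mu)\leq c$; one simply takes the smaller of the two lifespans.

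The heart is \textbf{(iii)}. The strategy is a standard consistency-plus-stability argument. First I would establish \emph{consistency}: the exact solution $(\zeta,\ovV,Q)$ built from $(\zeta,\psi,\omega_\mu)$ satisfies \eqref{eqQ}, \eqref{eqVr}, \eqref{eqkin2}, i.e. it solves the Saint-Venant--with-vorticity system up to a residual of size $O(\mu)$ in every $H^n$ norm, where the $O(\mu)$ constant is controlled by $\cE^{n+k}(\zeta^0,\psi^0,\omega_\mu^0)$ uniformly in $\mu$ — this is precisely the content of the formal derivation sketched in \S\ref{sectderivSW}, and making it rigorous amounts to justifying the expansion \eqref{structU} of $\bU^\mu$. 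That expansion in turn is obtained by writing $\bU^\mu=\nabla^\mu\Phi+\curlm\bA$ via Proposition \ref{prop1mu} and expanding each piece in powers of $\mu$: the irrotational part $\nabla^\mu\Phi$ is expanded by the classical shallow-water asymptotics of the Dirichlet--Neumann operator (as in \cite{Lannes_book}), and the rotational part $\curlm\bA$ is expanded using the div-curl estimates of \S\ref{DCpar}, all error terms being controlled uniformly in $\mu$ thanks to Proposition \ref{horizontalregularitymu} and the uniform energy bound from Theorem \ref{theomainND}. Second comes \emph{stability}: the difference $(\zeta-\zeta_{\rm SW},\ovV-\ovV_{\rm SW},Q-Q_{\rm SW})$ solves the Saint-Venant--with-vorticity system linearized around $(\zeta_{\rm SW},\ovV_{\rm SW},Q_{\rm SW})$ (or around a convex combination) with source term equal to the $O(\mu)$ residual; since that linearized system is symmetrizable hyperbolic with coefficients bounded in terms of the $H^N$ norm of the shallow-water solution, a Gronwall estimate in $H^{N-1}$ (say) controls the difference by $\mu c$ on $[0,T]$, and Sobolev embedding $H^{N-1}\hookrightarrow L^\infty$ for $N$ large converts this into the stated $L^\infty$ bound. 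Note the difference of initial data vanishes by the choice \eqref{CINSW}.

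The main obstacle I expect is the consistency step, specifically making the expansion \eqref{structU} fully rigorous with \emph{uniform-in-$\mu$} control of the remainders: one must carefully track the powers of $\sqrt{\mu}$ produced by each $\nabla^\mu$, use the $\mu$-sharp trace and elliptic estimates of Lemmas \ref{lemlemmu}, \ref{lemtpsimu}, \ref{lemmasave} (whose whole point is to avoid spurious $1/\sqrt\mu$ losses, especially from the bottom vorticity), and verify that the residuals in \eqref{eqQ} and \eqref{eqVr} are genuinely $O(\mu)$ and not merely $O(\sqrt\mu)$ — the cancellation of the $O(\sqrt\mu)$ terms between $\dt\ovV$ and the $Q$-equation, visible in \eqref{eqVr}, is the delicate algebraic point. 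The hyperbolic stability estimate, by contrast, is routine once the shallow-water solution is known to be bounded in $H^N$, which is granted by part (i). I would therefore organize the proof as: (a) uniform bounds on initial data and invocation of (i)+(ii); (b) a lemma stating the expansion \eqref{structU} with uniform remainder bounds, proved via Proposition \ref{prop1mu} and the asymptotics of \S\ref{DCpar}; (c) deduction of consistency, i.e. the $O(\mu)$ residuals in \eqref{eqQ}-\eqref{eqkin2}; (d) the symmetrizer/Gronwall stability estimate for the difference; (e) conclusion via Sobolev embedding.
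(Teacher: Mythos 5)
Your approach is essentially the same as the paper's: part (i) from the symmetrizable quasilinear hyperbolic structure of the NSW-with-vorticity system, part (ii) as a direct application of Theorem \ref{theomainND}, and part (iii) via consistency (the residual estimates from \S\ref{sectderivSW}) followed by a standard hyperbolic stability/Gronwall estimate and Sobolev embedding. The paper's proof is terser — it works in $H^2$ rather than $H^{N-1}$ for the stability step and defers the full rigorous justification of \eqref{structU} to the companion paper \cite{CastroLannes} — but the logical skeleton is identical to yours.
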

\begin{proof}
The first point of the proposition is classical and stems directly
from the hyperbolic structure of \eqref{NSW}, and the second point is
a direct consequence of Theorem \ref{theomainND}. For the third point,
we have shown in \S \ref{sectderivSW} that $(\zeta,\ovV,Q)$ solves
\eqref{NSW}-\eqref{NSWcor}
up to $O(\mu)$ terms. Standard hyperbolic estimates then give a
$O(\mu)$ control of the error in $H^{2}$-norm (provided than $N$ is
chosen large enough), from which the $L^\infty$-estimate given in the
statement of the lemma follows from Sobolev embeddings.
\end{proof}

\section{A Hamiltonian formulation of the water waves equations with
  vorticity \eqref{ZCSgen}}\label{sect6}

The total energy  is given by the sum of the potential energy $E$ and
the kinetic energy $K$,
\begin{eqnarray*}
H&=&E+K\\
&=&\frac{1}{2}\int_{\R^d}g\zeta^2+\frac{1}{2}\int_{\Omega_\zeta}\abs{\bU}^2,
\end{eqnarray*}
where we always assume that $\zeta$ satisfies the nonvanishing depth
condition \eqref{hmin} and where $\Omega_\zeta$ is the fluid domain
delimited above by the graph of $\zeta$ and below by the flat bottom
$z=-H_0$ (when no confusion is possible, we simply write
$\Omega=\Omega_\zeta$ as everywhere else in this paper).
In \cite{Zakharov}, Zakharov showed that, {\it in the irrotational
  case}, the water waves equations could be formally written under a
canonical Hamiltonian formulation, namely,
\begin{equation}\label{Zakham}
\dt \left(\begin{array}{c}\zeta \\ \psi\end{array}\right)=J \mbox{\rm
  grad}_{\zeta,\psi}H,
\quad\mbox{ with }\quad J=\left( \begin{array}{cc} 0 & 1 \\ -1 & 0\end{array}\right).
\end{equation}
Several authors have proposed formulations of the water waves
equations in presence of vorticity that also have a Hamiltonian
structure (but without addressing the well-posedness of these formulations). Let us mention for instance \cite{MP,Constantin0} in a
Lagrangian framework, \cite{Wahlen} in an Eulerian framework for one
dimensional flows with constant vorticity, and the general approach of
\cite{LMM} (see also \cite{KolevSattinger}, and \cite{Kolev} for
comments on
the validity of these formulations). We investigate in this
section if our new well-posed formulation \eqref{ZCSgen} has a
structure similar to \eqref{Zakham} when the vorticity is non zero.\\
We first define admissible functionals in \S \ref{sectAF}, show how to
compute the gradients of such functionals in \S \ref{sectgradAF}, and
finally show in \S \ref{sectPoisson} that the formulation
\eqref{ZCSgen} has a formal Hamiltonian structure.

\subsection{The set of admissible functionals}\label{sectAF}

With the notations introduced in Definition \ref{defimappings}, we
can write the energy as a function of  $(\zeta,\psi,\bom)$,
\begin{eqnarray}
\nonumber
H=H(\zeta,\psi,\bom)&=&\frac{1}{2}\int_{\R^d}
g\zeta^2+\frac{1}{2}\int_{\Omega}\abs{{\mathbb
    U}(\zeta,\psi,\bom)}^2\\
\label{totalNRJ}
&=:&{\mathcal H}(\zeta,{\mathbb
    U}(\zeta,\psi,\bom)),
\end{eqnarray}
where $H$ and ${\mathcal H}$ are respectively functionals on ${\mathcal
  M}$ and ${\mathcal N}$, defined as follows.
\begin{defi}
{\bf i.} We denote
\begin{align*}
{\mathcal M}=\big\{(\zeta,\psi,\bom), &(\zeta,\psi)\in H^\infty(\R^d)^2,
\zeta \,\mbox{\rm
  satisfies }\eqref{hmin}, \\
&\bom\in H^\infty(\Omega_\zeta)^3,\, \dive
\bom=0, \, \omega_b\cdot N_b\in H_0^\infty(\R^d)\big\}.
\end{align*}
{\bf ii.} We denote
\begin{align*}
{\mathcal N}=\{(\zeta,\bU), &\zeta\in H^\infty(\R^d) \quad \mbox{\rm
  satisfies }\eqref{hmin}, \\
&\bU\in H^\infty(\Omega_\zeta)^3, \,\dive \bU=0,\, U_b\cdot N_b=0\}.
\end{align*}
\end{defi}
Denoting $\sigma_\zeta=\frac{1}{H_0}(z+H_0)\zeta$, and recalling that
$\mbox{div}^{\sigma_\zeta}$ is defined in \eqref{notstar}, we also define the
straightened version of ${\mathcal N}$ by
$$
{\mathcal N}^\sigma=\{(\zeta,U), \zeta\in H^\infty(\R^d) \quad \mbox{\rm
  satisfies }\eqref{hmin}, U\in H^\infty(\cS)^3,\, {\mbox{\rm
    div}}^{\sigma_\zeta}U=0,\, U_b\cdot N_b=0\}.
$$
Any functional ${\mathcal F}$ on ${\mathcal N}$ can be equivalently
defined as a functional ${\mathcal F}^\sigma$ on ${\mathcal N}^\sigma$
through the relation
$$
{\mathcal F}^\sigma(\zeta, U)={\mathcal F}(\zeta,U\circ \Sigma_\zeta^{-1})
\quad\mbox{ with }\quad \Sigma_\zeta(X,z)=(X,z+\sigma_\zeta(X,z));
$$
we use this observation to define the class $C^\infty({\mathcal N})$
of smooth functionals on ${\mathcal N}$.
\begin{defi}
 A functional ${\mathcal F}$ belongs to $C^\infty({\mathcal N})$ if
 and only if ${\mathcal F}^\sigma$ belongs to $C^\infty({\mathcal N}^\sigma)$.
\end{defi}
We can also use this observation to define G\^ateaux-derivatives of
functional  $C^\infty({\mathcal N})$; the following assumption is made
on these derivatives:
\begin{equation}\label{hypder}
\left\lbrace
\begin{array}{l}
\dsp \exists \,\frac{\delta {\mathcal F}}{\delta \zeta}\in
H^\infty(\R^d),\, \forall \,\delta\zeta \in H^\infty(\R^d),
\quad d_\zeta {\mathcal F}\cdot \delta \zeta=\int_{\R^d} \frac{\delta
  {\mathcal F}}{\delta \zeta} \delta\zeta,\vspace{1mm}\\
\dsp \exists \,\frac{\delta {\mathcal F}}{\delta \bU}\in
H^\infty(\Omega_\zeta)^{3},\, \dive \frac{\delta {\mathcal
    F}}{\delta \bU}=0,\,
{\frac{\delta {\mathcal F}}{\delta \bU}} {\vert_{z=-H_0}}\cdot N_b=0,\vspace{1mm}\\
\hspace{1.3cm}\forall \,\delta\bU \in H^\infty(\Omega_\zeta)^{3}\,
\mbox{\rm such that }
\dive \delta\bU=0,\,(\delta\bU)_{\vert_{z=-H_0}}\!\!\cdot N_b=0,\vspace{1mm}\\
\hspace{1.3cm} \mbox{\rm one has } d_\bU {\mathcal F}\cdot \delta \bU=\int_{\Omega_\zeta} \frac{\delta
  {\mathcal F}}{\delta \bU} \cdot \delta\bU.
\end{array}\right.
\end{equation}
We can finally define the class ${\mathcal A}$ of
admissible functionals to which the Hamiltonian $H$ belongs.
\begin{defi}\label{deffunctadm}
A functional $F$ on ${\mathcal M}$ belongs to the set ${\mathcal A}$
of \emph{admissible} functionals  if and only if there exists ${\mathcal F}\in C^\infty({\mathcal N})$
satisfying \eqref{hypder}, and such that
$$
 \forall (\zeta,\psi,\bom)\in {\mathcal M}, \quad
 F(\zeta,\psi,\bom)={\mathcal F}(\zeta,{\mathbb U}[\zeta](\psi,\bom)).
$$
\end{defi}

\subsection{Gradients of admissible functionals}\label{sectgradAF}

We give in the following proposition an expression for the gradient of
admissible functionals, as well as an expression for the cotangent
bundle
$T^*{\mathcal M}$. Note that the tangent space
$T_{\zeta,\psi,\bom}{\mathcal M}$, in which we take the variations
$(\delta\zeta,\delta\psi,\delta\bom)$, is defined\footnote{The fact
  that the variations $\delta\zeta$ are taken in $H_0^\infty(\R^d)$ is
to ensure the conservation of the volume of the fluid domain;
consequently, the variations $\delta\psi$ are taken in $\dot{H}^\infty(\R^d)$.} as
\begin{align*}
T_{\zeta,\psi,\bom}{\mathcal M}=\{(\delta\zeta,\delta\psi,\delta\bom)\in
{H}_0^\infty(\R^d)&\times \dot{H}^{\infty}(\R^d)\times
H^\infty(\Omega_\zeta)^3, \\
&\dive \delta\bom=0,\, (\delta\omega)_b\cdot N_b\in H_0^\infty(\R^d)\};
\end{align*}
we also recall that the operator $\mbox{\rm curl}^{-1}$ is
defined in Corollary \ref{invertcurl}.
\begin{proposition}\label{propgrad}
Let $F$ be an admissible functional and ${\mathcal F}$ be the
associated functional on ${\mathcal N}$.
One can write, for all variations
$(\delta\zeta,\delta\psi,\delta\bom)\in T_{\zeta,\psi,\bom}{\mathcal
  M}$,
\begin{align*}
\big\langle d_{\zeta,\psi,\bom} F, \left(\!\!\begin{array}{c}\delta \zeta\\
    \delta \psi \\
    \delta \bom \end{array}\!\!\right)\big\rangle_{T^*_{\zeta,\psi,\bom}{\mathcal
  M}-T_{\zeta,\psi,\bom}{\mathcal
  M}}&=\int_{\R^d}\frac{\delta F}{\delta
  \zeta}\delta \zeta+\int_{\R^d}\frac{\delta F}{\delta
  \psi}\delta \psi+
\int_{\Omega_\zeta}\frac{\delta F}{\delta
  \bom}\cdot \delta \bom\\
&=\big(\mbox{\rm grad}_{\zeta,\psi,\bom}\,F,\left(\!\!\begin{array}{c}\delta \zeta \\
    \delta \psi \\
    \delta \bom \end{array}\!\!\right)\big)_{L^2(\R^d)\times
  L^2(\R^d)\times L^2(\Omega_\zeta)}
\end{align*}
with the $L^2$-gradient $\mbox{\rm grad}_{\zeta,\psi,\bom}\,F$ given by
$$
\mbox{\rm grad}_{\zeta,\psi,\bom}\,F:=\left(\begin{array}{c}
\dsp \frac{\delta F}{\delta
  \zeta}\\
\dsp \frac{\delta F}{\delta
  \psi}\\
\dsp \frac{\delta F}{\delta
  \bom}\end{array}\right)=
\left(\begin{array}{c}
\dsp \frac{\delta {\mathcal F}}{\delta \zeta}-\uw \frac{\delta
  {\mathcal F}}{\delta \bU}\cdot
N-\uom_h^\perp\cdot\nabla\Delta^{-1}\frac{\delta {\mathcal F}}{\delta
  \bU}\cdot N\\
\dsp \frac{\delta {\mathcal F}}{\delta \bU}{\vert_{z=\zeta}}\cdot
N\\
\dsp \mbox{\rm curl}^{-1}\frac{\delta {\mathcal F}}{\delta
  \bU}.
\end{array}\right).
$$
Identifying the cotangent space with the set of the $L^2$-gradients of
all the admissible functionals, one has moreover
\begin{align*}
T^*_{\zeta,\psi,\bom}{\mathcal M}=\{(a,b,{\bf C})\in H^\infty(\R^d)&\times
H^\infty(\R^d)\times H^\infty(\Omega_\zeta)^3, \\
&\dive{\bC}=0,\,\nabla^\perp \cdot
C_\parallel=b,\, C_b=0\}.
\end{align*}
\end{proposition}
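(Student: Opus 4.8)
\textbf{Proof strategy for Proposition \ref{propgrad}.}

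The plan is to compute the G\^ateaux derivative $d_{\zeta,\psi,\bom}F$ by working with the straightened velocity field on the fixed strip $\cS$, where variations are unconstrained by the moving domain, and then transport the resulting identities back to $\Omega_\zeta$. First I would fix an admissible functional $F$ with associated ${\mathcal F}\in C^\infty({\mathcal N})$ satisfying \eqref{hypder}, so that $F(\zeta,\psi,\bom)={\mathcal F}(\zeta,{\mathbb U}[\zeta](\psi,\bom))$. Taking a variation $(\delta\zeta,\delta\psi,\delta\bom)\in T_{\zeta,\psi,\bom}{\mathcal M}$ and applying the chain rule gives
\begin{align*}
d_{\zeta,\psi,\bom}F\cdot(\delta\zeta,\delta\psi,\delta\bom)
&=\int_{\R^d}\frac{\delta{\mathcal F}}{\delta\zeta}\delta\zeta
+\int_{\Omega_\zeta}\frac{\delta{\mathcal F}}{\delta\bU}\cdot
\big(d_\zeta{\mathbb U}\cdot\delta\zeta+{\mathbb U}_I[\zeta]\delta\psi+{\mathbb U}_{II}[\zeta]\delta\bom\big).
\end{align*}
The term $d_\zeta{\mathbb U}\cdot\delta\zeta$ is the obstruction: ${\mathbb U}[\zeta](\psi,\bom)$ depends on $\zeta$ both through the div-curl data and through the domain $\Omega_\zeta$ itself. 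This is exactly the kind of computation handled by Proposition \ref{propshapeU} (with $\delta$ in place of $\dt$), whose identity expresses $\delta({\mathbb U}^\sigma[\cdot](\psi,\omega))$ in terms of ${\mathbb U}^\sigma[\zeta]$ applied to shifted data plus a $\delta\sigma\,\dz^\sigma$ term. The key point will be that $\frac{\delta{\mathcal F}}{\delta\bU}$ is divergence-free and tangent to the bottom, so that pairing it against $\nabla_{X,z}^\sigma\phi$-type terms (the potential parts appearing in that identity) produces, after the integration-by-parts formula \eqref{IPP}, only a surface contribution $\int_{\R^d}\frac{\delta{\mathcal F}}{\delta\bU}\cdot N\,(\cdots)$; all interior potential contributions cancel.

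Next I would treat each of the three pieces. For the $\delta\psi$ piece: ${\mathbb U}_I[\zeta]\delta\psi=\nabla_{X,z}\Phi_{\delta\psi}$ with $\Phi_{\delta\psi}$ harmonic, trace $\delta\psi$ at the surface and vanishing Neumann data at the bottom; pairing against the divergence-free, bottom-tangent $\frac{\delta{\mathcal F}}{\delta\bU}$ and integrating by parts leaves exactly $\int_{\R^d}\big(\frac{\delta{\mathcal F}}{\delta\bU}\surf\cdot N\big)\delta\psi$, giving $\frac{\delta F}{\delta\psi}=\frac{\delta{\mathcal F}}{\delta\bU}\surf\cdot N$. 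For the $\delta\bom$ piece: ${\mathbb U}_{II}[\zeta]\delta\bom=\curl({\mathbb A}[\zeta]\delta\bom)$, and one integrates by parts using $\int_{\Omega}\curl{\bf B}\cdot\frac{\delta{\mathcal F}}{\delta\bU}=\int_{\Omega}{\bf B}\cdot\curl\frac{\delta{\mathcal F}}{\delta\bU}+\text{(boundary)}$; the boundary terms vanish because ${\bf B}={\mathbb A}[\zeta]\delta\bom$ satisfies $N_b\times A_b=0$ and $N\cdot\uA=0$, and one recognizes the result as $\int_{\Omega_\zeta}\delta\bom\cdot\curlinv\frac{\delta{\mathcal F}}{\delta\bU}$ after checking that $\curlinv\frac{\delta{\mathcal F}}{\delta\bU}$ is well defined (this uses Corollary \ref{invertcurl}, whose hypothesis $C_b\cdot N_b=0$ holds since $\frac{\delta{\mathcal F}}{\delta\bU}$ is bottom-tangent). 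This gives $\frac{\delta F}{\delta\bom}=\curlinv\frac{\delta{\mathcal F}}{\delta\bU}$. For the $\delta\zeta$ piece: combining $\frac{\delta{\mathcal F}}{\delta\zeta}$ with the $d_\zeta{\mathbb U}\cdot\delta\zeta$ contribution and using Proposition \ref{propshapeU}'s structure, the interior terms organize into the shift $-\uw\,\delta\zeta$ acting like a change in $\psi$ (producing $-\uw\frac{\delta{\mathcal F}}{\delta\bU}\cdot N$) and the $\frac{\nabla}{\Delta}\cdot(\uom_h^\perp\delta\zeta)$ shift (producing $-\uom_h^\perp\cdot\nabla\Delta^{-1}\frac{\delta{\mathcal F}}{\delta\bU}\cdot N$ after moving $\Delta^{-1}$ onto the other factor and using that $\frac{\delta{\mathcal F}}{\delta\bU}\cdot N$ pairs with it). Collecting the three yields the stated formula for $\mbox{\rm grad}_{\zeta,\psi,\bom}F$.

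Finally, for the characterization of $T^*_{\zeta,\psi,\bom}{\mathcal M}$, I would argue by double inclusion. One inclusion is immediate: given any admissible $F$, the components $(a,b,{\bf C})=\mbox{\rm grad}F$ satisfy $\dive{\bf C}=0$ (since ${\bf C}=\curlinv\frac{\delta{\mathcal F}}{\delta\bU}$ is divergence-free by Corollary \ref{invertcurl}), $C_b=0$ (bottom condition in Corollary \ref{invertcurl}), and $\nabla^\perp\cdot C_\parallel=b$ — this last from the surface boundary condition $C_\parallel=\nabla^\perp\Delta^{-1}(\underline{(\curl{\bf C})}\cdot N)$ in Corollary \ref{invertcurl} combined with the identity \eqref{iden}, which identifies $\underline{(\curl{\bf C})}\cdot N=\underline{(\frac{\delta{\mathcal F}}{\delta\bU})}\cdot N=b$, so $\nabla^\perp\cdot C_\parallel=\Delta\Delta^{-1}b=b$. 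For the reverse inclusion, given $(a,b,{\bf C})$ in the proposed set, I would exhibit an admissible $F$ realizing it: take $\frac{\delta{\mathcal F}}{\delta\bU}=\curl{\bf C}$ (well-defined and divergence-free, bottom-tangent because $\nabla^\perp\cdot C_{\bott\parallel}=\omega_b\cdot N_b$ and one checks $C_b=0$ forces the needed vanishing) and $\frac{\delta{\mathcal F}}{\delta\zeta}$ chosen to absorb the $\uw$ and $\uom_h^\perp$ correction terms so that $\frac{\delta F}{\delta\zeta}=a$; then $\frac{\delta F}{\delta\psi}=\curl{\bf C}\surf\cdot N=\nabla^\perp\cdot C_\parallel=b$ by \eqref{iden} and the hypothesis, and $\frac{\delta F}{\delta\bom}=\curlinv\curl{\bf C}={\bf C}$. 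I expect the main obstacle to be the bookkeeping in the $\delta\zeta$-piece — tracking how the domain variation interacts with the div-curl reconstruction and confirming that Proposition \ref{propshapeU}'s formula, derived for time derivatives, applies verbatim to a general linearization $\delta$; the cancellation of interior potential terms via \eqref{IPP} and the divergence-free/bottom-tangent properties of $\frac{\delta{\mathcal F}}{\delta\bU}$ is the crux, and must be done carefully.
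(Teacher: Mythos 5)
Your overall structure --- chain rule on $F(\zeta,\psi,\bom)={\mathcal F}(\zeta,{\mathbb U}[\zeta](\psi,\bom))$, treating the three variational directions separately, invoking the shape-derivative of Proposition \ref{propshapeU} for the $\delta\zeta$-piece, integrating by parts against the divergence-free bottom-tangent field $\frac{\delta{\mathcal F}}{\delta\bU}$, and establishing the cotangent identification by double inclusion --- is the same as the paper's, and the $\delta\psi$, $\delta\zeta$, and cotangent-space pieces of your argument are essentially sound (the paper confirms in a footnote that the formula of Proposition \ref{propshapeU} carries over verbatim from $\dt$ to a general linearization $\delta$).

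Your $\delta\bom$ piece, however, has a genuine gap. You integrate by parts by moving the curl from ${\bf B}={\mathbb A}[\zeta]\delta\bom$ onto $\frac{\delta{\mathcal F}}{\delta\bU}$ and assert that the boundary terms vanish because $N_b\times A_b=0$ and $N\cdot\uA=0$. The first condition does kill the bottom boundary term, but $N\cdot\uA=0$ does not kill the surface one: the surface contribution in Green's identity for the curl is $\int_{\R^d}\big(\uA\times \frac{\delta{\mathcal F}}{\delta\bU}\surf\big)\cdot N$, a triple product that is generically nonzero even when $\uA$ is tangent to the surface (there is no surface boundary condition on $\frac{\delta{\mathcal F}}{\delta\bU}$). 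In addition, after your integration by parts the interior integral is $\int_\Omega {\mathbb A}[\zeta]\delta\bom\cdot\curl\frac{\delta{\mathcal F}}{\delta\bU}$, which is not visibly equal to $\int_\Omega\delta\bom\cdot\curlinv\frac{\delta{\mathcal F}}{\delta\bU}$; bridging the two would require a self-adjointness statement about ${\mathbb A}[\zeta]$, itself another integration by parts that produces further boundary terms.

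The paper sidesteps both difficulties by integrating by parts in the opposite direction. Set ${\bf C}'=\curlinv\frac{\delta{\mathcal F}}{\delta\bU}$, which is well defined by Corollary \ref{invertcurl} precisely because $\frac{\delta{\mathcal F}}{\delta\bU}$ is divergence-free with zero normal trace at the bottom; it satisfies $\curl{\bf C}'=\frac{\delta{\mathcal F}}{\delta\bU}$ and $C'_b=0$. Then
\[
d_\bom F\cdot\delta\bom=\int_\Omega\curl{\bf C}'\cdot{\mathbb U}_{II}[\zeta]\delta\bom
=\int_\Omega{\bf C}'\cdot\delta\bom
+\int_{\R^d}(C'_\parallel)^\perp\cdot\big({\mathbb U}_{II}[\zeta]\delta\bom\big)_\parallel,
\]
the bottom boundary term having vanished since $C'_b=0$. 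The remaining surface term vanishes because $(C'_\parallel)^\perp=-\nabla\Delta^{-1}\big(\frac{\delta{\mathcal F}}{\delta\bU}\surf\cdot N\big)$ is a gradient while $({\mathbb U}_{II}[\zeta]\delta\bom)_\parallel=\nabla^\perp\Delta^{-1}(\delta\omega_\surf\cdot N)$ is an orthogonal gradient, and these are $L^2(\R^d)$-orthogonal. This orthogonality is the crux of the $\delta\bom$ computation and is absent from your proposal.
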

\begin{proof}
Let us first consider the derivative with respect to $\psi$. Since $F$ is admissible, one has
$$
F(\zeta,\psi,\bom)={\mathcal F}(\zeta,{\mathbb U}[\zeta](\psi,\bom))
$$
with ${\mathcal F}\in C^\infty({\mathcal N})$ satisfying
\eqref{hypder}, and therefore
\begin{eqnarray*}
d_\psi F\cdot \delta\psi&=& d_U{\mathcal F}\cdot \big(d_\psi {\mathbb
  U}[\zeta](\cdot,\bom)\cdot \delta \psi\big)\\
&=&\int_\Omega \frac{\delta {\mathcal F}}{\delta \bU}\cdot {\mathbb
  U}_I[\zeta]\delta \psi\\
&=&\int_{\R^d}\frac{\delta {\mathcal F}}{\delta \bU}{\vert_{z=\zeta}}\cdot N\delta \psi,
\end{eqnarray*}
where we used the definition of $ {\mathbb
  U}_I[\zeta]\delta \psi$ and the fact that $\frac{\delta {\mathcal
    F}}{\delta \bU}$ is divergence free and that its normal component
vanishes at the bottom.\\
For the derivative with respect to $\bom$, one proceeds
along the same lines as above to get
\begin{eqnarray*}
d_\bom F\cdot \delta\bom&=&\int_\Omega \frac{\delta {\mathcal F}}{\delta \bU}\cdot {\mathbb
  U}_{II}[\zeta]\delta\bom
\end{eqnarray*}
Since $\frac{\delta {\mathcal F}}{\delta
  \bU} $ is divergence free and has zero normal component at the
bottom, we can use Corollary \ref{invertcurl} and Green's identity and write
\begin{eqnarray*}
d_\bom F\cdot \delta\bom
&=&\int_\Omega \mbox{\rm curl}^{-1}\frac{\delta {\mathcal F}}{\delta
  \bU}\cdot \delta\bom
+\int_{\R^d} N\times (\mbox{\rm curl}^{-1}\frac{\delta {\mathcal
    F}}{\delta \bU}
)\cdot ({\mathbb U}_{II}[\zeta]\delta\bom)_\surf\\
&=&\int_\Omega \mbox{\rm curl}^{-1}\frac{\delta {\mathcal F}}{\delta
  \bU}\cdot \delta\bom
+\int_{\R^d}  (\mbox{\rm curl}^{-1}\frac{\delta {\mathcal
    F}}{\delta \bU}
)_\parallel^\perp\cdot ({\mathbb U}_{II}[\zeta]\delta\bom)_\parallel.
\end{eqnarray*}
The second term in the above expression vanishes because $(\mbox{\rm curl}^{-1}\frac{\delta {\mathcal
    F}}{\delta \bU}
)_\parallel^\perp=-\nabla\Delta^{-1}(\underline{C}\cdot N)$, which is
$L^2$-orthogonal to $({\mathbb
  U}_{II}[\zeta]\delta\bom)_\parallel=\nabla^\perp
\Delta^{-1}(\delta\omega_\surf\cdot N)$, and we therefore have
\begin{eqnarray*}
d_\bom F\cdot \delta\bom
&=&\int_\Omega \mbox{\rm curl}^{-1}\frac{\delta {\mathcal F}}{\delta
  \bU}\cdot \delta\bom.
\end{eqnarray*}
Finally, for the derivative with respect to $\zeta$, we get
\begin{eqnarray*}
d_\psi F\cdot \delta\psi&=& d_\zeta {\mathcal F}\cdot \delta \zeta+ d_U{\mathcal F}\cdot \big(d_\zeta {\mathbb
  U}[\cdot](\psi,\bom)\cdot \delta \zeta\big)\\
&=&\int_{\R^d} \frac{\delta {\mathcal F}}{\delta \zeta}\delta \zeta+\int_\Omega \frac{\delta {\mathcal F}}{\delta \bU}\cdot{\mathbb
  U}_I[\zeta](-\underline{w}\delta
\zeta+\frac{\nabla}{\Delta}\cdot(\uom_h^\perp\delta\zeta)),
\end{eqnarray*}
where we used\footnote{Proposition \ref{propshapeU} actually gives a
  formula for the time derivative of ${\mathbb U}[\zeta](\psi,\bom)$; the shape derivative formula used here
is obtained exactly in the same way.} Proposition \ref{propshapeU} to substitute $d_\zeta {\mathbb
  U}[\cdot](\psi,\bom)\cdot \delta \zeta={\mathbb
  U}_I[\zeta](-\underline{w}\delta
\zeta+\frac{\nabla}{\Delta}\cdot(\uom_h^\perp\delta\zeta))$ in the
second term. Since $\frac{\delta {\mathcal F}}{\delta
  \bU} $ is divergence free and has zero normal component at the
bottom, we can use Green's identity to get
\begin{eqnarray*}
d_\psi F\cdot \delta\psi
&=&\int_{\R^d} \frac{\delta {\mathcal F}}{\delta \zeta}\delta
\zeta+\int_{\R^d} \frac{\delta {\mathcal F}}{\delta \bU}\cdot N
(-\underline{w}\delta
\zeta+\frac{\nabla}{\Delta}\cdot(\uom_h^\perp\delta\zeta))\\
&=&\int_{\R^d}\big(\frac{\delta {\mathcal F}}{\delta \zeta}-\uw \frac{\delta {\mathcal F}}{\delta \bU}\cdot N-\uom_h^\perp\cdot\nabla\Delta^{-1}\frac{\delta {\mathcal F}}{\delta \bU}\cdot N\big)\delta\zeta,
\end{eqnarray*}
and the expression for $\mbox{\rm grad}_{\zeta,\psi,\bom}F$ given in
the statement of the proposition follows easily.\\
Recalling the identity $\nabla^\perp\cdot C_\parallel=(\curl{\bf
  C})_\surf\cdot N$, one immediatly deduces that
$$
T^*_{\zeta,\psi,\bom}{\mathcal M}\subset\{(a,b,{\bf C})\in H^\infty(\R^d)\times
H^\infty(\R^d)\times H^\infty(\Omega_\zeta), \, \nabla^\perp \cdot
C_\parallel=b,\, C_b=0\}.
$$
In order to prove the reverse inclusion, we need, for all
$(a,b,{\bf C})$ satisfying the condition defining the space in the
right part of the identity, to construct a functional ${\mathcal F}\in
C^\infty({\mathcal N})$ satisfying \eqref{hypder}, and such that
$$
\frac{\delta{\mathcal F}}{\delta \bU}(\zeta,\bU)=\curl{\bf C},\qquad
\frac{\delta{\mathcal F}}{\delta
  \zeta}(\zeta,\bU)=a+\underline{w}b-\omega_h^\perp\cdot \nabla\Delta^{-1}b.
$$
This is achieved by taking ${\mathcal F}$ defined as
$$
{\mathcal
  F}(\zeta',\bU')=\int_{\Omega_{\zeta'}}\curl{\bf C}\cdot
\bU'+\int_{\R^d}\big(a+\underline{w}b-\omega_h^\perp\cdot
\nabla\Delta^{-1}b-(\curl{\bf C})_{\vert_{z=\zeta}}\cdot \underline{U}\big)\zeta',
$$
for all $(\zeta',\bU')\in {\mathcal N}$.
\end{proof}
This proposition can be used to compute the gradient of the total
energy \eqref{totalNRJ}.
\begin{cor}
Let $H$ be the functional on ${\mathcal M}$ associated to the total
energy \eqref{totalNRJ}. One has
$$
\mbox{\rm grad}_{\zeta,\psi,\bom}H=\left(
\begin{array}{c}
    g\zeta+\frac{1}{2}\abs{\Vp}^2-\frac{1}{2}(1+\abs{\nabla\zeta}^2)\underline{w}^2-\underline{\omega}_h^\perp\cdot
    \nabla\Delta^{-1} \uU\cdot N\\
 \uU\cdot N \\
\mbox{\rm curl}^{-1}\bU
\end{array}\right)
$$
\end{cor}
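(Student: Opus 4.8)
The plan is to apply Proposition \ref{propgrad} directly to the total energy functional $H(\zeta,\psi,\bom)={\mathcal H}(\zeta,{\mathbb U}[\zeta](\psi,\bom))$, with ${\mathcal H}(\zeta,\bU)=\frac12\int_{\R^d}g\zeta^2+\frac12\int_{\Omega_\zeta}\abs{\bU}^2$. The only thing that must be checked is that ${\mathcal H}$ is an admissible functional in the sense of Definition \ref{deffunctadm}, that is, that the associated straightened functional ${\mathcal H}^\sigma$ is smooth on ${\mathcal N}^\sigma$ and that \eqref{hypder} holds; then the formula for $\mbox{\rm grad}_{\zeta,\psi,\bom}H$ follows by reading off the expressions for $\frac{\delta{\mathcal H}}{\delta\zeta}$ and $\frac{\delta{\mathcal H}}{\delta\bU}$ and substituting into the master formula of Proposition \ref{propgrad}.

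First I would compute $\frac{\delta{\mathcal H}}{\delta\bU}$: for a divergence-free variation $\delta\bU$ vanishing normally at the bottom, $d_\bU{\mathcal H}\cdot\delta\bU=\int_{\Omega_\zeta}\bU\cdot\delta\bU$, so $\frac{\delta{\mathcal H}}{\delta\bU}=\bU$ (which indeed is divergence free with $U_b\cdot N_b=0$ since $(\zeta,\bU)\in{\mathcal N}$). Hence $\mbox{\rm curl}^{-1}\frac{\delta{\mathcal H}}{\delta\bU}=\mbox{\rm curl}^{-1}\bU$ and $\frac{\delta{\mathcal H}}{\delta\bU}{\vert_{z=\zeta}}\cdot N=\uU\cdot N$, which gives the second and third components of the gradient. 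Next I would compute the shape derivative $\frac{\delta{\mathcal H}}{\delta\zeta}$ at fixed $\bU$ (in the straightened variables). The potential part contributes $g\zeta$. For the kinetic part, the classical shape-derivative computation for a domain integral $\frac12\int_{\Omega_\zeta}\abs{\bU}^2$ under a variation of the upper boundary gives the surface term $\frac12\int_{\R^d}\abs{\underline{\bU}}^2\,\delta\zeta$; rewriting $\abs{\underline{\bU}}^2=\abs{\uV}^2+\uw^2$ and using the algebraic identity $\abs{\uV}^2+\uw^2=\abs{\Vp}^2-(1+\abs{\nabla\zeta}^2)\uw^2+2\uw\,\uU\cdot N$ (which comes from $\uV=\Vp-\uw\nabla\zeta$ together with $\uU\cdot N=\uw(1+\abs{\nabla\zeta}^2)-\nabla\zeta\cdot\Vp$, exactly as in the manipulations of \S\ref{sectTheeq}), one finds
\[
\frac{\delta{\mathcal H}}{\delta\zeta}=g\zeta+\tfrac12\abs{\Vp}^2-\tfrac12(1+\abs{\nabla\zeta}^2)\uw^2+\uw\,\uU\cdot N.
\]

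Then I plug these into the first component of $\mbox{\rm grad}_{\zeta,\psi,\bom}F$ from Proposition \ref{propgrad}, namely $\frac{\delta{\mathcal H}}{\delta\zeta}-\uw\frac{\delta{\mathcal H}}{\delta\bU}\cdot N-\uom_h^\perp\cdot\nabla\Delta^{-1}\frac{\delta{\mathcal H}}{\delta\bU}\cdot N$. Since $\frac{\delta{\mathcal H}}{\delta\bU}\cdot N=\uU\cdot N$, the term $\uw\,\uU\cdot N$ in $\frac{\delta{\mathcal H}}{\delta\zeta}$ is exactly cancelled by $-\uw\frac{\delta{\mathcal H}}{\delta\bU}\cdot N$, leaving $g\zeta+\frac12\abs{\Vp}^2-\frac12(1+\abs{\nabla\zeta}^2)\uw^2-\uom_h^\perp\cdot\nabla\Delta^{-1}\uU\cdot N$, which is the first component in the statement. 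The second and third components are immediate from the previous paragraph, so the corollary follows.

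The main obstacle is justifying the shape-derivative formula rigorously: one must differentiate $\frac12\int_{\Omega_{\zeta'}}\abs{{\mathbb U}[\zeta'](\psi,\bom)}^2$, which has two sources of $\zeta'$-dependence — the moving domain and the $\zeta'$-dependence of the reconstructed velocity field ${\mathbb U}[\zeta'](\psi,\bom)$. The cleanest route, and the one implicit in the setup, is to work in the straightened variables where ${\mathcal H}^\sigma(\zeta',U')$ with $U'$ a \emph{fixed} element of ${\mathcal N}^\sigma$ has only the explicit $\zeta'$-dependence through the Jacobian weight $1+\dz\sigma_{\zeta'}$; differentiating that weight produces precisely the surface term $\frac12\int_{\R^d}\abs{\underline U}^2\delta\zeta$ after integration by parts in $z$ (using that $\sigma_{\zeta'}$ vanishes at the bottom). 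The chain-rule term involving $d_\zeta{\mathbb U}[\cdot](\psi,\bom)\cdot\delta\zeta$ is handled \emph{inside} the proof of Proposition \ref{propgrad} (via the shape-derivative analogue of Proposition \ref{propshapeU}), so here it suffices to identify $\frac{\delta{\mathcal H}}{\delta\zeta}$ and $\frac{\delta{\mathcal H}}{\delta\bU}$ as the partial derivatives at fixed $\bU$ and at fixed $\zeta$ respectively, together with the regularity and structural requirements in \eqref{hypder} — the latter being routine given the smoothing and elliptic estimates of \S\ref{sect3}.
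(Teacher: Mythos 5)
Your proposal is correct and follows the same route as the paper: apply Proposition \ref{propgrad} to ${\mathcal H}(\zeta,\bU)=\frac12\int_{\R^d}g\zeta^2+\frac12\int_{\Omega_\zeta}\abs{\bU}^2$ with $\frac{\delta{\mathcal H}}{\delta\bU}=\bU$ and $\frac{\delta{\mathcal H}}{\delta\zeta}=g\zeta+\frac12\abs{\uU}^2$. The only difference is presentational: the paper leaves $\frac{\delta{\mathcal H}}{\delta\zeta}$ in the compact form $g\zeta+\frac12\abs{\uU}^2$ and silently uses $\abs{\uU}^2=\abs{\Vp}^2-(1+\abs{\nabla\zeta}^2)\uw^2+2\uw\,\uU\cdot N$, whereas you expand it and exhibit the cancellation of $\uw\,\uU\cdot N$ with $-\uw\frac{\delta{\mathcal H}}{\delta\bU}\cdot N$ explicitly.
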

\begin{proof}
The functional $H$ is admissible, with associated function ${\mathcal
  H}$ on ${\mathcal N}$ given by
$$
{\mathcal H}(\zeta,\bU)=\frac{1}{2}\int_{\R^d} g\zeta^2+\frac{1}{2}\int_{\Omega_\zeta}\abs{\bU}^2.
$$
The result follows from Proposition \ref{propgrad} and the observation
that
$$
\frac{\delta {\mathcal H}}{\delta
  \zeta}=g\zeta+\frac{1}{2}\abs{\uU}^2,\qquad
\frac{\delta {\mathcal H}}{\delta
  \bU}=\bU.
$$
\end{proof}

\subsection{The Poisson bracket and the Hamiltonian formulation}\label{sectPoisson}

As said above,  Zakharov showed that the irrotational water waves
equations \eqref{ZCS} can be written under the form
$$
\dt \left(\begin{array}{c}\zeta \\ \psi\end{array}\right)=J \mbox{\rm
  grad}_{\zeta,\psi}H,
\quad\mbox{ with }\quad J=\left( \begin{array}{cc} 0 & 1 \\ -1 & 0\end{array}\right);
$$
in particular, $J$ is an antisymmetric operator on
$H_0^\infty(\R^d)\times \dot{H}^\infty(\R^d)$. In order to generalize
this result to the rotational case, we need to define the notion of
antisymmetric operator on the cotangent bundle $T^*{\mathcal M}$.
\begin{defi}
A mapping $J: T^*{\mathcal M}\to
T{\mathcal M}$ is \emph{antisymmetric} if on each fiber of the
cotangent bundle, the bilinear mapping
$$
\begin{array}{lcl}
T_{\zeta,\psi,\bom}^*{\mathcal M} \times T_{\zeta,\psi,\bom}^*{\mathcal M} & \to& \R\\
(d_{\zeta,\psi,\bom}F,d_{\zeta,\psi,\bom}G) &\mapsto&  \big( \mbox{\rm
grad}_{\zeta,\psi,\bom}F , J_{\zeta,\psi,\bom} \mbox{\rm
grad}_{\zeta,\psi,\bom}G\big)_{L^2(\R^d)\times L^2(\R^d)\times L^2(\Omega_\zeta)}
\end{array}
$$
is antisymmetric.
\end{defi}
We can now state the following generalization of Zakharov's result.
\begin{thm}\label{theoham}
The water waves equations \eqref{ZCSgen} can be written
$$
\dt \left(\begin{array}{c}
\zeta\\
\psi \\
\bom
\end{array}\right)=J_{\zeta,\psi,\bom}\mbox{\rm grad}_{\zeta,\psi,\bom}H.
$$
with
$$
J_{\zeta,\psi,\bom}=\left(\begin{array}{ccc}
0 & 1 & 0\\
-1 & \big(\uom_h\!\cdot\!
\frac{\nabla^\perp}{\Delta}\bullet+\frac{\nabla^\perp}{\Delta}\!\cdot\!(\uom_h\bullet)\big)
& \frac{\nabla^\perp}{\Delta}\!\cdot\!\big(\uom_h (\curl \bullet)\!\cdot\!
N-\uom\!\cdot\! N (\curl\bullet)_h\big)\\
0 & 0 & \curl\big(\bom\times \curl \bullet\big)
\end{array}\right);
$$
the field of linear mappings
$J=(J_{\zeta,\psi,\bom})_{(\zeta,\psi,\bom)\in {\mathcal M}}:
T^*{\mathcal M}\to T{\mathcal M}$ is antisymmetric.
\end{thm}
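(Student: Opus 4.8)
The plan is to verify the two assertions of the theorem --- that \eqref{ZCSgen} can be recast as $\dt (\zeta,\psi,\bom)^T = J_{\zeta,\psi,\bom}\,\mbox{\rm grad}_{\zeta,\psi,\bom}H$, and that $J$ is antisymmetric --- in that order, relying on the gradient formula of the Corollary at the end of \S\ref{sectgradAF}. First I would substitute the components of $\mbox{\rm grad}_{\zeta,\psi,\bom}H$ into the right-hand side $J_{\zeta,\psi,\bom}\,\mbox{\rm grad}_{\zeta,\psi,\bom}H$ and check the three rows against \eqref{ZCSgen}. The first row gives $\dt\zeta = \uU\cdot N$, which is exactly the kinematic equation. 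The third row gives $\dt\bom = \curl(\bom\times\curl\,\mbox{\rm curl}^{-1}\bU)=\curl(\bom\times\bU)$; since $\dive\bom=0$ and $\dive\bU=0$, the standard vector identity $\curl(\bom\times\bU)=\bU\cdot\nabla_{X,z}\bom-\bom\cdot\nabla_{X,z}\bU$ recovers the vorticity equation \eqref{4eq3}. The second row is the substantive computation: one must expand
$$
-\,\frac{\delta H}{\delta\zeta}+\big(\uom_h\cdot\tfrac{\nabla^\perp}{\Delta}\bullet+\tfrac{\nabla^\perp}{\Delta}\cdot(\uom_h\bullet)\big)\big|_{\uU\cdot N}+\tfrac{\nabla^\perp}{\Delta}\cdot\big(\uom_h(\curl\,\mbox{\rm curl}^{-1}\bU)\cdot N-\uom\cdot N(\curl\,\mbox{\rm curl}^{-1}\bU)_h\big)
$$
and show it equals $-g\zeta-\tfrac12|\Vp|^2+\tfrac12(1+|\nabla\zeta|^2)\uw^2+\tfrac{\nabla^\perp}{\Delta}\cdot(\uom\cdot N\,\uV)$, i.e.\ the right-hand side of the $\psi$-equation in \eqref{ZCSgen}. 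Here $\curl\,\mbox{\rm curl}^{-1}\bU=\bU$ so $(\curl\,\mbox{\rm curl}^{-1}\bU)_h=\uV$ at the surface, and one uses $\uom\cdot N=\nabla^\perp\cdot\Vp$ (identity \eqref{iden}) to combine the $\uom_h^\perp\cdot\nabla\Delta^{-1}\uU\cdot N$ term coming from $\frac{\delta H}{\delta\zeta}$ with the middle block; the key algebraic fact is that $\uom_h\cdot\frac{\nabla^\perp}{\Delta}f+\frac{\nabla^\perp}{\Delta}\cdot(\uom_h f)$ applied to $f=\uU\cdot N$ cancels against $\uom_h^\perp\cdot\nabla\Delta^{-1}(\uU\cdot N)$ up to the advective-type term that is absorbed into the $\frac{\nabla^\perp}{\Delta}\cdot(\uom\cdot N\,\uV)$ contribution.

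\textbf{Antisymmetry.}
For the second assertion I would show that for all admissible $F,G$,
$$
\big(\mbox{\rm grad}\,F,\,J_{\zeta,\psi,\bom}\mbox{\rm grad}\,G\big)_{L^2\times L^2\times L^2}=-\big(\mbox{\rm grad}\,G,\,J_{\zeta,\psi,\bom}\mbox{\rm grad}\,F\big)_{L^2\times L^2\times L^2}.
$$
Writing $\mbox{\rm grad}\,F=(a,b,{\bf C})$ and $\mbox{\rm grad}\,G=(a',b',{\bf C}')$ with the constraints from $T^*_{\zeta,\psi,\bom}{\mathcal M}$ (namely $\dive{\bf C}=\dive{\bf C}'=0$, $\nabla^\perp\cdot C_\parallel=b$, $\nabla^\perp\cdot C'_\parallel=b'$, $C_b=C'_b=0$), the pairing decomposes into a $2\times2$ Zakharov block in $(\zeta,\psi)$ --- whose antisymmetry is immediate from the off-diagonal $\pm1$ --- plus the contribution of the $\big(\uom_h\cdot\frac{\nabla^\perp}{\Delta}\bullet+\frac{\nabla^\perp}{\Delta}\cdot(\uom_h\bullet)\big)$ entry, which is manifestly a symmetric-looking operator on $b,b'$ but enters the bracket in the antisymmetric slot so its contribution to $\langle\mbox{\rm grad}F,J\mbox{\rm grad}G\rangle+\langle\mbox{\rm grad}G,J\mbox{\rm grad}F\rangle$ must be shown to cancel against the cross terms coming from the $(2,3)$ entry and the $(3,\cdot)$ row; one integrates by parts in $\R^d$ and in $\Omega_\zeta$, using $b=\nabla^\perp\cdot C_\parallel=(\curl{\bf C})_\surf\cdot N$ to rewrite everything in terms of $\curl{\bf C}$, and the skew-symmetry of $\bom\times\cdot$ together with the self-adjointness of $\curl$ on divergence-free fields with vanishing normal trace to handle the $(3,3)$ block $\curl(\bom\times\curl\bullet)$. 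The cancellations are exactly the ones that make a Lie--Poisson-type bracket antisymmetric.

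\textbf{Main obstacle.}
The hard part will be the bookkeeping in the antisymmetry verification: the operator $J_{\zeta,\psi,\bom}$ is not block-triangular in a way that makes skew-symmetry obvious, and the cross terms between the $\psi$--$\psi$ entry, the $\psi$--$\bom$ entry $\frac{\nabla^\perp}{\Delta}\cdot(\uom_h(\curl\bullet)\cdot N-\uom\cdot N(\curl\bullet)_h)$, and the $\bom$--$\bom$ entry must be tracked carefully through integrations by parts, repeatedly invoking the constraints defining $T^*_{\zeta,\psi,\bom}{\mathcal M}$ and the identity $\curl\,\mbox{\rm curl}^{-1}=\mathrm{Id}$ on the relevant space together with $(\mbox{\rm curl}^{-1}{\bf D})_\parallel^\perp=-\nabla\Delta^{-1}(D_b\cdot N_b)$-type surface relations from Corollary \ref{invertcurl}. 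I expect the $(2,3)$/$(3,2)$ interplay --- reconstructing that the surface term generated by Green's identity on $\curl(\bom\times\curl\,\mbox{\rm curl}^{-1}{\bf D})$ paired against ${\bf C}$ reproduces precisely $-\frac{\nabla^\perp}{\Delta}\cdot(\uom_h(\curl{\bf C})\cdot N-\uom\cdot N(\curl{\bf C})_h)$ paired against $b'$ --- to be the delicate point; everything else is either the classical Zakharov computation or a routine vector-calculus identity. Since the theorem is stated as ``formally Hamiltonian'', all manipulations may be carried out on the smooth class ${\mathcal M}$, so no functional-analytic subtleties arise and the proof reduces to these identities.
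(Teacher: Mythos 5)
Your overall strategy for the antisymmetry part is close to the paper's, but the cancellation mechanism you anticipate is not quite the one the paper actually uses, and the difference matters. You expect a cross-pairing cancellation: the Green's-identity boundary term appearing in $\langle\mbox{\rm grad}F,J\,\mbox{\rm grad}G\rangle$ should reproduce, with opposite sign, the $(2,3)$ contribution of $\langle\mbox{\rm grad}G,J\,\mbox{\rm grad}F\rangle$. What the paper does instead is show that the $(2,3)$ contribution and the Green's-identity boundary term of the \emph{same} pairing $\langle\mbox{\rm grad}F,J\,\mbox{\rm grad}G\rangle$ combine into a single surface integral of the form
$$
\Big((\curl\delta_{\bom}G)_\surf,\ \uom\times\begin{pmatrix}(\delta_{\bom}F)_\parallel^\perp+\frac{\nabla}{\Delta}\delta_\psi F\\ \big((\delta_{\bom}F)_\parallel^\perp+\frac{\nabla}{\Delta}\delta_\psi F\big)\cdot\nabla\zeta\end{pmatrix}\Big),
$$
and that this vanishes \emph{identically} because the admissibility constraint from Proposition \ref{propgrad} (equivalently, the characterization of $T^*_{\zeta,\psi,\bom}{\mathcal M}$ together with the surface condition in Corollary \ref{invertcurl}, namely $(\curl^{-1}{\bf C})_\parallel=\nabla^\perp\Delta^{-1}(\underline{C}\cdot N)$ --- a surface trace, not a bottom trace as you wrote) forces $(\delta_{\bom}F)_\parallel^\perp=-\nabla\Delta^{-1}\delta_\psi F$. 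So the problematic cross terms are zero on each fiber, not merely antisymmetric under swapping $F$ and $G$. Once they are gone, the pairing reduces to the Zakharov $2\times2$ block plus $\int_\Omega\curl\delta_{\bom}F\cdot(\bom\times\curl\delta_{\bom}G)$, both of which are skew for elementary reasons (the adjoint of $\frac{\nabla^\perp}{\Delta}\cdot(\uom_h\,\bullet)$ is $-\uom_h\cdot\frac{\nabla^\perp}{\Delta}$, and $\bom\times\cdot$ is skew). Attempting the cross-pairing route you sketch would force you to match $b'=\delta_\psi G$ against expressions in $\curl{\bf C}$ with no obvious reason for them to coincide; the within-pairing identity is both the actual mechanism and the cleaner one, and you should reorganize your computation around it.

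On the first part of your plan --- substituting $\mbox{\rm grad}_{\zeta,\psi,\bom}H$ into $J$ and recovering the three equations --- be aware that the paper's proof of the theorem does not carry this out: the equivalence with \eqref{ZCSgen} is dealt with only at the level of the Poisson bracket in the subsequent corollary, via \eqref{west2}. Your ``key algebraic fact'' for the second row (that $\uom_h\cdot\frac{\nabla^\perp}{\Delta}(\uU\cdot N)$ cancels $\uom_h^\perp\cdot\nabla\Delta^{-1}(\uU\cdot N)$, since $a\cdot b^\perp=-a^\perp\cdot b$) is correct, but after that cancellation you are left with the sum of $\frac{\nabla^\perp}{\Delta}\cdot(\uom_h\,\uU\cdot N)$ from the $(2,2)$ slot and $\frac{\nabla^\perp}{\Delta}\cdot(\uom_h\,\uU\cdot N-\uom\cdot N\,\uV)$ from the $(2,3)$ slot, and matching this against the target $\frac{\nabla^\perp}{\Delta}\cdot(\uom\cdot N\,\uV)$ is not the routine absorption you suggest --- you should carry out the bookkeeping carefully rather than assert it, because the net identity you need is not obviously trivial and the paper does not supply it. By contrast, rows one and three are indeed immediate (your use of $\curl(\bom\times\bU)=(\bU\cdot\nabla_{X,z})\bom-(\bom\cdot\nabla_{X,z})\bU$ for divergence-free fields is the right ingredient, modulo a sign you should double-check against the orientation of the cross product in the $(3,3)$ entry).
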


\begin{proof}
Let us define $J^0_\bom$ by
$$
J^0_\bom=\left(\begin{array}{cc}
0 & 1 \\
-1 & \big(\uom_h\cdot
\frac{\nabla^\perp}{\Delta}\bullet+\frac{\nabla^\perp}{\Delta}\cdot(\uom_h\bullet)
\big)
\end{array}\right).
$$
We have therefore for all admissible functionals $F,G\in {\mathcal A}$
(and writing $\delta_\zeta { F}=\frac{\delta {
    F}}{\delta \zeta}$ etc.),
\begin{align}
\nonumber
\big( \mbox{\rm
grad}_{\zeta,\psi,\bom}F& , J_{\zeta,\psi,\bom} \mbox{\rm
grad}_{\zeta,\psi,\bom}G\big)_{L^2\times L^2\times L^2}
=\Big(\left(\begin{array}{c} \delta_\zeta { F} \\ \delta_\psi { F} \end{array}\right), J^0_{\zeta,\bom} \left(\begin{array}{c}
    \delta_\zeta { G} \\ \delta_\psi { G}\end{array}\right)   \Big)_{L^2\times L^2}\\
\nonumber
&+\Big(\delta_\psi {
  F},\frac{\nabla^\perp}{\Delta}\cdot\big(\uom_h (\curl
\delta_{\bom}{ G})_\surf\cdot
N-\uom\cdot N (\curl \delta_{\bom}{ G})_{\surf,h}\big)\Big)_{L^2(\R^d)}\\
\label{west}
&+\Big(\delta_{\bom}{ F},\curl(\bom\times \curl \delta_{\bom}{ G})\Big)_{L^2(\Omega)}.
\end{align}
Focusing our attention on the last two terms of the right hand side,
we remark first that
\begin{align*}
\Big(\delta_{\psi}{ F},\frac{\nabla^\perp}{\Delta}&\cdot\big(\uom_h (\curl \delta_{\bom}{ G})_\surf\cdot
N-\uom\cdot N (\curl \delta_{\bom}{ G})_{\surf,h}\big)\Big)\\
=&-\Big(\frac{\nabla^\perp}{\Delta} \delta_{\psi}{ F},\uom_h (\curl \delta_{\bom}{ G})_\surf\cdot
N-\uom\cdot N (\curl \delta_{\bom}{ G})_{\surf,h}\Big)\\
=&\Big({\curl \delta_{\bom}{ G}}_\surf,\uom\times \left(\begin{array}{c}
\frac{\nabla}{\Delta}\delta_{\psi}{ F} \\
\frac{\nabla}{\Delta}\delta_{\psi}{ F}\cdot \nabla\zeta
\end{array}\right)\Big).
\end{align*}
For the last term of \eqref{west}, we use Green's identity to get
\begin{align*}
\Big(&\delta_{\bom}{ F},\curl(\bom\times \curl \delta_{\bom}{ G})\Big)_{L^2(\Omega)}\\
&=
\Big(\curl \delta_{\bom}{ F},\bom\times \curl \delta_{\bom}{ G}\Big)_{L^2(\Omega)}\!\!\!+\big(({\curl \delta_{\bom}{ G}})_\surf,\uom\times(N\times (\delta_{\bom}{ F}))_\surf\big)_{L^2(\R^d)}\\
&=
\Big(\curl \delta_{\bom}{ F},\bom\times \curl \delta_{\bom}{ G}\Big)_{L^2(\Omega)}\!\!\!+\big({(\curl \delta_{\bom}{ G}})_\surf,\uom\times
 \left(\!\!\begin{array}{c}
(\delta_{\bom}{ F})^\perp_{\parallel}\\
(\delta_{\bom}{ F})^\perp_{\parallel}\cdot \nabla\zeta
\end{array}\!\!\right)
\big)_{L^2(\R^d)}.
\end{align*}
We therefore get from \eqref{west} that
\begin{align}
\nonumber
\Big( \left(\begin{array}{c} \delta_\zeta{ F} \\ \delta_\psi{ F}  \\ \delta_\bom{ F} \end{array}\right),& J_{\zeta,\psi,\bom} \left(\begin{array}{c}
    \delta_\zeta{ G}  \\ \delta_\psi{ G}  \\ \delta_\bom{ G} \end{array}\right)   \Big)_{L^2\times
  L^2\times L^2}\\
\nonumber
=&\Big(\left(\begin{array}{c} \delta_\zeta{ F}  \\ \delta_\psi{ F}  \end{array}\right), J^0_{\zeta,\bom} \left(\begin{array}{c}
    \delta_\zeta{ G}  \\ \delta_\psi{ G} \end{array}\right)   \Big)_{L^2\times L^2}
+
\Big(\curl \delta_\bom{ F} ,\bom\times \curl \delta_\bom{ G} \Big)_{L^2(\Omega)}\\
\nonumber
&+\Big((\curl \delta_\bom{ G})_\surf,\uom\times
 \left(\begin{array}{c}
(\delta_\bom{ F} )^\perp_{\parallel}+\frac{\nabla}{\Delta}\delta_\psi{ F} \\
((\delta_\bom{ F} )^\perp_{\parallel}+\frac{\nabla}{\Delta}\delta_\psi{ F} )\cdot \nabla\zeta
\end{array}\right)
\Big)_{L^2(\R^d)}.
\end{align}
Now, the assumption that ${ F}\in { A}$ implies by
Proposition \ref{propgrad} that the last term vanishes, so that
\begin{align}
\nonumber
\Big( \left(\begin{array}{c} \delta_\zeta{ F} \\ \delta_\psi{ F}  \\ \delta_\bom{ F} \end{array}\right),& J_{\zeta,\psi,\bom} \left(\begin{array}{c}
    \delta_\zeta{ G}  \\ \delta_\psi{ G}  \\ \delta_\bom{ G} \end{array}\right)   \Big)_{L^2\times
  L^2\times L^2}\\
\label{west2}
=&\Big(\left(\begin{array}{c} \delta_\zeta{ F}  \\ \delta_\psi{ F}  \end{array}\right), J^0_{\zeta,\bom} \left(\begin{array}{c}
    \delta_\zeta{ G}  \\ \delta_\psi{ G} \end{array}\right)   \Big)_{L^2\times L^2}
+
\Big(\curl \delta_\bom{ F} ,\bom\times \curl
\delta_\bom{ G} \Big)_{L^2(\Omega)}.
\end{align}
Since moreover $J^0_{\zeta,\bom}$ is obviously skew-symmetric for the $L^2(\R^d)\times L^2(\R^d)$
scalar product, the result follows directly.
\end{proof}
We can now deduce the following corollary that shows that the water
waves equations with vorticity can be formally written in Hamiltonian
form.
\begin{cor}
The water waves equations \eqref{ZCSgen} are equivalent to the
Hamiltonian equation
$$
\forall F\in {\mathcal A}, \qquad \dot{F}=\{F,H\},
$$
where $H$ is the Hamiltonian \eqref{totalNRJ}, while the {\it Poisson
  bracket} $\{\cdot,\cdot\}$ is defined as
\begin{align*}
\{F,G\}=&\int_{\R^d}
\frac{\delta F}{\delta \zeta} \frac{\delta
      G}{\delta \psi} -
\frac{\delta F}{\delta \psi} \frac{\delta
      G}{\delta \zeta}-\int_{\R^d} \uom_h\cdot \big[ \frac{\delta F}{\delta
  \psi}\frac{\nabla^\perp}{\Delta}\frac{\delta G}{\delta \psi}-
\frac{\delta G}{\delta \psi}\frac{\nabla^\perp}{\Delta}\frac{\delta F}{\delta \psi}\big]\\
&+\int_{\Omega} (\curl \frac{\delta F}{\delta \bom})\cdot (\bom \times
\curl \frac{\delta G}{\delta \bom}),
\end{align*}
for all $F,G\in {\mathcal A}$.
\end{cor}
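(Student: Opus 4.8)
The plan is to deduce the corollary directly from Theorem \ref{theoham}, which already establishes the Hamiltonian form $\dt(\zeta,\psi,\bom)^T = J_{\zeta,\psi,\bom}\,\mbox{grad}_{\zeta,\psi,\bom}H$ together with the antisymmetry of $J$. The corollary is essentially a reformulation of this statement in ``Poisson bracket'' language, so the work splits into two parts: (i) defining $\{F,G\}$ from $J$ and checking that the proposed explicit formula matches; (ii) showing the bracket formulation is equivalent to the evolution equation for every admissible functional $F$.

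First I would set $\{F,G\} := \big(\mbox{grad}_{\zeta,\psi,\bom}F, J_{\zeta,\psi,\bom}\,\mbox{grad}_{\zeta,\psi,\bom}G\big)_{L^2(\R^d)\times L^2(\R^d)\times L^2(\Omega_\zeta)}$. Then I would invoke \eqref{west2} from the proof of Theorem \ref{theoham}, which already computes this pairing as
$$
\Big(\begin{pmatrix}\delta_\zeta F\\ \delta_\psi F\end{pmatrix}, J^0_{\zeta,\bom}\begin{pmatrix}\delta_\zeta G\\ \delta_\psi G\end{pmatrix}\Big)_{L^2\times L^2} + \Big(\curl\delta_\bom F,\bom\times\curl\delta_\bom G\Big)_{L^2(\Omega)}.
$$
Expanding the $J^0_{\zeta,\bom}$ term using the explicit form of $J^0_\bom$ gives $\int_{\R^d}\big(\delta_\zeta F\,\delta_\psi G - \delta_\psi F\,\delta_\zeta G\big)$ plus the term $\big(\delta_\psi F, (\uom_h\cdot\tfrac{\nabla^\perp}{\Delta}\delta_\psi G + \tfrac{\nabla^\perp}{\Delta}\cdot(\uom_h\delta_\psi G))\big)_{L^2}$; integrating by parts in the second half of this last term (moving $\tfrac{\nabla^\perp}{\Delta}$ onto $\delta_\psi F$, which is legitimate since these are Schwartz-class/Beppo-Levi functions) turns it into $-\int_{\R^d}\uom_h\cdot\big(\delta_\psi F\,\tfrac{\nabla^\perp}{\Delta}\delta_\psi G - \delta_\psi G\,\tfrac{\nabla^\perp}{\Delta}\delta_\psi F\big)$, which is exactly the middle term in the claimed formula. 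Finally one substitutes $\delta_\bom F = \mbox{curl}^{-1}\tfrac{\delta\mathcal F}{\delta\bU}$, i.e. $\curl\delta_\bom F = \tfrac{\delta\mathcal F}{\delta\bU}$, and likewise for $G$; this matches the last integral, giving the stated bracket.

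Second, I would verify the equivalence ``$\dt(\zeta,\psi,\bom)^T = J\,\mbox{grad}_{\zeta,\psi,\bom}H$ for all of $\eqref{ZCSgen}$'' $\iff$ ``$\dot F = \{F,H\}$ for all $F\in\mathcal A$.'' The forward direction is immediate: $\dot F = \langle d_{\zeta,\psi,\bom}F, \dt(\zeta,\psi,\bom)^T\rangle = \big(\mbox{grad}\,F, \dt(\zeta,\psi,\bom)^T\big)_{L^2\times L^2\times L^2} = \big(\mbox{grad}\,F, J\,\mbox{grad}\,H\big) = \{F,H\}$, using the pairing identity of Proposition \ref{propgrad}. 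For the converse, one must show that the equality $\big(\mbox{grad}\,F, \Xi\big)_{L^2\times L^2\times L^2} = 0$ for all $F\in\mathcal A$ forces the triple $\Xi = \dt(\zeta,\psi,\bom)^T - J\,\mbox{grad}\,H$ to vanish; this is where the characterization of $T^*_{\zeta,\psi,\bom}\mathcal M$ in Proposition \ref{propgrad} is used — the set of $L^2$-gradients $\mbox{grad}\,F$ is ``large enough'' (dense in the relevant sense) to separate points of the tangent space $T_{\zeta,\psi,\bom}\mathcal M$. One should check that $\Xi$ indeed lies in the tangent space, i.e. that $\dt\bom$ is divergence free with the right bottom regularity (this follows from the vorticity equation and Remark \ref{vorbot}) and that $\dt\zeta\in H_0^\infty$ (volume conservation), so that the nondegeneracy of the $L^2$ pairing between $T^*\mathcal M$ and $T\mathcal M$ applies.

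The main obstacle I anticipate is the converse direction of the equivalence: one needs to argue carefully that testing against $\mbox{grad}_{\zeta,\psi,\bom}F$ over all admissible $F$ is enough to conclude $\Xi=0$ in $T_{\zeta,\psi,\bom}\mathcal M$. Proposition \ref{propgrad} identifies $T^*_{\zeta,\psi,\bom}\mathcal M$ with $\{(a,b,\bC): \dive\bC=0,\ \nabla^\perp\cdot C_\parallel = b,\ C_b=0\}$, and the pairing with $T_{\zeta,\psi,\bom}\mathcal M = \{(\delta\zeta,\delta\psi,\delta\bom): \delta\zeta\in H_0^\infty,\ \dive\delta\bom=0,\ (\delta\omega)_b\cdot N_b\in H_0^\infty\}$ is the natural $L^2\times L^2\times L^2$ one. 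The constraint $\nabla^\perp\cdot C_\parallel = b$ couples the second and third slots, so the argument is not a trivial ``dual pairing is nondegenerate'' statement: one must use that for any prescribed $(\delta\zeta,\delta\psi,\delta\bom)\in T\mathcal M$ one can find $F\in\mathcal A$ whose gradient pairs nontrivially with it, which in turn requires the constructive part of the proof of Proposition \ref{propgrad} (building $\mathcal F$ on $\mathcal N$ from prescribed data). Handling the $\dot H^\infty/H_0^\infty$ duality for the $\zeta,\psi$ components and the divergence-free/Beppo-Levi constraints on $\bom$ simultaneously is the delicate point; everything else is bookkeeping with the already-established formula \eqref{west2} and the gradient computation of Proposition \ref{propgrad}.
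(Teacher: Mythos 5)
Your proof takes essentially the same route as the paper: for the forward direction you invoke Theorem \ref{theoham} and \eqref{west2}, identifying $\{F,G\}=\big(\mathrm{grad}\,F,\,J\,\mathrm{grad}\,G\big)_{L^2\times L^2\times L^2}$; for the converse you use the characterization of $T^*_{\zeta,\psi,\bom}{\mathcal M}$ from the last point of Proposition \ref{propgrad}. The paper's proof is shorter but structurally identical, so this is essentially the paper's proof with the $J^0_{\zeta,\bom}$ expansion written out explicitly and with more attention paid to the converse.

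One detail you should re-examine: in the expansion of $\big(\delta_\psi F,(\uom_h\cdot\tfrac{\nabla^\perp}{\Delta}\delta_\psi G+\tfrac{\nabla^\perp}{\Delta}\cdot(\uom_h\delta_\psi G))\big)_{L^2}$, integrating by parts in the second summand (the adjoint of $\tfrac{\nabla^\perp}{\Delta}\cdot\,$, vector to scalar, is $-\tfrac{\nabla^\perp}{\Delta}$, scalar to vector) gives
\begin{equation*}
\int_{\R^d}\delta_\psi F\,\tfrac{\nabla^\perp}{\Delta}\!\cdot(\uom_h\delta_\psi G)
=-\int_{\R^d}\uom_h\cdot\big(\delta_\psi G\,\tfrac{\nabla^\perp}{\Delta}\delta_\psi F\big),
\end{equation*}
so the sum is $+\int_{\R^d}\uom_h\cdot\big[\delta_\psi F\,\tfrac{\nabla^\perp}{\Delta}\delta_\psi G-\delta_\psi G\,\tfrac{\nabla^\perp}{\Delta}\delta_\psi F\big]$, with a plus sign, not the minus sign you report. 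Since the bracket term is antisymmetric in $F,G$, this is simply the opposite sign, and your answer happens to match the sign printed in the corollary; but the computation from the $J^0_{\zeta,\bom}$ given in Theorem \ref{theoham} produces the opposite sign, so either the corollary has a sign typo or $J^0$ does, and you should not claim the identification works out without noticing the tension. Your discussion of the converse direction is a legitimate elaboration of what the paper compresses into ``one readily deduces'': the constraint $\nabla^\perp\cdot C_\parallel=b$ in $T^*{\mathcal M}$ does couple the $\psi$ and $\bom$ slots, and one must use the constructive description of admissible gradients (the last part of the proof of Proposition \ref{propgrad}) to see that the pairing separates points of $T_{\zeta,\psi,\bom}{\mathcal M}$; you have correctly identified where the work lies.
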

\begin{remark}
As said above, the Hamiltonian formulation derived above is only
formal. In order to obtain a {\it valid} Hamiltonian structure \cite{Kolev}, one
must also prove that the Poisson bracket satisfies Jacobi's identity,
and that it is closed (i.e. that for all $F,G\in {\mathcal A}$,
${F,G}$ is also an admissible functional). Checking these points is
left for future work. Note that it is proved in \cite{Kolev} that the
Poisson brackets derived in \cite{LMM} are not valid; actually, even
in the irrotational case, it does not seem to be known whether
Zakharov's formulation \eqref{Zakham} provides a {\it valid}
Hamiltonian structure.
\end{remark}
\begin{proof}
The fact that if $(\zeta,\psi,\bom)$ solves \eqref{ZCSgen} implies
that the Hamiltonian equation is satisfied follows directly from
Theorem \ref{theoham} after remarking that \eqref{west2} corresponds exactly
to the Poisson bracket.\\
Conversely, if the Hamiltonian equation is satisfied for all admissible
functional $F$, one deduces from Theorem \ref{theoham} that
$$
\forall F\in {\mathcal A}, \qquad \big(\mbox{\rm
  grad}_{\zeta,\psi,\bom} F,\left(\begin{array}{c} \dt \zeta\\ \dt \psi
    \\ \dt \bom \end{array}\right)-J_{\zeta,\psi,\bom}\mbox{\rm
  grad}_{\zeta,\psi,\bom} H \big)_{L^2\times L^2\times L^2}=0.
$$
Using the last point of Proposition \ref{propgrad}, one readily
deduces that
$$
\left(\begin{array}{c} \dt \zeta\\ \dt \psi
    \\ \dt \bom \end{array}\right)-J_{\zeta,\psi,\bom}\mbox{\rm
  grad}_{\zeta,\psi,\bom} H=0,
$$
and the result is proved.
\end{proof}

\bigbreak
\noindent
{\bf Acknowledgment.} The authors warmly thank B. Kolev for his help
on the various hamiltonian structures encountered in fluid mechanics. A.C is support by the grant MTM2011-266696 (Spain), ICMAT Severo Ochoa project SEV-2011-0087 and ERC grant 307179-GFTIPFD. D. L. acknowledges support from the ANR-13-BS01-0003-01
  DYFICOLTI, the ANR BOND, and the INSU-CNRS LEFE-MANU project Soli.

\end{document}